\documentclass[a4paper]{amsart}

\usepackage{amsmath}
\usepackage{amsfonts}
\usepackage{amssymb}
\usepackage{amsthm}
\usepackage{mathrsfs}
\usepackage{yfonts}
\usepackage{enumitem}
\usepackage{tikz}
\usepackage{tikz-cd}
\usepackage{amsaddr}
\usepackage{comment}

\newtheorem{theorem}{Theorem}[section]

\newtheorem{lemma}[theorem]{Lemma}

\newtheorem{proposition}[theorem]{Proposition}
\newtheorem{corollary}[theorem]{Corollary}

\newtheorem*{remark}{Remark}

\newtheorem{question}[theorem]{Question}
\newtheorem{definition}[theorem]{Definition}

\newtheorem*{acknowledgements}{Acknowledgements}

\newcommand{\existsinfty}{\exists^\infty}

\newcommand{\aaa}{\mathfrak{a}}
\newcommand{\bbb}{\mathfrak{b}}
\newcommand{\ddd}{\mathfrak{d}}
\newcommand{\sss}{\mathfrak{s}}
\newcommand{\rrr}{\mathfrak{r}}

\newcommand{\ppp}{\mathfrak{p}}

\newcommand{\mmm}{\mathfrak{m}}

\newcommand{\continuum}{\mathfrak{c}}

\newcommand{\Meager}{\mathcal{M}}
\newcommand{\Null}{\mathcal{N}}
\newcommand{\E}{\mathcal{E}}

\newcommand{\Baire}{\omega^{\omega}}
\newcommand{\baire}{\omega^{<\omega}}
\newcommand{\Cantor}{2^{\omega}}
\newcommand{\cantor}{2^{<\omega}}

\newcommand{\add}{\mathrm{add}}
\newcommand{\cof}{\mathrm{cof}}
\newcommand{\cov}{\mathrm{cov}}
\newcommand{\non}{\mathrm{non}}

\newcommand{\J}{\mathcal{J}}
\newcommand{\I}{\mathcal{I}}
\newcommand{\RandomGraph}{\mathcal{R}}
\newcommand{\Solecki}{\mathcal{S}}
\newcommand{\ED}{\mathcal{ED}}
\newcommand{\FF}{\mathcal{F}in\times\mathcal{F}in}
\newcommand{\nwd}{nwd}
\newcommand{\FIN}{\mathcal{F}in}
\newcommand{\Z}{\mathcal{Z}}
\newcommand{\SUM}{\mathcal{S}um}

\newcommand{\SPL}{\mathcal{S}pl}
\newcommand{\conv}{conv}
\newcommand{\trN}{\mathrm{tr}(\Null)}
\newcommand{\WANDER}{\mathcal{W}}
\newcommand{\SC}{\mathcal{SC}}
\newcommand{\trE}{\mathrm{tr}(\E)}
\newcommand{\Br}{\mathcal{B}r}
\newcommand{\BI}{\mathcal{BI}}

\newcommand{\IAP}{\mathbb{IAP}}
\newcommand{\ILB}{\mathbb{ILB}}

\title{On antichain numbers of algebras $\mathcal{P}(\omega)/\mathcal{J}$\\ and the splitting ideal}
\author{Aleksander Cieślak}
\address{Faculty of Pure and Applied Mathematics, Wrocław University of Science and Technology, Wybrzeże Stanisława Wyspiańskiego 27, 50-370 Wrocław, Poland}
\subjclass[2020]{Primary: 03E17, 03E05, 03E15, 03E35}
\keywords{splitting ideal, Katětov order, antichain number, covering number, Van der Waerden's ideal, Hechler forcing, Van Douwen's diagram}

\begin{document}
\begin{abstract}
    In this article, we study combinatorial properties of a certain ideal on $\omega$, called the \emph{Splitting ideal}. We calculate its cardinal invariants and its position in the Katětov order among other definable ideals. We also study the antichain numbers $\mathfrak{a}(\mathcal{J})$ of algebras $\mathcal{P}(\omega)/\mathcal{J}$ for various Borel ideals. We show that $\textrm{min}\{\mathfrak{b},\textrm{cov}^{+}_{h}(\mathcal{J})\}\leq\mathfrak{a}(\mathcal{J})$ holds for a wide class of ideals, including all $F_{\sigma}$-ideals, all analytic $P$-ideals and many other examples. We also show that $\mathfrak{b}\leq\mathfrak{a}(\mathcal{J})$ holds for \emph{convergent ideal} and for \emph{Boring ideal}. Finally, we will show the consistency of $\mathfrak{a}(\mathcal{J})<\mathfrak{b}$ for the \emph{Van der Waerden's ideal} and the linear growth ideal.
\end{abstract}
\maketitle

\section{Introduction}
Ideals on countable sets play a fundamental role in infinite combinatorics and study of cardinal invariants of the continuum. This article is devoted to two subjects: study of a certain ideal, called the \emph{Splitting ideal} and of \emph{antichain numbers} of the algebras $\mathcal{P}(\omega)/\J$ for various definable ideals. The splitting ideal is an interesting example of an ideal that was defined by Sabok and Zapletal in \cite{SabZapl} in order to characterize adding splitting reals by certain class of idealized forcing notions. We will be interested in the cardinal invariants of the splitting ideal as well as its position in the Katětov order among other known definable ideals. In particular, we will show that, in the Katětov order, the Splitting ideal is above \emph{convergent ideal}, but below the \emph{nowhere-dense ideal}, the \emph{trace of measure zero ideal} and the \emph{Fubini product of ideal of finite sets}. We will also show that $\cov^{*}(\SPL)$ is equal to the reaping number $\rrr$. Here, the $*$-covering number is the smallest number of elements of the ideal that are needed to infinitely intersect any infinite subset of $\omega$. As a result, we will obtain a new upper bound for the covering number of the trace of null ideal, i.e. $\cov^{*}(\trN)\leq\rrr$. This gives more conviction to the open problem whether $\cov^{*}(\Z)\leq\bbb$, where $\Z$ is the asymptotic density zero ideal. Regarding the second subject, over the years, the Van Douwen's diagram of the algebra $\mathcal{P}(\omega)/\J$ for different ideals $\J$ received attention of many researchers. In particular, several people were interested in the antichain numbers of such algebras. For an ideal $\J$ on $\omega$, by $\aaa(\J)$ we mean the smallest size of an uncountable family of $\J$-positive sets, maximal with respect to the property that the intersection of any two of its elements is in $\J$. We will show that $\min\{\bbb,\cov^{+}_{h}(\J)\}$ is a lower bound for $\aaa(\J)$ for a wide class of definable ideals, called \emph{good} ideals. This class includes all $F_{\sigma}$-ideals, all analytic $P$-ideals and many other known examples. The classical result saying that $\bbb\leq\aaa$ can only be partially extended for algebras of the form $\mathcal{P}(\omega)/\J$. For example, $\bbb\leq\aaa(\J)$ holds for all analytic $P$-ideals $\J$ and for ideals like $\FF$ and $\ED$. However, for some ideals, the inequality $\aaa(\J)<\bbb$ is consistent. For example, Steprāns in \cite{Steprans} showed that $\aaa(\nwd)<\bbb$ holds in the Laver's model for Borel conjecture and Brendle in \cite{BrendleAntichains} showed that $\aaa(\ED_{fin})<\bbb$ holds in the Hechler's model. We will show that another examples of ($F_{\sigma}$) ideals for which $\aaa(\J)<\bbb$ is consistent is the Van der Waerden's ideal and the linear growth ideal. Finally, we will show that $\bbb\leq\aaa(\J)$ holds for \emph{convergent ideal} $\conv$ and for \emph{Boring ideal} $\BI$ (studied in \cite{KwelaUnboring} and \cite{BrendleGuzHruRaghavan}).\\

By $\Meager$ we will denote the $\sigma$-ideals of meager subsets of $\Cantor$ with the product topology. By $\Null$ we will denote the $\sigma$-ideals of the Lebesgue measure zero sets in $\Cantor$ with standard product measure. By $\E$ we will denote the $\sigma$-ideal generated by closed sets of measure zero in $\Cantor$. It is easy to see, that $\E\subseteq\Null\cap\Meager$ and the inclusion is proper. If $\I$ is a $\sigma$-ideal on $X$, then the standard cardinal characteristics of $\I$ are
\begin{itemize}
    \item[--] $\mathrm{add}(\I)=\min\{|\mathcal{F}|:\mathcal{F}\subseteq\I$ and $\bigcup\mathcal{F}\notin\I\}$,
    \item[--] $\mathrm{cov}(\I)=\min\{|\mathcal{F}|:\mathcal{F}\subseteq\I$ and $\bigcup\mathcal{F}=X\}$,
    \item[--] $\mathrm{non}(\I)=\min\{|Y|:Y\subseteq X$ and $X\notin\I\}$,
    \item[--] $\mathrm{cof}(\I)=\min\{|\mathcal{F}|:\mathcal{F}\subseteq\I$ and $\forall Y\in\I$ $\exists Y'\in\mathcal{F}$ $Y\subseteq Y'\}$.
\end{itemize}
These are known as \emph{the additivity}, \emph{the covering}, \emph{the uniformity} and \emph{the cofinality} of the ideal $\I$ respectively.
For $f,g\in\omega^{\omega}$ we write $f\leq^{*} g$ if $g$ dominates $f$, i.e. $f(n)\leq g(n)$ for almost all $n\in\omega$. We will call $\mathcal{F}\subseteq\omega^{\omega}$ a \emph{dominating family} if for every $g\in\omega^{\omega}$ there is $f\in\mathcal{F}$ which dominates $g$. We will call $\mathcal{F}\subseteq\omega^{\omega}$ an \emph{unbounded family} if there is no $g\in\omega^{\omega}$ which dominates every $f\in\mathcal{F}$. The bounding and dominating numbers are defined as follows.
\begin{itemize}
    \item[--] $\bbb$ is defined as a minimal cardinality of an unbounded family,
    \item[--] $\ddd$ as a minimal cardinality of an dominating family.
\end{itemize}
Assume that $A,B\in[\omega]^{\omega}$. We will write $A\subseteq^{*}B$ if $A\setminus B$ is finite and $A=^{*}B$ if $A\subseteq^{*}B$ and $B\subseteq^{*}A$, i.e. $A$ and $B$ differ on finite set. We will say that $A$ splits $B$ if both $A\cap B$ and $B\setminus A$ are infinite. A family $\mathcal{A}\subseteq[\omega]^{\omega}$ is a \emph{splitting family} if for every $B\in[\omega]^{\omega}$ there is $A\in\mathcal{A}$ that splits $B$. A family $\mathcal{A}\subseteq[\omega]^{\omega}$ is a \emph{reaping (unsplit) family} if there is no single $B\in[\omega]^{\omega}$ that splits all elements of $\mathcal{A}$. The splitting and reaping numbers are defined as follows. 
\begin{itemize}
    \item[--] $\sss$ is defined as a minimal cardinality of a splitting family,
    \item[--] $\rrr$ as a minimal cardinality of a reaping family.
\end{itemize}
It is well known that inequalities $\sss\leq\ddd,\non(\Null),\non(\Meager)$ and $\cov(\Null),\cov(\Meager),\bbb\leq\rrr$ hold and $\sss$ together with $\rrr$ are mutually incomparable on the basis of ZFC.\\
A family $\mathcal{F}\subseteq[\omega]^{\omega}$ has the finite intersection property (fip. for short) if any intersection of finitely many elements of the family is infinite. A set $A\in[\omega]^{\omega}$ is a pseudo-intersection of a family $\mathcal{F}\subseteq[\omega]^{\omega}$ if $A\subseteq^{*}F$ for any $F\in \mathcal{F}$. We define the pseudo-intersection number as
\begin{itemize}
    \item[--] $\ppp=\min\{|\mathcal{F}|:\mathcal{F}\subseteq[\omega]^{\omega}$ has fip. but no pseudointersection$\}$
\end{itemize}
It is well-known result of Bell, that the Martin's number of $\sigma\mathrm{-centered}$ partial orders $\mmm_{\sigma\mathrm{-centered}}$ is equal $\ppp$. The following diagram summarizes the ZFC-provable inequalities between the cardinal invariants mentioned so far.
\begin{center}
\begin{tikzpicture}[]
  \matrix[matrix of math nodes,column sep={50pt,between origins},row
    sep={45pt,between origins},nodes={asymmetrical rectangle}] (s)
  {
    &&&|[name=r]| \rrr \\
    |[name=covn]| \mathrm{cov}(\Null) &&|[name=nonm]| \mathrm{non}(\Meager) &&|[name=cofm]| \mathrm{cof}(\Meager) &&|[name=cofn]| \mathrm{cof}(\Null)\\
    &&|[name=b]| \bbb &&|[name=d]| \ddd \\
    |[name=addn]| \mathrm{add}(\Null) &&|[name=addm]| \mathrm{add}(\Meager) &&|[name=covm]| \mathrm{cov}(\Meager) &&|[name=nonn]| \mathrm{non}(\Null) \\
    &|[name=ppp]| \ppp &&|[name=sss]| \sss \\
  };
    \draw[->] 
            (addn) edge (addm)
            (addm) edge (covm)
            (covm) edge (nonn)
            (addm) edge (b)
            (b) edge (d)
            (covm) edge (d)
            (covn) edge (nonm)
            (addn) edge (covn)
            (b) edge (nonm)
            (d) edge (cofm)
            (nonm) edge (cofm)
            (cofm) edge (cofn)
            (nonn) edge (cofn)
            
            (sss) edge [bend right=10] (nonm)
            (sss) edge (nonn)
            (covm) edge [bend left=10] (r)
            (covn) edge (r)
            
            (sss) edge [bend left=20] (d)
            (b) edge [bend right=20] (r)

            (ppp) edge (sss)
            (ppp) edge (addm)
            ;
\end{tikzpicture}
\end{center}
An ideal $\J$ on $\omega$ is tall if for every $A\in[\omega]^{\omega}$ there is $B\in[A]^{\omega}$ such that $B\in\J$. An ideal $\J$ is a $P$-ideal if for every $\{A_{n}:n\in\omega\}\subseteq\J$ there is $B\in\J$ such that $A_{n}\subseteq B$ for every $n\in\omega$. An ideal $\J$ is a $P^{+}$-ideal if for every decreasing sequence $\{A_{n}:n\in\omega\}\subseteq\J^{+}$ there is $B\in\J^{+}$ such that $B\subseteq^{*}A_{n}$ for every $n\in\omega$. As usual, when we refer to the descriptive complexity of an ideal, we mean its complexity as a subset of Cantor space $\Cantor$.\\

Recall the standard cardinal invariants associated with an ideal $\J$:
\begin{itemize}
    \item[--] $\add^*(\J) = \mathrm{min}\{ |\mathcal{F}| : \mathcal{F} \subseteq \J \text{ and } \forall X \in \J$ $\exists F$ $\in \mathcal{F}$ $(F \nsubseteq^* X) \}$,
    \item[--] $\cov^*(\J) = \mathrm{min}\{ |\mathcal{F}| : \mathcal{F} \subseteq \J \text{ and } \forall X\in[\omega]^{\omega}$ $\exists F \in \mathcal{F}$ $(|F \cap X|=\omega) \}$,
    \item[--] $\non^*(\J) = \mathrm{min}\{ |\mathcal{F}| : \mathcal{F} \subseteq [\omega]^{\omega} \text{ and } \forall X\in\J$ $\exists F \in \mathcal{F}$ $(F \cap X=^*\emptyset) \}$,
    \item[--] $\cof^*(\J) = \mathrm{min}\{ |\mathcal{F}| : \mathcal{F} \subseteq \J \text{ and } \forall X\in\J$ $\exists F \in \mathcal{F}$ $(X \subseteq^{*} F) \}$.
\end{itemize}
They are called \emph{the additivity}, \emph{the covering}, \emph{the uniformity} and \emph{the cofinality} of the ideal $\J$ respectively. It is not difficult to see that an ideal $\J$ is a $P$-ideals if and only if $\add^*(\J)>\omega$. Also, $\cov^{*}(\J)$ is not defined if and only if $\J$ is not tall. The following diagram contains well-known ZFC relations between these invariants
\begin{center}
\begin{tikzpicture}
  \matrix[matrix of math nodes,column sep={50pt,between origins},row
    sep={35pt,between origins},nodes={asymmetrical rectangle}] (s)
  {
    &|[name=non]| \non^{*}(J) &\\
    |[name=add]| \add^{*}(J) &&|[name=cof]| \cof^{*}(J) \\
    &|[name=cov]| \cov^{*}(J) &\\
  };
    \draw[->] 
            (add) edge (cov)
            (add) edge (non)
            (cov) edge (cof)
            (non) edge (cof);
\end{tikzpicture}
\end{center}
As usual, the arrows point towards the greater cardinal invariant. These cardinal invariants have been studied by many researchers (see survey \cite{Hrus}). The interested reader can consult \cite{Hrus} or \cite{Meza}.\\

A powerful tool for classifying ideals on countable sets is the following notion of Katětov reducibility.
\begin{definition}
    Suppose that $\J_{0}$ and $\J_{1}$ are ideals on $\omega$. We will write $\J_{0}\leq_{K}\J_{1}$, and say that $\J_{0}$ is Katětov reducible to $\J_{1}$, if there is a function $\pi:\omega\rightarrow\omega$ such that $\pi^{-1}[A]\in\J_{1}$ for any $A\in\J_{0}$. If such a function is finite-to-one the we call this Katětov-Blass reduction and write $\J_{0}\leq_{KB}\J_{1}$. If $\J_{0}$ is Katětov reducible to $\J_{1}$ and vice versa, we will say that the two ideals are Katětov equivalent.
\end{definition}
The following is well known (see \cite{Hrus})
\begin{proposition}\label{NonKatětov}
    Suppose that $\J_{0}$ and $\J_{1}$ are ideals on $\omega$. Then
    \begin{itemize}
        \item $\J_{0}\leq_{K}\J_{1}$ implies $\mathrm{cov}^*(\J_{1})\leq \mathrm{cov}^*(\J_{0})$,
        \item $\J_{0}\leq_{KB}\J_{1}$ implies $\mathrm{non}^*(\J_{0})\leq \mathrm{non}^*(\J_{1})$.
    \end{itemize}
\end{proposition}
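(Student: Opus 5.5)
Both items follow directly from the definitions by transporting witnessing families along the reduction $\pi:\omega\to\omega$ witnessing $\J_{0}\leq_{K}\J_{1}$ (resp. $\J_{0}\leq_{KB}\J_{1}$). Throughout, $\pi^{-1}[A]\in\J_{1}$ for every $A\in\J_{0}$.

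For the first item, take $\mathcal{F}\subseteq\J_{0}$ of size $\cov^{*}(\J_{0})$ witnessing the covering number, and put $\mathcal{F}'=\{\pi^{-1}[F]:F\in\mathcal{F}\}\subseteq\J_{1}$. We check that $\mathcal{F}'$ witnesses $\cov^{*}(\J_{1})$. Let $X\in[\omega]^{\omega}$; there are two cases.
\begin{itemize}
\item[(a)] Suppose $\pi[X]$ is infinite. Choose $F\in\mathcal{F}$ with $F\cap\pi[X]$ infinite. Then $\pi^{-1}[F]\cap X$ is infinite, since it meets infinitely many fibres of $\pi$ restricted to $X$.
\item[(b)] Suppose $\pi[X]$ is finite. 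Then some fibre satisfies $|\pi^{-1}(\{n\})\cap X|=\omega$. We need an $F\in\mathcal{F}$ containing $n$. This exists: $\mathcal{F}$ must $*$-cover the infinite set $\omega$, so $\bigcup\mathcal{F}$ is cofinite. Moreover, we may add the finitely many missing singletons to $\mathcal{F}$ (they lie in $\J_{0}$ because ideals contain $\FIN$) without changing its cardinality, which is infinite. Then $\pi^{-1}[F]\cap X\supseteq\pi^{-1}(\{n\})\cap X$ is infinite.
\end{itemize}
Hence $\cov^{*}(\J_{1})\leq|\mathcal{F}'|\leq\cov^{*}(\J_{0})$.

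For the second item, take $\mathcal{F}\subseteq[\omega]^{\omega}$ of size $\non^{*}(\J_{1})$ witnessing the uniformity of $\J_{1}$, and let $\mathcal{F}'=\{\pi[F]:F\in\mathcal{F}\}$. Since $\pi$ is finite-to-one, each $\pi[F]$ is infinite. Given $A\in\J_{0}$, we have $\pi^{-1}[A]\in\J_{1}$, so some $F\in\mathcal{F}$ satisfies $F\cap\pi^{-1}[A]=^{*}\emptyset$. Then $\pi[F]\cap A\subseteq\pi[F\cap\pi^{-1}[A]]$ is finite. Therefore $\non^{*}(\J_{0})\leq\non^{*}(\J_{1})$.

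The only delicate step is the finite-fibre case (b) in the first item, handled as above. In the second item, finite-to-oneness is used exactly to keep the images infinite.
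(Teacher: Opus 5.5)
Your proof is correct. The paper does not prove this itself but cites Hru\v{s}\'{a}k's survey \cite{Hrus}, and your argument is the standard one: pull back a $\cov^{*}$-witness along $\pi$, and push forward a $\non^{*}$-witness, using finite-to-oneness to keep the images infinite. Your handling of the case where $\pi[X]$ is finite, by adding the finitely many missing singletons (equivalently, adding the fibres $\pi^{-1}(\{n\})\in\J_{1}$ to the pulled-back family), correctly supplies the one detail usually left implicit.
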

Below we will gather all the ideals related to this work.

\subsection{Examples of $F_{\sigma}$-ideals}\index{Ideal!$F_{\sigma}$-ideal}
A function $\phi:\mathcal{P}(\omega)\rightarrow[0,\infty]$ is a lower semi-continuous submeasure (l.s.c.s.m. for short) if the following conditions hold:
\begin{itemize}
    \item[--] $\phi(\emptyset)=0$, $\phi(\{n\})<\infty$ for every $n\in\omega$,
    \item[--]   $\phi(X)\leq\phi(Y)$ for $X\subseteq Y$,
    \item[--]  $\phi(X\cup Y)\leq\phi(X)+\phi(Y)$ for all $X, Y\subseteq\omega$,
    \item[--] $\phi(X)=\sup\{\phi(X\cap n):n\in\omega\}$ for each $X\subseteq\omega$.
\end{itemize}
There is a natural way to associate an ideal to such $\phi$. Define 
\begin{center}
    $Fin(\phi)=\{X\subseteq\omega:\phi(X)<\infty\}$
\end{center}
It is easy to see that this is an $F_{\sigma}$ $P^{+}$-ideal. The following is a well-known characterization of $F_{\sigma}$-ideals.
\begin{theorem}(Mazur,\cite{Mazur})\label{Mazur}
    An ideal $\J$ is $F_{\sigma}$ if and only if $\J=Fin(\phi)$ for some l.s.c.s.m. $\phi$.
\end{theorem}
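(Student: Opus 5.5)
Both directions of Mazur's theorem will be proved; the easy direction comes first. Suppose $\J=Fin(\phi)$ for a l.s.c.s.m. $\phi$. By lower semicontinuity, $\phi(X)=\sup_n\phi(X\cap n)$, so
\[
\J=\bigcup_{k\in\omega}\{X\subseteq\omega:\phi(X)\leq k\}=\bigcup_{k\in\omega}\bigcap_{n\in\omega}\{X:\phi(X\cap n)\leq k\}.
\]
Each set $\{X:\phi(X\cap n)\leq k\}$ depends only on $X\cap n$. Hence it is clopen in $\Cantor$, the intersection over $n$ is closed, and $\J$ is $F_\sigma$. I will also check that $Fin(\phi)$ is an ideal: it is closed downward by monotonicity and closed under finite unions by subadditivity. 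It contains all finite sets because $\phi(\{n\})<\infty$.

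For the converse, let $\J$ be an $F_\sigma$ ideal, written as $\J=\bigcup_k K_k$ with each $K_k$ closed. The first step is to normalise the closed pieces. Replace $K_k$ by its downward closure $\widehat K_k=\{Y:\exists X\in K_k\ Y\subseteq X\}$. This set is still closed because it is the projection of a compact set. Then put
\[
C_k=\{X_1\cup\dots\cup X_k: X_i\in \widehat K_1\cup\dots\cup\widehat K_k\},
\]
which is compact as a continuous image of a compact set and is contained in $\J$. In this way I obtain an increasing sequence of compact hereditary sets $C_1\subseteq C_2\subseteq\cdots$ with union $\J$ and with $C_k\cup C_k\subseteq C_{k+1}$. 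Adding singletons if needed, I may also assume that $\{n\}\in C_1$ for all $n$ (assuming, as usual, that $\J$ contains $\FIN$).

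The main step is to define $\phi$ so that it is genuinely subadditive and lower semicontinuous. On finite sets I set
\[
\phi(F)=\min\Big\{\sum_{i<m}2^{k_i}: F=F_0\cup\dots\cup F_{m-1},\ F_i\in C_{k_i}\Big\},
\]
and on arbitrary $X$ I set $\phi(X)=\sup_n\phi(X\cap n)$. On finite sets, monotonicity holds because the $C_k$ are hereditary, and subadditivity holds because two covers can be concatenated. Both properties pass to the supremum, and lower semicontinuity is built into the definition. Also $\phi(\{n\})\leq 2$.

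It remains to verify that $Fin(\phi)=\J$, and I expect this to be the main obstacle. If $X\in C_k$, then every $X\cap n$ lies in $C_k$, so $\phi(X)\leq 2^k$. Therefore $\J\subseteq Fin(\phi)$. Conversely, suppose $\phi(X)\leq M$. Each $X\cap n$ is a union of pieces $F_i\in C_{k_i}$ with $\sum_i 2^{k_i}\leq M$. The number of pieces and their levels are bounded by $M$, so by the doubling property $X\cap n\in C_{N}$ for some $N$ depending only on $M$. Since $C_N$ is closed and $X\cap n\to X$, it follows that $X\in C_N\subseteq\J$. The delicate point is the bookkeeping that the union of boundedly many pieces of bounded level lands in a single fixed $C_N$, and this is exactly what the condition $C_k\cup C_k\subseteq C_{k+1}$ provides: iterating it gives $N\approx\log_2 M+\log_2 M$.
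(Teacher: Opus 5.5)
The paper does not prove this theorem; it only cites Mazur. Your argument follows the standard proof: normalise the closed pieces to compact hereditary sets, weight each piece of a cover by $2^{k}$ according to its level, and finish with compactness. The easy direction, the construction of $\phi$, and the check that $\phi$ is a l.s.c.s.m. are all correct. There is one false claim in the normalisation step, and your last step explicitly depends on it: the doubling property $C_k\cup C_k\subseteq C_{k+1}$ does not hold for your $C_k$. You define $C_k$ as the unions of exactly $k$ members of $\widehat K_1\cup\dots\cup\widehat K_k$. A union of two members of $C_k$ therefore needs $2k$ pieces, while $C_{k+1}$ allows only $k+1$. For example, take $\J=\FIN$ and $K_k=[\omega]^{\le k}$. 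Then $C_k=[\omega]^{\le k^2}$, and $C_3\cup C_3$ contains $18$-element sets, which are not in $C_4=[\omega]^{\le 16}$. So your estimate $N\approx 2\log_2 M$ is unjustified for these $C_k$.

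The repair is short, and either of two fixes works. First, you can keep your $C_k$ and use a property they do have. A union of $m\ge 1$ members of $C_K$ is a union of $mK$ members of $\widehat K_1\cup\dots\cup\widehat K_K$, so it lies in $C_{mK}$. In the final step every $k_i$ satisfies $2^{k_i}\le M$, so $k_i\le\log_2 M$. Each term is at least $2$, so $m\le M/2$. Taking $M\ge 2$ without loss of generality, each $X\cap n$ then lies in $C_N$ for $N=\lfloor M/2\rfloor\cdot\lfloor\log_2 M\rfloor$. This $N$ depends only on $M$, and closedness of $C_N$ finishes the argument as you wrote it. Second, you can build the doubling in by recursion: set $C_1=\widehat K_1\cup[\omega]^{\le 1}$ and $C_{k+1}=\{A\cup B: A,B\in C_k\cup\widehat K_{k+1}\}$. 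These sets are compact, being continuous images of compact sets. They are hereditary, since $Y\subseteq A\cup B$ gives $Y=(Y\cap A)\cup(Y\cap B)$. They are increasing, contained in $\J$, and have union $\J$. They now satisfy $C_k\cup C_k\subseteq C_{k+1}$ literally, so your original bookkeeping goes through unchanged.
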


\textbf{Summable ideal}:\\
        Given a function $f:\omega\rightarrow[0,+\infty)$ such that $\Sigma_{n\in\omega}f(n)=\infty$ we define a summable ideal corresponding to $f$ as follows:
        \begin{center}
            $\mathcal{I}_{f}=\{A\subseteq\omega:\Sigma_{n\in A}f(n)<\infty\}$
        \end{center}
        Clearly, $\mathcal{I}_{f}$ is an $F_{\sigma}$-ideal with the associated submeasure defined as $\phi(A)=\Sigma_{n\in A}f(n)$. It follows that $\mathcal{I}_{f}$ is a $P^{+}$-ideal. It is also a $P$-ideal. $\mathcal{I}_{f}$ is tall if and only if $lim_{n}f(n)=0$. In this article, we will only consider the classical summable ideal $\SUM$ defined as $\mathcal{I}_{f}$ where $f(n)=\frac{1}{n+1}$.\\
        
\textbf{Eventually different ideal}:
        Let $\ED$ be the ideal on $\omega\times\omega$ consisting of these sets $A$ such that for some $N$ for all $n>N$ the set $\{m: (n,m)\in A\}$ has at most $N$ elements. Let $\ED_{fin}$ be the restriction $\ED|_{\Delta}$ where $\Delta=\{(n,m)\in\omega\times\omega:m\leq n\}$. For $F\in\Baire$ one could consider an ideal $\ED_{F}$ defined as restriction of $\ED$ to the set of form $\{(n,m)\in\omega\times\omega:m\leq F(m)\}$. It is well-known that such $\ED_{F}$ is Katětov equivalent with $\ED_{fin}$. It follows that the two have the same covering and uniformity numbers.\\
        Both $\ED$ and $\ED_{fin}$ are $F_{\sigma}$-ideals. It follows that both $\ED$ and $\ED_{fin}$ are $P^+$-ideals but neither $\ED$ nor $\ED_{fin}$ is a $P$-ideal. It is easy to see that in the definition of $\ED_{fin}$, if we replace the diagonal by any increasing function, we get a Katětov equivalent ideal. It is also easy to see that every $\ED$-positive set contains $\ED$-positive set $F$ such that $\ED|_{F}$ is Katětov equivalent to $\ED_{fin}$.\\
        The cardinal invariants $\cov^{*}(\ED_{fin})$ and $\non^{*}(\ED_{fin})$ are interesting invariants in the sense that they allow to express $\non(\Meager)$ and $\cov(\Meager)$ as a maximum as minimum.
    \begin{theorem}(Hrušák, Meza-Alcántara, Minami; \cite{HrusakMeza})
        The following (in)equalities holds:
        \begin{enumerate}
            \item[--] $\cov^{*}(\ED)=\non(\Meager)$ and $\non^{*}(\ED)=\omega$,
            \item[--] $\cov(\Null)\leq\cov^{*}(\ED_{fin})\leq\non(\Meager)$,
            \item[--] $\cov(\Meager)\leq\non^{*}(\ED_{fin})\leq\non(\Null)$
        \end{enumerate}
        also
        \begin{enumerate}
            \item[--] $\non(\Meager)=\max\{\bbb,\cov^{*}(\ED_{fin})\}$,
            \item[--] $\cov(\Meager)=\min\{\ddd,\non^{*}(\ED_{fin})\}$.
        \end{enumerate}
    \end{theorem}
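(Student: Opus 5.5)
The plan is to reduce every item to the combinatorics of (partial) functions. The key observation is that $A\subseteq\omega\times\omega$ lies in $\ED$ exactly when $A$ is covered, up to finitely many points, by finitely many columns $\{n\}\times\omega$ together with finitely many graphs of functions. Likewise, $A\subseteq\Delta$ is in $\ED_{fin}$ exactly when $A$ is covered, modulo finite, by finitely many graphs of functions $g$ with $g(n)\le n$.

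I would use two classical facts.
\begin{itemize}
\item[--] (Bartoszyński) $\non(\Meager)$ is the least size of a family $\mathcal{F}\subseteq\Baire$ such that every $g\in\Baire$ satisfies $f(n)=g(n)$ for infinitely many $n$, for some $f\in\mathcal{F}$.
\item[--] (Dually) $\cov(\Meager)$ is the least size of a family $G\subseteq\Baire$ such that no single $f$ agrees infinitely often with every $g\in G$.
\end{itemize}

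\textbf{First item.} For $\non^{*}(\ED)=\omega$ I take the countably many columns $\{k\}\times\omega$. If $A\in\ED$ is witnessed by $N$, then every column with $k>N$ meets $A$ in at most $N$ points, so that column is almost disjoint from $A$. For $\cov^{*}(\ED)\ge\non(\Meager)$, let $\mathcal{F}\subseteq\ED$ have size $<\non(\Meager)$. Each member is covered by finitely many graphs plus columns, so we get fewer than $\non(\Meager)$ functions. Choose $g$ that is eventually different from all of them. Its graph is infinite and meets each member of $\mathcal{F}$ only finitely. For $\le$, take a witness family $\mathcal{F}$ for Bartoszyński's characterization, and use the graphs of its members together with all columns. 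An infinite $X$ either meets some column infinitely, or contains an infinite partial function. To catch the partial function I need the version of the characterization for partial functions with infinite domain. I would obtain this by the standard interval-partition coding: pass to a partition into intervals and to functions on intervals, so that partial agreement on an infinite domain becomes total infinitely-often equality. Verifying that this coding preserves the cardinal is the step I expect to be the main technical obstacle.

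\textbf{Second and third items.} These use the graph of a random or Cohen real, chosen to be sparse. Put $I=\{2^k:k\in\omega\}$ and let $g$ range over $\prod_{n\in I}(n+1)$ with the product measure. Consider $A\in\ED_{fin}$ with witness $N$. The probability that $(n,g(n))\in A$ is at most $N/(n+1)$, and $\sum_{n\in I}N/(n+1)<\infty$. By Borel--Cantelli, $B_A=\{g:\mathrm{graph}(g)\cap A\text{ infinite}\}$ is null.
\begin{itemize}
\item[--] $\cov(\Null)\le\cov^{*}(\ED_{fin})$: if $|\mathcal{F}|<\cov(\Null)$, the null sets $B_A$ for $A\in\mathcal{F}$ do not cover the space. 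A $g$ outside all of them has a graph that defeats $\mathcal{F}$.
\item[--] $\non^{*}(\ED_{fin})\le\non(\Null)$: take a non-null set $Y$ of such $g$, and use their graphs as the family. For each $A\in\ED_{fin}$, some $g\in Y$ lies outside $B_A$, so its graph is almost disjoint from $A$.
\item[--] $\cov^{*}(\ED_{fin})\le\non(\Meager)$: use Bartoszyński's characterization for the bounded space $\prod_n(n+1)$. The set of $f$ that agree infinitely often with a fixed $g$ is comeager, so a non-meager family of graphs works.
\item[--] $\cov(\Meager)\le\non^{*}(\ED_{fin})$: any infinite $X\subseteq\Delta$ meets infinitely many rows. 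Hence the set of $g\le\mathrm{id}$ whose graph meets $X$ infinitely often is comeager. If $|\mathcal{X}|<\cov(\Meager)$, one $g$ lies in all these comeager sets, and its graph is in $\ED_{fin}$ and meets every $X\in\mathcal{X}$ infinitely.
\end{itemize}

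\textbf{Max and min formulas.} The inequality $\non(\Meager)\ge\max\{\bbb,\cov^{*}(\ED_{fin})\}$ follows from $\bbb\le\non(\Meager)$ and the second item. For the reverse, fix an unbounded family $\{h_\alpha:\alpha<\bbb\}$. For each $\alpha$, fix a $\cov^{*}$-witness $\mathcal{F}_\alpha$ for $\ED_{h_\alpha}$; it has size $\cov^{*}(\ED_{fin})$ by the Katětov equivalence noted above. Given $g$, pick $\alpha$ with $g(n)\le h_\alpha(n)$ for infinitely many $n$. The graph of $g$ on those $n$ is an infinite subset of the domain of $\ED_{h_\alpha}$, so some $A\in\mathcal{F}_\alpha$ meets it infinitely. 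Covering $A$ by finitely many graphs yields a function that agrees with $g$ infinitely often. Collecting all these functions gives a Bartoszyński family of size $\bbb\cdot\cov^{*}(\ED_{fin})$.

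The dual argument gives $\cov(\Meager)=\min\{\ddd,\non^{*}(\ED_{fin})\}$. Let $G$ be a family of size less than both $\ddd$ and $\non^{*}(\ED_{fin})$. There is $h$ not dominated by any member of $G$, that is, each $g\in G$ satisfies $g(n)\le h(n)$ infinitely often. Apply $\non^{*}$ of $\ED_h$ to these infinite partial graphs to get a single $A\in\ED_h$ meeting all of them infinitely. Covering $A$ by finitely many graphs, some single one of them (after the usual splicing into one function) agrees infinitely often with every member of $G$. Hence $G$ is not a witness for $\cov(\Meager)$.
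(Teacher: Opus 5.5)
The paper does not prove this theorem; it is quoted from Hru\v{s}\'ak, Meza-Alc\'antara and Minami. Your proposal therefore has to stand on its own. Most of it does, but the last step of the min formula has a genuine gap.

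\textbf{The splicing step in the min formula.} Once you have $A\in\ED_h$ covered modulo finite by graphs of $\phi_0,\dots,\phi_{N-1}$, you only know that each $g\in G$ agrees infinitely often with \emph{some} $\phi_i$. Different $g$'s may need different $i$'s, and no operation on the $\phi_i$ alone yields one function catching them all. For example, $\phi_0\equiv 0$ and $\phi_1\equiv 1$ catch every $\{0,1\}$-valued function. Yet for any $f$, the $\{0,1\}$-valued function equal to $1$ where $f=0$ and to $0$ elsewhere never agrees with $f$. The splicing must be prepared before you invoke $\non^*$:
\begin{itemize}
\item[--] Fix intervals $J_k$ with $|J_k|=k$, and replace each $g\in G$ by $\bar g(k)=$ a code of $g\restriction J_k$.
\item[--] Since $|G|<\ddd$, there is a strictly increasing $h$ with $\bar g(k)\le h(k)$ for infinitely many $k$, for every $g\in G$.
\item[--] Since $|G|<\non^*(\ED_h)$, some $A\in\ED_h$ meets every partial graph $\{(k,\bar g(k)):\bar g(k)\le h(k)\}$ infinitely. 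Cover $A$ modulo finite by graphs of $\phi_0,\dots,\phi_{N-1}$.
\item[--] For $k\ge N$ and $i<N$, let $f$ at the $i$-th point of $J_k$ be the value there of the finite function coded by $\phi_i(k)$.
\end{itemize}
Whenever $\phi_i(k)=\bar g(k)$, $f$ and $g$ agree at that point. Hence $f$ agrees infinitely often with every $g\in G$.

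This step also needs $\non^*(\ED_{fin})\le\non^*(\ED_h)$, which Kat\v{e}tov equivalence alone does not give; Proposition~\ref{NonKatětov} requires a Kat\v{e}tov--Blass reduction. For strictly increasing $h$, the injective map $(n,m)\mapsto(h(n),m)$ witnesses $\ED_{fin}\le_{KB}\ED_h$. Making $h$ or the $h_\alpha$ strictly increasing costs nothing, since you may enlarge them. The same map yields both this inequality and the bound $\cov^*(\ED_h)\le\cov^*(\ED_{fin})$ used in your max formula.

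\textbf{The step you flag as the main obstacle in the first item is not one.} You are right that an arbitrary Bartoszy\'nski witness will not do: all its members could vanish on the even numbers. But for a partial $p$ with infinite domain $D$, the set $\{f\in\Baire:\exists^\infty n\in D\ f(n)=p(n)\}$ is dense $G_\delta$. So take any non-meager $Y\subseteq\Baire$ of size $\non(\Meager)$. The graphs of its members, together with the columns, witness $\cov^*(\ED)\le\non(\Meager)$, and no interval-partition coding is needed. This is the same argument you give for $\cov^*(\ED_{fin})\le\non(\Meager)$. There too an infinite $X\subseteq\Delta$ yields only a partial function, and the same comeagerness handles it.

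The remaining parts are correct:
\begin{itemize}
\item[--] the graph-and-column description of $\ED$ and $\ED_{fin}$;
\item[--] the Borel--Cantelli bounds on the sparse set $\{2^k:k\in\omega\}$;
\item[--] the comeager argument for $\cov(\Meager)\le\non^*(\ED_{fin})$;
\item[--] the $\bbb$-indexed family of $\cov^*(\ED_{h_\alpha})$-witnesses giving $\non(\Meager)\le\max\{\bbb,\cov^*(\ED_{fin})\}$.
\end{itemize}
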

\textbf{Solecki ideal}:\\
        Let $\Solecki$ be the Solecki's ideal, the ideal on $\Omega=:\{C\subseteq\Cantor:C$ is a clopen set and $\lambda(C)=1/2\}$ generated by sets of the form $[F]=:\{C\in\Omega: C\cap F\neq\emptyset\}$ where $F\in[2^{\omega}]^{<\omega}$. The ideal $\Solecki$ is a tall $F_{\sigma}$-ideal: for an $A\subseteq\Omega$, the submeasure $\phi(A)$ measures how many elements of $\Cantor$ do we need to intersects every element of $A$. It follows that $\Solecki$ is a $P^{+}$-ideal. It is not a $P$-ideal.\\
        
\textbf{Van der Waerden ideal}:\\
        Let $\WANDER$ be the ideal on $\omega$ consisting of these $A\subseteq\omega$ which does not contain an arithmetical progression of arbitrary length. The ideal $\WANDER$ is tall but not a $P$-ideal. The $\WANDER$ ideal is an $F_{\sigma}$-ideal: for $A\subseteq\omega$, the submeasure $\phi(A)$ measures how long arithmetical sequence does $A$ contain. It follows that $\WANDER$ is a $P^+$-ideal. Van der Waerden’s well-known Theorem says that $\WANDER$ is a proper ideal. Szemerédi’s famous Theorem says that $\WANDER\subseteq\Z$. The stronger statement $\WANDER\subseteq\SUM$ is still open Erdős prize problem.\\
        
\textbf{Random Graph ideal}:\\
        Let $(\omega,E)$, $E\subseteq[\omega]^{2}$ be the random graph, i.e. the only graph (up to isomorphism) with the following property: for every pair $A,B\subseteq\omega$ of non-empty, finite, disjoint sets there is an $n\in\omega\setminus(A\cup B)$ such that $\{\{n,a\}:a\in A\}\subseteq E$ and $\{\{n,b\}:b\in b\}\cap E=\emptyset$. The random graph ideal $\RandomGraph$ is the ideal generated by homogeneous subsets (cliques and anticliques) of the random graph. The ideal $\RandomGraph$ is tall and it is an $F_{\sigma}$-ideal: for $A\subseteq E$, the submeasure $\phi(A)$ measures what is the minimum amount of cliques and anticliques in $E$ needed to cover $A$. It follows that $\RandomGraph$ is a $P^{+}$-ideal but not a $P$-ideal.

\subsection{Examples of analytic $P$-ideals}
The following is a well-known characterization of analytic $P$-ideals using lower semi continuous submeasures.
\begin{theorem}(Solecki,\cite{SoleckiExh})\index{Ideal!analytic $P$-ideal}
    Every analytic $P$-ideal is of the form
    \begin{center}
        $Exh(\phi)=\{X\subseteq\omega: lim_{n}\phi(X\setminus n)=0\}$
    \end{center}
    for some lower semi continuous submeasure $\phi$.
\end{theorem}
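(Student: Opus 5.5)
The plan is to go through \emph{Polishability}. First we show that the group $(\J,\triangle)$ carries a Polish group topology $\tau$ whose Borel sets are exactly the traces of Borel subsets of $\Cantor$. Then we read off a submeasure from a compatible invariant metric. Because $\triangle$ is abelian, we may fix a $\tau$-compatible metric $d$ on $\J$ that is invariant, i.e. $d(A\triangle C,B\triangle C)=d(A,B)$.

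\textbf{Step 1 (Polishability).} Fix a continuous surjection $f:\Baire\to\J$. For $s\in\baire$ consider the analytic sets $\J_s=\{A\in\J: A\subseteq^{*} B \text{ for some } B\in f[N_s]\}$. We use the $P$-property to show that the family $\{f[N_s]\}$ is directed modulo finite sets. By a Baire category (Pettis-type) argument inside the group, the sets $f[N_s]\triangle f[N_s]$ then generate a neighbourhood base at $\emptyset$ for a separable group topology on $\J$. We complete it, and check that the completion is $\J$ itself by analyticity (an analytic subgroup that is the continuous image of a Polish group is closed in it). The output of this step is:
\begin{enumerate}
\item[(a)] $\tau$ refines the topology inherited from $\Cantor$;
\item[(b)] for every $X\in\J$, $X\setminus n\to\emptyset$ in $\tau$;
\item[(c)] $\emptyset$ has a $\tau$-neighbourhood base consisting of sets $U$ that are \emph{hereditary}: if $A\in U$ and $B\subseteq A$ then $B\in U$.
\end{enumerate}

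\textbf{Step 2 (the submeasure).} Using (c), we choose $d$ so that $d(\emptyset,B)\le d(\emptyset,A)$ whenever $B\subseteq A$; this can be done by a Birkhoff--Kakutani construction run on a hereditary neighbourhood base. Then define
$$\phi(X)=\sup\{d(\emptyset,F):F\subseteq X\text{ finite}\}.$$
Each required property of $\phi$ is checked as follows:
\begin{itemize}
\item[--] Monotonicity and $\phi(\emptyset)=0$ are immediate.
\item[--] Lower semicontinuity holds because every finite $F\subseteq X$ lies in some $X\cap n$.
\item[--] Subadditivity follows from invariance: for finite $F\subseteq X\cup Y$, split $F=F_0\cup F_1$ with $F_0\subseteq X$ and $F_1\subseteq Y\setminus F_0$, and compute
$$d(\emptyset,F)\le d(\emptyset,F_0)+d(F_0,F)=d(\emptyset,F_0)+d(\emptyset,F_1).$$
\end{itemize}

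\textbf{Step 3 ($\J=Exh(\phi)$).} For $X\in\J$, property (b) gives $d(\emptyset,X\setminus n)\to0$. Monotonicity of $d(\emptyset,\cdot)$ then bounds $\phi(X\setminus n)$ by $d(\emptyset,X\setminus n)$, so $X\in Exh(\phi)$. Conversely, suppose $\phi(X\setminus n)\to0$. Then the finite sets $X\cap n$ form a $d$-Cauchy sequence, since
$$d(X\cap n,X\cap m)=d(\emptyset,X\cap[n,m))\le\phi(X\setminus n).$$
By completeness they converge in $\tau$ to some $Y\in\J$. By (a) they also converge to $X$ in $\Cantor$, hence $X=Y\in\J$.

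\textbf{Main obstacle.} The main obstacle is Step 1, and specifically property (c) together with (b): we need a neighbourhood base at $\emptyset$ that is closed under subsets and an argument that tails of members of $\J$ are $\tau$-small. My first attempt would be to take the generated neighbourhoods and replace each $U$ by its downward closure $\{B: B\subseteq A \text{ for some } A\in U\}$. This replacement is still analytic and non-meagre in $\tau$, so by Pettis it is again a neighbourhood of $\emptyset$. Property (b) would then follow because $X\setminus n\subseteq X$ and, by (a), $X\setminus n\to\emptyset$ in $\Cantor$. This last point needs care: convergence in $\Cantor$ alone does not yield $\tau$-convergence. So I would instead use the $P$-property once more: take a single $B\in\J$ almost containing a countable family of witnesses, and run the category argument inside $\{A: A\subseteq B\}$.
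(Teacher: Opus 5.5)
The paper does not prove this theorem; it only cites Solecki. So your proposal has to stand on its own, and it has a genuine gap. Step~1 is not a lemma on the way to the theorem: it is an equivalent form of the theorem. Polishability of $(\J,\triangle)$ is one of the equivalent conditions in Solecki's result, and your Steps~2--3 are the comparatively routine implication ``Polishable $\Rightarrow$ $\J=Exh(\phi)$''.

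The mechanism you sketch for Step~1 does not work, for three reasons.
\begin{itemize}
\item[--] A Pettis/Baire-category argument needs a Baire group topology on $\J$ to start from. The only one you have is the topology inherited from $\Cantor$, and there $\J$ is meagre in itself: a proper ideal containing $\FIN$ with the Baire property is meagre and dense in $\mathcal{P}(\omega)$.
\item[--] Nothing shows that the sets $f[N_s]\triangle f[N_s]$ satisfy the axioms of a neighbourhood base for a Hausdorff group topology. For instance, you would need each of them to contain some $W\triangle W$ with $W$ of the same kind.
\item[--] The principle you use to identify the completion with $\J$ is false. The inherited topology is itself a separable metrizable group topology on the analytic group $\J$, yet its completion is $\mathcal{P}(\omega)\neq\J$ (compare also $\mathbb{Q}\le\mathbb{R}$).
\end{itemize}
The $P$-property does give you cofinality statements, e.g.\ a branch $x\in\Baire$ such that for every $n$, every $A\in\J$ is almost contained in some member of $f[N_{x|_{n}}]$. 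The real content is to turn this into neighbourhoods of $\emptyset$ small enough to force the inclusion $Exh(\phi)\subseteq\J$, which is equivalent to completeness of $\tau$. Concretely, what must be produced (and is equivalent to the conclusion) is a sequence of closed hereditary sets $U_n\subseteq\mathcal{P}(\omega)$ with $U_{n+1}\cup U_{n+1}\subseteq U_n$ and $\J=\{A:\forall n\ \exists k\ A\setminus k\in U_n\}$. From such sets, $\phi$ comes out of a Birkhoff--Kakutani-type formula. Your sketch contains no argument producing such sets.

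Conversely, what you flag as the main obstacle is not one.
\begin{itemize}
\item[--] Property (b) is automatic once $(\J,\tau)$ is a Polish group and (a) holds. For $X\in\J$, the set $\mathcal{P}(X)$ is a $\tau$-closed subgroup. So the identity from $(\mathcal{P}(X),\tau)$ onto $\mathcal{P}(X)$ with the Cantor topology is a continuous bijective homomorphism between Polish groups, hence a homeomorphism.
\item[--] Property (c) is unnecessary. Your $\phi$ is monotone, lower semicontinuous and subadditive without any monotonicity of $d(\emptyset,\cdot)$. For $X\in\J$, compactness of $(\mathcal{P}(X),\tau)$ gives $\mathcal{P}(X\setminus n)\subseteq\{B:d(\emptyset,B)<\varepsilon\}$ for large $n$, i.e.\ $\phi(X\setminus n)\le\varepsilon$.
\item[--] Step~3 uses completeness of $d$ without justification. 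It holds because every compatible invariant metric on an abelian Polish group is complete, but this deserves a sentence.
\item[--] Your proposed repair misapplies Pettis. Pettis yields that $D\triangle D$ is a neighbourhood of $\emptyset$. The downward closure $D$ of $U$ is trivially a neighbourhood, since it contains $U$; the issue is whether it is small, which is again the missing core.
\end{itemize}
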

It is not difficult to see that $Exh(\phi)$ is in fact an $F_{\sigma\delta}$-ideal. It is well-known, that for a l.s.c.s.m. $\phi$, the ideal $Exh(\phi)$ is tall if and only if $lim_{n}\phi(\{n\})=0$. Our first example of analytic $P$-ideal is the already defined summable ideal $\SUM$. In this article we will be interested in the following two analytic P-ideals.\\

\textbf{Density zero ideal}:\\
        The density zero ideal $\Z$ consists of these $A\subseteq\omega$ such that $lim_{n\to\infty}\frac{|A\cap n|}{n}=0$. This ideal is tall and can be expressed in the form of $Exh(\phi)$ for l.s.c.s.m. $\phi$ defined as
        \begin{center}
            $\phi(A)=:sup\{\frac{|A\cap [2^{n},2^{n+1})|}{2^{n}}:n\in\omega\}$
        \end{center}
        It is known that the density zero ideal is Katětov equivalent with the ideal of subsets $A\subseteq\cantor$ such that $lim_{n\to\infty}\frac{|A\cap 2^{n}|}{2^{n}}=0$ (see \cite{Hernand}, Theorem 3.4).\\
        
\textbf{Trace of null ideal}:\\
        The trace of null ideal $\trN$ is an ideal on $\cantor$ consisting of those sets $A\subseteq\cantor$ such that the set $G(A):=\{x\in\Cantor:\existsinfty_{n} x|_{n}\in A\}$ is of Lebesgue measure zero. This is a tall analytic $P$-ideal with a l.s.c.s.m given for $A\subseteq\cantor$ by
        \begin{center}
            $\phi(A)=\Sigma\{\frac{1}{2^{|\sigma|}}:\sigma\in A$ is $\subseteq$-minimal in $A\}$
        \end{center}
        Similarly we define the trace of closed measure zero ideal $\trE$ (even though it is not a $P$-ideal). Clearly, as $\E\subseteq\Null$, we have $\trE\subseteq\trN$.
        
\subsection{Other examples}
Other than $F_{\sigma}$ and analytic $P$-ideals, we will be interested in the following examples.\\

\textbf{Nowhere dense ideal}:\\
        Usually, the ideal $\nwd$ consists of all $A\subseteq\mathbb{Q}\cap[0,1]$ such that $\overline{A}$ is nowhere dense in $[0,1]$. However, there is a different (but Katětov equivalent) presentation of this ideal which we will use in this article. Let $\nwd$ be the ideal on $\cantor$ consisting of these sets $A$ that are nowhere dense in $\cantor$, i.e. for any $\sigma\in\cantor$ there is $\tau\in\cantor$, $\sigma\subseteq\tau$ with $\tau\notin A$. By the Lemma 81 of \cite{GuzmanPhd} the two versions of $\nwd$ ideal defined above are Katětov equivalent. The ideal $\nwd$ is not a $P$-ideal and not a $P^+$-ideal. Its complexity is $F_{\sigma\delta}$.\\
        
\textbf{Fubini product of fin and Boring ideal}:
        Let $\FF$ be ideal on $\omega\times\omega$ consisting of these sets $A$ such that for almost all $n\in\omega$ the set $\{m: (n,m)\in A\}$ is finite. $\FF$ is tall but not a $P$-ideal and not a $P^{+}$-ideal. Its descriptive complexity is $F_{\sigma\delta\sigma}$.\\
        By $\BI$ we will denote the \emph{boring ideal} defined in \cite{KwelaUnboring}. The ideal $\BI$ consists of these $A\subseteq\omega^{3}$ such that for every $n\in\omega$ $A\cap(\{n\}\times\omega^{2})\in\FF$ and for almost all $n\in\omega$ the set $A\cap(\{n\}\times\omega^{2})$ is finite. This ideal appeared also in \cite{BrendleGuzHruRaghavan} in the context of weakly-tight m.a.d. families and was denoted there by $\mathcal{WT}$.
        It is known that $\conv\leq_{K}\BI\leq_{K}\FF$.

\textbf{Convergent ideal}:\\
        The convergence ideal $\conv$ is defined as the ideal on $\mathbb{Q}\cap[0,1]$ generated by convergent sequences. Its descriptive complexity is $F_{\sigma\delta\sigma}$ and it is a tall ideal. It is not a $P$-ideal and not a $P^{+}$-ideal. It is not difficult to see that for every $\conv$-positive $A$ there is $\conv$-positive $B\subseteq A$ such that $\conv|_{B}$ is Katětov equivalent to $\FF$.\\
        We will later need the following two facts regarding the $\conv$ ideal.
        \begin{lemma}(Meza-Alcántara, \cite{Meza})\label{convKatětov}
            The following are equivalent:
            \begin{itemize}
                \item[--] $\conv\leq_{K}\J$,
                \item[--] there is $\{X_{n}:n\in\omega\}\subseteq[\omega]^{\omega}$ such that for every $Y\in\J^{+}$ there is $n\in\omega$ with $|Y\cap X_{n}|=|Y\setminus X_{n}|=\omega$.
            \end{itemize}
        \end{lemma}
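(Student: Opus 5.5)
Both directions are handled by unravelling Katětov reductions; it is convenient to fix a concrete presentation of $\conv$ in which a set is positive exactly when it has infinitely many accumulation points. Work on $\mathbb{Q}\cap[0,1]$, and note that $A\in\conv$ iff $A$ is covered by finitely many convergent sequences (together with finitely many points), i.e.\ iff the closure $\overline{A}$ has only finitely many accumulation points. Fix a countable base $\{U_{n}:n\in\omega\}$ of rational open intervals.

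For the implication from the first condition to the second, let $\pi:\omega\to\mathbb{Q}\cap[0,1]$ witness $\conv\leq_{K}\J$ and put $X_{n}=\pi^{-1}[U_{n}]$. Take $Y\in\J^{+}$. Then $\pi[Y]\notin\conv$, since otherwise $Y\subseteq\pi^{-1}[\pi[Y]]\in\J$. So $\pi[Y]$ has infinitely many accumulation points; choose two of them, $p\neq q$. Pick $n$ with $p\in U_{n}$ and $q\notin\overline{U_{n}}$. Infinitely many points of $\pi[Y]$ lie in $U_{n}$, and infinitely many lie outside it, near $q$. Their preimages meet $Y$, so $Y\cap X_{n}$ and $Y\setminus X_{n}$ are both infinite. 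Some care is needed if some $X_{n}$ turns out finite or cofinite; this is harmless, because the witnessing $n$ automatically gives infinite pieces.

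For the converse, given $\{X_{n}\}$, define $\pi:\omega\to 2^{\omega}$ by $\pi(k)=\chi$, where $\chi(n)=1$ iff $k\in X_{n}$. We aim to show that every $A\subseteq 2^{\omega}$ (restricted to the image) whose closure has only finitely many accumulation points satisfies $\pi^{-1}[A]\in\J$. Suppose instead that $Y=\pi^{-1}[A]\in\J^{+}$. We would like a single $n$ that splits $Y$ to contradict the finiteness of accumulation points, but a single $n$ splitting $Y$ does not immediately contradict this. To get a contradiction we instead use the hypothesis on all positive subsets of $Y$. Write $\overline{\pi[Y]}$'s accumulation points as $a_{1},\dots,a_{m}$ and choose disjoint clopen neighbourhoods $V_{i}$. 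Then $Y$ is covered by the sets $\pi^{-1}[V_{i}]\cap Y$ together with a finite remainder, so some piece $Y'=Y\cap\pi^{-1}[V_{i}]$ is positive.

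Now consider the subsets $Y'\cap\pi^{-1}[\{x:x(j)=a_{i}(j)\ \forall j<N\}]$. For each $N$, the complement of this set within $Y'$ is finite, because all but finitely many points of $\pi[Y']$ are close to $a_{i}$. Hence each such set is positive, and by the hypothesis some $n$ splits it. However, the coordinates $j<N$ are constant on it, so the splitting index $n$ must be at least $N$. This alone does not yet produce a contradiction.

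The main obstacle is to choose a single positive set whose splitting index is bounded. I would try the following. Fix $N$ with the ball $V$ around $a_{i}$ of level $N$, and take $Z=Y'\cap\pi^{-1}[V]$. All coordinates $n$ are eventually constant along $\pi[Z]$, because $\pi[Z]$ converges to $a_{i}$ in each coordinate. So for every $n$, $Z\cap X_{n}$ is finite or cofinite in $Z$, and no $n$ splits $Z$. This contradicts the hypothesis, since $Z$ is positive. In fact $V$ is unnecessary: any positive $Y$ with $\pi[Y]$ having finitely many accumulation points has a positive part converging to a single point, and that part is split by no $X_{n}$. Finally, transfer from $2^{\omega}$ to $\mathbb{Q}\cap[0,1]$ using that $\conv$ on a countable dense subset of any compact metric space is Katětov equivalent, or simply embed the countable image into the rationals via a homeomorphism onto a countable set.
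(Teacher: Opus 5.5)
Your forward direction is fine; finite $X_n$ can simply be discarded, since they never witness splitting. The converse has a genuine gap. The map $\pi(k)=\chi_k$ need not be injective, and the key step fails: ``all coordinates are eventually constant along $\pi[Z]$, so $Z\cap X_n$ is finite or cofinite in $Z$''. Convergence of the \emph{set} $\pi[Z]$ to $a_i$ only says that all but finitely many \emph{points} of $\pi[Z]$ agree with $a_i$ at coordinate $n$. Each of those finitely many exceptional points may have an infinite fibre inside $Z$.

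This is not a presentational slip: your $\pi$ is in general not a Katětov reduction. Take $\J=\FF$ and $X_n=\{n\}\times\omega$. The hypothesis holds, because a positive $Y$ meets infinitely many columns in infinite sets. But $\pi(n,m)=e_n$, the characteristic function of $\{n\}$. The set $\{e_n:n\in\omega\}$ converges to the zero sequence, yet its preimage is all of $\omega\times\omega$. Each fibre individually does lie in $\J$ (a fibre is never split by any $X_n$), so the obstruction is a convergent sequence of fibres, not a single fibre.

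The fix is to make the map injective, i.e.\ to count limit points with multiplicity. For instance, add the cofinite sets $\omega\setminus\{m\}$, $m\in\omega$, to the family and re-index. These sets split no infinite set, so the hypothesis survives, and the new characteristic map $\pi:\omega\to\Cantor$ is injective. Now suppose $Z\in\J^+$ and $\pi[Z]$ has a single accumulation point $a$. Then, by injectivity, for each coordinate all but finitely many $k\in Z$ agree with $a$ there. So no $X_n$ splits $Z$, a contradiction, and your reduction to a single accumulation point goes through as written.

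The transfer to $\mathbb{Q}\cap[0,1]$ also needs correcting. The image of $\pi$ need not be dense. Moreover, a homeomorphism of a countable set onto a subset of $\mathbb{Q}$ need not extend to its closure, so it can merge infinitely many limit points lying outside the image into one. Instead, compose with the continuous injection $c(x)=\sum_n 2x(n)3^{-n-1}$ of $\Cantor$ into $[0,1]$. Then choose distinct rationals $q_k$ with $|q_k-c(\pi(k))|<2^{-k}$. For every $Y$, the accumulation points of $\{q_k:k\in Y\}$ are exactly the $c$-images of those of $\pi[Y]$. Hence $k\mapsto q_k$ witnesses $\conv\leq_{K}\J$.
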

        \begin{lemma}(Meza-Alcántara, \cite{Meza})\label{NoFsigmaABOVEconv}
        If $\conv\leq_{K}\J$, then $\J$ cannot be a $P^{+}$-ideal. In particular, no $F_{\sigma}$-ideal is Katětov above $\conv$.
    \end{lemma}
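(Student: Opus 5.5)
The plan is to prove the contrapositive, obtaining a countable family of sets witnessing Katětov failure as in Lemma \ref{convKatětov}. Suppose $\conv\leq_{K}\J$. By Lemma \ref{convKatětov} there is a sequence $\{X_{n}:n\in\omega\}\subseteq[\omega]^{\omega}$ such that every $Y\in\J^{+}$ is split by some $X_{n}$, meaning $|Y\cap X_{n}|=|Y\setminus X_{n}|=\omega$. I will use this to build a decreasing sequence of $\J$-positive sets that has no $\J$-positive pseudo-intersection, so that $\J$ is not $P^{+}$.

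The construction runs recursively. Put $A_{0}=\omega$, which is $\J$-positive because $\J$ is proper. Given $A_{n}\in\J^{+}$, we have $A_{n}=(A_{n}\cap X_{n})\cup(A_{n}\setminus X_{n})$. Since $\J$ is an ideal, at least one of the two pieces lies in $\J^{+}$, and we let $A_{n+1}$ be such a piece. The sequence $(A_{n})$ is then decreasing and consists of $\J$-positive sets. Now let $B$ be any set with $B\subseteq^{*}A_{n}$ for all $n$. For each $n$, the set $A_{n+1}$ is either contained in $X_{n}$ or disjoint from $X_{n}$, so $B\subseteq^{*}X_{n}$ or $B\cap X_{n}=^{*}\emptyset$. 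In either case $X_{n}$ does not split $B$. Since this holds for every $n$, the choice of $(X_{n})$ forces $B\notin\J^{+}$, that is, $B\in\J$. So no $\J$-positive pseudo-intersection exists, and $\J$ is not a $P^{+}$-ideal.

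For the second claim, we know that every $F_{\sigma}$-ideal has the form $Fin(\phi)$ by Theorem \ref{Mazur}. Such an ideal is $P^{+}$, as noted after the definition of $Fin(\phi)$, and so by the first part it is not Katětov above $\conv$. If one wants to verify the $P^{+}$ property directly, take a decreasing sequence $(A_{n})$ with $\phi(A_{n})=\infty$. Choose finite sets $F_{n}\subseteq A_{n}\setminus k_{n}$ with $\phi(F_{n})\geq n$, using lower semicontinuity together with the finiteness of $\phi$ on finite sets, where $k_{n}$ is chosen beyond $\max F_{n-1}$. Then $B=\bigcup_{n}F_{n}$ satisfies $B\subseteq^{*}A_{n}$ for all $n$ and $\phi(B)=\infty$.

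The only delicate point is to check that "not split by $X_{n}$" follows from the one-sided choice at each stage. That verification was carried out above, and the rest of the argument is routine.
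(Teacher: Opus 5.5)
Your proof is correct. The paper gives no proof of this lemma and simply cites Meza-Alcántara, but your argument is the natural one its context points to: split positive sets along the countable splitting family from the characterization of $\conv\leq_{K}\J$ to obtain a decreasing sequence in $\J^{+}$ whose pseudo-intersections are split by no $X_{n}$ and so lie in $\J$, then get the $F_{\sigma}$ case from Mazur's theorem and the $P^{+}$-property of $Fin(\phi)$ that the paper notes.
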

        
\textbf{Ideals generated by m.a.d. families}:\\
        If $\mathcal{A}\subseteq[\omega]^{\omega}$ is an almost disjoint family, then let $\J(\mathcal{A})$ be the ideal generated by $\mathcal{A}$. It is easy to see that $\J(\mathcal{A})$ is tall if and only if $\mathcal{A}$ is maximal almost disjoint family. Ideals of this form cannot be analytic by a well-known result of Mathias \cite{Mathias}, and are not $P$-ideals. It is a folklore result that for every m.a.d. family $\mathcal{A}$ we have $\mathcal{I}(\mathcal{A})\leq_{K}\FF$.
        
The Katětov reductions between most of the ideals listed above can be summarized with the following diagram.
\begin{center}
\begin{tikzpicture}[]
  \matrix[matrix of math nodes,column sep={36pt,between origins},row
    sep={30pt,between origins},nodes={asymmetrical rectangle}] 
  {
    &&|[name=nwd]| \nwd &&|[name=finfin]| \FF &&|[name=Z]| \Z \\
    &&&&&&|[name=trN]| \trN \\
    &&&&|[name=BI]| \BI &&|[name=sum]| \SUM &|[name=WANDER]| \WANDER \\
    &&&&&&|[name=edfin]| \ED_{\mathrm{Fin}} \\
    &&|[name=sol]| \Solecki &&|[name=conv]| conv &&|[name=ed]| \ED  \\
    &&&&|[name=rand]| \RandomGraph \\
    &&&&|[name=fin]| \FIN \\
  };
    \draw[->] 
            (conv) edge (BI)
            (BI) edge (finfin)
            (ed) edge (finfin)
            (conv) edge (nwd)
            (conv) edge (Z)
            (rand) edge (ed)
            (ed) edge (edfin)
            (edfin) edge (sum)
            (rand) edge (conv)
            (fin) edge (rand)
            (fin) edge (sol)
            (sol) edge (nwd)
            (sum) edge (trN)
            (trN) edge (Z)
            (WANDER) edge (Z)
            (edfin) edge (WANDER)
            ;
            
\end{tikzpicture}
\end{center}

\section{Splitting ideal}
\begin{definition}(Sabok and Zapletal, \cite{SabZapl})\index{Splitting ideal}\index{Ideal!$\SPL$}
    Let $\SPL$ be the splitting ideal, an ideal on $\cantor$ generated by the sets of form $S(A)$ for $A\subseteq\omega$, where $S(A)$ consists of those $\sigma\in\cantor$ such that $\sigma|_{A}$ is constant.
    \end{definition}
    The ideal $\SPL$ was introduced by Sabok and Zapletal in \cite{SabZapl} in the context of characterizing when an idealized forcing notion $\mathbb{P}_{\I}:=(\mathcal{B}or\setminus\I,\subseteq)$ adds a splitting (independent) real. Note that for every $A\in[\omega]^{\omega}$ the set $S(A)$ forms a tree in $\cantor$ and that $A\subseteq B$ implies $S(B)\subseteq S(A)$. We will start by checking the tallness of $\SPL$.
    \begin{proposition}
        $\SPL$ is tall.
    \end{proposition}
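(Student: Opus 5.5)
To prove the proposition I will take an arbitrary infinite $X\subseteq\cantor$ and build an infinite $A\subseteq\omega$ together with an infinite $Y\subseteq X$ such that $Y\subseteq S(A)$. Then $Y$ is an infinite subset of $X$ lying in $\SPL$, which is exactly tallness. (I take the generators $S(A)$ to range over infinite $A$, since $S(\emptyset)=\cantor$.) The idea is to pick $Y$ along a convergent sequence in $\Cantor$, and to put into $A$ only coordinates where the limit point is constant and which lie beyond the lengths of the elements chosen so far.

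\emph{Step 1: a convergent sequence.} Every level of $\cantor$ is finite, so $X$ contains elements of arbitrarily large length. Choose $\sigma_n\in X$ with $|\sigma_0|<|\sigma_1|<\dots$. By compactness of $\Cantor$ (diagonalizing coordinate by coordinate), pass to a subsequence, still denoted $(\sigma_n)$, and find $x\in\Cantor$ such that for every $k$ we have $\sigma_n|_k=x|_k$ for all sufficiently large $n$. Fix $i\in\{0,1\}$ for which $A'=x^{-1}(i)$ is infinite.

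\emph{Step 2: simultaneous induction.} I construct $a_0<a_1<\dots$ in $A'$ and indices $n_0<n_1<\dots$ by induction.
\begin{itemize}
\item Let $a_0=\min A'$.
\item Given $a_0,\dots,a_j$, choose $n_j>n_{j-1}$ so large that $\sigma_{n_j}$ has length greater than $a_j$ and $\sigma_{n_j}|_{a_j+1}=x|_{a_j+1}$.
\item Then choose $a_{j+1}\in A'$ with $a_{j+1}\geq|\sigma_{n_j}|$.
\end{itemize}
Put $A=\{a_j:j\in\omega\}$ and $Y=\{\sigma_{n_j}:j\in\omega\}$. Both are infinite, and $Y$ is infinite because the lengths are strictly increasing.

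\emph{Step 3: verification.} For each $j$ the construction gives $A\cap\mathrm{dom}(\sigma_{n_j})=\{a_0,\dots,a_j\}$, because $a_{j+1}$ and all later elements are at least $|\sigma_{n_j}|$. On these coordinates $\sigma_{n_j}$ agrees with $x$, which takes the value $i$ there. Hence $\sigma_{n_j}|_A$ is constant, so $Y\subseteq S(A)\in\SPL$.

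The only genuine subtlety is in Step 2. Membership in $S(A)$ constrains every coordinate of $A$ inside $\mathrm{dom}(\sigma)$, including elements of $A$ chosen later in the construction. Choosing each new $a_{j+1}$ beyond the length of the previously selected $\sigma_{n_j}$ handles this, and the limit $x$ supplies the uniform constant value $i$. Otherwise the argument is a routine diagonalization.
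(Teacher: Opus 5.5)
Your proof is correct, and it organizes the argument differently from the paper. The paper first passes to a subsequence that is a $\subseteq$-chain or a $\subseteq$-antichain. A chain is handled by taking $A$ to be a side of the branch $\bigcup_n\sigma_n$. For an antichain, the paper takes a branch $y$ by K\"onig's lemma and lets $a_n$ be the point where $\sigma_n$ leaves $y$. It then thins out so that $|\sigma_n|<a_{n+1}<|\sigma_{n+1}|$ and puts at most one point of $A$ into each gap $[|\sigma_n|,a_{n+1})$, with a pigeonhole step to make $y|_A$ constant.

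You dispense with the dichotomy: one convergent subsequence with limit $x$ covers both cases, since for a chain each $\sigma_n$ is simply an initial segment of $x$. Because you interleave the choices of $a_{j+1}$ and $n_{j+1}$, $a_{j+1}$ can be taken anywhere in the infinite set $x^{-1}(i)$. So the constant value is fixed at the start instead of being extracted at the end. This also sidesteps the need, implicit in the paper's chain case, to check that the chosen side of $\bigcup_n\sigma_n$ is infinite.

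The underlying mechanism is the same in both arguments: points of $A$ lie beyond the lengths of the earlier selected nodes but below the places where the later ones leave the limit branch. What the paper's more rigid decomposition buys is reuse. In Proposition \ref{covSplIsRRR} and in the computation of $\non^{*}_{\omega}(\SPL)$, the set $A$ comes from a reaping or $\omega$-splitting family and cannot be built adaptively. There one needs the intervals $[a_n,|\sigma_n|)$ determined by $Y$ in advance, and that is exactly the structure the chain/antichain analysis provides.
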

    \begin{proof}
        Let $\{\sigma_{n}:n\in\omega\}$ be an infinite subset of $2^{<\omega}$. By passing to a subsequence we may assume that $\{\sigma_{n}:n\in\omega\}$ forms either chain of antichain in the order $(2^{<\omega},\subseteq)$. If $\{\sigma_{n}:n\in\omega\}$ is a chain, then for $A=\bigcup_{n}\sigma_{n}\in\Cantor\approx[\omega]^{\omega}$ we have $\{\sigma_{n}:n\in\omega\}\subseteq S(A)$. Assume then that $\{\sigma_{n}:n\in\omega\}$ is an antichain. By Konig's lemma, let $y\in\Cantor$ be an infinite branch in the tree generated by $\sigma_{n}$'s. Let $a_{n}\in\omega$ be defined as $\min\{i\in|\sigma_{n}|:\sigma_{n}(i)\neq y(i)\}$. The $a_{n}$'s are well defined as $\{\sigma_{n}:n\in\omega\}$ is an antichain. Again, by passing to a subsequence we may assume that for each $n\in\omega$ we have that $|\sigma_{n}|<a_{n+1}<|\sigma_{n+1}|$. Let $A\in[\omega]^{\omega}$ be such that $|A\cap[|\sigma_{n}|,a_{n+1})|\leq 1$ for each $n\in\omega$ and $y|_{A}$ is constant. Clearly then $\{\sigma_{n}:n\in\omega\}\subseteq S(A)$.
    \end{proof}
    
\begin{question}
    What do canonical $\SPL$-positive sets look like? Is $\SPL$ homogeneous?
\end{question}
Next, we will focus of the position of $\SPL$ in the Katětov order.
\begin{proposition}\label{SPLKatětovUpper}
    The $\SPL$ ideal is Katětov below $\trE$ and $\FF$.
\end{proposition}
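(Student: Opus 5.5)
Both reductions will be given by explicit maps, and in each case I only need to check generators. Since $\pi^{-1}$ commutes with finite unions and both target ideals are closed under finite unions and subsets, it suffices to show $\pi^{-1}[S(A)]$ lies in the target ideal for every $A\in[\omega]^{\omega}$. I read the generators $S(A)$ as having $A$ infinite, as in the tallness proof. For finite $A$ the set $S(A)$ contains a basic cone, and the ideal would not be proper.

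\textbf{$\SPL\leq_{K}\trE$.} I will show $\SPL\subseteq\trE$, so the identity on $\cantor$ is the Katětov map. Fix $A\in[\omega]^{\omega}$ and suppose $x\in G(S(A))$, i.e.\ $x|_{n}\in S(A)$ for infinitely many $n$. Each such $x|_{n}$ is constant on $A\cap n$. Taking $n$ arbitrarily large, $x$ is constant on all of $A$, so
\[
G(S(A))\subseteq C_{A}:=\{x\in\Cantor: x|_{A}\ \text{is constant}\}.
\]
The set $C_{A}$ is closed, as an intersection of clopen conditions. It has measure $0$, because its measure is at most $2\cdot 2^{-(k-1)}$ for every $k$, using the first $k$ points of $A$. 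Hence $C_{A}\in\E$. Since $G$ preserves finite unions and $\E$ is hereditary, every finite union of generators, and every subset of one, lies in $\trE$.

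\textbf{$\SPL\leq_{K}\FF$.} The difficulty is that columns cannot simply approximate countably many reals $x_{n}$. We would need every infinite $A$ to make $x_{n}|_{A}$ nonconstant for almost all $n$, and such countable families fail to exist, since $\rrr>\omega$. So each column should instead have ``finite memory''. I define $\pi:\omega\times\omega\to\cantor$ by
\[
\pi(n,m)=0^{n}{}^\frown 1^{m},
\]
the string of length $n+m$ that is $0$ below $n$ and $1$ on $[n,n+m)$.

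Fix $A\in[\omega]^{\omega}$ and let $n>\min A$. If $\pi(n,m)\in S(A)$, then $\pi(n,m)$ takes the value $0$ at $\min A$. Being constant on $A$, it must then avoid the value $1$ on $A$, so $A\cap[n,n+m)=\emptyset$. Because $A$ is infinite, this holds for only finitely many $m$. Hence the section $\{m:(n,m)\in\pi^{-1}[S(A)]\}$ is finite for all $n>\min A$, and $\pi^{-1}[S(A)]\in\FF$. Finite unions of generators behave the same way: take $n$ above the maximum of the finitely many $\min A_{i}$.

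The only delicate point I expect is choosing the column map correctly, namely one tying membership in $S(A)$ to the finite pattern of $A$ on an interval rather than to a fixed real. Once that choice is made, the remaining steps are routine.
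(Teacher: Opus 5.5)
Your proof is correct and is essentially the paper's. For $\trE$ the identity works because $G(S(A))$ is the closed null set of reals constant on $A$. For $\FF$ your columns $0^{n}{}^\frown 1^{m}$ are the paper's columns $x_{n}|_{m}$ with $x_{n}=1^{n}0^{\infty}$, with $0$ and $1$ swapped; in both, for $n>\min A$, membership in $S(A)$ forces $A\cap[n,|\sigma|)=\emptyset$.

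Drop your motivating remark, though, because it is false. Columns approximating countably many reals work fine: $S(A)$ demands exact constancy on $A\cap|\sigma|$, so each $x_{n}$ with $n>\min A$ only has to take both values somewhere on $A$, not split $A$, and $1^{n}0^{\infty}$ does this. Your own columns are just initial segments of the reals $0^{n}1^{\infty}$, so $\rrr>\omega$ plays no role here.
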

\begin{proof}
    To show $\SPL\leq_{K}\FF$ let for each $n\in\omega$ $x_{n}\in\Cantor$ be the binary sequence starting with $n$-th many 1's followed by 0's only. Let $\pi:\omega\times\omega\rightarrow \cantor$ be defined as $\pi(n,m)=x_{n}|_{m}$. To show that $\pi$ is a Katětov reduction, assume that $A\in[\omega]^{\omega}$. Let $A=\{a_{n}:n\in\omega\}$ and let $f\in\Baire$ be such that $f|_{[a_{n},a_{n+1})}$ is constant with value $a_{n+1}$. To finish the proof note that
    \begin{center}
        $\pi^{-1}[S(A)]\subseteq\{(n,m)\in\omega\times\omega: n\leq a_{0}$ or $m\leq f(n)\}$.
    \end{center}
    To show that $\SPL\leq_{K} \trE$ let $\pi$ be the identity map. Observe that for each $A\in[\omega]^{\omega}$ and $n\in\omega$ we have that 
    \begin{center}
        $\frac{|S(A)]\cap 2^{n}|}{2^{n}}=\frac{1}{2^{|a\cap n|}}$
    \end{center}
    It follows that $G(S(A))=[S(A)]\in \E$. 
    \end{proof}
As $\E\subseteq\Meager\cap\Null$ the inclusion $\trE\subseteq\nwd\cap\trN$ holds and we obtain the following corollary.
\begin{corollary}
    $\SPL\leq_{K}\nwd$ and $\SPL\leq_{K}\trN$.
\end{corollary}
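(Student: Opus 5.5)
The plan is to get both reductions from Proposition~\ref{SPLKatětovUpper} together with the inclusion $\trE\subseteq\nwd\cap\trN$. The only other ingredient is the trivial fact that an inclusion of ideals on the same underlying set gives a Katětov reduction: if $\J_0\subseteq\J_1$ are ideals on $\cantor$, then the identity map $\pi=\mathrm{id}_{\cantor}$ witnesses $\J_0\leq_K\J_1$, because $\pi^{-1}[A]=A\in\J_1$ for every $A\in\J_0$. Since $\leq_K$ is transitive (compose the reduction maps), it is enough to prove $\trE\subseteq\nwd$ and $\trE\subseteq\trN$ as ideals on $\cantor$, and then chain $\SPL\leq_K\trE\leq_K\nwd$ and $\SPL\leq_K\trE\leq_K\trN$.

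For $\trE\subseteq\trN$ there is nothing to do. If $G(A)\in\E$ then $G(A)\in\Null$, because $\E\subseteq\Null$; this inclusion is already noted in the text.

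For $\trE\subseteq\nwd$, take $A\subseteq\cantor$ with $G(A)\in\E\subseteq\Meager$, and suppose toward a contradiction that $A$ is not nowhere dense in $\cantor$. Then there is $\sigma\in\cantor$ such that every $\tau\supseteq\sigma$ lies in $A$. In that case every $x\in[\sigma]$ has $x|_n\in A$ for all $n\geq|\sigma|$, so $[\sigma]\subseteq G(A)$. This says the meager set $G(A)$ contains a nonempty open set, which contradicts the Baire category theorem. Hence $A\in\nwd$.

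The only step that needs any care is this last one. The point to check is that the tree-presentation of $\nwd$ used here (every $\sigma$ has an extension $\tau\notin A$) is exactly the property that fails when $[\sigma]\subseteq G(A)$. Note that only $\E\subseteq\Meager$ is used, and closedness plays no role.
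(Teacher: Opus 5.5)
Your route is the paper's: compose $\SPL\leq_{K}\trE$ with the identity reductions coming from $\trE\subseteq\nwd\cap\trN$, which the paper justifies in one line by $\E\subseteq\Meager\cap\Null$. Your identity-map and transitivity remarks and the $\trN$ half are fine.

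The gap is in the step you singled out, $\trE\subseteq\nwd$. You negated the paper's gloss ``every $\sigma$ has an extension $\tau\notin A$'' literally, but that gloss is a slip. The family it describes is not even an ideal: the nonempty sequences ending in $0$ and those ending in $1$ each contain no full cone, yet their union contains the cone above $\langle 0\rangle$. The nowhere dense ideal on $\cantor$ that is Kat\v{e}tov equivalent to $\nwd$ on $\mathbb{Q}\cap[0,1]$ requires that every $\sigma$ have an extension $\tau$ such that \emph{no} $\rho\supseteq\tau$ lies in $A$. Negating this only tells you that $A$ is dense above some $\sigma$. That does not give $[\sigma]\subseteq G(A)$: for instance $A=\{\rho^{\frown}1:\rho\in\cantor\}$ is dense above every $\sigma$, while $G(A)$, the set of reals with infinitely many ones, contains no cone. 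So as written you have only shown that members of $\trE$ contain no full cone, which falls short of nowhere density.

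The repair is short and still uses nothing beyond $\E\subseteq\Meager$. Write $G(A)=\bigcap_{k}U_{k}$ with $U_{k}=\bigcup\{[\rho]:\rho\in A,\ |\rho|\geq k\}$, each $U_{k}$ open. Suppose $A$ is dense above $\sigma$. Then every $U_{k}$ is dense in $[\sigma]$: extend a given $\tau\supseteq\sigma$ to length at least $k$, then further into $A$. Hence $G(A)\cap[\sigma]$ is a dense $G_{\delta}$ subset of $[\sigma]$, so it is non-meager by the Baire category theorem, contradicting $G(A)\in\E\subseteq\Meager$. With this in place of your last paragraph, the proof is complete.
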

Regarding the ideal $\trE$ Hru\v{s}\'{a}k and Zapletal showed in \cite{HrusakZapletal} that $\cov(\E)\leq\cov^{*}(\trE)\leq\max\{\ddd,\cov(\E)\}$. I do not know if $\cov^{*}(\trE)$ is not simply equal $\cov(\E)$. Notice, for comparison, that for the ideal $tr(\Meager)$, which is Katětov equivalent to $\nwd$, we have $\cov^{*}(\nwd)=\cov(\Meager)$. On the other hand for the $\trN$ ideal, the inequality $\cov(\Null)<\cov^{*}(\trN)$ is consistent. This raises the following question.
\begin{question}
    Is it consistent that $\cov(\E)<\cov^{*}(\trE)$?
\end{question}
Next, we will show that $\SPL$ is Katětov above the convergent ideal. To do this, we will need the following lemma.
\begin{lemma}\label{offBranchSpl}
    If $X\subseteq\cantor$ is such that for some $y\in\Cantor$ we have that:
    \begin{center}
        $\forall n\in\omega$ $\forall i\in 2$, if $i\neq y(n)$, then $\{\sigma\in X:y|_{n}^{\frown}i\subseteq\sigma\}$ is finite
    \end{center}
    then $X\in\SPL$.
\end{lemma}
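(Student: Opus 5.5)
The idea is to cover $X$ by two generators $S(A_{0})\cup S(A_{1})$, where $A_{0},A_{1}$ are chosen very sparse inside a set on which $y$ is constant. Every $\sigma\in X$ is of one of two kinds:
\begin{itemize}
\item[(a)] $\sigma$ is an initial segment of $y$;
\item[(b)] for some (unique) $n$, $\sigma$ extends $y|_{n}{}^\frown i$ with $i\neq y(n)$.
\end{itemize}
We say $\sigma$ of kind (b) \emph{branches off at $n$}; note $|\sigma|>n$. By hypothesis, for each $n$ only finitely many $\sigma\in X$ branch off at $n$. Let $f(n)$ be the maximum of their lengths (or $n$ if there are none), and put $g(m)=\max_{n\le m}f(n)$, which is nondecreasing.

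Fix $j\in 2$ such that $C=\{k:y(k)=j\}$ is infinite. Recursively choose $a_{0}<a_{1}<\dots$ in $C$ with $a_{k+1}>g(a_{k})$. Put $A_{0}=\{a_{2k}:k\in\omega\}$ and $A_{1}=\{a_{2k+1}:k\in\omega\}$. We claim $X\subseteq S(A_{0})\cup S(A_{1})$, which gives $X\in\SPL$.

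Kind (a) is immediate: $y$ is constant with value $j$ on $A_{0}\subseteq C$, so every initial segment of $y$ lies in $S(A_{0})$.

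For kind (b), let $\sigma$ branch off at $n$, and let $a_{k}$ be the least element of $\{a_{l}:l\in\omega\}$ with $a_{k}\ge n$. Then $a_{k+1}>g(a_{k})\ge f(n)\ge|\sigma|$. Hence the only element of $\{a_{l}\}$ in $[n,|\sigma|)$ is possibly $a_{k}$. Let $B$ be whichever of $A_{0},A_{1}$ does not contain $a_{k}$. Then $B\cap|\sigma|\subseteq n$, and on $n$ the sequence $\sigma$ agrees with $y$, which equals $j$ on $C\supseteq B$. So $\sigma|_{B}$ is constant and $\sigma\in S(B)$.

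The only delicate point is the single element $a_{k}\in[n,|\sigma|)$, where $\sigma$ may take the wrong value (for instance $a_{k}=n$). Splitting the sparse sequence into evens and odds is exactly what gets around it: the bad coordinate is harmless for the other parity class.
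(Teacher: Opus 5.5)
Your proof is correct and takes essentially the same route as the paper. The paper also covers $X$ by $S(A)\cup S(K)$ for two interleaved sparse sets $k_0<a_0<k_1<a_1<\cdots$ on which $y$ is constant, with growth conditions ensuring that an element branching off at $j$ is too short to reach the next point of the combined sequence. Your bookkeeping is in fact slightly more careful than the paper's. Taking $a_k$ as the least element $\geq n$ and switching to the other parity class explicitly handles the case where the branching coordinate is itself in the sequence, which the paper's half-open intervals $[k_n,a_n)$ overlook when $j=k_n$. Your cumulative bound $g$ also covers branching points below $a_0$ automatically.
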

\begin{proof}
    For $j\in\omega$ let 
    \begin{center}
        $C_{j}=\{\sigma\in X: \sigma\subseteq y|_{j}$ or $( \sigma|_{j}=y|_{j}$ and $\sigma|_{j+1}\neq y|_{j+1})\}$.
    \end{center}
    By the assumption, each $C_{j}$ is finite. Let $i\in 2$ be such that $y(n)=i$ for infinitely many $n$'s. Inductively we construct two infinite sets $A=\{a_{n}:n\in\omega\}$ and $K=\{k_{n}:n\in\omega\}$ such that for every $n\in\omega$ we have:
    \begin{itemize}
        \item[--] $y(a_{n})=y(k_{n})=i$,
        \item[--] $k_{n}<a_{n}<k_{n+1}$,
        \item[--] $k_{n+1}>\max\{|\sigma|:\sigma\in\bigcup\{C_{j}:j\in[k_{n},a_{n})\}\}$,
        \item[--] $a_{n+1}>\max\{|\sigma|:\sigma\in\bigcup\{C_{j}:j\in[a_{n},k_{n+1})\}\}$.
    \end{itemize}
    We claim that $X\subseteq S(A)\cup S(K)$. Let $\sigma\in X$ and let $j\in\omega$ be such that $\sigma|_{j}=y|_{j}$ but $\sigma|_{j+1}\neq y|_{j+1}$. Let $n\in\omega$ be such that $j\in[k_{n},a_{n})$ or $i\in[a_{n},k_{n+1})$. In the first case, we have that $\sigma\in S(K)$, and in the second that $\sigma\in S(A)$. This finishes the proof.
\end{proof}
 We will now show that $\SPL$ is Katětov above $\conv$ ideal.
\begin{proposition}\label{SPLconv}
    $\conv\leq_{K}\SPL$
\end{proposition}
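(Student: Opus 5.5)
We will use the characterization of Lemma \ref{convKatětov}. Our goal is a countable family $\{X_{n}:n\in\omega\}\subseteq[\cantor]^{\omega}$ such that every $\SPL$-positive set $Y\subseteq\cantor$ is split by some $X_{n}$, meaning both $Y\cap X_{n}$ and $Y\setminus X_{n}$ are infinite. The natural candidates are the cones $X_{\tau}=\{\sigma\in\cantor:\tau\subseteq\sigma\}$ for $\tau\in\cantor$. There are countably many of them, and each is infinite.

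Suppose, towards a contradiction, that $Y\in\SPL^{+}$ is split by no cone $X_{\tau}$. Then for every $\tau$, either $Y\cap X_{\tau}$ is finite or $Y\setminus X_{\tau}$ is finite. We build a branch $y\in\Cantor$ by induction so that $Y\setminus X_{y|_{n}}$ is finite for every $n$. At $n=0$ we have $X_{\emptyset}=\cantor$, so this holds trivially. Now suppose $Y\setminus X_{y|_{n}}$ is finite, so $Y\cap X_{y|_{n}}$ is infinite; here we use that $Y$ is infinite, since $\SPL$ contains all finite sets.

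Write $Y\cap X_{y|_{n}}$ as the union of $Y\cap X_{y|_{n}{}^\frown 0}$, $Y\cap X_{y|_{n}{}^\frown 1}$ and possibly the single point $y|_{n}$. So at least one of the two pieces, say the one for $i$, is infinite. Then $Y\cap X_{y|_{n}{}^\frown i}$ is infinite, and since $X_{y|_{n}{}^\frown i}$ does not split $Y$, the set $Y\setminus X_{y|_{n}{}^\frown i}$ is finite. Put $y(n)=i$.

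For the other value $1-i$, the cone $X_{y|_{n}{}^\frown(1-i)}$ is disjoint from $X_{y|_{n}{}^\frown i}$. Hence $Y\cap X_{y|_{n}{}^\frown (1-i)}\subseteq Y\setminus X_{y|_{n+1}}$, which is finite. This is exactly the hypothesis of Lemma \ref{offBranchSpl} for $X=Y$ and the branch $y$: for every $n$ and $i\neq y(n)$, the set $\{\sigma\in Y: y|_{n}{}^\frown i\subseteq\sigma\}$ is finite. By that lemma $Y\in\SPL$, a contradiction. Therefore the cones witness the second condition of Lemma \ref{convKatětov}, and $\conv\leq_{K}\SPL$.

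The only delicate point is the bookkeeping in the induction: the dichotomy "finite intersection or cofinite" has to be applied at the child node rather than at the current node, so that the off-branch sides come out finite. Everything else is immediate from Lemma \ref{offBranchSpl}, which carries the real combinatorial content.
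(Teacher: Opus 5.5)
Your proof is correct and follows the paper's argument: the countably many cones $X_{\sigma}$ serve as the splitting family in the Meza-Alc\'antara characterization, and an $\SPL$-positive set not split by any cone meets every off-branch cone along some $y\in\Cantor$ finitely, so Lemma \ref{offBranchSpl} yields a contradiction. The one difference is that the paper calls the existence of $y$ ``easy to see,'' while you construct $y$ explicitly by induction, applying the dichotomy at the child node; that fills in the step correctly.
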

\begin{proof}
    To show this, we will use lemma \ref{convKatětov} by which it is enough to construct countably many infinite subsets of $\cantor$ which split all $\SPL$-positive sets. For $\sigma\in\cantor$ let $X_{\sigma}=\{\tau\in\cantor:\sigma\subseteq\tau\}$. Assume that $X\in\SPL^{+}$ is not splitted by any $X_{\sigma}$. It is easy to see that there is $y\in\Cantor$ such that for every $n\in\omega$ and every $i\in 2$, if $i\neq y(n)$, then $X_{y|_{n}\textbf{}^{\frown}i}\cap X$ is finite. By the lemma \ref{offBranchSpl}, $X$ cannot be $\SPL$-positive. A contradiction.
\end{proof}
Using the lemma \ref{NoFsigmaABOVEconv} and the Proposition \ref{SPLconv} we see that $\SPL$ cannot be $P^{+}$-ideal and thus cannot be Katětov below any $F_{\sigma}$-ideal. In particular, it is not true that $\SPL\leq_{K}\mathcal{I}_{\frac{1}{n}}$ or $\SPL\leq_{K}\mathcal{W}$. I do not know what is the relation between ideals $\BI$ and $\SPL$. We have the following.
\begin{question}
    Is $\SPL\leq_{K}\BI$?
\end{question}
The following diagram shows the position of the ideal $\SPL$ in the Katětov order. 
\begin{center}
\begin{tikzpicture}[]
  \matrix[matrix of math nodes,column sep={100pt,between origins},row
    sep={30pt,between origins},nodes={asymmetrical rectangle}] 
  {
    |[name=nwd]| \nwd &|[name=finfin]| \FF &|[name=Z]| \Z \\
    &&|[name=trN]| \trN \\
    &&|[name=sum]| \SUM \\
    &&|[name=edfin]| \ED_{\mathrm{fin}} \\
    |[name=sol]| \Solecki &|[name=SPL]| \SPL &|[name=ed]| \ED  \\
    &|[name=conv]| conv \\
    &|[name=rand]| \RandomGraph \\
    &|[name=fin]| \FIN \\
  };
    \draw[->] 
            (SPL) edge (finfin)
            (ed) edge (finfin)
            (sum) edge (trN)
            (trN) edge (Z)
            (SPL) edge (nwd)
            (SPL) edge (trN)
            (conv) edge (SPL)
            (rand) edge (ed)
            (ed) edge (edfin)
            (edfin) edge (sum)
            (rand) edge (conv)
            (fin) edge (rand)
            (rand) edge (sol)
            (sol) edge (nwd);
            
\end{tikzpicture}
\end{center}

As $\SPL$ is Katětov above $\conv$ it cannot be an $F_{\sigma}$-ideal. We will prove a slightly stronger result, namely that $\SPL$ is not a $\Pi_{0}^{4}$-ideal, using the following result.
\begin{lemma}(Kwela, \cite{KwelaUnboring} Proposition 4.9)
    Suppose that $\J$ is such that there is $A\in\J^{+}$ and a bijection $\pi:A\rightarrow\omega\time\omega$ such that $B\subseteq A$ is in $\J$ if and only if $\pi[B]\in\FF$. Then is not a $\Pi_{0}^{4}$-ideal.
\end{lemma}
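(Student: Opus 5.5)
The plan is to reduce the statement to the known Borel complexity of $\FF$, using the fact that restricting an ideal to a set and transporting it along a bijection are continuous operations.

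\emph{Step 1 (transfer of complexity).} Suppose, towards a contradiction, that $\J$ is $\Pi^{0}_{4}$, and fix $A\in\J^{+}$ and the bijection $\pi:A\rightarrow\omega\times\omega$ from the hypothesis. Consider the map $\Phi:\mathcal{P}(\omega\times\omega)\rightarrow\mathcal{P}(\omega)$ given by $\Phi(C)=\pi^{-1}[C]$. Under the identifications $\mathcal{P}(\omega\times\omega)\approx 2^{\omega\times\omega}$ and $\mathcal{P}(\omega)\approx\Cantor$, each coordinate of $\Phi(C)$ depends on a single coordinate of $C$: for $a\in A$ we have $a\in\Phi(C)$ iff $\pi(a)\in C$, and for $a\notin A$ the coordinate is constantly $0$. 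Hence $\Phi$ is continuous. By hypothesis, for every $C$ the set $\Phi(C)=\pi^{-1}[C]\subseteq A$ lies in $\J$ iff $\pi[\pi^{-1}[C]]=C\in\FF$. Therefore $\FF=\Phi^{-1}[\J]$. Since $\Pi^{0}_{4}$ classes are closed under continuous preimages, $\FF$ would be $\Pi^{0}_{4}$.

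\emph{Step 2 (the obstruction).} It remains to show that $\FF$ is not $\Pi^{0}_{4}$. This is the classical fact that $\FF$ is $\Sigma^{0}_{4}$-complete, and it is the real content of the lemma, so it is the main obstacle. The upper bound is immediate from the description
\[
C\in\FF\iff\exists N\ \forall n\geq N\ \exists k\ \forall m\geq k\ (n,m)\notin C,
\]
which shows that $\FF$ is $\Sigma^{0}_{4}$. For hardness, I would use that $P_{3}=\{x\in 2^{\omega\times\omega}:\forall n\ x_{n}\text{ is finite}\}$ is $\Pi^{0}_{3}$-complete; this is standard, and it follows from the $\Sigma^{0}_{2}$-completeness of $\FIN\subseteq\Cantor$ applied coordinatewise.

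Given a $\Sigma^{0}_{4}$ set $B=\bigcup_{N}B_{N}$ with each $B_{N}$ in $\Pi^{0}_{3}$, I may assume the $B_{N}$ are increasing. Take continuous reductions $f_{N}$ of $B_{N}$ to $P_{3}$ and define $g(x)$ by letting the $N$-th block of columns of $g(x)$ be the columns of $f_{N}(x)$ arranged along $\omega\times\omega$, where the $N$-th block is indexed by the pairs $\langle N,j\rangle$ for $j<N$, or is otherwise arranged so that all but finitely many columns of $g(x)$ are finite iff some $B_{N}$ holds. The delicate point is the bookkeeping: a failure of $B_{N}$ for every $N$ must produce infinitely many infinite columns, while $x\in B_{N_{0}}$ must make all columns beyond some point finite. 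Monotonicity of the $B_{N}$ handles this: put into column $N$ the union, over $i\leq N$, of the first $N$ columns of $f_{N}(x)$, shifted so as to avoid collisions. Then $x\in B_{N_{0}}$ gives $f_{N}(x)\in P_{3}$ for all $N\geq N_{0}$, so all columns $N\geq N_{0}$ are finite. Conversely, if $x\notin B$, then each $f_{N}(x)$ has some infinite column with index $j_{N}$, which appears in column $N$ as soon as $N>j_{N}$; so infinitely many columns are infinite.

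\emph{Step 3 (conclusion).} The map $g$ is a continuous reduction of an arbitrary $\Sigma^{0}_{4}$ set to $\FF$. Taking $B$ to be a $\Sigma^{0}_{4}$ set that is not $\Pi^{0}_{4}$ (which exists by the Borel hierarchy theorem), it follows that $\FF$ is not $\Pi^{0}_{4}$. This contradicts Step 1, and so $\J$ is not a $\Pi^{0}_{4}$-ideal. Alternatively, the $\Sigma^{0}_{4}$-completeness of $\FF$ in Step 2 can simply be quoted from the literature, in which case the proof reduces to Step 1.
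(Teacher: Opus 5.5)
The paper gives no proof of this lemma; it only cites Kwela. Your Step 1 is correct. The map $C\mapsto\pi^{-1}[C]$ is continuous, and the hypothesis gives $\FF=\Phi^{-1}[\J]$, so a $\Pi^{0}_{4}$ ideal $\J$ would make $\FF$ a $\Pi^{0}_{4}$ set. Combined with the known $\Sigma^{0}_{4}$-completeness of $\FF$, which you offer to quote, this is a complete proof.

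The self-contained hardness argument in Step 2, however, is wrong. The step that fails is ``if $x\notin B$, each $f_{N}(x)$ has an infinite column $j_{N}$, which appears in column $N$ as soon as $N>j_{N}$.'' The index $j_{N}$ depends on $N$, and nothing guarantees $j_{N}\le N$ for even a single $N$. Concretely, replace each $f_{N}$ by the reduction that shifts all columns of $f_{N}(x)$ right by $N+1$ places, leaving the first $N+1$ columns empty. This is still a continuous reduction of $B_{N}$ to $P_{3}$, but your $g$ becomes constantly $\emptyset\in\FF$.

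This is not a bookkeeping slip. Whether column $N$ of $g(x)$ is infinite is a $\Pi^{0}_{2}$ condition on $x$, while $x\notin B_{N}$ is genuinely $\Sigma^{0}_{3}$. So you cannot arrange that column $N$ of $g(x)$ is infinite exactly when $x\notin B_{N}$. If you instead give every column of every $f_{N}(x)$ its own column of $g(x)$, the other direction breaks. For $x\in B_{N_{0}}$, each of the finitely many $N<N_{0}$ may contribute infinitely many infinite columns.

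The missing idea is to control how many infinite columns each $N$ contributes. Pass to complements in the ambient zero-dimensional space $X$ (e.g.\ $\Cantor$). The sets $X\setminus B_{N}$ form a decreasing sequence of $\Sigma^{0}_{3}$ sets. In such a space every $\Sigma^{0}_{3}$ set is a union of \emph{pairwise disjoint} $\Pi^{0}_{2}$ sets:
\begin{itemize}
\item write it as $\bigcup_{k}G_{k}$ with $G_{k}\in\Pi^{0}_{2}$;
\item then $G_{k}\setminus\bigcup_{k'<k}G_{k'}$ is $G_{k}$ intersected with a $\Sigma^{0}_{2}$ set;
\item in a zero-dimensional space a $\Sigma^{0}_{2}$ set is a countable disjoint union of closed sets.
\end{itemize}
So $X\setminus B_{N}=\bigcup_{k}H_{N,k}$, with the $H_{N,k}$ ($k\in\omega$) pairwise disjoint $\Pi^{0}_{2}$ sets.

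Take continuous reductions $h_{N,k}$ of $H_{N,k}$ to the $\Pi^{0}_{2}$-complete set of infinite subsets of $\omega$. Let column $\langle N,k\rangle$ of $g(x)$ be $h_{N,k}(x)$. Then each $N$ contributes at most one infinite column, and exactly one iff $x\notin B_{N}$. If $x\in B_{N_{0}}$, monotonicity leaves at most $N_{0}$ infinite columns, so $g(x)\in\FF$. If $x\notin B$, every $N$ contributes one infinite column, so $g(x)\notin\FF$. With this replacement, or simply by citing the completeness of $\FF$, your proof goes through.
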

\begin{proposition}
    $\SPL$ is not a $\Pi_{0}^{4}$-ideal.
\end{proposition}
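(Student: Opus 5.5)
The plan is to apply Kwela's lemma stated just above. This means finding an $\SPL$-positive set $X\subseteq\cantor$ together with a bijection $\pi:X\rightarrow\omega\times\omega$ that carries $\SPL|_{X}$ exactly onto $\FF$. I will reuse the branches from the proof of Proposition \ref{SPLKatětovUpper}: $x_{n}=1^{n}{}^{\frown}0^{\infty}$. Put
\[
X=\{x_{n}|_{m}: n\in\omega,\ m>n\}
\]
and $\pi(x_{n}|_{m})=(n,m-n-1)$. This is a bijection, since distinct pairs $(n,m)$ with $m>n$ give distinct strings: the first $0$ of $x_{n}|_{m}$ is at position $n$. Throughout I take the generators $S(A)$ with $A$ infinite, as in the rest of the section. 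Under $\pi$ the $n$-th column corresponds to $C_{n}=\{x_{n}|_{m}:m>n\}$. It then remains to show that for $B\subseteq X$ we have $B\in\SPL$ iff $B\cap C_{n}$ is infinite for only finitely many $n$. Positivity of $X$ follows from this, since $X$ itself has all columns infinite.

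For the direction ($\Leftarrow$), suppose $B\cap C_{n}$ is infinite only for $n<N$. I split $B$ into $B\cap\bigcup_{n<N}C_{n}$ and the rest $B'$.

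\begin{itemize}
\item Each full column lies in $\SPL$: $C_{n}\subseteq S(\omega\setminus n)$, because $x_{n}$ is identically $0$ on $\omega\setminus n$ and every restriction of it is then constant on that set. So the first part is in $\SPL$.
\item For $B'$, every column is finite. I apply Lemma \ref{offBranchSpl} with $y$ the constant-$1$ branch. The only off-branch cones are those of $1^{n}{}^{\frown}0$. Inside $X$, such a cone contains only elements of $C_{n}$, since the first $0$ is at position $n$. Hence each such cone meets $B'$ in a finite set, the lemma applies, and $B'\in\SPL$.
\end{itemize}

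For the direction ($\Rightarrow$), which I expect to be the only point needing care, suppose $B\subseteq S(A_{1})\cup\dots\cup S(A_{k})$ with each $A_{i}$ infinite. I will show that only finitely many columns meet $B$ infinitely.

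\begin{itemize}
\item If $B\cap C_{n}$ is infinite, then by pigeonhole some single $S(A_{i})$ contains $x_{n}|_{m}$ for infinitely many $m$.
\item Since $S(A_{i})$ is closed downward, this gives $x_{n}|_{m}\in S(A_{i})$ for all $m$. That is, $x_{n}$ is constant on $A_{i}$.
\item Now $x_{n}$ is $1$ on $[0,n)$ and $0$ afterwards, and $A_{i}$ is infinite. So $x_{n}$ constant on $A_{i}$ forces $A_{i}\cap n=\emptyset$, i.e. $n\leq\min A_{i}$.
\end{itemize}

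Thus $B\cap C_{n}$ is infinite only for $n\leq\max_{i}\min A_{i}$, and $\pi[B]\in\FF$. Combining both directions, $\SPL|_{X}$ is isomorphic via $\pi$ to $\FF$, $X\in\SPL^{+}$, and Kwela's lemma yields that $\SPL$ is not $\Pi^{0}_{4}$.
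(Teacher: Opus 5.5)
Your proposal is correct and follows the paper's route: the same branches $x_n=1^n{}^\frown 0^\infty$, columns that lie in $\SPL$, Lemma \ref{offBranchSpl} (with $y$ the constant-$1$ branch) for sets meeting every column finitely, and then Kwela's lemma. Your direct pigeonhole argument for ($\Rightarrow$) is the content of the Katětov reduction $\SPL\leq_K\FF$, which the paper uses only implicitly. Restricting to $m>n$ is a worthwhile refinement: the paper's $\phi(n,m)=x_n|_m$ is not injective (it gives $\phi(n,m)=1^m$ for every $n\geq m$), so its "disjoint union" claim needs exactly this fix to yield a bijection onto $\omega\times\omega$.
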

\begin{proof}   
    We would like to use the above lemma. Notice that if $\phi:\omega\times\omega\rightarrow\cantor$ is the witness for $\SPL\leq_{K}\FF$ constructed in the Proposition \ref{SPLKatětovUpper}, then the set $X=\phi[\omega\times\omega]$ is $\SPL$-positive and is disjoint union of the sets $X_{n}=\phi[\{n\}\times\omega]\in\SPL$. On the other hand, if $Y\subseteq X$ has a finite intersection with each $X_{n}$, then by the lemma \ref{offBranchSpl} it must belong to $\SPL$.
\end{proof}
The question about exact complexity of the $\SPL$ ideal is open. Clearly, by definition, it is an analytic ideal, but I do not know whether it is analytic complete?
\begin{question}(Sabok, Zapletal; \cite{SabZapl})
    Is the ideal $\SPL$ Borel?
\end{question}
We will now focus on the cardinal invariants of the splitting ideal.
\begin{proposition}
    $\add^{*}(\SPL)=\non^{*}(\SPL)=\omega$. In particular, the splitting ideal $\SPL$ is not a $P$-ideal.
\end{proposition}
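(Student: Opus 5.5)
Since the diagram gives $\add^{*}(\SPL)\leq\non^{*}(\SPL)$, and both invariants are at least $\omega$ whenever defined, it suffices to show $\non^{*}(\SPL)=\omega$. For that I will exhibit countably many infinite subsets of $\cantor$ such that every $S(A)$, and hence every element of $\SPL$, is almost disjoint from at least one of them. The conclusion $\add^{*}(\SPL)=\omega$ then follows, and it says exactly that $\SPL$ is not a $P$-ideal, by the remark after the definitions.

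The natural candidates come from the fact that elements of $\SPL$ are finite unions $S(A_{1})\cup\dots\cup S(A_{k})$. A single $S(A)$ with $A$ infinite contains no sequence that is non-constant on $A$. So I will use the countably many sets
\[
Y_{\tau}=\{\tau^{\frown}\rho:\rho\in\cantor\text{ alternates }0,1,0,1,\dots\}\quad(\tau\in\cantor)
\]
and, more generally, sets built from finite patterns. On reflection this is awkward, because $A$ could be sparse. A cleaner choice is, for each $k\in\omega$, the set
\[
Y_{k}=\{\sigma_{m}^{k}:m\in\omega\},
\]
where $\sigma_{m}^{k}$ has length $m$ and is chosen so that its restriction to any set of more than $k$ coordinates is non-constant. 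This is impossible for long binary strings, so I have to rethink.

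The better approach is to use the witnessing family from Proposition~\ref{SPLKatětovUpper}. Let $x_{n}\in\Cantor$ be the sequence of $n$ ones followed by zeros, and let $Y_{n}=\{x_{n}|_{m}:m\in\omega\}$, which is a branch. Fix $X\in\SPL$, so $X\subseteq S(A_{1})\cup\dots\cup S(A_{k})$ with each $A_{j}$ infinite (finite $A_{j}$ are excluded in the generators, otherwise $S(\emptyset)$ would be everything). For each $j$ let $a_{j}=\min A_{j}$, and take $n>\max_{j}a_{j}$. For $m$ large enough, $A_{j}\cap m$ contains $a_{j}<n$, where $x_{n}$ is $1$, and also some element $\geq n$, where $x_{n}$ is $0$. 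Hence $x_{n}|_{m}\notin S(A_{j})$ for all $j$ and all large $m$, and so $Y_{n}\cap X$ is finite. This proves $\non^{*}(\SPL)=\omega$.

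The main obstacle I expect is the convention for finite $A$: I must check that the generators are $S(A)$ with $A$ infinite, as the tallness proof and the remark ``$A\in[\omega]^{\omega}$'' indicate. Otherwise the ideal would be improper.
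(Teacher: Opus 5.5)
Your final argument is correct and is essentially the paper's proof. The paper uses the branches $\{x_\sigma|_m : m\in\omega\}$, with $x_\sigma$ equal to $\sigma$ followed by zeros, and for a given $A$ picks $\sigma$ taking two different values on two points of $A$. You use only the subfamily $\sigma=1^{n}$ and choose $n>\max_j\min A_j$, which has the small advantage of handling finite unions $S(A_1)\cup\dots\cup S(A_k)$ explicitly, where the paper only treats a single $S(A)$; the abandoned attempts in your first paragraph should simply be deleted.
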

\begin{proof}
    For each $\sigma\in \cantor$, let $x_{\sigma}$ be the element of $\Cantor$ that starts with $\sigma$ and continues with zeros. Let $X_{\sigma}=\{x_{\sigma}|_{n}:n\in\omega\}$. We claim that $\{X_{\sigma}:\sigma\in \cantor\}$ witnesses for $\non^{*}(\SPL)$. To show this, let $A\in[\omega]^{\omega}$. Let then $\sigma\in \cantor$ be such that $A\cap|\sigma|$ consists of two elements and let the values of $\sigma$ on these two elements are different. Then, clearly $X_{\sigma}\cap S(A)$ is finite.
\end{proof}
To calculate the covering number of $\SPL$ we will need the following lemma about certain strengthening of reaping families. Call a family $\mathcal{R}\subseteq[\omega]^{\omega}$ \emph{hereditarily reaping} if it is reaping and it is reaping inside of every its element, i.e. for every $R\in\mathcal{R}$ and every $A\in[R]^{\omega}$ there is $R'\in\mathcal{R}$, $R'\subseteq A$ such that either $R'\subseteq^{*} A$ or $A\cap R'=^{*}\emptyset$. The following lemma is known.
\begin{lemma}
    There is a hereditarily reaping family of size $\rrr$.
\end{lemma}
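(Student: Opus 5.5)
The plan is to build the family by a countable closure argument, transporting one fixed reaping family of size $\rrr$ into each of its members. I read the hereditary condition as intended to be: for every $R\in\mathcal{R}$ and every $A\in[\omega]^{\omega}$ (equivalently every $A\in[R]^{\omega}$) there is $R'\in\mathcal{R}$ with $R'\subseteq R$ such that $R'\subseteq^{*}A$ or $R'\cap A=^{*}\emptyset$. In other words, the members of $\mathcal{R}$ lying below $R$ form a reaping family relative to $R$.

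Fix a reaping family $\mathcal{R}_{0}\subseteq[\omega]^{\omega}$ with $|\mathcal{R}_{0}|=\rrr$. For each $R\in[\omega]^{\omega}$ let $h_{R}:\omega\rightarrow R$ be the increasing enumeration of $R$. The key observation is that $h_{R}[\mathcal{R}_{0}]=\{h_{R}[R_{0}]:R_{0}\in\mathcal{R}_{0}\}$ is reaping inside $R$. To see this, take any $A\in[R]^{\omega}$ and put $B=h_{R}^{-1}[A]\in[\omega]^{\omega}$. Since $B$ does not split every member of $\mathcal{R}_{0}$, there is $R_{0}\in\mathcal{R}_{0}$ with $R_{0}\subseteq^{*}B$ or $R_{0}\cap B=^{*}\emptyset$. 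Because $h_{R}$ is a bijection onto $R$, it preserves inclusion and finiteness. Hence $h_{R}[R_{0}]\subseteq^{*}A$ or $h_{R}[R_{0}]\cap A=^{*}\emptyset$, and also $h_{R}[R_{0}]\subseteq R$. If $A\subseteq\omega$ is arbitrary rather than a subset of $R$, apply the same argument to $A\cap R$ when it is infinite; when $A\cap R$ is finite, $R$ itself is almost disjoint from $A$, and any member below $R$ then works.

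Now define recursively
\[
\mathcal{R}_{n+1}=\mathcal{R}_{n}\cup\bigcup\{h_{R}[\mathcal{R}_{0}]:R\in\mathcal{R}_{n}\},
\]
and let $\mathcal{R}=\bigcup_{n}\mathcal{R}_{n}$. By induction $|\mathcal{R}_{n}|\leq\rrr\cdot\rrr=\rrr$, so $|\mathcal{R}|=\rrr$. The family $\mathcal{R}$ is reaping because it contains $\mathcal{R}_{0}$. It is hereditarily reaping because every $R\in\mathcal{R}$ lies in some $\mathcal{R}_{n}$, so $h_{R}[\mathcal{R}_{0}]\subseteq\mathcal{R}_{n+1}\subseteq\mathcal{R}$, and the observation above applies.

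I do not expect a genuine obstacle. The only points needing care are fixing the correct reading of the (slightly garbled) definition and checking that transport along $h_{R}$ preserves $\subseteq^{*}$ and $=^{*}\emptyset$, which holds because $h_{R}$ is a bijection onto $R$.
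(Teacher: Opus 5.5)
Your proof is correct and follows the paper's argument: you fix a reaping family of size $\rrr$, use a bijection with $\omega$ to place a copy of it inside each member, and close off under this operation $\omega$ times. Your reading of the garbled definition ($R'\subseteq R$ rather than $R'\subseteq A$) is the intended one, and your explicit check that the transported copy is reaping inside $R$ supplies the step the paper leaves as ``clearly''.
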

\begin{proof}
    Fix a reaping family $\mathcal{R}$ of size $\rrr$. We will inductively construct an increasing sequence $\{\mathcal{R}_{n}:n\in\omega\}$ with $|\mathcal{R}_{n}|=\rrr$ such that $\mathcal{R}':=\bigcup_{n}\mathcal{R}_{n}$ is hereditarily reaping. Let $\mathcal{R}_{0}=\mathcal{R}$ and to construct $\mathcal{R}_{n+1}$ out of $\mathcal{R}_{n}$ we do as follows: for each $R\in\mathcal{R}_{n}$ use any bijection between $R$ and $\omega$ to place a copy of $\mathcal{R}$ inside of $R$. Declare then $\mathcal{R}_{n+1}$ to be $\mathcal{R}_{n}$ with all such copies for all $R\in\mathcal{R}_{n}$. Clearly $\mathcal{R}'$ is hereditarily reaping.
\end{proof}
\begin{proposition}\label{covSplIsRRR}
    $\cov^{*}(\SPL)=\rrr$
\end{proposition}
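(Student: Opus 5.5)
The plan is to prove the two inequalities $\rrr\leq\cov^{*}(\SPL)$ and $\cov^{*}(\SPL)\leq\rrr$ separately. The lower bound is short: a single branch of $\cantor$ chosen as a splitting real does the job. The upper bound uses the hereditarily reaping family from the preceding lemma, together with the chain/antichain analysis from the proof that $\SPL$ is tall.

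\emph{Lower bound.} Let $\mathcal{F}\subseteq\SPL$ with $|\mathcal{F}|<\rrr$.
\begin{enumerate}
\item Each $F\in\mathcal{F}$ is contained in a finite union $S(A^{F}_{1})\cup\dots\cup S(A^{F}_{k_F})$. We may assume every $A^{F}_{j}$ is infinite: if $A$ is finite then $S(A)$ is cofinite, and $\SPL$ is proper by tallness, so such generators do not occur.
\item The family $\mathcal{A}$ of all the sets $A^{F}_{j}$ has size $<\rrr$. Hence it is not reaping, and some $B\in[\omega]^{\omega}$ splits every member of $\mathcal{A}$.
\item Let $y=\chi_{B}$ and $X=\{y|_{n}:n\in\omega\}$. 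For $A\in\mathcal{A}$ we have $y|_{n}\in S(A)$ iff $y$ is constant on $A\cap n$. Since $A$ meets both $B$ and $\omega\setminus B$, this fails for all large $n$. So $X\cap S(A)$ is finite.
\item Therefore $X\cap F$ is finite for every $F\in\mathcal{F}$, and $\mathcal{F}$ does not witness $\cov^{*}(\SPL)$.
\end{enumerate}

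\emph{Upper bound.} Fix a hereditarily reaping family $\mathcal{R}$ of size $\rrr$ and close it under finite modifications; the size stays $\rrr$. We claim that $\{S(R):R\in\mathcal{R}\}$ meets every infinite $X\subseteq\cantor$ in an infinite set. As in the tallness proof, first pass to an infinite $\{\sigma_{n}:n\in\omega\}\subseteq X$ that is a chain or an antichain, and let $y\in\Cantor$ be a branch through it.

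\emph{Chain case.} Since $\mathcal{R}$ is reaping, some $R\in\mathcal{R}$ satisfies $R\subseteq^{*}y^{-1}(i)$ for some $i\in2$. After a finite modification, $y|_{R}$ is constant. Then every initial segment of $y$, and in particular every $\sigma_{n}$, lies in $S(R)$.

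\emph{Antichain case.}
\begin{enumerate}
\item Let $a_{n}$ be the first point where $\sigma_{n}$ departs from $y$. Thin out so that the intervals $I_{n}=[a_{n},|\sigma_{n}|)$ are pairwise disjoint and increasing.
\item Since $\sigma_{n}$ agrees with $y$ below $a_{n}$, it is enough to find $R$ such that $y|_{R}$ is constant and $R\cap I_{n}=\emptyset$ for infinitely many $n$. Then $\sigma_{n}|_{R}$ is constant for all those $n$, so $\sigma_{n}\in S(R)$.
\item As in the chain case, take $R_{0}\in\mathcal{R}$ with $y|_{R_{0}}$ constant.
\item Let $E=\bigcup_{n}I_{2n}$. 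If $R_{0}\cap E$ is finite, then after a finite modification $R_{0}$ misses all even intervals, and we are done.
\item Otherwise apply hereditary reaping to $R_{0}$ and $A=R_{0}\cap E$. This gives $R'\subseteq R_{0}$ in $\mathcal{R}$ with $R'\subseteq^{*}E$ or $R'\cap E=^{*}\emptyset$. In the first case $R'$ eventually misses every odd interval; in the second it eventually misses every even interval.
\item After a finite modification, $R'$ is as required, and $y|_{R'}$ is still constant because $R'\subseteq R_{0}$.
\end{enumerate}

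The main obstacle is this antichain case. The key point is that hereditary reaping lets us make $y$ constant on the generator and also avoid infinitely many of the disagreement intervals, using a single refinement step inside $R_{0}$. Closure of $\mathcal{R}$ under finite modifications is needed to turn the $\subseteq^{*}$ statements into exact constancy.
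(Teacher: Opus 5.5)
Your proof is correct and is essentially the paper's argument: you use the branch of a splitting real for the lower bound, and a hereditarily reaping family closed under finite modifications with the chain/antichain analysis for the upper bound; your single refinement with $E=\bigcup_n I_{2n}$ only streamlines the paper's "apply hereditary reaping twice" three-case split. One harmless slip: for finite $A$ with $|A|\geq 2$ the set $S(A)$ is not cofinite, and tallness does not imply properness; the real reason to use only infinite $A$ is that the generators are meant to be $S(A)$ for $A\in[\omega]^{\omega}$, since otherwise $S(\emptyset)=\cantor$ would make $\SPL$ improper.
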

\begin{proof}
    To show $\rrr\leq \cov^{*}(\SPL)$ assume that  $\{A_{\alpha}\in[\omega]^{\omega}:$ and $\kappa<\rrr\}$ is given. Let $B$ be such an infinite subset of $\omega$ that splits each $A_{\alpha}$. Define $y\in\Cantor$ such that $y(n)=1$ if and only if $n\in B$. Then note that $Y=\{y|_{n}:n\in\omega\}\subseteq \cantor$ is almost disjoint from each $S(A_{\alpha})$.\\
    To show that $\cov^{*}(\SPL)\leq\rrr$ let $\mathcal{R}$ be a hereditary reaping family of size $\rrr$. We may assume that $\mathcal{R}$ is closed under finite modifications. We claim that the family $\{S(R):R\in\mathcal{R}\}$ witnesses for $\cov^{*}(\SPL)$. Take arbitrary $Y\in[\cantor]^{\omega}$. Consider the following two possibilities. If $Y$ contains $\subseteq$-chain $\{\sigma_{n}:n\in\omega\}$ there must be an $R\in\mathcal{R}$ such that for $y=\bigcup_{n}\sigma_{n}\in\Cantor$ we have $R\cap y^{-1}[1]=\emptyset$ or $R\subseteq y^{-1}[1]$. Clearly in any of the two cases we have $\{\sigma_{n}:n\in\omega\}\subseteq S(R)$. Assume then that $Y$ contains no $\subseteq$-chain. Shrinking $Y$ if necessary, we may assume that $Y$ forms an antichain in $(\cantor,\subseteq)$ and the tree generated by $Y$ contains exactly one infinite branch $y\in\Cantor$. Let $\{\sigma_{n}:n\in\omega\}$ be an length increasing enumeration of the set $Y$ and let $a_{n}\in\omega$ be the smallest $m\in\omega$ such that $\sigma_{n}|_{m}\neq y|_{m}$. Shrinking $Y$ once more we may assume that $a_{n}<|\sigma_{n}|<a_{n+1}$ for each $n\in\omega$. Let then $I_{n}$ be the interval $[a_{n}, |\sigma_{n}|)$. Pick $R_{0}\in \mathcal{R}$ such that $R_{0}\subseteq y^{-1}[0]$ or $R_{0}\subseteq y^{-1}[1]$. Next, using hereditary reaping family twice we may find $R_{1}\in\mathcal{R}$, $R_{1}\subset R_{0}$ such that one of the following three possibilities holds:
    \begin{center}
        $R_{1}\cap \bigcup_{n}I_{n}=\emptyset$ or $R_{1}\subseteq \bigcup_{n}I_{2n}$ or $R_{1}\subseteq \bigcup_{n}I_{2n+1}$
    \end{center}
    Note, that in the first case, we have $\{\sigma_{n}:n\in\omega\}\subseteq S(R_{1})$ and in the second and third case, we have $\{\sigma_{2n+1}:n\in\omega\}\subseteq S(R_{1})$ or $\{\sigma_{2n}:n\in\omega\}\subseteq S(R_{1})$ respectively. It follows that $Y\cap S(R_{1})$ is infinite which finishes the proof.
\end{proof}
Many researchers thought about upper bounds of the covering number of the density zero ideal. Let us gather here several known related results.
\begin{theorem}
    The following inequalities hold in ZFC:
        \begin{itemize}
            \item[--] $\cov^{*}(\Z)\leq \cov^{*}(\trN)\leq\cov^{*}(\SUM)\leq \cov^{*}(\ED_{fin})\leq \non(\Meager)$, (folklore)
            \item[--] $\cov^{*}(\Z)\leq \max\{\cov(\Null),\ddd\}$  (Hrušák and Hern\'{a}ndez-Hern\'{a}ndez; \cite{Hernand}),
            \item[--] $\cov^{*}(\Z)\leq\ddd$ (Raghavan and Shelah; \cite{RaghavanShelah}),
            \item[--] $\cov^{*}(\Z)\leq\cov(\E)$ (Cieślak, Gappo, Martínez, Yamazoe; \cite{CIESGAPPOYAMAZOEMARTINEZ}),
            \item[--] $\cov^{*}(\Z)\leq\max\{\bbb,\sss(\mathfrak{pr})\}$\footnote{$\sss(\mathfrak{pr})$ is a variant of splitting number that will not be relevant for this article; for the definition of this invariant the reader may consult \cite{Raghavan}} (Raghavan; \cite{Raghavan})
        \end{itemize}
        also
        \begin{itemize}
            \item[--] $\min\{\bbb,\cov(\Null)\}\leq\cov^{*}(\Z)$ (Hrušák and Hern\'{a}ndez-Hern\'{a}ndez; \cite{Hernand})
        \end{itemize}
\end{theorem}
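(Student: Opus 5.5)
This theorem collects known results, each attributed to its source. The plan is therefore to prove only the chain labelled folklore and to cite the remaining items. For the folklore chain I would combine Proposition~\ref{NonKatětov} (Katětov reductions reverse $\cov^*$) with the Katětov diagram from the introduction. Since that diagram has arrows $\ED_{fin}\to\SUM\to\trN\to\Z$, it gives
\[
\ED_{fin}\leq_K\SUM\leq_K\trN\leq_K\Z,
\]
and hence immediately
\[
\cov^*(\Z)\leq\cov^*(\trN)\leq\cov^*(\SUM)\leq\cov^*(\ED_{fin}).
\]
The last inequality, $\cov^*(\ED_{fin})\leq\non(\Meager)$, is part of the theorem of Hrušák, Meza-Alcántara and Minami quoted earlier, so it needs no new work.

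If the reductions are to be justified rather than read off the diagram, I would check them one at a time.

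\begin{itemize}
\item For $\trN\leq_K\Z$, use the Katětov-equivalent presentation of $\Z$ on $\cantor$ by densities $|A\cap 2^n|/2^n$. Take the identity map and use the Lebesgue density theorem (or a direct estimate) to see that a set whose levels have density not tending to $0$ has a positive-measure set of branches hitting it infinitely often. This gives $\Z\subseteq\trN$ in that presentation.
\item For $\SUM\leq_K\trN$, send the interval $[2^n,2^{n+1})$ injectively onto pairwise incomparable nodes whose measures $2^{-|\sigma|}$ are roughly $1/2^{n}$ each. A $\trN$-small set then pulls back to a set of finite harmonic sum; this step is the most computational.
\item For $\ED_{fin}\leq_K\SUM$, partition $\omega$ into consecutive blocks $I_n$ with $\sum_{k\in I_n}1/(k+1)\approx1$ and $|I_n|\geq n$. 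Map block $I_n$ onto the column $\{n\}\times(n+1)$, splitting $I_n$ into $n+1$ pieces of harmonic weight about $1/(n+1)$. A set meeting each column in at most $N$ points then pulls back to weight $\lesssim\sum_n N/(n+1)$ per tail block.
\end{itemize}

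The last bullet is where an obstacle appears: the estimate $\sum_n N/(n+1)$ diverges, so the naive weighting is wrong. To fix it, I would instead make the column heights grow fast, say $2^{2^n}$, and use that $\ED_F$ is Katětov equivalent to $\ED_{fin}$ for any $F$. Each point then pulls back to weight $2^{-2^n}$, and the total is summable.

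The remaining bullets, namely the bounds by $\max\{\cov(\Null),\ddd\}$, by $\ddd$, by $\cov(\E)$, by $\max\{\bbb,\sss(\mathfrak{pr})\}$, and the lower bound $\min\{\bbb,\cov(\Null)\}$, would be taken from the cited papers. They are not reproved here.
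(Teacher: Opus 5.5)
Your main line is correct and matches what the paper does. The paper gives no proof of this theorem. The folklore chain comes from the reductions $\ED_{fin}\le_K\SUM\le_K\trN\le_K\Z$ in the introductory diagram, together with the proposition that Kat\v{e}tov reductions reverse $\cov^*$. The bound $\cov^*(\ED_{fin})\le\non(\Meager)$ is taken from Hru\v{s}\'ak--Meza-Alc\'antara--Minami, and every other item is cited.

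The optional verifications, however, contain direction errors that you should fix before relying on them.

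\textbf{The reduction $\trN\le_K\Z$.} Via the identity on $\cantor$ you need $\trN\subseteq\Z$ in the level-density presentation. That is what your argument actually proves: if $\limsup_n|A\cap 2^n|/2^n=\epsilon>0$, then $G(A)=\limsup_n[A\cap 2^n]$ has measure at least $\epsilon$ by continuity of measure alone. The inclusion you wrote, $\Z\subseteq\trN$, is false. Let $\tau_t$ run through $2^1$, then $2^2$, and so on, and at a new level $n_t\geq|\tau_t|$ put into $A$ all nodes extending $\tau_t$. The level densities tend to $0$, but $G(A)=\Cantor$.

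\textbf{The reduction $\SUM\le_K\trN$.} Here the reduction must be a map $\pi:\cantor\to\omega$. Your claim that a $\trN$-set pulls back along the injection $\psi:\omega\to\cantor$ to a set of finite harmonic sum is false. Take $\psi$ to biject $[2^n,2^{n+1})$ onto level $n$. Let $B\cap 2^n$ consist of all extensions of $0^{k(n)}$, where $k(n)=\lfloor\log_2(n+1)\rfloor$. Then $G(B)=\{0^\infty\}$, so $B\in\trN$. Yet $\psi^{-1}[B]$ has harmonic weight at least $1/(2(n+1))$ in every block, so it is not in $\SUM$.

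The fix is to reverse the roles. With $\pi=\psi^{-1}$ and $A\in\SUM$ you get $\sum_{\sigma\in\pi^{-1}[A]}2^{-|\sigma|}\le 2\sum_{k\in A}\frac{1}{k+1}<\infty$, so $\pi^{-1}[A]\in\trN$ by Borel--Cantelli. Also, the image nodes cannot be pairwise incomparable across all blocks, since an antichain has total measure at most $1$. Incomparability within each level is all you need.

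\textbf{The reduction $\ED_{fin}\le_K\SUM$.} The obstacle here is self-inflicted. The column-by-column enumeration $\omega\to\Delta$ already works, because column $n$ sits at indices of size about $n^2/2$, so $N$ points per column contribute only $O(N/n^2)$.

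Your $2^{2^n}$ fix also works, but not with the stated bookkeeping. The $n$-th consecutive block of harmonic weight about $1$ has only on the order of $e^n$ elements, each of weight about $e^{-n}$. So it cannot be split into $2^{2^n}$ pieces of weight $2^{-2^n}$. If you instead map it injectively into the column, the weight per point is $O(e^{-n})$, which is still summable.
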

Despite all the effort, the following problem remains open.
\begin{question}
    Is $\cov^{*}(\Z)\leq\bbb$ provable in ZFC?
\end{question}
Here, as $\Z\leq_{K}\trN\leq_{K}\SPL$ we get a new upper bound of the covering number of the trace of null ideal. This provides more conviction to the problem stated above.
\begin{corollary}
    $\cov^{*}(\Z)\leq\cov^{*}(\trN)\leq\rrr$
\end{corollary}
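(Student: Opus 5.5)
The corollary should follow by chaining Katětov reductions with the monotonicity of $\cov^{*}$ from Proposition \ref{NonKatětov}, finishing with Proposition \ref{covSplIsRRR}. No new combinatorics should be needed. The only real work is to check that every Katětov step used is actually justified earlier in the paper.

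The first inequality is $\cov^{*}(\Z)\leq\cov^{*}(\trN)$. It is the first item of the folklore chain in the theorem just above, so I would cite it. Alternatively, $\trN\leq_{K}\Z$ appears in the Katětov diagram, and Proposition \ref{NonKatětov} then gives $\cov^{*}(\Z)\leq\cov^{*}(\trN)$.

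The second inequality is $\cov^{*}(\trN)\leq\rrr$. The corollary following Proposition \ref{SPLKatětovUpper} gives $\SPL\leq_{K}\trN$. Concretely, the identity map on $\cantor$ is a reduction, because each generator $S(A)$ has $G(S(A))=[S(A)]\in\E\subseteq\Null$, so $S(A)\in\trE\subseteq\trN$. Since $\trN$ is an ideal, every finite union of generators, and hence every element of $\SPL$, lies in $\trN$.

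Applying Proposition \ref{NonKatětov} with $\J_{0}=\SPL$ and $\J_{1}=\trN$ gives $\cov^{*}(\trN)\leq\cov^{*}(\SPL)$, and Proposition \ref{covSplIsRRR} gives $\cov^{*}(\SPL)=\rrr$. Both covering numbers are defined, since $\SPL$ is tall (proved above) and $\trN$ is tall. Concatenating the two inequalities yields $\cov^{*}(\Z)\leq\cov^{*}(\trN)\leq\rrr$.

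The main obstacle is bookkeeping about the direction of the Katětov order. The introduction's phrase ``$\Z\leq_{K}\trN\leq_{K}\SPL$'' has to be read as ``the covering numbers go in the right direction''. What is actually used is $\SPL\leq_{K}\trN$, together with the fact that the smaller ideal in $\leq_{K}$ has the larger $\cov^{*}$. I would state the reductions explicitly in the form used above to avoid that confusion.
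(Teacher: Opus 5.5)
Your proof is correct and follows the paper's own argument: monotonicity of $\cov^{*}$ under $\leq_{K}$ along $\SPL\leq_{K}\trN\leq_{K}\Z$, combined with $\cov^{*}(\SPL)=\rrr$ from Proposition \ref{covSplIsRRR}. You are also right that the paper's remark ``$\Z\leq_{K}\trN\leq_{K}\SPL$'' writes the Kat\v{e}tov order backwards relative to its own reductions, and the direction you use is the one that actually gives the inequalities.
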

Next, we focus on the cofinality of the splitting ideal
\begin{proposition}\label{cofSPL}
    $\cof^{*}(\SPL)=\continuum$
\end{proposition}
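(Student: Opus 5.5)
The upper bound $\cof^{*}(\SPL)\leq\continuum$ is trivial, since $|\SPL|\leq|\mathcal{P}(\cantor)|=\continuum$. So the task is the lower bound. The plan is to find a family $\{Y_{x}:x\in P\}\subseteq\SPL$, indexed by a set $P$ of size $\continuum$, such that no single element of $\SPL$ almost contains two (or at most countably many) of them. Since every element of $\SPL$ is contained in a finite union $S(A_{1})\cup\dots\cup S(A_{k})$, it suffices to show that a single finite union of generators can almost contain only countably many $Y_{x}$; then any cofinal family must have size at least $\continuum$.

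The natural candidates are the branch sets $Y_{x}=\{x|_{n}:n\in\omega\}$ for $x\in\Cantor$. Each $Y_{x}$ is in $\SPL$: choose $i$ with $x^{-1}[i]$ infinite and put $A=x^{-1}[i]$; then $x|_{A}$ is constant, so $Y_{x}\subseteq S(A)$. Now suppose $Y_{x}\subseteq^{*}S(A_{1})\cup\dots\cup S(A_{k})$. By pigeonhole some single $S(A_{j})$ contains infinitely many initial segments of $x$. Since $S(A_{j})$ is a tree, it then contains all initial segments of $x$, so $x|_{A_{j}}$ is constant. Thus $x$ lies in the set $\bigcup_{j\leq k}\{x: x|_{A_{j}}\text{ constant}\}$. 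Each set in this union is closed nowhere dense, and it has size $\continuum$ whenever $A_{j}$ is coinfinite. So a single element of $\SPL$ can almost contain $\continuum$ many of the $Y_{x}$, and branches by themselves do not give the bound.

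To repair this, I will choose the $x$'s so that no infinite set makes two distinct $x$'s constant on it. Take an almost disjoint family $\{B_{x}\}$ of size $\continuum$, and let the indexed real be the characteristic function $\chi_{B}$ of a set $B$ from this family. Then $\chi_{B}|_{A}$ is constant exactly when $A\subseteq B$ or $A\cap B=\emptyset$. The second option allows many $B$'s at once, so I must force the relevant $A$ into $B$. The fix is to use $Y_{B}=\{\chi_{B}|_{n}: n\in B\}$ together with an extra coding: take the segments ending in a $1$, namely $\{\chi_{B}|_{n+1}:n\in B\}$. If infinitely many of these lie in $S(A)$, then every such segment has the value $1$ at $\max(A\cap(n+1))$ or a related position. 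Pinning this down is the main obstacle. I expect to handle it by requiring $A$ to meet $B$ infinitely often and $\chi_{B}|_{A}$ to be constant, which forces $A\subseteq B$.

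The argument would then go as follows. Since $A$ is infinite and the $B$'s are almost disjoint, $A\subseteq B$ can hold for at most one $B$. With $k$ generators, at most $k$ sets $Y_{B}$ are almost covered. Hence any cofinal family has size at least $\continuum$. The delicate point, which I would verify carefully, is ruling out the case $A\cap B=\emptyset$ when $S(A)$ contains infinitely many elements of $Y_{B}$. My approach is to choose $Y_{B}$ to consist of segments $\chi_{B}|_{m}$ whose domain contains both a point of $B$ and a point outside $B$ in positions that $A$ must hit. If that coding fails, I would replace $\chi_{B}$ by a perfect set of reals $x$ that are pairwise "nowhere simultaneously constant", for instance using an independent family, and redo the pigeonhole argument.
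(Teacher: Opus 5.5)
Your proposal has a genuine gap, and the strategy cannot be completed as set up. Every witness you consider is an infinite subset of a single branch $\{x|_n:n\in\omega\}$: this covers $Y_x$, $Y_B=\{\chi_B|_{n+1}:n\in B\}$, and any variant built from segments $\chi_B|_m$. Since $S(A)$ is downward closed, such a $Y$ satisfies $Y\subseteq^{*}S(A)$ iff infinitely many elements of $Y$ lie in $S(A)$ iff $x|_A$ is constant. No choice of which initial segments you keep changes this, so the case $A\cap B=\emptyset$ can never be excluded.

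Concretely, for your almost disjoint family, fix $B_0$. The map $B\mapsto B\cap B_0$ sends the other members to countably many finite sets. Since $\mathrm{cf}(\continuum)>\omega$, some finite $F$ is the value for $\continuum$ many $B$. The single generator $S(B_0\setminus F)$ then almost contains all of those $Y_B$.

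More decisively, no branch-type family can work. Take a reaping family $\mathcal{R}$ closed under finite modifications. Every $x\in\Cantor$ is constant on some $R\in\mathcal{R}$, so $\{S(R):R\in\mathcal{R}\}$ almost contains every infinite subset of every branch; this is the first case in the paper's proof of $\cov^{*}(\SPL)\leq\rrr$. Since $\rrr<\continuum$ is consistent (e.g. in the Sacks model), no argument using only branch-type witnesses can give $\cof^{*}(\SPL)=\continuum$ in ZFC.

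Your fallback is impossible as stated. For any two reals $x,y$, some pair $(i,j)$ occurs as $(x(n),y(n))$ for infinitely many $n$, and both reals are constant on that infinite set.

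The missing idea is to use witnesses that are bushy rather than branch-like, namely the generators $S(A)$ themselves for $A$ in a maximal almost disjoint family $\mathcal{A}$ of size $\continuum$. This is what the paper does.
\begin{enumerate}
\item Given a cofinal family of size $\kappa<\continuum$, you may assume it consists of finite unions of generators $S(B_\alpha)$, $\alpha<\kappa$.
\item By maximality, shrink each $B_\alpha$ to $B_\alpha\cap A_\alpha$ for some $A_\alpha\in\mathcal{A}$. This only enlarges $S(B_\alpha)$.
\item Any $A\in\mathcal{A}\setminus\{A_\alpha:\alpha<\kappa\}$ is then almost disjoint from every $B_\alpha$.
\item Given finitely many such $B_1,\dots,B_k$, pick pairwise distinct points $p_j,q_j\in B_j\setminus A$. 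For every $N$ beyond all of these points, let $\sigma_N\in 2^N$ take the value $1$ exactly at the $q_j$'s.
\item Each $\sigma_N$ is constant $0$ on $A$ but takes both values on every $B_j$. This gives infinitely many elements of $S(A)$ outside $S(B_1)\cup\dots\cup S(B_k)$.
\end{enumerate}
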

\begin{proof}
    Let $\mathcal{A}\subseteq[\omega]^{\omega}$ be an almost disjoint family of cardinality $\continuum$. Suppose that $\kappa=\cof^{*}(\SPL)<\continuum$ and that $\{B_{\alpha}:\alpha<\kappa\}\subseteq[\omega]^{\omega}$ is such that finite unions of sets from $\{S(B_{\alpha}):\alpha<\kappa\}$ is a witness for $\cof^{*}(\SPL)$. Without loss of generality we may assume that each $B_{\alpha}$ is an almost subset of some $A_{\alpha}\in\mathcal{A}$. Let then $A\in\mathcal{A}\setminus\{A_{\alpha}:\alpha<\kappa\}$. It is not difficult to see that no $S(B_{\alpha})$ can cover $S(A)$.
\end{proof}
We will also estimate the $\cov^{+}$ number of the splitting ideal. See \ref{defOfCovPlus} for the definition of $\cov^{+}$ number.
    \begin{proposition}
    $\cov(\Meager)\leq\cov^{+}(\SPL)\leq\rrr$
    \end{proposition}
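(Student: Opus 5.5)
The upper bound will come from Proposition~\ref{covSplIsRRR}, and the lower bound from a category argument; I read $\cov^{+}(\SPL)$ (Definition~\ref{defOfCovPlus}) as the least size of a family $\mathcal{F}\subseteq\SPL$ such that every $X\in\SPL^{+}$ has infinite intersection with some $F\in\mathcal{F}$.

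\emph{Upper bound.} Let $\mathcal{R}$ be the hereditarily reaping family of size $\rrr$ from the lemma preceding Proposition~\ref{covSplIsRRR}. The proof of that proposition shows that every infinite $Y\subseteq\cantor$ meets some $S(R)$, $R\in\mathcal{R}$, in an infinite set. In particular this holds for every $Y\in\SPL^{+}$, so $\{S(R):R\in\mathcal{R}\}$ also witnesses $\cov^{+}(\SPL)\leq\rrr$. No further work is needed here.

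\emph{Lower bound.} Let $\kappa<\cov(\Meager)$ and let $\{A_{\alpha}:\alpha<\kappa\}\subseteq[\omega]^{\omega}$ be given; it suffices to treat the generators $S(A_\alpha)$. I want to produce $X\in\SPL^{+}$ with $X\cap S(A_{\alpha})$ finite for every $\alpha$. The plan has three steps.
\begin{enumerate}
\item Fix a Polish parameter space $P$ (something like $(\Cantor)^{\cantor}$) and a Borel assignment $z\mapsto X_{z}\subseteq\cantor$.
\item Prove that every $X_{z}$ is $\SPL$-positive.
\item Prove that for each fixed $A\in[\omega]^{\omega}$ the set $M_{A}=\{z\in P:|X_{z}\cap S(A)|=\omega\}$ is meager.
\end{enumerate}
Then $\bigcup_{\alpha<\kappa}M_{A_{\alpha}}\neq P$ because $\kappa<\cov(\Meager)$, and any $z$ outside this union gives the required $X_{z}$.

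For positivity I would use the same observation as in the proof that $\non^{*}(\SPL)=\omega$. Given finitely many $A_{0},\dots,A_{k}$, choose $\sigma$ that is nonconstant on the first two elements of each $A_{i}$. Then no extension of $\sigma$ lies in $\bigcup_{i}S(A_{i})$. Hence any $X$ containing, for every $\sigma\in\cantor$, at least one extension of $\sigma$ is $\SPL$-positive, whatever the parameter. So I would let $X_{z}$ consist of one extension $\sigma^{\frown}\tau^{z}_{\sigma}$ for each $\sigma$ (or a finite piece of a branch through $\sigma$), where the strings $\tau^{z}_{\sigma}$ are read off from $z$.

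\emph{Main obstacle.} The difficulty is step~3. The condition ``$X_{z}\cap S(A)$ is finite'' is $F_\sigma$ in $z$, and with a naive coding it is not comeager: a finite condition fixes only finitely many $\tau_{\sigma}$ and leaves the tail free, so both alternatives stay dense. Moreover, every $\sigma$ that is constant on $A\cap|\sigma|$ threatens to contribute a point of $S(A)$. I would attack this by coupling the strings $\tau_{\sigma}$ through a single Cohen real $c$. Take $\tau_{\sigma}=c|_{[|\sigma|,\ell(|\sigma|))}$ for a fast-growing $\ell$, arranged so that a single failure of $c$ to be constant on $A$ inside a block $[m,\ell(m))$ kills every $\sigma$ of length $m$ simultaneously. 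Then $M_{A}$ is contained in a set of the form $\{c:\existsinfty_{m}\ c|_{A\cap[m,\ell(m))}\text{ constant}\}$. I would aim to show this set is meager, using the fact that $A\cap[m,\ell(m))$ has at least two elements for all $m$ once $\ell$ dominates the enumeration of $A$. If a single $\ell$ cannot serve all $A_\alpha$, I would fall back on the combinatorial characterization of $\cov(\Meager)$: for fewer than $\cov(\Meager)$ functions $h_{\alpha}\in\Baire$ there is $g$ that is infinitely often equal to each $h_\alpha$. This characterization is what I expect to be the real crux of the construction.
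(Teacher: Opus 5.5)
Your upper bound is correct and is the paper's argument: the witnesses for $\cov^{*}(\SPL)\le\rrr$ in Proposition~\ref{covSplIsRRR} meet every infinite set, and in particular every positive one. The lower bound, however, is not proved. Step~3 is exactly what is missing, and the coupling you suggest makes it fail. For fixed $A$ the set $\{c:\existsinfty_{m}\ c|_{A\cap[m,\ell(m))}\text{ constant}\}$ is a dense $G_\delta$, since you can always set $c$ to $0$ on a late block; so it is comeager, not meager. Worse, comeagerly many $c$ vanish on infinitely many blocks $[m,\ell(m))$. For such $c$ we get $0^{m}{}^{\frown}c|_{[m,\ell(m))}=0^{\ell(m)}\in X_{c}\cap S(A)$ for every $A$ at once, so every $M_{A}$ is comeager. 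The infinitely-often-equal characterization cannot rescue this: it yields \emph{infinitely often} conclusions, whereas you need all but finitely many elements of $X$ to leave $S(A_{\alpha})$.

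In fact no choice of dense $X_{z}$ can work. Suppose $x_{m}\in X$ extends $0^{m}$ and put $Y_{m}=x_{m}^{-1}(1)\cap[m,|x_{m}|)$. Then $x_{m}\in S(A)$ whenever $A\cap Y_{m}=\emptyset$, so finiteness of $X\cap S(A_{\alpha})$ forces the finite sets $Y_{m}$ to meet $A_{\alpha}$ for almost all $m$, which is a domination-type requirement. This fails in the Cohen model over CH, taking the $A_{\alpha}$ to be the $\omega_{1}$ ground-model sets while $\cov(\Meager)=\omega_{2}$. Given a Cohen name for $(Y_{m})$, build $A=\{a_{0}<a_{1}<\cdots\}$ in the ground model as follows. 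List the conditions so that each appears infinitely often, and at stage $k$ choose $q\le p_{k}$ deciding $\dot Y_{a_{k}+1}$, with $a_{k+1}$ above the decided value. Then $A$ misses infinitely many $Y_{m}$. Separately, your positivity remark needs repair: being nonconstant on the first two elements of each $A_{i}$ can be contradictory, e.g.\ for the pairs $\{0,1\},\{1,2\},\{0,2\}$. Choose pairwise disjoint pairs instead.

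The missing idea is to take $X$ to be a tree and use compactness. For a subtree $T\subseteq\cantor$, the set $T\cap S(A)$ is again a subtree, so by K\"onig's lemma it is finite iff no branch of $T$ is constant on $A$. The paper uses this via Cohen forcing with finite trees under end-extension, where \emph{no terminal node lies in $S(A_{\alpha})$} is dense. But positivity of $\bigcup G$ is only asserted there, and it is not automatic. For each $N$ it is dense to extend every terminal node by zeros past some position $b>N$. So a sufficiently generic tree has infinitely many positions $b$ at which every node of length $>b$ has value $0$, and it lies inside $S(B)$ for the set $B$ of these positions.

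A choice that does work is the following. Let $C_{A}=\{x\in\Cantor:x|_{A}\text{ constant}\}$, which is closed nowhere dense. Pick $c$ such that $c$ and all $x_{n}=c+\chi_{[n,\infty)}$ (mod $2$) avoid every $C_{A_{\alpha}}$; these are $\max\{\kappa,\omega\}<\cov(\Meager)$ nowhere dense conditions. The tree $T$ of initial segments of $c$ and the $x_{n}$ has exactly these branches, so each $T\cap S(A_{\alpha})$ is finite. It remains to see that $T$ is positive. If $n<n'$ and both $x_{n},x_{n'}$ are constant on an infinite $B$, comparing them gives $B\cap[n,n')=\emptyset$. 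Hence each $C_{B}$ contains only finitely many $x_{n}$. Finitely many $S(B_{i})$ therefore cannot cover $T$, since each branch would have to lie in some $C_{B_{i}}$.
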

    \begin{proof}
        The inequality $\cov^{+}(\SPL)\leq\rrr$ follows from $\cov^{+}(\SPL)\leq\cov^{*}(\SPL)=\rrr$.\\
        To prove the inequality $\cov(\Meager)\leq\cov^{+}(\SPL)$ assume that $\kappa<\cov(\Meager)$ and that we are given a family $\{A_{\alpha}:\alpha<\kappa\}\subseteq[\omega]^{<\omega}$. Let $\mathbb{C}$ be the forcing notion for adding perfect tree of Cohen reals i.e. $\mathbb{C}$ consists of finite subtrees of $\cantor$ ordered by end-extension. Note that $\mathbb{C}$ is forcing equivalent to Cohen forcing. Notice also, that for every $\alpha<\kappa$ the set
        \begin{center}
            $D_{\alpha}=\{F\in\mathbb{C}:$ no terminal of $F$ is a member of $S(A_{\alpha})\}$
        \end{center}
        is dense in $\mathbb{C}$. As $\kappa<\cov(\Meager)$, there is a filter $G\subseteq\mathbb{C}$ intersecting all $D_{\alpha}$'s, $\alpha<\kappa$. Define the tree $T\subseteq\cantor$ as $\bigcup G$. It is not difficult to see that $T$ is $\SPL$-positive and the intersection $T\cap S(A_{\alpha})$ is finite for every $\alpha<\kappa$.
    \end{proof}
    I do not know if the $+$-covering of Splitting ideal is simply equal to $\rrr$.
    \begin{question}
        Is $\cov^{+}(\SPL)=\rrr$ ? 
    \end{question}
    We will be also interested in the $\omega$-versions of the standard cardinal invariants of ideals. For an ideal $\J$ we define:\\
    $\add_{\omega}^*(\J) = \min\{ |\mathcal{F}| : \mathcal{F} \subseteq \J \text{ and } \forall \{X_{n}:n\in\omega\} \subseteq \J$ $\exists A \in \mathcal{F}$ $\forall n\in\omega$ $(A \nsubseteq^* X_{n}) \}$,\\
    $\non_{\omega}^*(\J) = \min\{ |\mathcal{F}| : \mathcal{F} \subseteq [\omega]^{\omega} \text{ and } \forall \overline{A}\in[\J]^{\omega}$ $ \exists F\in\mathcal{F}$ $\forall A\in\overline{A}$ $(A\cap F=^*\emptyset) \}$,\\
    $\cof_{\omega}^*(\J) = \min\{ |\mathcal{F}| : \mathcal{F} \subseteq [\J]^{\omega} \text{ and } \forall \overline{A} \in [\J]^{\omega}$ $ \exists \overline{F}\in\mathcal{F}$ $\forall A\in\overline{A}$ $\exists F\in\overline{F}$ $(A\subseteq^{*}F)\}$.\\
    
    These invariants has several applications to $\sigma$-ideals of reals (see for example \cite{BS99}, \cite{CIESGAPPOYAMAZOEMARTINEZ} or \cite{CIESMARTINEZ}) or ideal convergence (see \cite{FilipowKwela}). Clearly, the relations between the $\omega$-versions are the same as for the original ones and additionally we have that $\mathrm{inv}^{*}(\J)\leq\mathrm{inv}_{\omega}^{*}(\J)$ for every $\mathrm{inv}\in\{\add,\non,\cof\}$. It is not difficult to see that for $P$-ideals these $\omega$-invariants are equal to the original ones (see \cite{CIESMARTINEZ}).\\
    Regarding the $\omega$-invariants of $\SPL$ we have the following
    \begin{proposition}
        $\add_{\omega}^{*}(\SPL)=\omega_{1}$ and $\cof_{\omega}^{*}(\SPL)=\continuum$.
    \end{proposition}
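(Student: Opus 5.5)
The plan is to treat the two equalities separately. The cofinality equality is immediate from what is already proved. The additivity equality reduces to a combinatorial lemma about the generators $S(A)$.

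\textbf{Cofinality.} As noted after the definition of the $\omega$-invariants, $\cof^{*}(\SPL)\leq\cof_{\omega}^{*}(\SPL)$, and $\cof^{*}(\SPL)=\continuum$ by Proposition \ref{cofSPL}. For the upper bound, note that $|\SPL|=\continuum$ (there are only $\continuum$ finite unions of sets $S(A)$). Hence $|[\SPL]^{\omega}|=\continuum^{\omega}=\continuum$, and the family of all countable subsets of $\SPL$ trivially witnesses $\cof^{*}_{\omega}(\SPL)\leq\continuum$.

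\textbf{Additivity, lower bound.} Given any countable $\mathcal{F}\subseteq\SPL$, enumerate it as $\{X_{n}:n\in\omega\}$. Every $A\in\mathcal{F}$ equals some $X_{n}$, so this sequence defeats $\mathcal{F}$. Hence $\add_{\omega}^{*}(\SPL)\geq\omega_{1}$; this holds for every ideal.

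\textbf{Additivity, upper bound.} I will prove the following key claim: if $A,B_{1},\dots,B_{m}\in[\omega]^{\omega}$ and $S(A)\subseteq^{*}S(B_{1})\cup\dots\cup S(B_{m})$, then $B_{i}\subseteq^{*}A$ for some $i$. Suppose towards a contradiction that each $B_{i}\setminus A$ is infinite. For each $N$, choose $\sigma_{N}\in\cantor$ of length greater than $N$ as follows:
\begin{itemize}
\item $\sigma_{N}$ is constantly $0$ on $A\cap|\sigma_{N}|$;
\item for each $i$, there are two points of $B_{i}\setminus A$ above $N$ on which $\sigma_{N}$ takes the values $0$ and $1$.
\end{itemize}
This is possible because the required points lie outside $A$, so their values can be set freely. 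Then $\sigma_{N}\in S(A)$ and $\sigma_{N}\notin S(B_{i})$ for every $i$. Since the lengths are unbounded, infinitely many distinct $\sigma_{N}$ lie in $S(A)$ but outside the union, contradicting almost inclusion.

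With the claim in hand, fix an almost disjoint family $\{A_{\alpha}:\alpha<\omega_{1}\}\subseteq[\omega]^{\omega}$ and set $\mathcal{F}=\{S(A_{\alpha}):\alpha<\omega_{1}\}\subseteq\SPL$. Let $\{X_{n}:n\in\omega\}\subseteq\SPL$ be given. Each $X_{n}$ is contained in a finite union of generators, so altogether there are countably many infinite sets $\{B_{k}:k\in\omega\}$ such that each $X_{n}$ is covered by finitely many $S(B_{k})$. Generators with finite index set can be ignored: $S(B)$ for finite $B$ is all of $\cantor$ only if $|B|\leq1$, and by tallness of the problem we may restrict to infinite $B$. (More carefully, the ideal is generated by $S(B)$ with $B$ infinite, since $S(B)\subseteq S(B')$ whenever $B'\subseteq B$; this is the point to double-check.) Each infinite $B_{k}$ is almost contained in at most one $A_{\alpha}$, by almost disjointness. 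So there is $\alpha<\omega_{1}$ with $B_{k}\not\subseteq^{*}A_{\alpha}$ for all $k$. By the claim, $S(A_{\alpha})\not\subseteq^{*}X_{n}$ for every $n$. Hence $\add^{*}_{\omega}(\SPL)\leq\omega_{1}$.

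\textbf{Main obstacle.} The step I expect to need the most care is the key claim. Specifically, one must make the witnesses $\sigma_{N}$ genuinely infinitely many, which is done by forcing the lengths to grow. One must also check the bookkeeping that every member of $\SPL$ lies inside a finite union of generators with infinite index sets.
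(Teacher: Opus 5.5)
Your proof is correct and is essentially the paper's: the same $\omega_1$-sized almost disjoint family $\{S(A_\alpha):\alpha<\omega_1\}$ witnesses $\add^{*}_{\omega}(\SPL)\leq\omega_1$, and $\cof^{*}_{\omega}(\SPL)=\continuum$ comes from Proposition \ref{cofSPL}. Your key claim (hypothesis $B_i\nsubseteq^{*}A$, with finite unions of generators allowed) is a cleaner form of the paper's step, which passes to single generators by a loose ``without loss of generality'' and shrinks each $B_n$ into some $A_{\alpha_n}$.

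The only fix needed is your justification for discarding finite index sets. Monotonicity points the wrong way there, since $B\subseteq B''$ gives $S(B'')\subseteq S(B)$. The real reason is that the generators are $S(A)$ with $A\in[\omega]^{\omega}$ by definition: otherwise $S(\{0,1\})\cup S(\{0,2\})\cup S(\{1,2\})=\cantor$, and $\SPL$ would be improper.
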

    \begin{proof}
        To show that $\add_{\omega}^{*}(\SPL)\leq\omega_{1}$ let $\{A_{\alpha}:\alpha<\omega_{1}\}$ be an almost disjoint family on $\omega$. We claim that the collection $\{S(A_{\alpha}):\alpha<\omega_{1}\}$ witnesses $\add_{\omega}^{*}(\SPL)$. Suppose then that we are given countably many elements of $\SPL$. Without loss of generality, we may assume that they are of form $S(B_{n})$, where $B_{n}\in[\omega]^{\omega}$ for each $n$. Note that $A\subseteq B$ implies that $S(B)\subseteq S(A)$ and therefore we may shrink $B_{n}$'s if necessary and assume that each $B_{n}$ is a subset of some $A_{\alpha_{n}}$. Pick $\alpha\in\omega_{1}\setminus\{\alpha_{n}:n\in\omega\}$. For each $n\in\omega$, as $A_{\alpha}$ and $B_{n}$ are almost disjoint, we cant have $S(A_{\alpha})\subseteq^{*}S(B_{n})$.\\
        The equality $\cof_{\omega}^{*}(\SPL)=\continuum$ simply follows from $\continuum=\cof^{*}(\SPL)\leq\cof^{*}_{\omega}(\SPL)$ (by Proposition \ref{cofSPL}).
    \end{proof}
    We will now calculate the $\non^{*}_{\omega}$ of the splitting ideal. To do this we will need the following version of the splitting number. Call a family $\mathcal{A}\subseteq[\omega]^{\omega}$ $\omega$-splitting if for every $\{A_{n}:n\in\omega\}\subseteq[\omega]^{\omega}$ there is $A\in\mathcal{A}$ such that $A$ splits every $A_{n}$, $n\in\omega$. The smallest size of $\omega$-splitting family is denoted by $\sss_{\omega}$. Clearly $\sss\leq\sss_{\omega}$ but it is a long-standing open problem whether the two are equal.
    It turns out that the $\non^{*}_{\omega}$ of the Splitting ideal is exactly equal to this invariant.
    \begin{proposition}
        $\non^{*}_{\omega}(\SPL)=\sss_{\omega}$
    \end{proposition}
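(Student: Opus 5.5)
We prove the two inequalities separately, imitating the proof that $\non^{*}(\SPL)=\omega$ and the first half of Proposition \ref{covSplIsRRR}.

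\emph{Upper bound $\non^{*}_{\omega}(\SPL)\leq\sss_{\omega}$.} Fix an $\omega$-splitting family $\mathcal{A}$ of size $\sss_{\omega}$. For each $B\in\mathcal{A}$ let $y_{B}\in\Cantor$ be its characteristic function and $Y_{B}=\{y_{B}|_{n}:n\in\omega\}$. We claim $\{Y_{B}:B\in\mathcal{A}\}$ witnesses $\non^{*}_{\omega}(\SPL)$. Let $\{X_{n}:n\in\omega\}\subseteq\SPL$. Each $X_{n}$ is contained in a finite union $S(A_{n,0})\cup\dots\cup S(A_{n,k_{n}})$ with $A_{n,j}\in[\omega]^{\omega}$ (a generator $S(A)$ with $A$ finite is cofinite, so we may assume all generators used are infinite; otherwise $X_{n}$ would be cofinite and the ideal trivial). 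Choose $B\in\mathcal{A}$ splitting all the countably many sets $A_{n,j}$. Then $y_{B}$ is non-constant on every $A_{n,j}$, so $y_{B}|_{m}\notin S(A_{n,j})$ for all large $m$. Hence $Y_{B}\cap X_{n}$ is finite for every $n$.

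\emph{Lower bound $\sss_{\omega}\leq\non^{*}_{\omega}(\SPL)$.} Let $\mathcal{F}\subseteq[\cantor]^{\omega}$ with $|\mathcal{F}|<\sss_{\omega}$. For each $Y\in\mathcal{F}$ we find countably many infinite sets $C^{Y}_{k}\subseteq\omega$ such that any $A$ not splitting some $C^{Y}_{k}$ (i.e.\ $A$ almost containing or almost disjoint from it) yields $S(\cdot)$-sets covering infinitely much of $Y$. Following the case analysis in Proposition \ref{covSplIsRRR}, shrink $Y$ to either a chain with limit $y$, or an antichain with a unique branch $y$, splitting points $a_{n}$ and intervals $I_{n}=[a_{n},|\sigma_{n}|)$ with $a_{n}<|\sigma_{n}|<a_{n+1}$. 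We let the $C^{Y}_{k}$ be
\[
y^{-1}[0],\quad y^{-1}[1],\quad \textstyle\bigcup_{n}I_{n},\quad \bigcup_{n}I_{2n},\quad \bigcup_{n}I_{2n+1},\quad \omega\setminus\bigcup_{n}I_{n},
\]
together with their intersections with $y^{-1}[i]$ (discarding finite ones).

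The family $\{C^{Y}_{k}:Y\in\mathcal{F},k\in\omega\}$ has size $<\sss_{\omega}$. Since $\sss_{\omega}$ is uncountable, there is a subtle point here: $\omega$-splitting demands a single set splitting countably many given sets, whereas we need to handle fewer than $\sss_{\omega}$ countable collections at once. So we take the dual view: a family of fewer than $\sss_{\omega}$ sets is not $\omega$-splitting. Apply this to $\{C^{Y}_{k}\}$ to get countably many infinite sets $D_{m}$, $m\in\omega$, such that no $C^{Y}_{k}$ splits all the $D_{m}$. Equivalently, for each $(Y,k)$ some $D_{m}$ is almost contained in $C^{Y}_{k}$ or almost disjoint from it.

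This is not directly in the form we need. The natural candidate witness is the countable collection $\{S(D_{m})\}$, refined by closing $\{D_{m}\}$ under taking infinite intersections with finitely many sets, which remains countable if done carefully. The main obstacle is arranging that a \emph{single} $D_{m}$ simultaneously handles all the nested choices, mimicking the use of the hereditarily reaping family: first it must lie inside a level set of $y$, then inside one of the interval unions or their complement. To achieve this, we apply the non-$\omega$-splitting property to the enlarged family of all pairwise intersections $C^{Y}_{k}\cap C^{Y}_{l}$, so that one $D_{m}$ is almost contained in, or almost disjoint from, both the relevant $y$-level set and the relevant interval set.

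A case check as in Proposition \ref{covSplIsRRR} then gives $|Y\cap S(D_{m})|=\omega$. Therefore $\{S(D_{m})\}$ shows that $\mathcal{F}$ is not a witness, and $\sss_{\omega}\leq\non^{*}_{\omega}(\SPL)$.
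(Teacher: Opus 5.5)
Your upper bound $\non^{*}_{\omega}(\SPL)\leq\sss_{\omega}$ is the paper's argument and is correct. One aside is wrong: $S(A)$ is not cofinite for finite $A$ (e.g.\ $|A|=2$). Nothing depends on it, since the generators are meant to be $S(A)$ with $A\in[\omega]^{\omega}$.

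The lower bound has a genuine gap, exactly at the step you flag as the main obstacle. That a family $\mathcal{E}$ of size $<\sss_{\omega}$ is not $\omega$-splitting only gives, for each \emph{single} $E\in\mathcal{E}$, some $D_{m}$ depending on $E$ with $D_{m}\subseteq^{*}E$ or $D_{m}\cap E=^{*}\emptyset$. Putting $P\cap Q$ into $\mathcal{E}$ does not force one $D_{m}$ to be unsplit by both $P$ and $Q$ (say $P=y^{-1}[1]$, $Q=\bigcup_{n}I_{2n}$). The outcome $D_{m}\cap P\cap Q=^{*}\emptyset$ says nothing about $D_{m}$ relative to $P$ or $Q$ separately, and the witnesses for $P$, $Q$, $P\cap Q,\dots$ may be unrelated.

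Here is a concrete obstruction. Suppose the four atoms of the partition generated by $P,Q$ are infinite, and take the six unions of two atoms. Every member of the Boolean algebra generated by $P,Q$ fails to split one of them, yet none of them is unsplit by both $P$ and $Q$. Now split each atom into three infinite pieces and use a different piece for each of the three pairs containing that atom. The six sets become pairwise disjoint, so closing $\{D_{m}\}$ under finite intersections does not rescue the argument either.

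A separate, smaller problem: the final family must be $\{S(D_{m}\setminus k):m,k\in\omega\}$, or $\{D_{m}\}$ closed under finite modifications as in Proposition \ref{covSplIsRRR}. If $D_{m}$ is only almost contained in a level set of $y$, then $Y\cap S(D_{m})$ can be finite.

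The missing idea is to use $\kappa<\sss_{\omega}$ in a \emph{nested} way, mirroring the construction of a hereditarily reaping family; this is what the paper does.
\begin{enumerate}
\item Get $\{B_{n}:n\in\omega\}$ such that for each $Y\in\mathcal{F}$ the real $y_{Y}$ is almost constant on some $B_{n}$.
\item For each $n$, apply $\kappa<\sss_{\omega}$ again inside $B_{n}$ (via a bijection $B_{n}\to\omega$) to the traces $Q_{Y}\cap B_{n}$, $Y\in\mathcal{F}$. This yields $\{B_{n,m}:m\in\omega\}\subseteq[B_{n}]^{\omega}$.
\item Almost constancy of $y_{Y}$ on $B_{n}$ passes to every infinite subset. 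Hence for each $Y$ a single $B_{n,m}$ is unsplit by both $y_{Y}^{-1}[1]$ and $Q_{Y}$.
\item Your case check then shows that $\{S(B_{n,m}\setminus k):n,m,k\in\omega\}$ meets every $Y$ infinitely.
\end{enumerate}
The paper nests once more to separate three interval cases. That is not needed: $\bigcup_{n}I_{2n}$ and $\bigcup_{n}I_{2n+1}$ are disjoint, so being unsplit by $\bigcup_{n}I_{2n}$ already suffices.
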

    \begin{proof}
        To show that $\non^{*}_{\omega}(\SPL)\leq\sss_{\omega}$ assume that $\mathcal{A}=\{A_{\alpha}:\alpha<\kappa\}\subseteq[\omega]^{\omega}$ is $\omega$-splitting. Let $X_{\alpha}=\{1_{A_{\alpha}}|_{n}:n\in\omega\}\in[2^{<\omega}]^{\omega}$. We claim that $\{X_{\alpha}:\alpha<\kappa\}$ is a wittness for $\non^{*}_{\omega}(\SPL)$. To see this, assume that $\{B_{n}:n\in\omega\}\subseteq[\omega]^{\omega}$ is given. As $\mathcal{A}$ is $\omega$-splitting, there is $\alpha<\kappa$ such that $A_{\alpha}$ splits all $B_{n}$'s. But then $X_{\alpha}\cap S(B_{n})$ is finite, because if $k\in\omega$ is such that $k\cap(A_{\alpha}\cap B_{n})\neq\emptyset\neq k\cap(B_{n}\setminus A_{\alpha})$, then we have that $(X_{\alpha}\setminus 2^{\leq k})\cap S(B_{n})=\emptyset$.\\
        To show that $\non^{*}_{\omega}(\SPL)\geq\sss_{\omega}$ assume that $\kappa<\sss_{\omega}$ and a family $\{A_{\alpha}:\alpha<\kappa\}\subseteq\SPL$ is given. For every $\alpha<\kappa$ we will construct two reals $x_{\alpha}$ and $y_{\alpha}$ in $\Cantor$ as follows. By shrinking each $A_{\alpha}$ is necessary, we may assume that $A_{\alpha}$ either forms a chain or an antichain in $\cantor$. If $A_{\alpha}$ is a chain, let $x_{\alpha}=y_{\alpha}$ be the union of this chain. If $A_{\alpha}$ is an antichain, say $A_{\alpha}=\{\sigma^{\alpha}_{n}:n\in\omega\}$, then by compactness argument we may assume that $A_{\alpha}$ converges, in a sense that there is a real $x_{\alpha}\in\Cantor$ such that for every $N\in\omega$ almost all elements of $A_{\alpha}$ extends $x_{\alpha}|_{N}$. Define then $(a^{\alpha}_{n})_{n\in\omega}$ where $a^{\alpha}_{n}$ is the smallest $l\in\omega$ such that $\sigma^{\alpha}_{n}(l)\neq x_{\alpha}(l)$. Again, by shrinking $A_{\alpha}$ if necessary, we may assume that $|\sigma^{\alpha}_{n}|<a^{\alpha}_{n+1}<|\sigma^{\alpha}_{n+1}|$ holds for every $n\in\omega$. Let then $y_{\alpha}\in\Cantor$ be such that
        if $l\in[|\sigma^{\alpha}_{n}|,a^{\alpha}_{n+1})$ for $n\in\omega$, then 
        $y_{\alpha}(l)=x_{\alpha}(l)$, and if $l\in[a^{\alpha}_{n+1},|\sigma^{\alpha}_{n+1}|)$ for $n\in\omega$, then $y_{\alpha}(l)=\sigma^{\alpha}_{n+1}(l)$.\\
        As $\kappa<\sss_{\omega}$ there is $\{B_{n}:n\in\omega\}\subseteq[\omega]^{\omega}$ with the property that for every $\alpha<\kappa$ there is $n_{\alpha}\in\omega$ such that both $x_{\alpha}$ and $y_{\alpha}$ are constant on $B_{n_{\alpha}}$ modulo finite set. Applying $\kappa<\sss_{\omega}$ twice inside of every $B_{n}$ we find a collection $\{C^{n}_{m}:m\in\omega\}$ such that for every $m\in\omega$ we have:
        \begin{itemize}
            \item[--] $C^{n}_{m}\subseteq B_{n}$ is infinite,
            \item[--] $C^{n}_{m}\subseteq^{*}\bigcup_{l}I_{2l+1}$ or $C^{n}_{m}\subseteq^{*}\bigcup_{l}I_{2(2l)}$ or $C^{n}_{m}\subseteq^{*}\bigcup_{l}I_{2(2l+1)}$
        \end{itemize}
        We claim that $\{S(C^{n}_{m}\setminus k):m,n,k\in\omega\}$ has the property that for every $\alpha<\kappa$ there are $m,n,k\in\omega$ such that $A_{\alpha}\cap S(C^{n}_{m}\setminus k)$ is infinite. To see this assume that $\alpha<\kappa$ is given and let $k\in\omega$ be such that $x_{\alpha}$ and $y_{\alpha}$ are constant on $B_{n_{\alpha}}\setminus k$ and that $C^{n_{\alpha}}_{m}\setminus k$ is contained in one of the sets $\bigcup_{l}I_{2l+1}$, $\bigcup_{l}I_{2(2l)}$ or $\bigcup_{l}I_{2(2l+1)}$. To see that $|A_{\alpha}\cap S(C^{n_{\alpha}}_{m}\setminus k)|=\omega$ notice by the argument similar to the one at the end of Proposition \ref{covSplIsRRR}, we have that: if $C^{n_{\alpha}}_{m}\setminus k\subseteq \bigcup_{l}I_{2l+1}$ then $A_{\alpha}\subseteq S(C^{n_{\alpha}}_{m}\setminus k)$; if $C^{n_{\alpha}}_{m}\setminus k\subseteq \bigcup_{l}I_{2(2l)}$ then $\{\sigma^{\alpha}_{2l+1}:l\in\omega\}\subseteq S(C^{n_{\alpha}}_{m}\setminus k)$; and if $C^{n_{\alpha}}_{m}\setminus k\subseteq \bigcup_{l}I_{2(2l+1)}$ then $\{\sigma^{\alpha}_{2l+1}:l\in\omega\}\subseteq S(C^{n_{\alpha}}_{m}\setminus k)$.
    \end{proof}
    The dual of this inequality does not make sense because $\non^{*}(\SPL)=\omega$. However, the same idea can be applied to get the following lower bound for $\non^{*}(tr(\Null))$.
\begin{proposition}
    $\sss\leq \non^{*}(\trN)\leq\non^{*}(\Z)$
\end{proposition}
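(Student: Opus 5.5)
The proof has two parts: the second inequality comes from Katětov--Blass reductions, and the first from a direct splitting argument, dual to the one giving $\non^{*}_{\omega}(\SPL)\leq\sss_{\omega}$.

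For $\non^{*}(\trN)\leq\non^{*}(\Z)$, I would use the second item of Proposition \ref{NonKatětov}: it suffices to show $\trN\leq_{KB}\Z$, or $\trN\leq_{KB}$ some presentation of $\Z$ with the same $\non^{*}$. I would take the presentation of $\Z$ on $\cantor$ as the sets $A$ with $|A\cap 2^{n}|/2^{n}\to 0$, where $2^{n}$ is read as the $n$-th level. The reduction is the identity map, which is injective and hence finite-to-one. So I need $\trN\subseteq\Z$ in this presentation. If $A\in\trN$, then $G(A)$ is null. The measure of $[A\cap 2^{n}]$, which equals $|A\cap 2^n|/2^n$, is bounded by the measure of the open set $\bigcup_{m\geq n}[A\cap 2^{m}]$. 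These open sets decrease to $G(A)$, so their measures tend to $0$, and hence $A\in\Z$. Since the two presentations of $\Z$ are equivalent (the density on levels versus the usual density), I would either cite the equivalence from \cite{Hernand} or just note that the level version also has $\non^{*}$ equal to $\non^{*}(\Z)$. If the equivalence there is only Katětov, I expect a KB map from the standard $\Z$ on $\omega$ to the level version, for instance by enumerating $\cantor$ level by level, which sends density zero to level density zero.

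For $\sss\leq\non^{*}(\trN)$, I would take $\kappa<\sss$ and a family $\{X_{\alpha}:\alpha<\kappa\}\subseteq[\cantor]^{\omega}$, and produce a single $A\in\trN$ meeting every $X_{\alpha}$ infinitely. As in the proof of $\non^{*}_{\omega}(\SPL)\geq\sss_{\omega}$, I shrink each $X_{\alpha}$ to a chain or to an antichain converging to a branch. From it I read off the reals $x_{\alpha}$, $y_{\alpha}$ and the interval structure $I^{\alpha}_{n}=[a^{\alpha}_{n},|\sigma^{\alpha}_{n}|)$. Using $\kappa<\sss$, I get one infinite $B$ on which every $x_{\alpha}$ and $y_{\alpha}$ is almost constant, and which is almost contained in, or almost disjoint from, the relevant unions of intervals.

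The single set $B$ alone is not enough, because $S(B)$ need not be in $\trN$ unless $B$ is infinite, and that is automatic. I would then take $A=S(B')$ for a thin infinite $B'\subseteq B$, which lies in $\trE\subseteq\trN$ by the computation in Proposition \ref{SPLKatětovUpper}. The catch is that the $\non^{*}_{\omega}$ argument needed countably many sets $C^{n}_{m}$. Here I can try to use a finite union $S(C_{0})\cup S(C_{1})\cup S(C_{2})$, still in $\trN$, obtained by applying $\kappa<\sss$ twice more inside $B$ to handle the parity cases. I would also add the countably many tails $S(C_{i}\setminus k)$; these are all contained in finitely many tail-shifted trees whose union is still null, since each $[S(C\setminus k)]$ is null and $\trN$ is a $P$-ideal. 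The $P$-ideal property lets me absorb countably many sets into one, which is exactly what $\SPL$ lacked.

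The main obstacle is the uniformity problem. Each $\alpha$ requires constancy on a tail of $B$ together with compatibility with its own intervals $I^{\alpha}_{n}$, and a single splitting step over $\kappa$ sets must give one $B$ working for all $\alpha$ at once. I would handle this by coding all requirements into $<\sss$ many sets, namely $x_{\alpha}^{-1}[1]$, $y_{\alpha}^{-1}[1]$, $\bigcup_{l}I^{\alpha}_{2l}$ and $\bigcup_{l}I^{\alpha}_{2l+1}$. I then iterate ``not split'' finitely many times, using the fact that families of size $<\sss$ are not splitting, so some infinite set is unsplit by them. The countably many tail corrections are then absorbed using the $P$-ideal property of $\trN$.
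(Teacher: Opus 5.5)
Your proposal is correct. For the first inequality it takes a more direct route than the paper.

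The paper's proof is the chain $\sss\le\sss_{\omega}=\non^{*}_{\omega}(\SPL)\le\non^{*}_{\omega}(\trN)=\non^{*}(\trN)$. It uses the preceding proposition, monotonicity of $\non^{*}_{\omega}$ under $\SPL\subseteq\trN$, and $\non^{*}_{\omega}=\non^{*}$ for $P$-ideals. It leaves the second inequality implicit.

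You unfold that chain instead. With $\sss$ in place of $\sss_\omega$ you get one unsplit set, and you use the $P$-property of $\trN$ directly to absorb the tails. The paper's route is a one-liner given earlier results, and it yields the formally stronger $\sss_\omega\le\non^{*}(\trN)$. Yours is self-contained and combinatorially simpler than the $\SPL$ argument: one set replaces the countable family $C^n_m$, and the reals $y_\alpha$ are not needed.

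Some points to clean up:

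\textbf{The ``uniformity problem''.} It is not an obstacle. The sets $x_\alpha^{-1}[1]$, $U^\alpha_0=\bigcup_l I^\alpha_{2l}$ and $U^\alpha_1=\bigcup_l I^\alpha_{2l+1}$ ($\alpha<\kappa$) are fewer than $\sss$. So a single application gives an infinite $C$ split by none of them. For each $\alpha$ there is then $k$ with $x_\alpha$ constant on $C\setminus k$, and one of three cases holds:
\begin{itemize}
\item $C\setminus k$ misses $U^\alpha_0\cup U^\alpha_1$. Then every point of $C\setminus k$ below $|\sigma^\alpha_n|$ lies below $a^\alpha_n$, so all $\sigma^\alpha_n\in S(C\setminus k)$.
\item $C\setminus k\subseteq U^\alpha_0$. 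Then all $\sigma^\alpha_{2l+1}\in S(C\setminus k)$.
\item $C\setminus k\subseteq U^\alpha_1$. Then all $\sigma^\alpha_{2l}\in S(C\setminus k)$.
\end{itemize}
The thin $B'$ and the three sets $C_0,C_1,C_2$ are therefore unnecessary. Also, $S(B)\in\trE\subseteq\trN$ for every infinite $B$; the only reason one $S(B)$ is not enough is the tails.

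\textbf{The tails.} They cannot be handled by a literal union. Since $S(C\setminus k)\supseteq 2^{\le\min(C\setminus k)}$, we get $\bigcup_k S(C\setminus k)=\cantor$. What you need is a pseudo-union $A\in\trN$ with $S(C\setminus k)\subseteq^{*}A$ for all $k$. This exists because $\trN$ is a $P$-ideal. Replace the sentence about ``finitely many tail-shifted trees whose union is still null'' with this.

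\textbf{The second inequality.} Your proof of $\trN\subseteq\Z_{\mathrm{lev}}$ (the level version) is fine. For $\non^{*}(\Z_{\mathrm{lev}})\le\non^{*}(\Z)$, however, you need $\Z_{\mathrm{lev}}\le_{KB}\Z$. That is, the level-by-level enumeration $e:\omega\to\cantor$ must pull level-density-zero sets back to density-zero sets.

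What you state (``sends density zero to level density zero'') is the converse inclusion. It only gives $\non^{*}(\Z)\le\non^{*}(\Z_{\mathrm{lev}})$.

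The needed direction does hold, because the levels grow geometrically. Suppose $|A\cap 2^{m}|\le\epsilon_m 2^m$ with $\epsilon_m\to 0$, and $N\ge 2^n$ lies in the block of level $n$. Then
\[
\frac{|e^{-1}[A]\cap N|}{N}\le\sum_{m\le n}\epsilon_m 2^{m-n}\longrightarrow 0 .
\]
So $e$ is in fact an isomorphism, and your conclusion stands once this direction is checked.
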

\begin{proof}
    By the previous Proposition and the fact that for $P$-ideals $\non^{*}$ and $\non^{*}_{\omega}$ are equal, we get that $\sss\leq\sss_{\omega}=\non^{*}_{\omega}(\SPL)\leq\non^{*}_{\omega}(\trN)=\non^{*}(\trN)$.
\end{proof}

\section{Antichain numbers of $\mathcal{P}(\omega)/\J$}
Many researchers were interested in cardinal invariants of Van Douwen's diagram for algebra $\mathcal{P}(\omega)/ \J$ (see for example \cite{HrusBalcar}, \cite{Stevo}, \cite{FilipowNice}, \cite{BrendleDiego}). In particular, there is an increasing interest in the antichain numbers of such algerbas.
\begin{definition}
    Suppose that $\J$ is and ideal on $\omega$. We define $\aaa(\J)$, the antichain number of $\J$, to be the smallest size of an uncountable family of $\J$-positive sets, maximal with respect to the property that the intersection of any two of its elements is in $\J$.
\end{definition}
Let us gather known lower boundaries for the $\J$-antichain numbers.
\begin{theorem}(Farkas, Sokoup; \cite{FarkSoukup})\label{FarkasSokoup}
    $\bbb\leq\aaa(\J)$ for all analytic P-ideals.
\end{theorem}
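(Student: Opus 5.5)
We fix an analytic $P$-ideal $\J$ and show that every family $\mathcal{A}=\{A_\alpha:\alpha<\kappa\}\subseteq\J^{+}$ with $\omega\le\kappa<\bbb$ and pairwise intersections in $\J$ fails to be maximal. By the Theorem of Solecki we write $\J=Exh(\phi)$ for a l.s.c.s.m.\ $\phi$; write $\|X\|_\phi=\lim_n\phi(X\setminus n)$. Then $X\in\J^{+}$ iff $\|X\|_\phi>0$, and $\|\cdot\|_\phi$ is subadditive and vanishes on $\J$. The strategy copies the classical proof of $\bbb\le\aaa$. Choose a countably infinite subfamily $\{A_n:n\in\omega\}\subseteq\mathcal{A}$ and disjointify it: $D_n=A_n\setminus\bigcup_{m<n}A_m$. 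Since $A_n\cap A_m\in\J$, we have $A_n\setminus D_n\in\J$, so $\|D_n\|_\phi=\|A_n\|_\phi>0$. The new set will be a union $X=\bigcup_n (D_n\cap[f(n),g(n)))$ of finite pieces, with the pieces chosen to carry a fixed positive amount of $\phi$-mass.

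\textbf{Step 1 (normalizing masses).} The masses $\|D_n\|_\phi$ may tend to $0$, which would make the resulting $X$ land in $\J$. The remedy is to pass to a subfamily along which the masses are bounded below, or otherwise to rescale. The first idea is to choose, for each $n$, finite sets $F^n_k\subseteq D_n\setminus k$ with $\phi(F^n_k)\ge\|D_n\|_\phi/2$. Then to take $X=\bigcup_n F^n_{h(n)}$ for a suitable $h$. This $X$ is almost disjoint mod $\J$ from each $A_n$, because $X\cap A_n$ is contained in finitely many of the pieces plus $A_n\cap\bigcup_{m\ne n}D_m$, and that part must also be controlled. For $X\in\J^{+}$ we need $\|X\|_\phi>0$, which requires infinitely many pieces to have $\phi$-mass above a fixed $\varepsilon$. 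I therefore pick the countable subfamily so that $\inf_n\|A_n\|_\phi>0$. If no such subfamily existed, then for each $k$ only finitely many $\alpha$ would have $\|A_\alpha\|_\phi>1/k$, making $\mathcal{A}$ countable. So, since $\kappa$ is uncountable, some $\varepsilon>0$ has infinitely many $\alpha$ with $\|A_\alpha\|_\phi>\varepsilon$. Uncountability of $\mathcal{A}$ is exactly what is used here.

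\textbf{Step 2 (the bounding argument).} For each $\alpha\ge\omega$ (i.e.\ $A_\alpha$ not among the $A_n$), $A_\alpha\cap D_n\in\J$, so $\lim_k\phi(A_\alpha\cap D_n\setminus k)=0$. Define $f_\alpha(n)$ to be the least $k$ with $\phi(A_\alpha\cap D_n\setminus k)<2^{-n}$. Since $\kappa<\bbb$, there is $h\in\Baire$ with $f_\alpha\le^* h$ for all $\alpha$. Take $X=\bigcup_n F^n$, where $F^n\subseteq D_n\setminus h(n)$ is finite with $\phi(F^n)>\varepsilon/2$; these exist by Step 1. Then $\|X\|_\phi\ge\varepsilon/2$ by lower semicontinuity, since the $F^n$ can be taken with increasing minima. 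For a fixed $\alpha$ we get $A_\alpha\cap X\subseteq^* \text{finite}\cup\bigcup_{n\ge N}(A_\alpha\cap D_n\setminus h(n))$. The tails then have $\phi$-value at most $\sum_{n\ge N}2^{-n}\to0$, so $A_\alpha\cap X\in\J$. For $\alpha=n$ we have $A_n\cap X\subseteq F^n\cup(A_n\cap\bigcup_{m\ne n}D_m)$. The set $A_n\cap D_m$ lies in $\J$ for each $m\ne n$, but the union over $m$ need not; I handle this by also including the countably many functions $m\mapsto$ (least $k$ with $\phi(A_n\cap D_m\setminus k)<2^{-m}$) in the family to be bounded, which is still fewer than $\bbb$.

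\textbf{Main obstacle.} The delicate point is ensuring $X\notin\J$ while the pieces are forced arbitrarily far out. This relies on $\|D_n\|_\phi$ being a limit and hence persisting in tails; that persistence is where the $P$-ideal/$Exh$ structure is essential, and it is precisely what fails for ideals like $\nwd$. One also needs $\mathcal{A}\ne\{A_n\}$, which again follows from uncountability. Combining the two steps, $X$ is a $\J$-positive set almost disjoint mod $\J$ from every member of $\mathcal{A}$, so $\mathcal{A}$ is not maximal and $\bbb\le\aaa(\J)$.
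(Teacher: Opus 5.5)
Your argument is correct. The paper does not prove this statement itself; it is quoted from Farkas and Soukup. The closest in-paper argument is Theorem~\ref{MAINthmAntichains} together with the proposition that analytic $P$-ideals are good. That only gives $\min\{\cov^{+}_{h}(\J),\bbb\}\leq\aaa(\J)$, and the author remarks that this is weaker than the Farkas--Soukup bound.

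Your proof departs from that route exactly at Step 2. The paper first spends $\kappa<\cov^{+}_{h}(\J)$ to shrink each $A_\alpha$ to a positive $B_\alpha$ meeting every other $A_\beta$ in a \emph{finite} set. This lets it define the functions to be dominated by $B_n\cap A_\alpha\subseteq f_\alpha(n)$. You skip the shrinking altogether. You only use $A_\alpha\cap D_n\in Exh(\phi)$ and let $f_\alpha(n)$ mark the point past which this intersection has $\phi$-mass below $2^{-n}$. After dominating, $X\cap A_\alpha$ is, up to a finite set, a union of pieces with summable masses, so it lies in $\J$ by subadditivity and lower semicontinuity.

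This is the genuinely $Exh(\phi)$-specific step. Members of an $F_\sigma$ ideal $Fin(\phi)$ need not have $\phi$-small tails, in line with the consistency of $\aaa(\ED_{fin})<\bbb$. It is what buys the sharp bound $\bbb\leq\aaa(\J)$. The paper's route, in exchange, works for every good ideal ($F_\sigma$ ideals, $\nwd$, \dots) at the cost of the $\cov^{+}_{h}$ term. Both arguments use uncountability of the family in the same way: to find infinitely many members with mass above a fixed $\varepsilon$.

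Three small corrections.

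First, your opening sentence should say $\omega_1\leq\kappa$, not $\omega\leq\kappa$, because countable maximal families do exist. For instance, let $P_n$ be the set of $k\geq 1$ divisible by $2^n$ but not by $2^{n+1}$, with $0$ added to $P_0$. These sets are maximal modulo $\Z$: a set meeting every $P_m$ in a density-zero set has upper density at most $2^{-n}$ for every $n$. This slip is harmless, since $\aaa(\J)$ only counts uncountable families and you do invoke uncountability later.

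Second, the extra bounding functions for $\alpha=n$ are unnecessary, though including them does no harm. By your disjointification, $A_n\cap D_m=\emptyset$ for $m>n$. Hence $A_n\cap X\subseteq F^n\cup\bigcup_{m<n}(A_n\cap A_m)\in\J$ directly.

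Third, your ``main obstacle'' paragraph misplaces where the $P$-ideal structure is used. A positive set stays positive in its tails for any ideal containing the finite sets. What really needs $\J=Exh(\phi)$ is the uniform lower bound $\varepsilon$ on the masses and the summable-tail estimate of Step 2.
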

Regarding the Fubini product of finite sets ideal we have
\begin{theorem}(Steprāns; \cite{Steprans})\label{BrendleFFantichains}
    $\bbb\leq\aaa(\FF)$
\end{theorem}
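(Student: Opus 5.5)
Fix $\kappa<\bbb$ and an uncountable almost disjoint family $\{A_\alpha:\alpha<\kappa\}$ of $\FF$-positive sets (pairwise intersections in $\FF$). I will produce an $\FF$-positive set $B$ with $B\cap A_\alpha\in\FF$ for every $\alpha$. This shows that no such family is maximal, and hence $\bbb\leq\aaa(\FF)$. I plan to adapt the classical argument for $\bbb\leq\aaa$ to columns.

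First I normalize the family. A set $A$ is $\FF$-positive exactly when the set $P_A=\{n:A\cap(\{n\}\times\omega)$ is infinite$\}$ is infinite. Choose countably many distinct indices $\alpha_k$, $k\in\omega$. Since $\bigcup_{j<k}A_{\alpha_j}\cap A_{\alpha_k}\in\FF$, I can shrink to positive sets $A'_k\subseteq A_{\alpha_k}$ with $A'_k\cap A'_j\in\FF$. By removing finitely many columns and finite parts of columns from each, I further arrange that every column of $A'_k$ is either empty or infinite, and that for $j<k$ each column of $A'_j\cap A'_k$ is finite.

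Next I encode the remaining $\alpha$ by functions. For $\alpha\notin\{\alpha_k\}$ and each $k$, the set $A_\alpha\cap A'_k$ is in $\FF$. So there are $N_\alpha(k)$ and $f_{\alpha,k}\in\Baire$ such that for all $n\geq N_\alpha(k)$ the column $A_\alpha\cap A'_k\cap(\{n\}\times\omega)\subseteq\{n\}\times f_{\alpha,k}(n)$. I code these by a single function $g_\alpha(\langle k,n\rangle)=\max(N_\alpha(k),f_{\alpha,k}(n))$. Since $\kappa<\bbb$, there is $h$ dominating every $g_\alpha$, and $h$ can be taken increasing in each coordinate.

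Now I build $B$ by a diagonal construction. Pick increasing $n_0<n_1<\dots$ with $n_k\in P_{A'_k}$, $n_k\ge h(\langle k,0\rangle)$, and additionally $n_k\notin P_{A'_j}$ for $j<k$ where possible. Let
\[
B=\bigcup_k \bigl(A'_k\cap(\{n_k\}\times\omega)\setminus(\{n_k\}\times h(\langle k,n_k\rangle))\bigr)\setminus\bigcup_{j<k}A'_j.
\]
The columns of $B$ are infinite at each $n_k$, because the removed part of $A'_j$ for $j<k$ is finite in that column. So $B\in\FF^{+}$. For $\alpha=\alpha_j$, $B\cap A_{\alpha_j}$ lives in the columns $n_k$, $k\le j$, together with finite parts of columns $n_k$, $k>j$, since $A_{\alpha_j}\cap A'_k$ has finite columns for almost all $n$. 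Here I need to ensure this almost-all threshold is below $n_k$; I will do that by also letting $h$ dominate those countably many thresholds. For other $\alpha$, fix $K$ with $g_\alpha\le h$ beyond $K$ in a suitable sense. For $k$ large, $n_k\ge N_\alpha(k)$ and the column of $A_\alpha\cap A'_k$ at $n_k$ lies below $h(\langle k,n_k\rangle)$, so $B\cap A_\alpha$ meets such columns trivially. Hence $B\cap A_\alpha\in\FF$.

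The main obstacle is that $g_\alpha\le^* h$ only holds on cofinitely many codes $\langle k,n\rangle$, and the finitely many exceptions might hit exactly the chosen points $(k,n_k)$. I would fix this by choosing the coding so that the exceptional set is finite in $k$. Concretely, I take $g_\alpha(k)=\max(N_\alpha(k),f_{\alpha,k}(n)$ for $n\le$ some bound$)$, or I dominate the functions $k\mapsto f_{\alpha,k}(n_k)$ after first fixing the $n_k$ independently of $\alpha$. The latter seems cleanest: choose the $n_k$ first, then apply $\kappa<\bbb$ to the functions $k\mapsto\max(N_\alpha(k),f_{\alpha,k}(n_k))$. The only remaining issue is that $n_k\ge N_\alpha(k)$ must hold for almost all $k$, and I will handle that by enlarging $f_{\alpha,k}(n_k)$ to cover the whole column when $n_k<N_\alpha(k)$. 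This works whenever that column is finite, which still needs checking, and it is the step I would scrutinize.
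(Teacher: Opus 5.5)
Your first construction, everything up to ``Hence $B\cap A_\alpha\in\FF$'', is already a complete proof. It is the same diagonalization the paper uses. The paper only cites Stepr\=ans for this statement, but Case~1 of its proof of $\bbb\leq\aaa(\BI)$ is exactly the $\FF$ argument inside the slice $\{i^{*}\}\times\omega^{2}$. There, one bounds the columns of the pairwise intersections and dominates. The $k$-th column of the new set is placed at a position beyond the dominated thresholds, and only its part above the dominating bound is kept. The paper uses two dominations: first one $f$ above all height bounds, then one $g$ above the thresholds after which those bounds lie below $f$. Your coding of $(k,n)$ into a single $g_\alpha$ does both at once. Also, subtracting $\bigcup_{j<k}A'_j$ already handles the sets $A_{\alpha_j}$ with no extra domination, since $A_{\alpha_j}\setminus A'_j$ has only finitely many infinite columns.

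The ``main obstacle'' in your last paragraph is not real, and the ``cleanest'' fix you propose is the one version that genuinely fails. The codes where $g_\alpha>h$ form a finite set, so they involve only finitely many $k$. For every larger $k$ you get $N_\alpha(k)\leq h(\langle k,0\rangle)\leq n_k$ and $f_{\alpha,k}(n_k)\leq h(\langle k,n_k\rangle)$, whatever $n_k$ was. So only finitely many columns of $B$ meet $A_\alpha$.

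If instead you fix the $n_k$ before dominating, nothing forces $n_k\geq N_\alpha(k)$. Consider $D=\bigcup_k\{n_k\}\times\{m:(n_k,m)\in A'_k\}$. It is $\FF$-positive and meets every $A_{\alpha_j}$ in a set from $\FF$, so nothing prevents it from being one of the $A_\alpha$. For that $\alpha$, the column of $A_\alpha\cap A'_k$ at $n_k$ is infinite for every $k$. No bound on heights can then help, and $B\cap A_\alpha\notin\FF$; this is exactly the case you flagged as needing scrutiny.

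Choosing the columns $n_k$ above a function that dominates all the $N_\alpha$ is the essential use of $\kappa<\bbb$. So keep your original version, or dominate the $N_\alpha$ first, then pick the $n_k$, then dominate the heights.
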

These two examples suggest, that the classical result saying that $\bbb\leq\aaa$ extends quite well from the algebra $\mathcal{P}(\omega)/fin$ to the algebras $\mathcal{P}(\omega)/\J$. However, this is not completely true as $\bbb\leq\aaa(\J)$ may be not provable even for $F_{\sigma}$-ideals. This is due to the following, unpublished result of Brendle.
\begin{theorem}(Brendle; \cite{BrendleAntichains})
    $\aaa(\ED_{fin})<\bbb$ holds in Hechler's model.
\end{theorem}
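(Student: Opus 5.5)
The plan is the standard one for consistency results of the form $\aaa(\cdot)<\bbb$ in the Hechler model. Start with a ground model $V\models\mathrm{GCH}$ and let $\mathbb{P}_{\omega_2}$ be the finite support iteration of Hechler forcing $\mathbb{D}$ of length $\omega_2$. In $V^{\mathbb{P}_{\omega_2}}$ we have $\bbb=\continuum=\omega_2$, so it suffices to find in $V$ a family $\mathcal{A}=\{A_\alpha:\alpha<\omega_1\}$ of $\ED_{fin}$-positive subsets of $\Delta$ with three properties: pairwise intersections are in $\ED_{fin}$, $\mathcal{A}$ is maximal, and maximality survives the whole iteration.

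Throughout I use the following concrete description of $\ED_{fin}$. A set $X\subseteq\Delta$ is $\ED_{fin}$-positive iff the column sizes $|X_n|$, where $X_n=\{m:(n,m)\in X\}$, are unbounded. A set $X$ is $\ED_{fin}$-almost disjoint from $A_\alpha$ iff $|X_n\cap (A_\alpha)_n|$ is eventually bounded by some constant $N$.

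\textbf{Step 1: the indestructibility notion.} Define what it means for $\mathcal{A}$ to be \emph{Hechler-indestructible}. Let $\dot X$ be a $\mathbb{D}$-name for a positive set. For each stem $s\in\baire$ (the conditions with stem $s$ form a centered piece) put
\[
Y^{s}_{k}=\{(n,m): \text{no condition with stem } s \text{ forces } (n,m)\notin \dot X\}.
\]
These sets are computed in $V$ relative to bounds $k$ on the finitely many values that conditions with stem $s$ must respect. Because the piece is centered and $\dot X$ is forced to have unbounded columns, I expect to extract ground-model positive sets (in fact sets $Z\subseteq\Delta$ whose columns contain many pairwise disjoint sets, each forced to meet $\dot X$) that capture $\dot X$ along a rank function in the style of Baumgartner--Dordal. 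The combinatorial property required of $\mathcal{A}$ is then the following: for every countable family of such ``fat'' ground-model witnesses, some $A_\alpha$ meets one of them in a set that is positive in this strong, uniform sense. This is analogous to how Brendle and others build Cohen- or Hechler-indestructible mad-type families.

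\textbf{Step 2: construction of $\mathcal{A}$ under CH.} Enumerate in type $\omega_1$ all countable families of such witnesses, together with all possible ranks and names. Build $A_\alpha$ by recursion. At stage $\alpha$ the family $\{A_\beta:\beta<\alpha\}$ is countable, so we may list it as $B_0,B_1,\dots$ and build $A_\alpha$ column by column. At column $n$ we pick many points that avoid the finitely many $B_i$ with $i<n$ up to a bounded number of points per column, and that hit the current witness in a growing number of its disjoint blocks. Since $B_i\cap B_j\in\ED_{fin}$, these requirements are compatible; this is a routine diagonalization.

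\textbf{Step 3: preservation.} This is the step I expect to be the main obstacle. First prove that one Hechler step preserves maximality. Suppose $\dot X$ is forced to be positive and $\ED_{fin}$-almost disjoint from all $A_\alpha$. Run the Baumgartner--Dordal rank argument on stems, using the statement ``for every $N$, $|X_n\cap(A_\alpha)_n|\le N$ eventually.'' From this, find a stem $s$ of minimal rank that decides enough of $\dot X$ to produce a ground-model fat witness. Step 2 then supplies an $A_\alpha$ meeting that witness in a positive way. Pulling back through the centered piece, this yields a condition forcing $|\dot X_n\cap (A_\alpha)_n|$ to be unbounded, which is a contradiction.

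Second, lift preservation through the finite support iteration. Successor stages are handled by the single-step lemma applied over the intermediate model; for this the indestructibility property must be stated in a form that is itself preserved, so I will phrase it directly for $\sigma$-centered names. Limit stages of uncountable cofinality are trivial, since every real appears at an earlier stage. At limits of countable cofinality I would use the standard fact that finite support iterations of Suslin $\sigma$-centered forcings preserve properties of the form ``every name is captured by countably many centered pieces,'' together with the fact that $\mathbb{P}_\delta$ is itself $\sigma$-centered for $\delta<\continuum^{+}$.

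The delicate point is choosing the right notion of ``fat witness.'' It must be weak enough that every Hechler name for a positive set yields one (this is where the rank argument and the bounded-column structure of $\ED_{fin}$ are essential). It must also be strong enough that hitting it guarantees a positive intersection with $\dot X$. If the rank argument does not work directly, the first fallback I would try is to first reduce $\dot X$ to a set whose columns are forced to lie within a ground-model bound; this uses that Hechler forcing adds no ``eventually different'' behaviour inside a single bounded column.
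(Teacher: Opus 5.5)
The paper only cites this result. Its proof of Theorem~\ref{theoremHechlerWander} for $\WANDER$ is, per the acknowledgements, modelled on Brendle's argument, and your outline shares that architecture: a capture property of Hechler forcing obtained by a rank argument, preservation of that property under finite support iterations, and a CH recursion of length $\omega_1$. However, the heart of the proof, the capture property itself, is never formulated (``fat witness'' stays undefined), and the parts you do specify fail.

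In Step 2 the requirements are not compatible ``since $B_i\cap B_j\in\ED_{fin}$''. What matters is whether the witness eventually avoids each $B_i$, and witnesses read off from $\dot X$ via ``no condition with stem $s$ forces $(n,m)\notin\dot X$'' need not. For instance, let $\dot d$ be the Hechler real, $Y=B_1\setminus B_0$, and $\dot X=Y\cup\{(n,m)\in B_0:\ m \text{ is among the first } \dot d(|s|) \text{ elements of } (B_0)_n\}$. Then $\Vdash\dot X\cap B_0\in\ED_{fin}$, yet no condition with stem $s$ forces any point of $B_0$ out of $\dot X$. The paper's remedy has two stages. First fix names $\dot a_m$ for $k$-blocks (for $\ED_{fin}$: $k$-element subsets of single columns, with increasing columns) satisfying $\dot a_m\subseteq\dot X\setminus\bigcup_{i\le m}A_{\gamma_i}$. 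Only then extract ground-model sequences from the favoured values; these avoid $\bigcup_{i\le m}A_{\gamma_i}$ by absoluteness (clause 1 of $(*)_{\IAP(k)}$). Consequently the witnesses depend on $\{A_\beta:\beta<\alpha\}$, and the bookkeeping has to run over pairs $(p,\dot A)$, not over name-free families of witnesses.

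Second, a single stem cannot yield the contradiction in Step 3. Favouring at $s$ says nothing about conditions with longer stems. A condition below $p$ deciding the $\ED_{fin}$-bound of $\dot X\cap A_\alpha$ will typically have a longer stem. Nor can infinitely many members of one centered piece be amalgamated into a single condition forcing $|\dot X_n\cap(A_\alpha)_n|$ to be unbounded. What is needed is one $A_\alpha$ that, for every $k$, contains a $k$-block sequence agreeing infinitely often with \emph{each} of the countably many ground-model sequences $b^{\sigma}$ attached to the stems $\sigma$ of type 1 or 2. The conclusion then follows by a density argument, as in Lemma~\ref{lemmaIteratonHechler}.

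Third, a recursion of length $\omega_1$ cannot enumerate the $\omega_2$ many names of $\mathbb{P}_{\omega_2}$. The paper builds $\mathcal{A}$ against $\mathbb{D}_{\omega_1}$-names and reflects every $\mathbb{D}_{\omega_2}$-name into a complete suborder $\mathbb{D}_{\omega_2\cap X}\cong\mathbb{D}_{\mathrm{otp}(X)}$ with $X$ countable; this step is missing from your plan. Relatedly, the paper preserves a property of the \emph{forcing}, with witnesses always in $V$. You instead propose to re-apply a one-step lemma over intermediate models, which requires a property of the family there that your construction in $V$ does not supply.

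Finally, the limit stage cannot lean on $\mathbb{P}_\delta$ being $\sigma$-centered. $\ED_{fin}$ is good and $\cov^{+}_{h}(\ED_{fin})$ is uncountable, so the paper's corollary gives $\ppp\le\aaa(\ED_{fin})$. Hence forcing $\ppp>\omega_1$ with $\sigma$-centered posets destroys every $\ED_{fin}$-antichain of size $\omega_1$. The preserved property has to be Hechler-specific, namely the $k$-block analogue of $(*)_{\IAP(k)}$, and its preservation at limits of countable cofinality must be checked directly.
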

Another example of an ideal for which $\bbb\leq\aaa(\J)$ is not provable is the nowhere-dense ideal, due to the following result of Steprāns.
\begin{theorem}(Steprāns; \cite{Steprans})
    The following holds:
    \begin{itemize}
        \item[--] $\ppp\leq\aaa(\nwd)$ holds in ZFC,
        \item[--] $\aaa(\nwd)<\bbb$ holds in Laver's model.
    \end{itemize}
\end{theorem}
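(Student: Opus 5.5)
The statement has two parts, and I would treat them separately: a ZFC argument by Martin's axiom for $\sigma$-centered posets (using $\ppp=\mmm_{\sigma\mathrm{-centered}}$, Bell's theorem recalled in the introduction), and a consistency argument by a CH-construction that survives the Laver iteration.

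\emph{The inequality $\ppp\leq\aaa(\nwd)$.} Let $\kappa<\ppp$ and let $\{A_\alpha:\alpha<\kappa\}$ be an uncountable family of $\nwd$-positive subsets of $\cantor$ with pairwise intersections in $\nwd$. I want a $\nwd$-positive $X\subseteq\cantor$ such that $X\cap A_\alpha\in\nwd$ for every $\alpha$, which shows the family is not maximal. I would read $\nwd$ as ``nowhere dense as a subset of the tree'': every $\sigma$ has some $\tau\supseteq\sigma$ with no element of the set extending $\tau$. Since every $A_\alpha$ is positive, each is dense above some node $s_\alpha$. Uncountability and the pigeonhole principle give one $s$ with $s=s_\alpha$ for uncountably many $\alpha$; I will try to make $X$ dense above $s$.

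The forcing will have conditions $(F,h)$, where $F\subseteq\cantor$ is finite (the approximation to $X$) and $h$ is a finite partial function $\kappa\times\cantor\to\cantor$ with $h(\alpha,t)\supseteq t$. A pair $(F,h)$ is read as the promise that no node added later which extends $h(\alpha,t)$ may lie in $A_\alpha$. Extension means enlarging $F$ consistently with the promises and enlarging $h$. Two conditions with the same $F$ are compatible, since one simply takes the union of the promises. As there are countably many $F$, the poset is $\sigma$-centered.

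There are two kinds of dense sets, fewer than $\ppp$ in total. For $(\alpha,t)$, the set of conditions with $(\alpha,t)\in\mathrm{dom}(h)$ makes $X\cap A_\alpha$ nowhere dense. For $t\supseteq s$, the set of conditions with some node of $F$ above $t$ makes $X$ dense above $s$.

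The main obstacle is the density of the second kind of set. Finitely many promises may be active above a node $v\supseteq t$, given by a finite $E\subseteq\kappa$, and one must still find a node above $v$ that lies outside $\bigcup_{\beta\in E}A_\beta$. This fails if the active $A_\beta$'s jointly cover a cone; for instance, one could be the even-length nodes and the other the odd-length nodes. So the choice of $h(\alpha,t)$ must be made carefully. If $A_\alpha$ is nowhere dense above $t$, I choose $h(\alpha,t)$ so that its cone misses $A_\alpha$ altogether. If $A_\alpha$ is dense above some $w\supseteq t$, I use that $A_\alpha\cap A_\beta\in\nwd$ for the finitely many $\beta$ already active to extend $w$ to a node whose cone avoids $A_\alpha\cap A_\beta$. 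I then try to maintain the invariant that the complement of the union of the active sets is somewhere dense inside every cone. If this invariant cannot always be arranged, the fallback is a preliminary case split. In the degenerate case some finite union of the $A_\alpha$'s covers a cone; then that cone already meets every other $A_\beta$ in a $\nwd$ set, and the uncountability of the family should produce a witness directly.

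\emph{The consistency of $\aaa(\nwd)<\bbb$ in Laver's model.} Start from GCH and perform the countable support iteration of Laver forcing of length $\omega_2$, so that $\bbb=\continuum=\omega_2$ in the extension. Using CH in the ground model, I would build an $\omega_1$-sized family that is $\nwd$-almost disjoint and maximal in a strong sense. The construction enumerates in type $\omega_1$ all ``slalom codes'' for candidate $\nwd$-positive sets: countable families of finite sets of nodes $(\Phi_n)_{n\in\omega}$ with $|\Phi_n|\leq 2^n$. At each stage it adds a set meeting some admissible selection from each code in a positive set.

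Preservation would go through the Laver property, which is preserved under countable support iterations. Given a name $\dot X$ for a positive set and a condition, first fix a cone above which $\dot X$ is dense. Then use the Laver property to find a ground-model slalom that captures, for each $n$, the finitely many possible witnesses that $\dot X$ reaches below level-$n$ nodes. Because the slalom is a ground-model object handled by the CH construction, some family member is forced to meet $\dot X$ in a positive set. The delicate point is to formulate ``strong maximality'' so that it is exactly what the Laver property supplies; this replaces the Cohen-real obstruction that appears in Hechler-type models.
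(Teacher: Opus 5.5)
\textbf{First bullet.} Your $\sigma$-centered forcing can be completed, but the step you flag as the main obstacle is left open, and your proposed remedy would not close it. Choosing $h(\alpha,t)$ so that its cone avoids $A_\alpha\cap A_\beta$ controls only pairwise intersections, so it cannot stop two active sets from jointly covering a cone. The fallback case split is not an argument.

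What you are missing is that your own pigeonhole choice of $s$ already rules the obstacle out. Take a finite $E\subseteq\kappa$ and a node $v\supseteq s$, and pick $\alpha\notin E$ with $s_\alpha=s$. Suppose $\bigcup_{\beta\in E}A_\beta$ contained every node extending $v$. Then the part of $A_\alpha$ above $v$ would lie in $\bigcup_{\beta\in E}(A_\alpha\cap A_\beta)\in\nwd$, although $A_\alpha$ is dense above $v$. Hence the complement of every finite union of the $A_\beta$'s is dense above $s$, and no care in choosing promises is needed.

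Concretely, use conditions $(F,E)$ with $F\in[\cantor]^{<\omega}$ and $E\in[\kappa]^{<\omega}$, where an extension may only add nodes outside $\bigcup_{\beta\in E}A_\beta$. This also removes the problem that the union of two partial functions $h$ need not be a function. To put a node above $t\supseteq s$ into $F$, extend $t$ to some $v$ longer than all relevant nodes and apply the observation. The generic $X$ is then dense above $s$ with $X\cap A_\alpha$ finite for every $\alpha$, and Bell's theorem finishes.

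The paper does not reprove Steprāns' theorem; it only cites it. It does obtain a stronger form of the first bullet by a different route: $\nwd$ is good and $\cov^{+}_{h}(\nwd)=\add(\Meager)$, so Theorem~\ref{MAINthmAntichains} gives $\add(\Meager)\le\aaa(\nwd)$, and $\ppp\le\add(\Meager)$. That route first shrinks each $A_\alpha$, via $\cov^{+}$, to a positive $B_\alpha$ almost disjoint from all other $A_\beta$. It then glues together tails of countably many $B_n$'s cut off by a single dominating function. Your argument is self-contained but yields only $\ppp$; the paper's yields the sharper bound at the cost of the cited computation of $\cov^{+}(\nwd)$.

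\textbf{Second bullet.} Here there is a genuine gap. The Laver property produces ground-model slaloms only for names of functions bounded by a ground-model function. The witnesses you want to trap are not so bounded, because Laver forcing adds a dominating real.

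For instance, let $\dot d$ name an increasing dominating real added by Laver forcing. Then $\dot X=\{u^\frown 0^{\dot d(|u|)}{}^\frown 1:u\in\cantor\}$ is dense. Yet every node of $\dot X$ extending a level-$n$ node that ends in $1$ and is not itself in $\dot X$ has length greater than $\dot d(n)$. The lengths of nodes in a ground-model slalom of finite sets are bounded by a ground-model function, so it cannot contain these witnesses.

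Beyond this, the notion of ``strong maximality'', which is where all the work lies, is never formulated. The construction step ``add a set meeting some admissible selection from each code'' is also unclear. The ground model does not know which selection will be realized, so $A_\alpha$ would have to catch all of them while staying $\nwd$-almost disjoint from the earlier sets. A correct proof must separate the bounded information the Laver property controls from the dominating part, which has to be absorbed by the combinatorics of the CH construction. As written, this half is a plan rather than a proof.
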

For ideals generated by m.a.d. families we have the following.
\begin{theorem}(Brendle, Castro, Hrušák, Mejía; \cite{BrendleCastro})
    For any m.a.d. family $\mathcal{A}$, the inequality $\bbb\leq\aaa(\mathcal{I}(\mathcal{A}))$ holds.
\end{theorem}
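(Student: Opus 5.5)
We argue by contradiction. Let $\I=\mathcal{I}(\mathcal{A})$ and suppose $\mathcal{B}=\{B_\alpha:\alpha<\kappa\}$ is an uncountable maximal family of $\I$-positive sets with pairwise intersections in $\I$, where $\kappa<\bbb$. We will produce an $\I$-positive $X$ with $X\cap B_\alpha\in\I$ for every $\alpha<\kappa$. Since $\mathcal{B}$ is uncountable, $X$ is then not already a member of $\mathcal{B}$, so this contradicts maximality. Two elementary facts about $\I$ will be used throughout:
\begin{itemize}
\item a set $Y$ is $\I$-positive exactly when it is not almost covered by finitely many members of $\mathcal{A}$;
\item by maximality of $\mathcal{A}$, such a $Y$ has infinite intersection with infinitely many members of $\mathcal{A}$.
\end{itemize}

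First we fix the countably many sets $B_n$, $n<\omega$, and put $B'_n=B_n\setminus\bigcup_{m<n}B_m$. Each $B'_n$ is still $\I$-positive, because $B_n\cap B_m\in\I$. By induction on a bookkeeping of $\omega\times\omega$, we choose pairwise distinct $A_{n,k}\in\mathcal{A}$ such that $C_{n,k}=B'_n\cap A_{n,k}$ is infinite for all $n,k$. This is possible since each $B'_n$ meets infinitely many members of $\mathcal{A}$ in infinite sets. The set $X$ will have the form
\[
X=\bigcup_n \bigl(C_{n,h(n)}\setminus g(n)\bigr)
\]
for suitable $h,g\in\Baire$. Any such $X$ is $\I$-positive: it has infinite intersection with infinitely many distinct members $A_{n,h(n)}$ of $\mathcal{A}$, and a finite union of members of $\mathcal{A}$ almost contains only finitely many of these intersections. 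Moreover, for $m<\omega$ we have $X\cap B_m\subseteq\bigcup_{n\le m}A_{n,h(n)}\in\I$, because $C_{n,k}\cap B_m=\emptyset$ whenever $n>m$. So the sets $B_m$ with $m<\omega$ are handled automatically.

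For $\alpha\ge\omega$ and each $n$, we have $B_\alpha\cap B_n\in\I$. Hence there is a finite $F_{\alpha,n}\subseteq\mathcal{A}$ with $B_\alpha\cap B_n\subseteq^*\bigcup F_{\alpha,n}$. Because the $A_{n,k}$ are distinct and almost disjoint, $B_\alpha\cap C_{n,k}$ is finite whenever $A_{n,k}\notin F_{\alpha,n}$. Let $h_\alpha(n)$ be one more than the largest $k$ with $A_{n,k}\in F_{\alpha,n}$, and $0$ if there is none. Since $\kappa<\bbb$, we pick $h$ with $h_\alpha\le^* h$ for all $\alpha$. Now $h$ is fixed, so for each $\alpha$ we set $g_\alpha(n)=\max(B_\alpha\cap C_{n,h(n)})+1$ whenever $h(n)\ge h_\alpha(n)$, and $g_\alpha(n)=0$ otherwise. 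Using $\kappa<\bbb$ again, we take $g$ dominating all $g_\alpha$. Then for all but finitely many $n$ we have $B_\alpha\cap(C_{n,h(n)}\setminus g(n))=\emptyset$. Therefore $X\cap B_\alpha$ is contained in finitely many of the sets $A_{n,h(n)}$, so $X\cap B_\alpha\in\I$.

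The main point requiring care is the one just handled. The finite sets $F_{\alpha,n}$ vary with $n$, so a single choice of one $A_n$ per $n$ could be caught by $F_{\alpha,n}$ for infinitely many $n$. This is why each $B'_n$ needs infinitely many candidates $A_{n,k}$, and why the bound $\kappa<\bbb$ is applied twice: first to choose which candidate to use, and then to cut off finite initial segments.
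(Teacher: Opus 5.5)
Your proof is correct, and it takes a more direct route than the paper's. The paper obtains this inequality as a corollary of Theorem~\ref{MAINthmAntichains} ($\min\{\cov^{+}_{h}(\J),\bbb\}\leq\aaa(\J)$ for good $\J$). It combines this with the goodness of $\mathcal{I}(\mathcal{A})$ and with Proposition~\ref{covPlusForMad}, $\bbb\leq\cov^{+}(\mathcal{I}(\mathcal{A}))$. That proposition is applied tacitly to the restrictions $\mathcal{I}(\mathcal{A})|_{X}$, which are again m.a.d.\ ideals, in order to bound $\cov^{+}_{h}$.

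Unwound, the paper first shrinks every antichain member to a positive set almost disjoint from all the others. It avoids the finitely many \emph{bad} members $F_{\alpha,n}$ of $\mathcal{A}$ by a cardinality argument: it picks members of the trace of $\mathcal{A}$ on $B_n$ outside the at most $\kappa$ members needed to cover all the sets $B_n\cap B_\alpha$. This silently uses that the trace has at least $\bbb$ elements, i.e.\ the classical $\bbb\leq\aaa$.

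You never need an $A$ that is harmless for all $\alpha$ at once. Your first use of $\kappa<\bbb$ (the function $h$) only makes $A_{n,h(n)}$ harmless, for each fixed $\alpha$, at almost all $n$, which is exactly what the final count needs. Your second use (the function $g$) is the same truncation as in the paper's main theorem.

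So your argument is self-contained, uses only the countably many $B_n$, and bypasses $\cov^{+}_{h}$, restrictions, and $\bbb\leq\aaa$. The paper's route is less economical for this particular ideal but modular. The same template yields the lower bounds for $F_\sigma$-ideals, analytic $P$-ideals and $\nwd$. It also isolates the m.a.d.-specific content in the invariant inequality $\bbb\leq\cov^{+}(\mathcal{I}(\mathcal{A}))$.

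One small slip: $X\notin\mathcal{B}$ holds not because $\mathcal{B}$ is uncountable. It holds because $X$ is $\I$-positive while $X\cap B_\alpha\in\I$ for every $\alpha$.
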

We will show another lower bound (which implies some of the results stated above) for a wide class of ideals (including almost all ideals that we are interested in this article). We will need the following cardinal invariant.
\begin{definition}\label{defOfCovPlus}
For an ideal $\J$ define:
\begin{itemize}
        \item[--] $\cov^{+}(\J)=\min\{|\mathcal{F}|:\mathcal{F}\subseteq\J \forall X\in\J^{+}$ $\exists F\in\mathcal{F}$ such that $|X\cap F|=\omega\}$,
        \item[--] $\cov^{+}_{h}(\J)=\min\{\cov^{+}(\J|_{X}):X\in\J^{+}\}$
\end{itemize}
\end{definition}
The relation between the two invariant is closely related to the notion of \emph{K-uniformity}. An ideal $\J$ is \emph{K-uniform} if $\J|_{F}\leq\J$ for every $F\in\J^{+}$. 
The following observation is easy to verify (see \cite{FarkasZdomskyy} for the proof).
\begin{proposition}
    The following hold:
    \begin{itemize}
        \item[--] $\add^{*}(\J)\leq \cov^{+}(\J)\leq\cov^{*}(\J)$
        \item[--] $\cov^{+}_{h}(\J)\leq \cov^{+}(\J)$,
        \item[--] $\J_{0}\leq_{K}\J_{1}$ implies that $\cov^{+}(\J_{1})\leq\cov^{+}(\J_{0})$,
        \item[--] if $\J$ is K-uniform, then $\cov^{+}_{h}(\J)=\cov^{+}(\J)$
    \end{itemize}
\end{proposition}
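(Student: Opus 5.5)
We prove the four items separately, working directly from the definitions of $\add^{*}$, $\cov^{+}$, $\cov^{*}$ and $\cov^{+}_{h}$ (Definition \ref{defOfCovPlus}), in the order listed.

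For $\cov^{+}(\J)\leq\cov^{*}(\J)$: take a family $\mathcal{F}\subseteq\J$ witnessing $\cov^{*}(\J)$. Every $X\in\J^{+}$ is infinite, so some $F\in\mathcal{F}$ meets it in an infinite set, and $\mathcal{F}$ also witnesses $\cov^{+}(\J)$. For $\add^{*}(\J)\leq\cov^{+}(\J)$: take $\mathcal{F}\subseteq\J$ with $|\mathcal{F}|<\add^{*}(\J)$; there is then $B\in\J$ with $F\subseteq^{*}B$ for all $F\in\mathcal{F}$. Put $X=\omega\setminus B$. Then $X\in\J^{+}$, since $\omega\notin\J$, and $X\cap F\subseteq F\setminus B$ is finite for every $F\in\mathcal{F}$. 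So $\mathcal{F}$ is not a $\cov^{+}$ witness. (If $\J$ is not tall, so that $\cov^{*}$ is undefined, the second half of the first item is read as vacuous.)

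The second item is immediate: taking $X=\omega$ in the minimum defining $\cov^{+}_{h}(\J)$ gives $\cov^{+}_{h}(\J)\leq\cov^{+}(\J|_{\omega})=\cov^{+}(\J)$.

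For the Katětov monotonicity, let $\pi$ witness $\J_{0}\leq_{K}\J_{1}$, and let $\mathcal{F}\subseteq\J_{0}$ witness $\cov^{+}(\J_{0})$. Consider $\{\pi^{-1}[F]:F\in\mathcal{F}\}\subseteq\J_{1}$ and fix $Y\in\J_{1}^{+}$.
\begin{itemize}
\item The image $\pi[Y]$ is $\J_{0}$-positive: otherwise $Y\subseteq\pi^{-1}[\pi[Y]]\in\J_{1}$.
\item Hence some $F\in\mathcal{F}$ meets $\pi[Y]$ in an infinite set.
\item Each point of $F\cap\pi[Y]$ has a preimage in $Y$, and distinct points have distinct preimages, so $Y\cap\pi^{-1}[F]$ is infinite.
\end{itemize}
So this family witnesses $\cov^{+}(\J_{1})\leq|\mathcal{F}|=\cov^{+}(\J_{0})$. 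This is the only step needing any care, namely the observation that $\pi[Y]\in\J_{0}^{+}$.

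Finally, assume $\J$ is K-uniform. For $X\in\J^{+}$ we have $\J|_{X}\leq_{K}\J$, so the previous item gives $\cov^{+}(\J)\leq\cov^{+}(\J|_{X})$. Taking the minimum over $X$ yields $\cov^{+}(\J)\leq\cov^{+}_{h}(\J)$, and the second item gives the reverse inequality.
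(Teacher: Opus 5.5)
Your proposal is correct. Each item is checked directly from the definitions, and you handle the one non-trivial point properly: $\pi[Y]\in\J_{0}^{+}$ whenever $Y\in\J_{1}^{+}$. The K-uniform case then follows correctly from Katětov monotonicity applied to $\J|_{X}\leq_{K}\J$, combined with the second item. The paper gives no argument of its own here; it calls the facts easy and refers to Farkas--Zdomskyy, and your proof is the routine verification that remark has in mind.
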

In \cite{FarkasZdomskyy} Farkas and Zdomskyy systematically investigated cardinal invariants $\cov^{+}$ for various Borel ideal and related forcing properties. We will gather here the values of some of this numbers as we will need them later on.
\begin{theorem}(Farkas and Zdomskyy, \cite{FarkasZdomskyy})\label{FarkasZdomskyyOnCovPlus}
    We have the following:
    \begin{itemize}
        \item[--] $\cov^{+}(\FF)=\omega$,
        \item[--] $\cov^{+}(\nwd)=\add(\Meager)$,
        \item[--] $\cov^{+}(\mathcal{S})=\non(\Null)$,
        \item[--] $\cov^{+}(\ED)=\non(\Meager)$,
        \item[--] $\cov^{+}(\ED_{fin})=\cov^{*}(\ED_{fin})$,
        \item[--] $\cov^{+}(\conv)=\continuum$,
        \item[--] $\cov^{+}(\mathcal{R})=\continuum$
    \end{itemize}
    Also, $\ppp\leq\cov^{+}(\J)$ for any ideal $\J$ with uncountable $\cov^{+}(\J)$.
\end{theorem}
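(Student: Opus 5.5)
The statement is a compilation of values computed by Farkas and Zdomskyy, so I would prove the items one at a time. I would always use the upper bound $\cov^{+}(\J)\leq\cov^{*}(\J)$ and the Katětov monotonicity of $\cov^{+}$ from the preceding Proposition, and save real work for the lower bounds.

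\textbf{The cases $\FF$, $\conv$ and $\RandomGraph$.} For $\FF$ the countable family of columns $\{n\}\times\omega$ already works. Any $X\in(\FF)^{+}$ has infinitely many infinite sections, so it meets some column in an infinite set; hence $\cov^{+}(\FF)=\omega$.

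For $\conv$ I would use derived sets. Every $F\in\conv$ has only finitely many accumulation points in $[0,1]$. Fix $\continuum$ many pairwise disjoint perfect sets $P_\xi$ of irrationals. For each $\xi$ choose a discrete $X_\xi\subseteq\mathbb{Q}\cap[0,1]$ whose set of accumulation points is exactly $P_\xi$. Then $X_\xi\in\conv^{+}$, since its derived set is infinite. Moreover, $|F\cap X_\xi|=\omega$ forces $F$ to have an accumulation point in $P_\xi$. So each $F$ is infinite on only finitely many $X_\xi$, and fewer than $\continuum$ sets in $\conv$ leave some $X_\xi$ almost disjoint from all of them. This gives $\cov^{+}(\conv)=\continuum$. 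Since $\RandomGraph\leq_K\conv$ we get $\continuum=\cov^{+}(\conv)\leq\cov^{+}(\RandomGraph)$, so $\cov^{+}(\RandomGraph)=\continuum$ with no further work.

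\textbf{The cases $\ED$ and $\ED_{fin}$.} For $\ED$ the upper bound is $\cov^{+}(\ED)\leq\cov^{*}(\ED)=\non(\Meager)$. For the lower bound, take $\kappa<\non(\Meager)$ sets in $\ED$. Each is, modulo finitely many columns, covered by finitely many graphs of functions, so it suffices to handle a family $\{f_\alpha:\alpha<\kappa\}\subseteq\Baire$. This family is meager. By Bartoszyński's characterization of meager sets there are an interval partition $\{I_k\}$ and a $g$ such that each $f_\alpha$ eventually disagrees with $g$ on every $I_k$. Using $g$, I would build an $\ED$-positive $X$ that, inside the block $I_k$, chooses $k$ points per column in the positions predicted by $g$. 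The aim is that $X$ meets each graph in only finitely many points. Getting the bookkeeping right here is the main obstacle I expect in the whole proof. The same scheme restricted below the diagonal should give $\cov^{+}(\ED_{fin})\geq\cov^{*}(\ED_{fin})$.

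\textbf{The cases $\nwd$ and $\Solecki$, and the bound by $\ppp$.} For $\nwd$ and $\Solecki$ I would run the parallel translations. A family of fewer than $\add(\Meager)$ (respectively $\non(\Null)$) generators gives a nowhere dense set, respectively a null set. Its complement supports a positive set that is almost disjoint from all the generators. For the reverse inequalities one needs coding arguments, namely that $\add(\Meager)$-many nwd sets can be unioned so that no positive set survives. For these I would follow the characterizations of $\add(\Meager)$ and $\non(\Null)$ by slaloms.

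Finally, for $\ppp\leq\cov^{+}(\J)$ when $\cov^{+}(\J)$ is uncountable, I would argue as follows. Uncountability means every countable subfamily of $\J$ is avoided by some positive set. Given fewer than $\ppp$ sets in $\J$, I would force with the $\sigma$-centered poset of finite approximations to a positive set together with finitely many promises to avoid given members of $\J$. The conditions are centered by their finite part, so Bell's theorem ($\mmm_{\sigma\text{-centered}}=\ppp$) supplies a sufficiently generic filter. Countable uncountability of $\cov^{+}$ is exactly what is needed to ensure the generic set is positive.
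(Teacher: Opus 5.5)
The paper gives no proof of this statement; it is quoted from \cite{FarkasZdomskyy}, so your outline has to stand on its own. Your items for $\FF$, $\conv$ and $\RandomGraph$ are correct. The derived-set argument for $\conv$ works, and Kat\v{e}tov monotonicity transfers it to $\RandomGraph$.

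\textbf{The $\ED$ and $\ED_{fin}$ items.} The $\ED$ part cannot be completed as planned. Bartoszy\'nski's characterization only says that $f_\alpha$ differs from $g$ \emph{somewhere} on almost every $I_k$. It gives no control over which points of your set the graph of $f_\alpha$ hits, so no placement of points `predicted by $g$' forces finite intersections.

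For $\ED_{fin}$ the plan cannot work at all. A scheme whose only input is $\kappa<\non(\Meager)$ would prove $\cov^{+}(\ED_{fin})\geq\non(\Meager)$. But in Hechler's model $\cov^{+}(\ED_{fin})\leq\aaa(\ED_{fin})<\bbb\leq\non(\Meager)$, by Brendle's theorem, Theorem~\ref{MAINthmAntichains} and homogeneity of $\ED_{fin}$. You must use $\kappa<\cov^{*}$ directly. One way:
\begin{itemize}
\item Take a positive set $Z$ and a finite-to-one $\pi$ from $Z$ onto $\Delta$ (resp.\ $\omega\times\omega$) whose fibre over each point of column $k$ is a $k$-element subset of a single column.
\item Then $\pi[A\cap Z]$ is in the ideal whenever $A$ is, $\pi^{-1}[Y]$ is positive whenever $Y$ meets infinitely many columns, and $\pi^{-1}[Y]\cap A\subseteq\pi^{-1}[Y\cap\pi[A\cap Z]]$.
\item Pulling back a $Y$ witnessing $\kappa<\cov^{*}$ gives $\cov^{+}=\cov^{*}$ for both ideals. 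For $\ED$, add the columns to the family first, then finish with $\cov^{*}(\ED)=\non(\Meager)$.
\end{itemize}

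\textbf{The $\nwd$ and $\Solecki$ items.} The $\nwd$ sketch has a similar problem. Fewer than $\add(\Meager)$ generators give a meager set, not a nowhere dense one. Also, countably many generators (singletons) already cover $\cantor$, so no positive set sits in the complement of their union. What works has two halves:
\begin{itemize}
\item Use $\kappa<\cov(\Meager)$ to pick, for each $\sigma$, a branch $z_\sigma\supseteq\sigma$ avoiding all the closed nowhere dense sets of branches determined by the generators.
\item Use $\kappa<\bbb$ to choose heights $h$ so that $\{z_\sigma|_{h(|\sigma|)}:\sigma\in\cantor\}$ meets each generator finitely.
\end{itemize}
The $\bbb$ half is missing from your outline. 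The upper bounds for $\nwd$ and $\Solecki$ are only gestured at. Your $\Solecki$ lower bound does go through: the complement of the null set contains a closed $K$ with $\lambda(K)>\frac12$, and you build $X$ from measure-$\frac12$ clopens inside clopen neighbourhoods shrinking to $K$.

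\textbf{The bound $\ppp\leq\cov^{+}(\J)$.} This argument omits the only nontrivial step. Bell's theorem gives a filter meeting fewer than $\ppp$ prescribed dense sets. But `$X_G\notin\J$' means $X_G\not\subseteq B$ for every $B\in\J$, which is $\continuum$ many requirements. For $Fin(\phi)$ the dense sets $\{(s,F):\phi(s)>n\}$ suffice, but in general you need two things:
\begin{itemize}
\item A proof that the Mathias-type poset \emph{forces} its generic to be $\J$-positive, using $\cov^{+}(\J)>\omega$. This is the Farkas--Zdomskyy analysis of $\mathbb{M}(\J^{*})$ that the paper invokes in the corollary to Theorem~\ref{MAINthmAntichains}.
\item Definability of $\J$ (e.g.\ Borel), so that the forced statement `$\dot X\notin\J$' is secured by meeting at most $|\mathbb{P}|\cdot\omega<\ppp$ further dense sets.
\end{itemize}
Your sentence that uncountable $\cov^{+}$ `is exactly what is needed' asserts the first step without proof. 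Without the second, the argument does not reach arbitrary ideals at all.
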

Regarding $\cov^{+}$ numbers for our analytic $P$-ideals we have
\begin{theorem}(Cieślak, Farkas, Zdomskyy, \cite{Killing_Ideals_Softly})
    We have the following:
    \begin{itemize}
        \item[--] $\cov^{+}(\SUM)\leq \non(\E)$,
        \item[--] $\cov^{+}(\trN)\leq\cov(\E)$,
        \item[--] $\cov^{+}(\Z)\leq\cov(\Meager)$
    \end{itemize}
    also $\add(\Null)\leq\cov^{+}(\Z)\leq\cov^{+}(\trN)\leq \cov^{+}(\SUM)$.
\end{theorem}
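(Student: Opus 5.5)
The plan is to prove the three upper bounds separately by coding positive sets as reals, and to obtain the chain of inequalities from general facts established earlier.

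\emph{The chain.} In the Katětov diagram we have $\SUM\leq_{K}\trN\leq_{K}\Z$. The proposition preceding Theorem \ref{FarkasZdomskyyOnCovPlus} says that $\J_{0}\leq_{K}\J_{1}$ implies $\cov^{+}(\J_{1})\leq\cov^{+}(\J_{0})$. This gives at once $\cov^{+}(\Z)\leq\cov^{+}(\trN)\leq\cov^{+}(\SUM)$. For the lower bound I use $\add^{*}(\J)\leq\cov^{+}(\J)$ from the same proposition, together with the known equality $\add^{*}(\Z)=\add(\Null)$ for the analytic $P$-ideal $\Z$ (Hernández-Hernández and Hrušák, \cite{Hernand}). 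Together these give $\add(\Null)\leq\cov^{+}(\Z)$.

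\emph{$\cov^{+}(\trN)\leq\cov(\E)$.} Fix closed null sets $\{C_{\alpha}:\alpha<\cov(\E)\}$ covering $\Cantor$; since $\E$ is generated by closed null sets, such a family of size $\cov(\E)$ exists. Let $T_{\alpha}\subseteq\cantor$ be the tree of $C_{\alpha}$. Because $C_{\alpha}$ is closed, $G(T_{\alpha})=C_{\alpha}$, which is null, so $T_{\alpha}\in\trN$. Now let $Y\in\trN^{+}$. Then $G(Y)$ is not null, in particular nonempty, so pick $y\in G(Y)$ and an $\alpha$ with $y\in C_{\alpha}$. Infinitely many restrictions $y|_{n}$ lie in $Y$, and all of them lie in $T_{\alpha}$, so $|Y\cap T_{\alpha}|=\omega$.

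\emph{$\cov^{+}(\SUM)\leq\non(\E)$.} Split $\omega$ into the intervals $I_{k}=[2^{k},2^{k+1})$ and split $\omega$ (as coordinates of $x\in\Cantor$) into disjoint blocks $J_{k}$ of length $k$. For $x\in\Cantor$ let $F_{x}$ contain, for each $k$, the single element of $I_{k}$ coded by $x|_{J_{k}}$. Then $\sum_{n\in F_{x}}\frac{1}{n+1}\leq\sum_{k}2^{-k}<\infty$, so $F_{x}\in\SUM$ for every $x$. If $Y\notin\SUM$, then $\sum_{k}p_{k}=\infty$, where $p_{k}=|Y\cap I_{k}|/2^{k}$ is the probability that $F_{x}$ picks a point of $Y$ in $I_{k}$. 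These events are independent, since the blocks $J_{k}$ are disjoint. Hence each set
\[
\{x:F_{x}\cap Y\subseteq I_{<m}\}
\]
is closed and has measure $\prod_{k\geq m}(1-p_{k})=0$. So $\{x:|F_{x}\cap Y|<\omega\}\in\E$. Taking $X\notin\E$ of size $\non(\E)$, for every $Y\in\SUM^{+}$ some $x\in X$ satisfies $|F_{x}\cap Y|=\omega$.

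\emph{$\cov^{+}(\Z)\leq\cov(\Meager)$ — the main obstacle.} Here the measure argument must be replaced by a category one, and the difficulty is the direction: we need a family indexed by meager sets, not by points. I would first thin each $Y\in\Z^{+}$ to a canonical positive witness. This is an infinite set $K$ of indices together with sets $A_{k}\subseteq I_{k}$ for $k\in K$, with $|A_{k}|\geq\varepsilon 2^{k}$, whose union lies in $Y$. Such witnesses are coded by a point $g_{Y}$ in a product of finite sets. Then I would use the Bartoszyński-type characterization: $\cov(\Meager)$ is the least size of a family $\mathcal{F}$ of (localized) functions such that no single $g$ is infinitely often equal to every $f\in\mathcal{F}$. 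Equivalently, every $g$ is eventually different from some $f\in\mathcal{F}$. Each such $f$ is to be turned into a sparse set $F_{f}\in\Z$, by taking in block $I_{k}$ a set of density $\to 0$ chosen so that it meets every $A_{k}$ compatible with $f(k)$ \emph{unless} $g(k)=f(k)$. I expect that arranging the blocks so that "eventually different" translates exactly into "$F_{f}$ meets $Y$ infinitely often", while keeping $F_{f}$ of density zero, is the delicate combinatorial core. The first thing I would try is to let $f(k)$ range over a small family of candidate "holes" in $I_{k}$ and to take $F_{f}\cap I_{k}$ to be a thin transversal avoiding only the hole $f(k)$.
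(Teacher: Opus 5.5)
The paper gives no proof of this statement; it is quoted from \cite{Killing_Ideals_Softly}, so your argument has to stand on its own. Three of your four parts are correct and complete:
\begin{itemize}
\item The chain follows from Katětov monotonicity of $\cov^{+}$ along $\SUM\leq_K\trN\leq_K\Z$, together with $\add(\Null)=\add^{*}(\Z)\leq\cov^{+}(\Z)$.
\item The bound $\cov^{+}(\trN)\leq\cov(\E)$ is right: $G(T_\alpha)=C_\alpha$ because $C_\alpha$ is closed, and a point of $G(Y)\cap C_\alpha$ has all its restrictions in $T_\alpha$ and infinitely many of them in $Y$.
\item The bound $\cov^{+}(\SUM)\leq\non(\E)$ is right: independence of the blocks $J_k$ and $\sum_k p_k=\infty$ make $\{x:|F_x\cap Y|<\omega\}$ an $F_\sigma$ null set.
\end{itemize}

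The gap is the bound $\cov^{+}(\Z)\leq\cov(\Meager)$. For it you only give a plan, and the mechanism in that plan provably fails. You want $F_f\cap I_k$ to meet the $\varepsilon$-dense set $A_k\subseteq Y\cap I_k$ whenever $g_Y(k)\neq f(k)$. Let $T_v\subseteq I_k$ be the part of $F_f$ in $I_k$ when $f(k)=v$. Every $\varepsilon$-dense $A\subseteq I_k$ occurs as $A_k$ for some $Y\in\Z^{+}$, with some code $c$. If $v\neq w$, then $c\neq v$ or $c\neq w$, so $T_v\cup T_w$ must meet every $\varepsilon$-dense subset of $I_k$. That forces $|T_v\cup T_w|>(1-\varepsilon)2^k$. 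Hence at most one value $v$ has $|T_v|\leq(1-\varepsilon)2^{k-1}$. Now $F_f\in\Z$ requires the relative density of $F_f\cap I_k$ to tend to $0$. So $f(k)$ must be that exceptional value for almost all $k$, and your family collapses to essentially one set. Your "thin transversal avoiding one hole" is a special case: an $\varepsilon$-dense set can simply miss the transversal. So eventual difference can never be turned into "hits $Y$ in almost every dense block". In one block a thin set can at best force $f(k)$ into a small set of candidates determined by $Y$. The passage to infinitely-often equality then has to be organised across blocks, for instance by grouping coordinates.

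There is a second problem. Bartoszyński's characterization of $\cov(\Meager)$ is a statement about $\Baire$. If $g_Y$ lives in a product $\prod_k h(k)$ of finite sets, the corresponding eventually-different number is only $\geq\cov(\Meager)$, because the sets $\{g:\forall^\infty k\ g(k)\neq f(k)\}$ are meager in $\prod_k h(k)$. It can be strictly larger: for $h(k)=k+1$ it equals $\continuum$ in the random model, where $\cov(\Meager)=\omega_1$. So even a repaired block construction with finitely coded witnesses gives only a weaker bound. Reaching $\cov(\Meager)$ needs an additional argument that handles the unbounded part; compare $\cov(\Meager)=\min\{\ddd,\non^{*}(\ED_{fin})\}$.
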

For more information of $\cov^{+}$ numbers as well as different consistency result regarding these, the reader may consult \cite{FarkasZdomskyy} or \cite{Killing_Ideals_Softly}. As a corollary of the above, we get
\begin{proposition}
    We have the following:
    \begin{itemize}
        \item[--] $\cov^{+}_{h}(\FF)=\omega$,
        \item[--] $\cov^{+}_{h}(\nwd)=\add(\Meager)$,
        \item[--] $\cov^{+}_{h}(\ED_{fin})=\cov^{+}(\ED_{fin})$,
        \item[--] $\cov^{+}_{h}(\ED)=\cov^{+}(\ED_{fin})$,
        \item[--] $\cov^{+}_{h}(\conv)=\omega$,
        \item[--] $\cov^{+}_{h}(\BI)=\omega$,
        \item[--] $\cov^{+}_{h}(\Z)=\cov^{+}(\Z)$,
        \item[--] $\cov^{+}_{h}(\trN)=\cov^{+}(\trN)$
    \end{itemize}
\end{proposition}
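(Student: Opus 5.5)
Each item comes from one of two mechanisms. Since $\cov^{+}_{h}(\J)=\min\{\cov^{+}(\J|_{X}):X\in\J^{+}\}$, it is always at most $\cov^{+}(\J)$. So for an ideal with a positive set $X$ such that $\J|_{X}$ has countable $\cov^{+}$, we get $\cov^{+}_{h}(\J)=\omega$. For an ideal that is K-uniform, or "K-uniform up to a Katětov-equivalent restriction", we get equality with $\cov^{+}(\J)$ or with $\cov^{+}$ of the canonical restriction. The basic tool is the monotonicity from the preceding Proposition: $\J_{0}\leq_K\J_1$ implies $\cov^{+}(\J_1)\leq\cov^{+}(\J_0)$, applied to restrictions.

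\textbf{The ideals with value $\omega$.} For $\FF$, take $X=\omega\times\omega$ and use $\cov^{+}(\FF)=\omega$ from Theorem \ref{FarkasZdomskyyOnCovPlus}. For $\conv$, use the remark in the introduction that every $\conv$-positive set contains a positive $B$ with $\conv|_{B}$ Katětov equivalent to $\FF$. Katětov equivalence transfers $\cov^{+}$ in both directions, so $\cov^{+}(\conv|_{B})=\omega$. For $\BI$, restrict to a single column: for fixed $n$, $X=\{n\}\times\omega^{2}$ is $\BI$-positive, since it is not finite, and $\BI|_{X}$ is a copy of $\FF$. Hence we again get $\omega$.

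\textbf{K-uniform ideals.} For $\nwd$, $\Z$ and $\trN$ I plan to verify K-uniformity, i.e. $\J|_{F}\leq_K\J$ for every positive $F$, and then invoke the last item of the preceding Proposition.
\begin{itemize}
\item[--] For $\nwd$ on $\cantor$: a positive $F$ is dense below some node $\sigma$. Map each $\tau$ to an element of $F$ extending $\sigma^\frown\tau$, chosen so that the map preserves the tree structure enough that preimages of nowhere dense sets stay nowhere dense.
\item[--] For $\Z$: the standard fact is that $\Z|_F$ is Katětov (indeed Katětov–Blass) below $\Z$ for positive $F$, via a map spreading intervals according to upper density.
\item[--] For $\trN$: a positive $F$ has $G(F)$ of positive measure. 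Pass to a basic clopen where the density is close to $1$ and use a measure-preserving-type map.
\end{itemize}
Then $\cov^{+}_h=\cov^+$ in each case, and for $\nwd$ we get $\add(\Meager)$ by Theorem \ref{FarkasZdomskyyOnCovPlus}.

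\textbf{The ED ideals.} For $\ED_{fin}$, show it is K-uniform. A positive $F\subseteq\Delta$ has infinitely many columns of unbounded size, and $\ED_{fin}|_F$ is Katětov equivalent to some $\ED_{F'}$, hence to $\ED_{fin}$ by the stated fact. For $\ED$, use the stated fact that every $\ED$-positive set contains a positive $F$ with $\ED|_F$ Katětov equivalent to $\ED_{fin}$. This gives $\cov^{+}_h(\ED)\leq\cov^+(\ED_{fin})$. Conversely, for any positive $X$ choose such an $F\subseteq X$. Then $\ED|_X\leq_K\ED|_F$ via the identity on $F$ extended arbitrarily, more precisely the restriction map from $F$ into $X$. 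So $\cov^+(\ED|_X)\geq\cov^+(\ED|_F)=\cov^+(\ED_{fin})$.

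\textbf{Main obstacle.} I expect the hardest step to be the K-uniformity verifications for $\Z$ and $\trN$. For $\trN$ I would first try a Lebesgue density argument on a clopen set where $G(F)$ has relative measure above $1/2$, building the reduction level by level. If that fails, I would fall back on citing the known homogeneity of these ideals.
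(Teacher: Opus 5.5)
Your proposal is correct and follows essentially the paper's route. The values of $\cov^{+}$ come from Farkas--Zdomskyy; for $\conv$ and $\BI$ you restrict to a positive set on which the ideal is a copy of $\FF$; for $\Z$ and $\trN$ you use K-uniformity, which the paper also simply cites from Meza-Alc\'antara, just as you fall back on doing; and for $\ED$ you pass to positive subsets Katětov equivalent to $\ED_{fin}$. Your direct K-uniformity checks are fine: for $\nwd$, any choice of $\pi(\tau)\in F$ extending $\sigma^\frown\tau$ already works, and for $\ED_{fin}$ the equivalence argument goes through. One small slip: $\{n\}\times\omega^{2}$ is $\BI$-positive not because it is infinite (e.g.\ $\{n\}\times\{0\}\times\omega\in\BI$), but because its $n$-th section $\omega^{2}$ is not in $\FF$, which is exactly what your identification of $\BI|_{X}$ with $\FF$ already gives.
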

\begin{proof}
    The equality $\cov^{+}_{h}(\FF)=\omega$ follows from Theorem \ref{FarkasZdomskyyOnCovPlus}. The equalities $\cov^{+}_{h}(\nwd)=\add(\Meager)$ and $\cov^{+}_{h}(\ED_{fin})=\cov^{+}(\ED_{fin})$ follow from Theorem \ref{FarkasZdomskyyOnCovPlus} and homogeneity of the ideals $\nwd$ and $\ED_{fin}$. The equality $\cov^{+}_{h}(\ED)=\cov^{+}(\ED_{fin})$ follows from the fact that any $\ED$-positive set contains $\ED$-positive set $F$ such that $\ED|_{F}=\ED_{fin}$. The equalities $\cov^{+}_{h}(\conv)=\omega$ and $\cov^{+}_{h}(\BI)=\omega$ follows from the fact that for these two ideals $\J$ any positive set contains a positive set $F$ such that $\J|_{F}=\FF$ and $\cov^{+}(\FF)=\omega$ by Theorem \ref{FarkasZdomskyyOnCovPlus}. The last two equalities follow from the fact that $\Z$ and $\trN$ are K-uniform (in \cite{Meza} see Proposition 2.1.11 and Theorem 2.1.17).
\end{proof}
In order to prove our main lower bound for numbers $\aaa(\J)$ we will need to introduce the following class of ideals.
\begin{definition}
    Suppose that $\J$ is and ideal on $\omega$. We will say that $\J$ is \emph{good} if for every uncountable family $\mathcal{F}\subseteq\J^+$ has a subfamily $\{B_{n}:n\in\omega\}\subseteq\mathcal{F}$ such that for every $f\in\Baire$ there is a sequence $\{C_{n}:n\in\omega\}\subseteq\J$, $C_{n}\subseteq B_{n}\setminus f(n)$ such that $C:=\bigcup_{n}C_{n}$ is $\J$-positive.
\end{definition}
We are now ready to prove the main result of this section.
\begin{theorem}\label{MAINthmAntichains}
    If an ideal $\J$ is good, then $\min\{\cov^{+}_{h}(\J),\bbb\}\leq\aaa(\J)$.
\end{theorem}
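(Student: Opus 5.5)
We argue by contradiction, adapting the classical argument for $\bbb\leq\aaa$. Fix $\kappa<\min\{\cov^{+}_{h}(\J),\bbb\}$ and an uncountable family $\mathcal{A}=\{A_{\alpha}:\alpha<\kappa\}\subseteq\J^{+}$ with $A_{\alpha}\cap A_{\beta}\in\J$ for $\alpha\neq\beta$. We will produce a $\J$-positive set $C$ with $C\cap A_{\alpha}\in\J$ for every $\alpha$, showing that $\mathcal{A}$ is not maximal. Since $\mathcal{A}$ is uncountable, goodness of $\J$ gives a countable subfamily $\{B_{n}:n\in\omega\}\subseteq\mathcal{A}$, with $B_{n}=A_{\alpha_{n}}$, such that for every $f\in\Baire$ there are $C_{n}\subseteq B_{n}\setminus f(n)$, $C_{n}\in\J$, with $\bigcup_{n}C_{n}\in\J^{+}$. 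Set $\mathcal{A}'=\mathcal{A}\setminus\{B_{n}:n\in\omega\}$. Every $C$ of this form meets each $B_{m}$ in a $\J$-set: $C\cap B_{m}\subseteq C_{m}\cup\bigcup_{n\neq m}(B_{n}\cap B_{m})$ modulo the finitely many $C_{n}$ with $f(n)$ small, and we will also arrange $f$ so that the relevant union is in $\J$. So the work is in handling the elements of $\mathcal{A}'$.

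\textbf{Use of $\bbb$.} For each $A\in\mathcal{A}'$ and each $n$, the set $A\cap B_{n}$ is in $\J$, but it need not be finite, so we cannot directly dominate. Instead we first disjointify: replace $B_{n}$ by $B'_{n}=B_{n}\setminus\bigcup_{m<n}B_{m}$, which is still $\J$-positive. For each $A\in\mathcal{A}'$ define $g_{A}(n)$ from the structure of $A\cap B'_{n}$; since $\kappa<\bbb$, take $f\in\Baire$ dominating all $g_{A}$. The natural choice is to let $f$ be large enough that the chosen pieces avoid the parts of $A$ we want to discard. However, since $A\cap B_{n}$ may be infinite, cutting at $f(n)$ only removes finitely many points. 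This shows that $\bbb$ alone cannot suffice, and it is where $\cov^{+}_{h}$ enters.

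\textbf{Use of $\cov^{+}_{h}$.} Apply goodness with the dominating $f$ to get $C=\bigcup_{n}C_{n}\in\J^{+}$. Now consider the restriction $\J|_{C}$ together with the family $\{A\cap C:A\in\mathcal{A}'\}$. For each $A$, $A\cap C\subseteq^{*}\bigcup_{n}(A\cap C_{n})$, and we want every such set to be in $\J$. Since $\kappa<\cov^{+}_{h}(\J)\leq\cov^{+}(\J|_{C})$, the family $\{A\cap C: A\in\mathcal{A}'\}$, if it were contained in $\J$, would not $+$-cover $\J|_{C}$. Hence there is $D\subseteq C$, $D\in\J^{+}$, with $D\cap A$ finite for all $A\in\mathcal{A}'$, and also $D\cap B_{m}\subseteq^{*}C_{m}\in\J$. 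Then $D$ is $\J$-positive and almost disjoint modulo $\J$ from all of $\mathcal{A}$, contradicting maximality.

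\textbf{Main obstacle.} The hard step is guaranteeing that $A\cap C\in\J$ for each $A\in\mathcal{A}'$, so that the $\cov^{+}$ argument applies. A countable union of the $\J$-sets $A\cap C_{n}$ need not be in $\J$, and this is exactly where $f$ must be chosen. I would first try the following: for each $A$ and $n$, the finite-initial-segment structure of $A\cap B_{n}$ together with lower semicontinuity-type behaviour (or simply the ordering $\leq^{*}$) gives $g_{A}$ with $A\cap C\subseteq^{*}\bigcup_{n}(A\cap B_{n}\cap g_{A}(n))$, and the set on the right is handled by finite cut-offs. If this fails in general, I would fall back on a different route. Apply $\cov^{+}(\J|_{C_{f}})$ directly to the family $\{A\cap C_{f}\}$ together with the $\J$-sets $C_{n}$, after first using $\bbb$ to ensure each $A\cap C_{f}$ is covered modulo finite by finitely many $\J$-pieces. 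The countably many extra sets do not affect $\kappa<\cov^{+}_{h}$, since $\cov^{+}_{h}$ is uncountable under the hypothesis or else the statement is trivial.
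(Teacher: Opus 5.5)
\textbf{There is a genuine gap, and it is exactly the step you flagged as the ``main obstacle''.} You apply goodness to the original family $\mathcal{A}$. Then for $A\in\mathcal{A}'$ the sets $A\cap B_n$ are only in $\J$ and may be infinite. No $f$ obtained from $\bbb$ can make $C_n\subseteq B_n\setminus f(n)$ avoid them, so you have no control over whether $A\cap C\in\J$. The same problem affects $C\cap B_m\subseteq C_m\cup\bigcup_{n\neq m}(B_n\cap B_m)$, which is a countable union of $\J$-sets. Your disjointification $B'_n$ does not help either: goodness returns $C_n\subseteq B_n\setminus f(n)$, not subsets of $B'_n$, and a removed piece can be absorbed into $f$ only if it is finite.

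The later use of $\cov^{+}_{h}$ cannot repair this. If all $A\cap C$ were in $\J$, then $C$ itself would already contradict maximality, and $D$ would be superfluous. If they are not, they are not members of $\J|_C$, and $\cov^{+}(\J|_C)$ says nothing about them. Your fallback (use $\bbb$ to cover $A\cap C_f$ mod finite by finitely many $\J$-pieces) restates the same unresolved problem.

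\textbf{The missing idea is to use $\cov^{+}_{h}$ first, before goodness, to convert $\J$-small intersections into finite ones.}
\begin{itemize}
\item[1.] For each $\alpha<\kappa$, the family $\{A_\alpha\cap A_\beta:\beta\neq\alpha\}$ consists of at most $\kappa<\cov^{+}_{h}(\J)\leq\cov^{+}(\J|_{A_\alpha})$ members of $\J|_{A_\alpha}$. So it is not a $+$-cover, and there is $B_\alpha\subseteq A_\alpha$, $B_\alpha\in\J^{+}$, with $B_\alpha\cap A_\beta$ finite for every $\beta\neq\alpha$.
\item[2.] Apply goodness to the uncountable family $\{B_\alpha:\alpha<\kappa\}$ to get $\{B_n:n\in\omega\}$, re-indexed so that $B_n\subseteq A_n$.
\item[3.] Since $B_n\cap A_\alpha$ is finite, you can define $f_\alpha(n)>\max(B_n\cap A_\alpha)$ for $\alpha\geq\omega$. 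Using $\kappa<\bbb$, take $f\in\Baire$ with $f\geq^{*}f_\alpha$ for all such $\alpha$. Also enlarge $f(n)$ above $\max\bigl(B_n\cap\bigcup_{i<n}A_i\bigr)$.
\item[4.] Goodness gives $C=\bigcup_n C_n\in\J^{+}$ with $C_n\in\J$ and $C_n\subseteq B_n\setminus f(n)$.
\item[5.] For $\alpha\geq\omega$, let $N$ be such that $f(n)\geq f_\alpha(n)$ for all $n\geq N$. Then $C\cap A_\alpha\subseteq\bigcup_{n<N}C_n\in\J$.
\item[6.] For $m<\omega$, $C\cap A_m\subseteq\bigcup_{n\leq m}C_n\in\J$.
\end{itemize}
This contradicts maximality. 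The point is that $\bbb$ only ever has to dominate cut-offs of finite sets. Your ordering of the two hypotheses prevents that.
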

\begin{proof}
    Let $\kappa<\min\{\cov^{+}_{h}(\J),\bbb\}$ and let $\{A_{\alpha}:\alpha<\kappa\}$ be an antichain in $\J^+$. As $\kappa<\cov^{+}_{h}(\J)$ there are $\{B_{\alpha}:\alpha<\kappa\}\subseteq\J^+$ such that $B_{\alpha}\subseteq A_{\alpha}$ and $B_{\alpha}\cap A_{\beta}=^*\emptyset$ for $\alpha\neq\beta$. Suppose that $\{B_{n}:n<\omega\}$ is as in definition of goodness. Without loss of generality we may assume that the intersection $B_{n}\cap\bigcup_{i\leq n}A_{i}$ is empty for each $n\in\omega$. For each $\alpha\in\kappa\setminus\omega$ define $f_{\alpha}\in\Baire$ such that $B_{n}\cap A_{\alpha}\subseteq f_{\alpha}(n)$ for each $n\in\omega$. Let $f\in\Baire$ be a function which dominates all of $f_{\alpha}$'s. By the property of being good there is a sequence $\{C_{n}:n\in\omega\}\subseteq\J$, $C_{n}\subseteq B_{n}\setminus f(n)$ such that $C:=\bigcup_{n}C_{n}$ is $\J$-positive. Then, clearly the intersection of $C$ with any $A_{\alpha}$, $\alpha<\kappa$ is in the ideal $\J$ which finishes the proof.
\end{proof}
As a corollary we get the following lower bound of $\aaa(\J)$'s for good ideals.
\begin{corollary}
    $\ppp\leq\aaa(\J)$ for good ideal $\J$ with uncountable $\cov^{+}_{h}(\J)$.
\end{corollary}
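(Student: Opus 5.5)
The corollary should follow by combining Theorem \ref{MAINthmAntichains} with the final clause of Theorem \ref{FarkasZdomskyyOnCovPlus}. Since $\J$ is good, Theorem \ref{MAINthmAntichains} gives $\min\{\cov^{+}_{h}(\J),\bbb\}\leq\aaa(\J)$. Because $\ppp\leq\bbb$ in ZFC (as $\ppp\leq\add(\Meager)\leq\bbb$ in the diagram), it suffices to show $\ppp\leq\cov^{+}_{h}(\J)$.

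By definition, $\cov^{+}_{h}(\J)=\min\{\cov^{+}(\J|_{X}):X\in\J^{+}\}$. So I will fix an arbitrary $X\in\J^{+}$ and show $\ppp\leq\cov^{+}(\J|_{X})$. The hypothesis that $\cov^{+}_{h}(\J)$ is uncountable gives $\cov^{+}(\J|_{X})\geq\cov^{+}_{h}(\J)>\omega$. The restricted ideal $\J|_{X}$ is an ideal on the countable set $X$, which I identify with $\omega$ via a bijection. The last clause of Theorem \ref{FarkasZdomskyyOnCovPlus}, that $\ppp\leq\cov^{+}(\I)$ for any ideal $\I$ with uncountable $\cov^{+}(\I)$, then applies to $\I=\J|_{X}$ and gives $\ppp\leq\cov^{+}(\J|_{X})$. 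Taking the minimum over $X$ yields $\ppp\leq\cov^{+}_{h}(\J)$, and hence $\ppp\leq\min\{\cov^{+}_{h}(\J),\bbb\}\leq\aaa(\J)$.

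The only step needing care is applying the Farkas--Zdomskyy bound to every restriction $\J|_{X}$ rather than to $\J$ alone. It is legitimate because that bound holds for an arbitrary ideal on a countable set, and each restriction has uncountable $\cov^{+}$ by the hypothesis on $\cov^{+}_{h}(\J)$. Beyond this the argument is routine bookkeeping.
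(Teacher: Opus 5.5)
Your proposal is correct and is essentially the paper's proof. The paper also fixes $A\in\J^{+}$ and obtains $\ppp\leq\cov^{+}(\J|_{A})$ from Farkas--Zdomskyy, phrased there as ``the $\sigma$-centered Mathias forcing $\mathbb{M}((\J|_{A})^{*})$ increases $\cov^{+}(\J|_{A})$'', which is exactly the source of the clause you quote; it then combines this with $\ppp\leq\bbb$ and Theorem~\ref{MAINthmAntichains}, just as you do.

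One small caution concerns your remark that the bound holds for an arbitrary ideal. It is really a fact about definable (Borel or analytic) ideals: for a non-definable ideal, such as the dual of a suitable P-point under MA, it can consistently fail. So the right justification for your restriction step is that $\J|_{X}$ is again definable whenever $\J$ is, and the paper's argument relies on this same hypothesis implicitly.
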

\begin{proof}
    Let $A\in\J^{+}$. If $\cov^{+}(\J|_{A})$ is uncountable, then by the Corollary 5.2 of \cite{FarkasZdomskyy}, the Mathias forcing with the dual filter $\mathbb{M}((\J|_{A})^{*})$ increases $\cov^{+}(\J|_{A})$. As $\mathbb{M}((\J|_{A})^{*})$ is $\sigma$-centered forcing notion, we have that $\ppp\leq\cov^{+}(\J|_{A})$. As also $\ppp\leq\bbb$ we get that $\ppp\leq\min\{\bbb,\cov^{+}_{h}(\J)\}\leq\aaa(\J)$.
\end{proof}
We will show that (almost) all of the ideals considered in the article are good. The only example of an ideal that I do not know if it is good is the 'splitting ideal'.
\begin{question}
    Find an example of a definable ideal $\J$ which is not good. Is the ideal $\SPL$ any good?
\end{question}
We will start by showing the goodness of $F_{\sigma}$ and analytic $P$-ideals.
\begin{proposition}
    All $F_{\sigma}$-ideals are good.
\end{proposition}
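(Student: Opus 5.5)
We use Mazur's Theorem \ref{Mazur} and write $\J=Fin(\phi)$ for a lower semicontinuous submeasure $\phi$. For $B\in\J^+$ we have $\phi(B)=\infty$. By lower semicontinuity, $\phi(B\setminus k)=\infty$ for every $k$, since $\phi(B)\le\phi(B\cap k)+\phi(B\setminus k)$ and $\phi(B\cap k)<\infty$ because $\phi$ is finite on singletons. Again by lower semicontinuity, for every $k$ and every $M$ there is a finite set $F\subseteq B\setminus k$ with $\phi(F)\ge M$. The plan is therefore to choose \emph{any} countable subfamily $\{B_n:n\in\omega\}$ of the given uncountable family $\mathcal{F}$, possibly with repetitions. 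If some trick is needed, we can take $B_n=B$ for a single $B\in\mathcal{F}$, since the definition does not demand distinct sets; if distinctness is wanted, any enumeration of countably many distinct members works equally well.

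Given $f\in\Baire$, we pick for each $n$ a finite set $C_n\subseteq B_n\setminus f(n)$ with $\phi(C_n)\ge n$. Each $C_n$ is finite, hence $C_n\in\J$. Put $C=\bigcup_n C_n$. By monotonicity, $\phi(C)\ge\phi(C_n)\ge n$ for every $n$, so $\phi(C)=\infty$ and $C\in\J^+$. This is exactly the requirement in the definition of a good ideal.

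There is essentially no obstacle here; the only point needing care is the observation that a positive set stays positive after removing a finite initial segment and contains finite pieces of arbitrarily large submeasure. Both facts follow directly from subadditivity together with $\phi(\{n\})<\infty$, and from lower semicontinuity. No property of $\mathcal{F}$ beyond $\mathcal{F}\subseteq\J^+$ is used, and uncountability is irrelevant.
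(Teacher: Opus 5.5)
Your proof is correct and follows the paper's argument: write $\J=Fin(\phi)$ by Mazur's theorem, take an arbitrary countable subfamily $\{B_n:n\in\omega\}$, and for a given $f$ choose finite $C_n\subseteq B_n\setminus f(n)$ with $\phi(C_n)\ge n$, so that $\phi(C)=\infty$. The one addition is that you justify why such finite $C_n$ exist, using subadditivity, finiteness on singletons and lower semicontinuity, whereas the paper calls this step easy.
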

\begin{proof}
    By a result of Mazur (Theorem \ref{Mazur}) all $F_{\sigma}$-ideals are of the form $Fin(\phi)=\{X\subseteq\omega:\phi(X)<\infty\}$ for some l.s.c.s.m. $\phi$. Assume that $\mathcal{F}$ is a collection of $Fin(\phi)$-positive sets. Let $\{B_{n}:n\in\omega\}\subseteq\mathcal{F}$ be arbitrary.
    For every $f\in\omega^{\omega}$ it is easy to pick finite sets $C_{n}$'s such that $C_{n}\subseteq B_{n}\setminus f(n)$ and $\phi(C_{n})>n$. Clearly then $\phi(C)=\infty$.
\end{proof}
It follows that the ideals $\ED$, $\ED_{fin}$, $\SUM$, $\RandomGraph$, $\Solecki$ and $\WANDER$ are good.
\begin{proposition}
    All analytic $P$-ideals are good.
\end{proposition}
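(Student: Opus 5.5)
We will use Solecki's characterization: write $\J=Exh(\phi)$ for a lower semicontinuous submeasure $\phi$. Then $X\in\J^+$ exactly when $\|X\|_\phi:=\lim_{k}\phi(X\setminus k)>0$; this limit exists because the sequence is nonincreasing. The plan is to pick the countable subfamily $\{B_n:n\in\omega\}$ of a given uncountable $\mathcal{F}\subseteq\J^+$ so that the positivity witnesses are uniformly bounded below. Then, given $f$, we glue finite pieces of the $B_n$'s with submeasure bounded below and supports escaping to infinity.

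\textbf{Step 1: choosing the subfamily.} For each $B\in\mathcal{F}$ we have $\|B\|_\phi>0$, so $\|B\|_\phi>1/m$ for some $m\ge 1$. Since $\mathcal{F}$ is uncountable and there are only countably many $m$, the pigeonhole principle gives an $\varepsilon>0$ and infinitely many (indeed uncountably many) $B\in\mathcal{F}$ with $\|B\|_\phi>\varepsilon$. We take $\{B_n:n\in\omega\}$ to be distinct members of this set. Only countable infiniteness is used here, so uncountability of $\mathcal{F}$ is more than we need.

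\textbf{Step 2: construction for a given $f\in\Baire$.} We build finite sets $C_n\subseteq B_n\setminus f(n)$ and integers $k_0<k_1<\dots$ recursively, requiring $C_n\subseteq[k_n,k_{n+1})$ and $\phi(C_n)>\varepsilon$. Given $k_n$, we set $k=\max(k_n,f(n))$. Then $\phi(B_n\setminus k)\ge\|B_n\|_\phi>\varepsilon$, and lower semicontinuity gives some $k'>k$ with $\phi(B_n\cap[k,k'))>\varepsilon$. We put $C_n=B_n\cap[k,k')$ and $k_{n+1}=k'$. Each $C_n$ is finite, hence in $\J$ (as $\J\supseteq\FIN$, being $Exh(\phi)$ with $\phi(\{n\})<\infty$). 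Also $C_n\subseteq B_n\setminus f(n)$.

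\textbf{Step 3: positivity of $C=\bigcup_n C_n$.} For any $j$, the tail $C\setminus j$ contains $C_n$ for all $n$ with $k_n\ge j$. By monotonicity, $\phi(C\setminus j)\ge\phi(C_n)>\varepsilon$, so $\|C\|_\phi\ge\varepsilon>0$. Hence $C\notin Exh(\phi)=\J$.

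The only real subtlety is Step 1, namely the need for a \emph{uniform} lower bound $\varepsilon$. Without it the pieces $C_n$ could have submeasures tending to $0$, and $C$ might land in $\J$. The pigeonhole over the rational bounds $1/m$ handles this. Everything else is routine from lower semicontinuity and the exhaustive-submeasure description.
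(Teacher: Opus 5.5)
Your proof is correct and follows the paper's argument. You use Solecki's representation $\J=Exh(\phi)$, pigeonhole over $1/m$ to get a uniform $\varepsilon$ with $\lim_k\phi(B_n\setminus k)>\varepsilon$, then pick finite $C_n\subseteq B_n\setminus f(n)$ with submeasure bounded below. Your write-up is more careful than the paper's sketch: you explicitly place the $C_n$ in intervals $[k_n,k_{n+1})$ escaping to infinity, which is what forces $\lim_k\phi(C\setminus k)\geq\varepsilon$ even when $f$ is bounded, and you state the conclusion correctly where the paper has the typo ``$\lim_k\phi(C\setminus k)=0$''.
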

\begin{proof}
    Recall a result of Solecki saying that an analytic $P$-ideal $\J$ is of form $Exh(\phi)=\{X\subseteq\omega:lim_{k}\phi(X\setminus k)=0\}$ for some l.s.c.s.m. $\phi$. Assume that $\mathcal{F}\subseteq Exh(\phi)^{+}$. Let $\{B_{n}:n\in\omega\}\subseteq\mathcal{F}$ be such that for some $\epsilon>0$ we have $lim_{k}\phi(B_{n}\setminus k)>\epsilon$ for each $n\in\omega$. For each $f\in\omega^{\omega}$ and $n\in\omega$ it is not difficult to pick $C_{n}\subseteq B_{n}\setminus f(n)$ with $\phi(C_{n})>\epsilon/2$. Clearly then $lim_{k}\phi(C\setminus k)=0$.
\end{proof}
It follows that the ideals $\Z$ and $\trN$ are good. It is worth mentioning that in the realm of analytic P-ideals, the estimate given by Theorem \ref{MAINthmAntichains} is weaker than the one given by Farkas and Sokoup in Theorem \ref{FarkasSokoup}. 
\begin{proposition}
    The ideal $\nwd$ is good.
\end{proposition}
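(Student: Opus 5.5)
We use the presentation of $\nwd$ on $\cantor$: $A\in\nwd$ iff every $\sigma\in\cantor$ has an extension $\tau\supseteq\sigma$ with no extension of $\tau$ in $A$. Equivalently, $A\in\nwd^{+}$ iff there is $\sigma_{A}\in\cantor$ such that $A$ is dense above $\sigma_{A}$, i.e. every $\tau\supseteq\sigma_{A}$ has an extension in $A$. The plan is to pick the countable subfamily so that all its members are dense above one common node. Then, for any $f$, we glue finite pieces from the different $B_{n}$ into a set that is still dense above that node.

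\textbf{Choosing the subfamily.} Let $\mathcal{F}\subseteq\nwd^{+}$ be uncountable. Each $B\in\mathcal{F}$ has a node $\sigma_{B}$ above which it is dense. There are only countably many nodes, so by pigeonhole some $\sigma$ satisfies $\sigma=\sigma_{B}$ for uncountably (in particular infinitely) many $B\in\mathcal{F}$. Choose distinct such sets $\{B_{n}:n\in\omega\}$. Every $B_{n}$ is dense above $\sigma$.

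\textbf{Given $f$, building the $C_{n}$.} Enumerate all extensions of $\sigma$ as $\{\tau_{n}:n\in\omega\}$. Fix $n$. Only finitely many nodes have code below $f(n)$, so $B_{n}\setminus f(n)$ is cofinite in $B_{n}$ (identifying $\cantor$ with $\omega$). The node $\tau_{n}$ has infinitely many extensions that are pairwise incomparable or of arbitrarily large length, and each of them has an extension in $B_{n}$. Hence $B_{n}\cap\{\rho:\rho\supseteq\tau_{n}\}$ is infinite, and we can choose a single node $\rho_{n}\in B_{n}\setminus f(n)$ with $\rho_{n}\supseteq\tau_{n}$. Put $C_{n}=\{\rho_{n}\}$. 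Finite sets lie in $\nwd$, and $C_{n}\subseteq B_{n}\setminus f(n)$, as the definition requires.

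\textbf{Positivity of $C$.} Let $C=\bigcup_{n}C_{n}$. Any $\tau\supseteq\sigma$ equals $\tau_{n}$ for some $n$, and then $\rho_{n}\in C$ extends $\tau$. So $C$ is dense above $\sigma$, hence $C\in\nwd^{+}$, and $\nwd$ is good.

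The only step that needs real care is the uniform choice of $\sigma$ in the first step. Goodness requires one countable subfamily that works for every $f$, so the subfamily must not depend on $f$. The pigeonhole argument handles this because the density property of each $B_{n}$ does not depend on $f$. After that the construction is routine; it uses only that each $C_{n}$ is finite and that $f(n)$ removes only finitely many nodes.
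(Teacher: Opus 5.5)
Your proof is correct and follows essentially the same route as the paper: a pigeonhole argument yields countably many $B_{n}$ that are all dense above one common node, chosen independently of $f$, and then small pieces of $B_{n}\setminus f(n)$ are glued into a set dense above that node. The only difference is bookkeeping: the paper takes $C_{n}$ to be a finite set meeting, above each extension of $\sigma^{*}$ of length $n$, some node of $B_{n}\setminus f(n)$, whereas you take a single node $\rho_{n}\supseteq\tau_{n}$ along an enumeration of all extensions of $\sigma$; both give density above the common node.
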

\begin{proof}
    First assume that $\mathcal{F}$ is a collection of $\nwd$-positive sets. Let $\{B_{n}:n\in\omega\}\subseteq\mathcal{F}$ be such that all $B_{n}$'s are dense below the same $\sigma^{*}$. Clearly for every $f\in\omega^{\omega}$ it is easy to pick finite $C_{n}$'s such that $C_{n}\subseteq B_{n}\setminus f(n)$ and for every $\sigma\in \{\tau\in2^{n}:\sigma^{*}\subseteq\tau\}$ there is $\rho\in C_{n}$ which extends $\sigma$.
\end{proof}
In \cite{Steprans} Steprāns proved that $\ppp\leq \aaa(\nwd)$. On the other hand by a result of Balcar, Hernández-Hernández and Hrušák $\cov^{*}_{+}(\nwd)=\add(\Meager)$, so we obtain the following improvement of the result of Steprāns.
\begin{corollary}
    $\add(\Meager)\leq \aaa(\nwd)$
\end{corollary}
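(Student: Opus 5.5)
The corollary follows by combining results already in the paper. We have just shown that $\nwd$ is good, so Theorem \ref{MAINthmAntichains} gives
\[
\min\{\cov^{+}_{h}(\nwd),\bbb\}\leq\aaa(\nwd).
\]
It therefore suffices to compute the left-hand side and check that it equals $\add(\Meager)$.

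First, the proposition following Theorem \ref{FarkasZdomskyyOnCovPlus} gives $\cov^{+}_{h}(\nwd)=\add(\Meager)$. That equality rests on $\cov^{+}(\nwd)=\add(\Meager)$ (Farkas and Zdomskyy, equivalently Balcar, Hernández-Hernández and Hrušák) together with homogeneity of $\nwd$. Homogeneity is used as follows: for every $X\in\nwd^{+}$ there is $\sigma$ with $X$ dense below $\sigma$, and the restriction of $\nwd$ to $X$ is Katětov equivalent to $\nwd$. Hence $\cov^{+}(\nwd|_{X})=\cov^{+}(\nwd)$, and taking the minimum over all positive $X$ gives the value $\add(\Meager)$.

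Second, $\add(\Meager)\leq\bbb$ is a ZFC fact recorded in the diagram in the introduction. So $\min\{\add(\Meager),\bbb\}=\add(\Meager)$, and substituting into the displayed inequality yields $\add(\Meager)\leq\aaa(\nwd)$.

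No step here poses a real obstacle, since all the ingredients are earlier results. The only point needing care is that Theorem \ref{MAINthmAntichains} requires the hereditary invariant $\cov^{+}_{h}$ rather than $\cov^{+}$; this is why the homogeneity of $\nwd$ enters, and it is already supplied by the earlier proposition. The result improves Steprāns' bound $\ppp\leq\aaa(\nwd)$, since $\ppp\leq\add(\Meager)$.
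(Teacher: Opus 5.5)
Your proposal is correct and matches the paper's argument: goodness of $\nwd$ combined with Theorem \ref{MAINthmAntichains}, where $\cov^{+}_{h}(\nwd)=\add(\Meager)$ comes from $\cov^{+}(\nwd)=\add(\Meager)$ plus homogeneity of $\nwd$, and $\add(\Meager)\leq\bbb$ collapses the minimum. Your explicit homogeneity step fills in something the paper leaves implicit: every $\nwd$-positive set is, modulo $\nwd$, dense in some nonempty open set, so its restriction is Katětov equivalent to $\nwd$ and has the same $\cov^{+}$.
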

This result, by using a different method, has been independently obtained by Jonathan Cancino Manriquez during the work on this paper. Next, we move to the ideals $\FF$, $\BI$ and $\conv$.
\begin{proposition}
    The ideals $\FF$ and $\BI$ are good.
\end{proposition}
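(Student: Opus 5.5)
We need to show that $\FF$ and $\BI$ are good. That is, for any uncountable $\mathcal{F}\subseteq\J^+$ we must find $\{B_n:n\in\omega\}\subseteq\mathcal{F}$ such that for every $f\in\Baire$ there are $C_n\in\J$ with $C_n\subseteq B_n\setminus f(n)$ and $\bigcup_n C_n\in\J^+$.

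\emph{The case $\FF$.} A set $B\subseteq\omega\times\omega$ is $\FF$-positive iff the set of columns $D(B)=\{k:|B\cap(\{k\}\times\omega)|=\omega\}$ is infinite. Given uncountable $\mathcal{F}\subseteq\FF^+$, I would pick $\{B_n\}\subseteq\mathcal{F}$ by pigeonhole so that either:
\begin{itemize}
\item[(a)] a single column $k^*\in\bigcap_n D(B_n)$ is shared by all of them; or
\item[(b)] the $B_n$ are arbitrary distinct members, if no such column exists.
\end{itemize}
In fact no pigeonhole is needed. Given $B_n$ and $f$, let $k_n$ be the $n$-th element of $D(B_n)$ above $\max_{i<n}k_i$, so the $k_n$ are strictly increasing, hence distinct. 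This is possible since each $D(B_n)$ is infinite. Put $C_n=B_n\cap(\{k_n\}\times\omega)\setminus f(n)$, where we identify $\omega\times\omega$ with $\omega$ via a bijection. Since $f(n)$ is a finite initial segment, $C_n$ is still infinite. Each $C_n$ lies in one column, hence $C_n\in\FF$. The union $C$ meets infinitely many columns $k_n$ in infinite sets, so $C\in\FF^+$. Any countable subfamily therefore works.

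\emph{The case $\BI$.} A set $B\subseteq\omega^3$ is $\BI$-positive iff either some section $B_{(k)}=B\cap(\{k\}\times\omega^2)$ is $\FF$-positive, or infinitely many sections are infinite. Since $\mathcal{F}$ is uncountable, I would apply pigeonhole to get infinitely many $B_n$ of the same type: either all of type (i), or all of type (ii) with no section $\FF$-positive.

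\emph{Type (ii).} As above, choose distinct $k_n$ with $B_n\cap(\{k_n\}\times\omega^2)$ infinite and set $C_n=\bigl(B_n\cap(\{k_n\}\times\omega^2)\bigr)\setminus f(n)$. This set is infinite and lies inside one section, which is in $\FF$ because it is not $\FF$-positive. Hence $C_n\in\BI$. The union $C$ has infinitely many infinite sections, so $C\in\BI^+$.

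\emph{Type (i).} Here each $B_n$ has an $\FF$-positive section $k_n$. Either infinitely many of the $k_n$ are distinct, or by pigeonhole (passing to the subfamily first) all $k_n$ equal a single $k^*$.

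In the distinct subcase, take in section $k_n$ one infinite column of $B_n$ minus $f(n)$. This set lies in $\FF$ within the section, so $C_n\in\BI$. The union has infinitely many infinite sections and is positive.

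The subcase $k_n=k^*$ is the main point to check, and it reduces to the $\FF$ argument applied inside section $k^*$. Choose strictly increasing columns $j_n$ of $(B_n)_{(k^*)}$ that are infinite, and let $C_n$ be that column of $B_n$ minus $f(n)$. Each $C_n$ is a single infinite column in section $k^*$, hence in $\BI$. The union restricted to section $k^*$ is $\FF$-positive, so $C\in\BI^+$.

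Thus goodness holds in every case; the only delicate point is arranging the pigeonhole so that the $C_n$ are individually in the ideal while their union escapes it.
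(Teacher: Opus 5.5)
Your proof is correct and is essentially the paper's argument. For $\FF$ you take one infinite column of each $B_n$ at strictly increasing column indices. For $\BI$ you split according to whether the sets have an $\FF$-positive section: if so, you take columns with increasing indices inside a common section $k^*$; if not, you take one infinite section of each $B_n$ at distinct indices, exactly as the paper does. The one deviation is harmless: the paper uses uncountability of $\mathcal{F}$ to pigeonhole all type-(i) sets onto a single section $i^*$, whereas your extra ``distinct sections'' subcase (one infinite column in each of infinitely many different sections) lets the argument work for any countably infinite subfamily.
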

\begin{proof}
    For the $\FF$ case, assume that $\mathcal{F}$ is an uncountable collection of $\FF$-positive sets. Let $\{B_{n}:n\in\omega\}$ be any countable sub-collection of $\mathcal{F}$. Given $f\in\omega^{\omega}$ inductively build an increasing sequence $\{c_{n}:n\in\omega\}$ of natural numbers such that $C_{n}:=B_{n}\cap(\{c_{n}\}\times\omega)\setminus f(n)$ is infinite.\\
    For the $\BI$ case, assume that $\{A_{\alpha}:\alpha<\omega_{1}\}$ is a collection of $\BI$-positive sets. Consider the following two cases. If the set
    \begin{center}
        $W=\{\alpha<\kappa: \exists i_{\alpha}\in\omega$ such that $\{(j,k):(i_{\alpha},j,k)\in A_{\alpha}\}\in\FF^{+}\}$
    \end{center}
    is uncountable, pick $\{\alpha_{n}:n\in\omega\}\subseteq W$ such that all $i_{\alpha_{n}}$'s are equal to the same $i^{*}\in\omega$. Let $f\in\Baire$. Inductively choose an increasing sequence $\{j_{n}:n\in\omega\}$ such that for every $n\in\omega$ we have that $(\{i^{*}\}\times\{j_{n}\}\times\omega)\cap A_{\alpha_{n}}$ is infinite. Then, let $\{C_{n}:n\in\omega\}$ be such that $C_{n}=(\{i^{*}\}\times\{j_{n}\}\times\omega)\cap A_{\alpha_{n}}\setminus f(n)$
    Then the sequence $\{C_{n}:n\in\omega\}$ is as required.\\
    If the set $W$ is countable, let $\{\alpha_{n}:n\in\omega\}\subseteq \omega_{1}\setminus W$ and choose an increasing sequence $\{i_{n}:n\in\omega\}$ such that $A_{\alpha_{n}}\cap(\{i_{n}\}\times\omega^{2})$ is infinite for every $n\in\omega$. For $f\in\Baire$ let $\{C_{n}:n\in\omega\}$ be such that $C_{n}=A_{\alpha_{n}}\cap(\{i_{n}\}\times\omega^{2})\setminus f(n)$. Again, $\{C_{n}:n\in\omega\}$ is as required.
\end{proof}
Even though both ideals $\FF$ and $\BI$ are good, the Theorem \ref{MAINthmAntichains} does not give interesting lower boundaries for $\aaa(\FF)$ and $\aaa(\BI)$ as both numbers $\cov^{+}_{h}(\FF)$ and $\cov^{+}_{h}(\BI)$ are equal to $\omega$. However, by the Theorem \ref{BrendleFFantichains} the inequality $\bbb\leq\aaa(\FF)$ holds. This motivates the question whether $\bbb\leq\aaa(\BI)$ is true as well? We will show this directly in the following
\begin{theorem}\label{bbbLeqAAABi}
    $\bbb\leq\aaa(\BI)$
\end{theorem}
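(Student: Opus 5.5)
We take $\kappa<\bbb$ and an uncountable family $\{A_\alpha:\alpha<\kappa\}\subseteq\BI^+$ with pairwise intersections in $\BI$. We aim to find $C\in\BI^+$ with $C\cap A_\alpha\in\BI$ for every $\alpha$, so the family is not maximal. For a set $X\subseteq\omega^3$ we write $X_i=X\cap(\{i\}\times\omega^2)$. As in the goodness proof for $\BI$, we split into two cases according to
\begin{center}
$W=\{\alpha<\kappa:\exists i\ (A_\alpha)_i\in\FF^+\}$.
\end{center}

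\textbf{Case 1: $W$ is uncountable.} By pigeonhole there is $i^*$ such that the set $W'=\{\alpha: (A_\alpha)_{i^*}\in\FF^+\}$ is uncountable. Identify $\{i^*\}\times\omega^2$ with $\omega\times\omega$. The restriction of $\BI$ to this copy is exactly $\FF$, because $\BI$ requires each section to be in $\FF$ and allows finitely many sections to be infinite. So $\{(A_\alpha)_{i^*}:\alpha\in W'\}$ is an uncountable family of $\FF$-positive sets with pairwise intersections in $\FF$. It cannot be maximal among $\FF$-positive subsets of this copy only if we extend it by the traces of the remaining $A_\alpha$. Each trace $(A_\alpha)_{i^*}$ is either in $\FF$ or $\FF$-positive, and only the positive ones matter for the intersection condition. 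Since $\kappa<\bbb\le\aaa(\FF)$ by Theorem \ref{BrendleFFantichains}, the uncountable antichain $\{(A_\alpha)_{i^*}:\alpha\in W'\}$ is not maximal. We therefore get $D\subseteq\{i^*\}\times\omega^2$, $D\in\FF^+$, almost disjoint mod $\FF$ from every positive trace. Put $C=D$. Then $C\notin\BI$, since its single section is $\FF$-positive. Also $C\cap A_\alpha$ lies in a single section and is in $\FF$, so $C\cap A_\alpha\in\BI$.

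\textbf{Case 2: $W$ is countable.} Here every $A_\alpha$ with $\alpha\notin W$ has all sections in $\FF$, so positivity means that infinitely many sections are infinite. Hence $A_\alpha\notin\BI$ exactly when the set $\{i:(A_\alpha)_i \text{ infinite}\}$ is infinite. Consider the ideal of $X$ whose sections are all finite except finitely many. Modulo this smaller ideal, the condition ``$C\cap A_\alpha\in\BI$'' becomes a condition of Fubini type $\FIN\times\FIN$ on $\omega\times(\omega^2)$, after collapsing each section $\omega^2$ to a countable set.

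To handle the countably many $\alpha\in W$, first shrink by hand. For $\alpha\in W$ we need $C_i\cap (A_\alpha)_i\in\FF$ for all $i$ and finite for almost all $i$. We will build $C$ with every section $C_i$ a subset of $\omega^2$ that is finite on each column (so $C_i\in\FF$ automatically) and infinite.

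The main step is a diagonalization modelled on the proof of $\bbb\le\aaa$. Pick $\alpha_n\notin W$ and an increasing sequence $i_n$ with $(A_{\alpha_n})_{i_n}$ infinite, arranged so that the sets $E_n=(A_{\alpha_n})_{i_n}\setminus\bigcup_{m<n}A_{\alpha_m}$ are infinite; this is possible since pairwise intersections are in $\BI$. Enumerate each $E_n$ as a sequence. For every $\alpha$ not among the $\alpha_n$, the set $A_\alpha\cap E_n$ is finite for almost all $n$ and is in $\FF$ always. We would like a function $f_\alpha$ bounding it, exactly as in Theorem \ref{MAINthmAntichains}. The obstacle is that $A_\alpha\cap E_n$ need only lie in $\FF$, not be finite, for finitely many $n$, and for $\alpha\in W$ the set can even be $\FF$-positive in some section.

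To fix this, choose $C_n\subseteq E_n$ infinite with only finitely many points in each column, taking one point per column $j\ge f(n)$ along a diagonal. Then $A_\alpha\cap C_n$ is finite whenever $A_\alpha\cap E_n$ meets only finitely many columns in infinitely many points. For the remaining finitely many exceptional $n$ the intersection still has finite columns, so it stays in $\FF$. We define $f_\alpha(n)$ to bound the finite part for the nonexceptional $n$. Let $f$ dominate all $f_\alpha$, which uses $\kappa<\bbb$, and let $C=\bigcup_n C_n\setminus f(n)$. Then $C$ has infinitely many infinite sections, so $C\in\BI^+$, and $C\cap A_\alpha\in\BI$ for each $\alpha$. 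I expect the delicate point to be the precise bookkeeping of columns so that every $C\cap A_\alpha$ has $\FF$ sections, and making $f_\alpha$ well defined, including for $\alpha\in W$.
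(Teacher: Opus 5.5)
Your Case 1 is correct, and it is cleaner than the paper's. The paper re-runs the $\FF$ diagonalization inside the section $\{i^*\}\times\omega^2$. You instead note that $\BI$ restricted to that section is $\FF$ and quote Theorem~\ref{BrendleFFantichains}.

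Case 2, however, has a genuine gap. You fix the sections $i_n$, and hence the sets $E_n$, subject only to $E_n$ being infinite. You then assert that for every $\alpha$ not among the $\alpha_n$, the set $A_\alpha\cap E_n$ is finite for almost all $n$. Nothing supports this. $A_\alpha\cap A_{\alpha_n}\in\BI$ only yields a threshold $g_\alpha(n)$ beyond which the sections of $A_\alpha\cap A_{\alpha_n}$ are finite, and your $i_n$ may lie below $g_\alpha(n)$ for infinitely many $n$.

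Here is a concrete failure. Let $A_n=\{(i,0,k):i\in P_n,\ k\in\omega\}$ with the $P_n$ pairwise disjoint and infinite. Choose $\alpha_n=n$ and any increasing $i_n\in P_n$. Suppose the family also contains $B=\{(i_n,0,k):n,k\in\omega\}$; it meets each $A_n$ in one column. To make the family uncountable with $W=\emptyset$, add $\omega_1$ further members $\{(i,1,k):i\in D_\xi,\ k\in\omega\}$ with $\{D_\xi:\xi<\omega_1\}$ almost disjoint. Then:
\begin{itemize}
\item every $E_n$ is the single column $\{0\}\times\omega$, so a $C_n$ with finitely many points per column cannot even be chosen;
\item every $C\subseteq\bigcup_n\{i_n\}\times E_n$ is a subset of $B$, so $C\cap B=C\notin\BI$ for any $C\in\BI^+$;
\item no later choice of $f$ repairs this.
\end{itemize}

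The diagonal device also fails in general. A $C_n$ with one point per column can meet $A_\alpha\cap E_n$ in infinitely many points even when no column of $A_\alpha\cap E_n$ is infinite. It is also unnecessary, since $E_n\subseteq(A_{\alpha_n})_{i_n}\in\FF$ already puts every section of $C$ in $\FF$.

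The missing idea is to spend $\kappa<\bbb$ on choosing the sections themselves. Let $g$ dominate all the thresholds $g_\alpha$; the countably many $\alpha\in W$ can be included, or subtracted as the paper does. Then take $i_n>g(n)$. Now each $\alpha$ has only finitely many exceptional $n$, and your second domination goes through: bound $A_\alpha\cap E_n$ by $f_\alpha(n)$ in a fixed enumeration of $\omega^2$, dominate all $f_\alpha$ by $f$, and put $C=\bigcup_n\{i_n\}\times(E_n\setminus f(n))$.

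The paper does the same with the dominations in the other order. It first dominates the bounds $f^\alpha_l(i)$ as functions of the section index $i$. It then lets $g$ dominate both the $g_\alpha$ and the points $h_\alpha(l)$ past which $f$ beats $f^\alpha_l$, and only then picks $i_l>g(l)$.
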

\begin{proof}
    Assume that $\kappa<\bbb$ and that a $\BI$-antichain $\{A_{\alpha}:\alpha<\kappa\}\subseteq\BI^{+}$ is given.\\
    We will say that $\alpha<\kappa$ is of type 1, if there is an $i_{\alpha}\in\omega$ such that the set $\{(j,k):(i_{\alpha},j,k)\in A_{\alpha}\}$ is $\FF$-positive. In such case the set $\{(j,k):(i_{\alpha},j,k)\in A_{\alpha}\}$ contains a set of the form $F_{\alpha}=\bigcup_{n}\{j^{\alpha}_{n}\}\times F^{\alpha}_{n}$ where $\{j^{\alpha}_{n}:n\in\omega\}\in[\omega]^{\omega}$ and $F^{\alpha}_{n}\in[\omega]^{\omega}$ for every $n\in\omega$.\\
    We will say that $\alpha<\kappa$ is of type 2, if it is not of type 1. Note that if $\alpha$ is of type 2, then the set $I_{\alpha}=\{i\in\omega:|(\{i\}\times\omega^{2})\cap A_{\alpha}|=\omega\}$ is infinite.\\
    
    Consider now the following two cases:\\
    CASE 1: the set $W=\{\alpha<\kappa: \alpha$ is of type 1$\}$ is uncountable.\\
    Without loss of generality we may assume that $\omega\subseteq W$ and $i_{\alpha}$'s for $\alpha\in W$ are equal to the same $i^{*}\in\omega$. For every $\alpha<\kappa$ and $l\in\omega$ with $l\neq\alpha$ there are $K^{\alpha}(l)\in\omega$ and $f^{\alpha}_{l}\in\Baire$ such that:
    \begin{center}
    $\{(j,k)\in\omega^{2}:(i^{*},j,k)\in A_{l}\cap A_{\alpha}\}\subseteq\{(j,k)\in\omega^{2}:j<K^{\alpha}(l)$ or $k\leq f^{\alpha}_{l}(j)\}$
    \end{center}
    As $\kappa<\bbb$ there is $f\in\Baire$ that dominates all $f^{\alpha}_{l}$'s. Then, for every $\alpha\in\kappa\setminus\omega$ we define $g_{\alpha}\in\Baire$ in a way that for all $j\geq g_{\alpha}(l)$ we have $f^{\alpha}_{l}(j)<f(j)$. Again, as $\kappa<\bbb$, we can find $g\in\Baire$ that dominates all functions $g_{\alpha}$ and $K^{\alpha}$.\\
    Inductively we construct sequences $\{j_{l}:l\in\omega\}$ and $\{C_{l}:l\in\omega\}$ such that for every $l\in\omega$ we have:
    \begin{itemize}
        \item[--] $j_{l+1}>j_{l}>g(l)$,
        \item[--] $C_{l}=F_{l}\cap (\{j_{l}\}\times(\omega\setminus f(j_{l}))$ is infinite
    \end{itemize}
    When the induction is finished define $C=\bigcup_{l}\{j_{l}\}\times C_{l}$ and $A=\{i^{*}\}\times C$. Note that the set $A$ is $\BI$-positive because $C$ is $\FF$-positive. We claim that for every $\alpha<\kappa$ we have $A\cap A_{\alpha}\in\BI$ which will finish the first case. Fix $\alpha<\kappa$. If $\{(j,k)\in\omega^{2}:(i^{*},j,k)\in A_{\alpha}\}\in\FF$, there is nothing to do. Assume then, that $\{(j,k)\in\omega^{2}:(i^{*},j,k)\in A_{\alpha}\}$ is $\FF$-positive. Let $L\in\omega$ be large enough so that for all $l>L$ we have $g(l)\geq\max\{g_{\alpha}(l),K^{\alpha}(l)\}$. But then
    \begin{center}
        $\{(j,k)\in\omega^{2}:(i^{*},j,k)\in A\cap A_{\alpha}\}\subseteq\bigcup_{l<L}C_{l}$
    \end{center}
    This is because if $l>L$, then $j_{l}>\max\{g_{\alpha}(l),K^{\alpha}(l)\}$ and we have that
    \begin{center}
    $C_{l}\cap\{(j,k)\in\omega^{2}:(i^{*},j,k)\in A_{\alpha}\}\subseteq C_{l}\cap \{(j,k):j<K^{\alpha}(l)$ or $k<f^{\alpha}_{l}(j)\}$
    \end{center}
    Notice that the latter is empty because $C_{l}\subseteq\{j_{l}\}\times(\omega\setminus f(j_{l}))$ and $f(j_{l})>f^{\alpha}_{l}(j_{l})$ for all $j_{l}>K^{\alpha}(l)$. As $\bigcup_{l<L}C_{l}\in\FF$, the first case is finished.\\
    
    CASE 2: $W$ is countable.\\
    Let $\{A'_{n}:n\in\omega\}$ enumerate all of the sets $\{A_{\alpha}:\alpha\in W\}$. Then, by removing the set $\{A'_{n}:n\in\omega\}$ from the original enumeration, we may assume that the intersection $\{A'_{n}:n\in\omega\}\cap\{A_{\alpha}:\alpha<\kappa\}$ is empty.\\
    For every $\alpha\in\kappa\setminus\omega$ define $g_{\alpha}\in\Baire$ such that for every $l\in\omega$ for all $i\geq g_{\alpha}(l)$ the set $\{(j,k):(i,j,k)\in A_{l}\cap A_{\alpha}\}$ is finite. Then, for all $\alpha<\kappa$ and $l\in\omega$ with $\alpha\neq l$ define $f^{\alpha}_{l}\in\Baire$ such that $\{(j,k):(i,j,k)\in A_{l}\cap A_{\alpha}\}\subseteq f^{\alpha}_{l}(i)$ for all $i>g_{\alpha}(l)$.
    As $\kappa<\bbb$ we can find $f\in\Baire$ that dominates all such $f^{\alpha}_{l}$'s.
    For every $\alpha\in\kappa\setminus\omega$ define then $h_{\alpha}\in\Baire$ such that for every $i>h_{\alpha}(l)$ we have $f(i)>f^{\alpha}_{l}(i)$. As $\kappa<\bbb$ there is $g\in\Baire$ that dominates all $g_{\alpha}$'s and $h_{\alpha}$'s.\\
    Inductively we construct an increasing sequence $\{i_{l}:l\in\omega\}\subseteq\omega$ and $\{C_{l}:l\in\omega\}$ such that for every $l\in\omega$ we have:
    \begin{itemize}
        \item[--] $i_{l}\in I_{l}\setminus i_{l-1}$,
        \item[--] $C_{l}=\{(j,k):(i_{l},j,k)\in A_{l}\setminus(\bigcup_{l'\leq l}A'_{l'}\cup f(i_{l}))\}$
        \item[--] $i_{l}>g(l)$
    \end{itemize}
    When induction is over define $A=\bigcup_{l}\{i_{l}\}\times C_{l}$. Note that $A$ is $\BI$-positive as each $C_{l}$ is infinite and $\{i_{l}:l\in\omega\}$ is increasing. By a similar argument as in the first case, we have that $A\cap A_{\alpha}\in\BI$ for every $\alpha\in\kappa$ and $A\cap A'_{n}\in\BI$ for every $n\in\omega$. This finishes the proof.
\end{proof}
We next turn to the convergent ideal.
\begin{proposition}
    $\conv$ ideal is good.
\end{proposition}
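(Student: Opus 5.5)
We need to show: for every uncountable family $\mathcal{F}\subseteq\conv^{+}$ there is a countable subfamily $\{B_{n}:n\in\omega\}$ such that for every $f\in\Baire$ we can pick $C_{n}\in\conv$ with $C_{n}\subseteq B_{n}\setminus f(n)$ and $\bigcup_{n}C_{n}$ being $\conv$-positive. Here we identify $\mathbb{Q}\cap[0,1]$ with $\omega$ via a fixed enumeration, so $B\setminus f(n)$ means removing finitely many points.

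The plan is a pigeonhole argument on accumulation points, modelled on the $\BI$ case.
\begin{enumerate}
\item Use that a set $B$ is $\conv$-positive iff its closure in $[0,1]$ (equivalently, its set of accumulation points $B'$) is not coverable by finitely many convergent sequences. Concretely, $B\in\conv^{+}$ iff $B'$ is infinite.
\item For each $B\in\mathcal{F}$, choose a sequence $(x^{B}_{k})_{k\in\omega}$ of distinct accumulation points of $B$. Passing to a subsequence, it converges to some $x^{B}\in[0,1]$, and we may assume it is strictly monotone.
\item Since $\mathcal{F}$ is uncountable, a counting argument gives uncountably many $B$'s whose sequences lie in a common rational interval pattern. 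More precisely, fix rational intervals $J_{k}$ shrinking to a common limit region, such that $x^{B}_{k}\in J_{k}$ for all $k$, the $J_{k}$ are pairwise disjoint, and their closures accumulate only at one point $x^{*}$. If exact agreement of the $J_{k}$'s across uncountably many $B$ is too strong, it suffices to pick any countable subfamily $B_{n}$ together with points $x_{n}\in B_{n}'$ that are pairwise distinct and converge to some $x^{*}$. This is always possible: take distinct $B_{n}\in\mathcal{F}$, pick $x_{n}\in B_{n}'$ different from the previous $x_{m}$ (possible since $B_{n}'$ is infinite), and pass to a convergent subsequence.
\item Given $f$, let $C_{n}\subseteq B_{n}\setminus f(n)$ be a sequence converging to $x_{n}$ with all points within $2^{-n}$ of $x_{n}$. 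This is possible because $x_{n}$ is an accumulation point of $B_{n}$, and removing finitely many points does not affect that. Each $C_{n}$ is a convergent sequence, hence in $\conv$.
\item The union $C$ has every $x_{n}$ as an accumulation point, and the $x_{n}$ are distinct. Hence $C'$ is infinite and $C\in\conv^{+}$.
\end{enumerate}

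The main obstacle is step 1, the characterization of $\conv^{+}$. One direction: if $B'=\{y_{1},\dots,y_{m}\}$ is finite, then splitting $B$ into the parts near each $y_{i}$ exhibits $B$ as a finite union of convergent sequences, plus finitely many points. Here we need that sets with accumulation point set $\{y\}$ converge to $y$, which is fine in compact $[0,1]$. Conversely, a finite union of convergent sequences has only finitely many accumulation points.

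Since limits are taken in $[0,1]$, possibly at irrationals, we should note the standard convention: $\conv$ is generated by sequences convergent in $[0,1]$. Under that convention, step 5 is immediate.

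Uncountability of $\mathcal{F}$ is not even needed beyond having infinitely many members. This matches the $F_{\sigma}$ case, where any countable subfamily worked.
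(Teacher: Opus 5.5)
Your proof is correct and is essentially the paper's argument: in both, $C_n$ is a tail of a sequence in $B_n$ converging to an accumulation point of $B_n$ chosen distinct from the points used for the other indices, and $C$ is $\conv$-positive because it has infinitely many accumulation points. You drop scaffolding the paper does not actually need (the $\FF$-copy lemma and the choice of $\alpha_n$ with $z^{\alpha_n}\to w$), and your characterization $B\in\conv^{+}\iff B'$ infinite is the correct form of the paper's remark about $\overline{A}\setminus A$, which should be stated for the derived set.
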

\begin{proof}
    Suppose that we are given a family $\{A_{\alpha}:\alpha<\omega_{1}\}$ of almost disjoint $\conv$-positive sets. We will need the following fact asserting that every $\conv$-positive set contains a copy of $\FF$-ideal.
    \begin{lemma}\label{lemmaConvPositive}
        For every $A_{\alpha}$ there is a collection $\{x^{\alpha}_{i,j}:i,j\in\omega\}\subseteq A_{\alpha}$ such that:
        \begin{itemize}
            \item[--] for every $i$ and $j_{0}\neq j_{1}$, $x^{\alpha}_{i,j_{0}}\neq x^{\alpha}_{i,j_{1}}$,
            \item[--] for every $i$, $\{x^{\alpha}_{i,j}:j\in\omega\}$ converges to $y^{\alpha}_{i}$,
            \item[--] for all $i_{0}\neq i_{1}$, $y^{\alpha}_{i_{0}}\neq y^{\alpha}_{i_{1}}$,
            \item[--] $\{y^{\alpha}_{i}:i\in\omega\}$ converges to $z^{\alpha}$
        \end{itemize}
    \end{lemma}
    \begin{proof}[proof of the lemma]
        Notice that $\overline{A}\setminus A$ cannot be finite because then $A\in\conv$. By compactness of $[0,1]$ let $\{y_{i}:i\in\omega\}\subseteq\overline{A}\setminus A$ be an infinite sequence of distinguished points with a limit point $z$. For each $i\in\omega$ choose a sequence $\{x_{i,j}:j\in\omega\}\subseteq A$ of distinguished point converging to $y_{i}$ such that $\{x_{i_{0},j}:j\in\omega\}\cap\{x_{i_{1},j}:j\in\omega\}=\emptyset$ for $i_{0}\neq i_{1}$. This finishes the proof of the claim.
    \end{proof}
    Let $W=\{\alpha_{n}:n\in\omega\}\subseteq\omega_{1}$ be such a countable set that $\{z^{\alpha_{n}}:n\in\omega\}$ converges to a point $w$ and let $f:\omega\rightarrow\mathbb{Q}\cap[0,1]$. Inductively we construct sets $\{C_{n}:n\in\omega\}$ such that for each $n\in\omega$ we have:
    \begin{itemize}
        \item[--] there are $i_{n}\in\omega$ and $N_{n}\in\omega$ such that $C_{n}=\{x^{\alpha_{n}}_{i_{n},j}:j>N_{n}\}$,
        \item[--] $N_{n}$ is large enough so that $C_{n}\cap f[n]=\emptyset$,
        \item[--] $y^{\alpha_{n}}_{i_{n}}\notin\{y^{\alpha_{n}}_{i_{k}}:k<n\}$,
        \item[--] $|y^{\alpha_{n}}_{i_{n}}\cap z|<\frac{1}{n}$.
    \end{itemize}
    The construction is straightforward. Now, as each $C_{n}$ is in $\conv$, it remains to check that $C=\bigcup_{n}C_{n}$ is $\conv$-positive. If $C$ would be covered by finitely many convergent sequences, then the limit points of $C$, it is $\{y^{\alpha_{n}}_{i_{n}}:n\in\omega\}$ and $z$, would be covered by finitely many limit points of the convergent sequences. 
\end{proof}
As in the case with the ideals $\FF$ and $\BI$, the Theorem \ref{MAINthmAntichains} does not give interesting lower boundaries for $\aaa(\conv)$, as invariant $\cov^{+}_{h}(\conv)$ is equal to $\omega$ (this is because for every $A\in\conv^{+}$ there is $B\subseteq A$, $B\in\conv^{+}$ such that $\conv|_{B}$ is Katětov equivalent to $\FF$). However, just as in the case with $\FF$ and $\BI$, it is possible to show that $\aaa(\conv)$ is above $\bbb$ directly.
\begin{theorem}
    $\bbb\leq\aaa(\conv)$
\end{theorem}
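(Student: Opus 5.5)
We follow the scheme of Theorem \ref{bbbLeqAAABi}, now with the "$\FF$-copies" inside $\conv$-positive sets supplied by Lemma \ref{lemmaConvPositive}. Fix $\kappa<\bbb$ and a $\conv$-antichain $\{A_{\alpha}:\alpha<\kappa\}\subseteq\conv^{+}$ with $\kappa\geq\omega_1$. We must find $C\in\conv^{+}$ with $C\cap A_{\alpha}\in\conv$ for every $\alpha$.

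\textbf{Extracting the structures.} For each $\alpha$ apply Lemma \ref{lemmaConvPositive} to obtain points $x^{\alpha}_{i,j}\in A_{\alpha}$, limits $y^{\alpha}_{i}\in\overline{A_\alpha}\setminus A_\alpha$ and the limit $z^{\alpha}$. By compactness of $[0,1]$ and uncountability of $\kappa$, choose indices $\alpha_{n}$, $n\in\omega$, with $z^{\alpha_{n}}\to w$ for some $w$. Re-index so that these become $A_{0},A_{1},\dots$; the remaining $A_\alpha$, $\alpha\geq\omega$, are the ones that must be avoided. As in the proof that $\conv$ is good, the target set will have the form
\[C=\bigcup_{n}\{x^{n}_{i_{n},j}:j>N_{n}\},\]
where $y^{n}_{i_{n}}$ are pairwise distinct and $|y^{n}_{i_{n}}-w|<1/n$. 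Such a $C$ has infinitely many accumulation points, hence $C\in\conv^{+}$. The freedom we still have is the choice of the tails $N_n$, and we will use $\bbb$ to choose them.

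\textbf{Bounding.} Since $A_{l}\cap A_{\alpha}\in\conv$ for $l\neq\alpha$, the set $A_l\cap A_\alpha$ is covered by finitely many convergent sequences. Hence $A_{\alpha}$ contains only finitely many points of the $i$-th column $\{x^{l}_{i,j}:j\in\omega\}$ for all but finitely many $i$, and for each such $i$ only finitely many indices $j$. More precisely:
\begin{itemize}[label={}]
\item $g_{\alpha}(l)$ is chosen so that for every $i\geq g_{\alpha}(l)$ the set $\{j: x^{l}_{i,j}\in A_{\alpha}\}$ is finite, namely those columns whose limit $y^l_i$ is not among the finitely many limits of the covering sequences;
\item for $i\geq g_\alpha(l)$, $f^{\alpha}_{l}(i)$ is an upper bound for that finite set of $j$.
\end{itemize}
Since $\kappa<\bbb$, take $f$ dominating all $f^\alpha_l$, and then $g$ dominating all $g_\alpha$ together with the functions $h_\alpha(l)=$ the point after which $f\geq f^\alpha_l$. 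Now choose $i_l>g(l)$ increasing, with $y^l_{i_l}$ new and close to $w$; these constraints only discard finitely many $i$, since $y^{l}_{i}\to z^{l}$ and $z^l\to w$. Set $N_l=f(i_l)$. We also remove from the $l$-th piece the points belonging to $A_0,\dots,A_{l-1}$; this removal is finite except on finitely many columns, which we avoid by also taking $i_l$ large.

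\textbf{Verification and the main obstacle.} For $\alpha\geq\omega$ and almost all $l$ we have $i_l>g_\alpha(l),h_\alpha(l)$. Then $x^{l}_{i_l,j}\in A_\alpha$ forces $j\leq f^\alpha_l(i_l)<f(i_l)=N_l$, so $C\cap A_\alpha$ is contained in finitely many pieces, each of which is a single convergent sequence; hence $C\cap A_\alpha\in\conv$. For $\alpha=n<\omega$, the $n$-th piece is one convergent sequence, and the later pieces were thinned to avoid $A_n$, so again $C\cap A_n\in\conv$.

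The delicate step is justifying the definition of $g_\alpha$. A covering sequence for $A_l\cap A_\alpha$ could converge to some $y^l_i$, or could pick up single points from many different columns. We need to show that for all but finitely many $i$ only finitely many $x^l_{i,j}$ lie in $A_\alpha$. The key fact is that a convergent sequence meets infinitely many points of column $i$ only if its limit is $y^l_i$, because the columns converge to distinct limits. With finitely many covering sequences, only finitely many columns are bad, and the rest meet $A_\alpha$ in finite sets. This is where I expect to spend care. One also has to check that $C$ is genuinely positive after the thinning: each piece stays infinite because only finitely many earlier sets are removed and the columns were chosen good for them.
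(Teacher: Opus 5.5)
Your proposal is correct and follows essentially the same route as the paper. You extract the $\FF$-copies via Lemma \ref{lemmaConvPositive}, bound the column-wise intersections by $g_\alpha$ and $f^\alpha_l$, dominate twice using $\kappa<\bbb$, and take tails beyond $f(i_l)$ of columns with pairwise distinct limits. Your $h_\alpha$ and the thinning against $A_0,\dots,A_{l-1}$ spell out what the paper leaves to ``a similar argument as in Theorem \ref{bbbLeqAAABi}''.

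The only superfluous ingredient is the requirement that the limits $y^{l}_{i_l}$ approach $w$. Distinct limits already give $C\in\conv^{+}$. Moreover, with the bound $1/l$ that condition can be unsatisfiable for small $l$ when $z^{l}$ is far from $w$, so simply drop it.
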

\begin{proof}
    Assume that $\kappa<\bbb$ and we are given a $\conv$-antichain $\{A_{\alpha}:\alpha<\kappa\}\subseteq\conv^{+}$. By Lemma \ref{lemmaConvPositive}, for every $\alpha<\kappa$ there is $F_{\alpha}=\{x^{\alpha}_{n,m}:n,m\in\omega\}$ such that
    \begin{itemize}
        \item[--] $F_{\alpha}\subseteq A_{\alpha}$
        \item[--] for every $n\in\omega$, the sequence $\{x^{\alpha}_{n,m}:m\in\omega\}$ converges to some $x^{\alpha}_{n}$,
        \item[--] $\{x^{\alpha}_{n}:n\in\omega\}$ converges to some $x^{\alpha}$.
    \end{itemize}
    For every $\alpha<\kappa\setminus\omega$ define $g_{\alpha}\in\Baire$ such that for every $l\in\omega$ for all $n>g_{\alpha}(l)$ the set $\{x^{l}_{n,m}:m\in\omega\}\cap A_{\alpha}$ is finite. Then for every $\alpha<\kappa\setminus\omega$ and $l\in\omega$ choose a function $f^{\alpha}_{l}\in\Baire$ so that for every $n>g_{\alpha}(l)$ we have that the intersection $\{x^{\alpha}_{n,m}:m>f^{\alpha}_{l}(n)\}\cap A_{\alpha}$ is empty.\\
    As $\kappa<\bbb$, we can find $f,g\in\Baire$ such that $f$ dominates all $f^{\alpha}_{l}$'s and $g$ dominates all $g_{\alpha}$'s. Inductively we will construct $\{n_{l}:l\in\omega\}\subseteq\omega$, $\{z_{l,m}:l,m\in\omega\}\subseteq\mathbb{Q}$ and $\{z_{l}:l\in\omega\}\subseteq[0,1]$ such that for every $l\in\omega$ we have:
    \begin{itemize}
        \item[--] $\{z_{l,m}:m\in\omega\}$ converges to some $z_{l}$
        \item[--] $n_{l}>g(l)$,
        \item[--] $z_{l}\notin\{z_{l'}:l'<l\}$,
        \item[--] $\{z_{l,m}:m\in\omega\}=\{x^{l}_{n_{l},m}:m>f(n_{l})\}$
    \end{itemize}
    When the induction is finished, define $A=\{z_{l,m}:l,m\in\omega\}$. Note that $A$ is $\conv$-positive. By a similar argument as in the Theorem \ref{bbbLeqAAABi}, we have that $A\cap A_{\alpha}\in\conv$ for every $\alpha<\kappa$.
\end{proof}
In the remainder of this section we will show that several other ideal on $\omega$ that naturally appear in the literature are good without being interested in the corresponding antichain numbers. We will first handle some coanalytic ideal.
\begin{definition}
    The ideal of graphs without infinite complete subgraphs is defined as:
    \begin{center}
        $\mathcal{G}_{c}=\{E\subseteq[\omega]^{2}:\forall X\in[\omega]^{\omega}$ $[X]^{2}\nsubseteq E\}$
    \end{center}
\end{definition}
To show that these two ideal are good we will need the following lemma.
\begin{lemma}(Shelah and Spinas \cite{SpHein})\label{SHSP}
    For every $\{A_{\alpha}:\alpha<\omega_{1}\}\subseteq[\omega]^{\omega}$ there is a countable $C\subseteq\omega_{1}$ such that for all $n\in C$ and $ F\in[A_{n}]^{<\omega}$ there are infinitely many $m\in C$ with $F\subseteq A_{m}$.
\end{lemma}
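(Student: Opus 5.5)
The plan is a closure argument. Its main tool is the observation that only countably many indices can fail to be ``generic'' for their own finite subsets. For a finite $F\subseteq\omega$ put $X_{F}=\{\alpha<\omega_{1}:F\subseteq A_{\alpha}\}$. Call $F$ \emph{bad} if $X_{F}$ is countable, and \emph{good} otherwise.

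First I shrink the index set to a cocountable set $Z$. There are only countably many finite subsets of $\omega$, so the set
\begin{center}
$B=\bigcup\{X_{F}:F\in[\omega]^{<\omega}$ is bad$\}$
\end{center}
is a countable union of countable sets, hence countable. Let $Z=\omega_{1}\setminus B$, which is cocountable and in particular nonempty.

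The key property of $Z$ is the following. If $\alpha\in Z$ and $F\in[A_{\alpha}]^{<\omega}$, then $\alpha\in X_{F}$. Since $\alpha\notin B$, the set $F$ cannot be bad, so $X_{F}$ is uncountable. Because $Z$ is cocountable, $X_{F}\cap Z$ is uncountable as well.

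Next I build $C$ as the union of an increasing sequence of countable sets $C_{0}\subseteq C_{1}\subseteq\cdots\subseteq Z$. Take $C_{0}=\{\alpha_{0}\}$ for any $\alpha_{0}\in Z$. Given a countable $C_{k}\subseteq Z$, consider the countably many pairs $(\alpha,F)$ with $\alpha\in C_{k}$ and $F\in[A_{\alpha}]^{<\omega}$. For each such pair, use the key property to choose a countably infinite set $D_{\alpha,F}\subseteq X_{F}\cap Z$. Then set
\begin{center}
$C_{k+1}=C_{k}\cup\bigcup\{D_{\alpha,F}:\alpha\in C_{k},\ F\in[A_{\alpha}]^{<\omega}\}$.
\end{center}
This is a countable subset of $Z$. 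Finally let $C=\bigcup_{k}C_{k}$, which is countable.

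To verify that $C$ works, take $\alpha\in C$ and $F\in[A_{\alpha}]^{<\omega}$. Then $\alpha\in C_{k}$ for some $k$. The infinite set $D_{\alpha,F}$ is contained in $C_{k+1}\subseteq C$, and every $m\in D_{\alpha,F}$ satisfies $F\subseteq A_{m}$, as required.

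The only delicate step is guaranteeing that the witnesses keep regenerating: each newly added index must itself satisfy the key property. This is exactly why all choices are made inside $Z$ rather than inside $\omega_{1}$. Once the countable exceptional set $B$ is discarded, the rest is routine countable bookkeeping.
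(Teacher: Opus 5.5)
Your proof is correct and complete. Discarding the countable set $B$ is what guarantees that every index you later add again has uncountably many witnesses in $Z$ for each finite $F\subseteq A_\alpha$. The $\omega$-step closure then handles the bookkeeping, and the verification goes through as you wrote it.

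The paper gives no proof of this lemma; it only quotes it from Hein and Spinas. So there is no argument in the paper to compare yours against. What you wrote is the standard condensation-point argument: the sets $\{x\subseteq\omega: F\subseteq x\}$ with $F$ finite form a countable base, and $B$ is the countable set of indices that are not condensation points.

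The same proof can be packaged as $C=(M\cap\omega_1)\setminus B$ for a countable elementary submodel $M$ containing the sequence. In that form, removing $B$ is genuinely needed. If some $X_F$ is finite and nonempty, then $X_F\in M$ and hence $X_F\subseteq M\cap\omega_1$, so $M\cap\omega_1$ alone would fail for the indices in $X_F$.
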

\begin{proposition}
    The ideal $\mathcal{G}_{c}$ is good.
\end{proposition}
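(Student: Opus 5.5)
The plan is to pass from $\mathcal{G}_{c}$-positive sets to their infinite complete subgraphs, apply Lemma \ref{SHSP} to those vertex sets, and then build a single infinite complete subgraph of the union, one vertex at a time.

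\textbf{Setup.} Fix a bijective coding of $[\omega]^{2}$ with $\omega$ such that the code of $\{a,b\}$ is at least $\max\{a,b\}$; this holds for the usual pairing functions. Expressions like $B\setminus k$ are read through this coding. Let $\mathcal{F}\subseteq\mathcal{G}_{c}^{+}$ be uncountable and pick $\{E_{\alpha}:\alpha<\omega_{1}\}\subseteq\mathcal{F}$ with the $E_{\alpha}$ pairwise distinct. Since $E_{\alpha}\notin\mathcal{G}_{c}$, choose $X_{\alpha}\in[\omega]^{\omega}$ with $[X_{\alpha}]^{2}\subseteq E_{\alpha}$.

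\textbf{Choosing the subfamily.} Apply Lemma \ref{SHSP} to $\{X_{\alpha}:\alpha<\omega_{1}\}$ to get a countable $C\subseteq\omega_{1}$. For every $\gamma\in C$ and every finite $F\subseteq X_{\gamma}$, there are infinitely many $\delta\in C$ with $F\subseteq X_{\delta}$; in particular $C$ is infinite. Enumerate $C=\{\gamma_{n}:n\in\omega\}$ without repetitions and set $B_{n}=E_{\gamma_{n}}$. This is the subfamily required by goodness.

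\textbf{Construction for a given $f$.} Fix $f\in\Baire$. We build distinct indices $n_{0},n_{1},\dots$ and points $x_{0}<x_{1}<\dots$ by induction, keeping the invariant $\{x_{0},\dots,x_{k}\}\subseteq X_{\gamma_{n_{k}}}$.
\begin{enumerate}
\item Let $n_{0}=0$ and pick any $x_{0}\in X_{\gamma_{0}}$.
\item Given $n_{k}$ and $x_{0},\dots,x_{k}$, let $F=\{x_{0},\dots,x_{k}\}$, a finite subset of $X_{\gamma_{n_{k}}}$.
\item By the choice of $C$, infinitely many $\delta\in C$ satisfy $F\subseteq X_{\delta}$. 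So we can take $n_{k+1}\notin\{n_{0},\dots,n_{k}\}$ with $F\subseteq X_{\gamma_{n_{k+1}}}$.
\item Choose $x_{k+1}\in X_{\gamma_{n_{k+1}}}$ with $x_{k+1}>\max\{x_{k},f(n_{k+1})\}$, which is possible because this set is infinite.
\item Put $C_{n_{k+1}}=\{\{x_{i},x_{k+1}\}:i\leq k\}$.
\end{enumerate}
Also put $C_{n_{0}}=\emptyset$, and $C_{n}=\emptyset$ for every $n$ not of the form $n_{k}$. The invariant is preserved, since $F\cup\{x_{k+1}\}\subseteq X_{\gamma_{n_{k+1}}}$.

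\textbf{Verification.} Each $C_{n_{k+1}}$ is contained in $[X_{\gamma_{n_{k+1}}}]^{2}\subseteq B_{n_{k+1}}$. Each of its edges contains $x_{k+1}>f(n_{k+1})$, so its code is at least $f(n_{k+1})$, giving $C_{n_{k+1}}\subseteq B_{n_{k+1}}\setminus f(n_{k+1})$. Each $C_{n}$ is finite, hence belongs to $\mathcal{G}_{c}$. Because the $n_{k}$ are distinct, no index receives two assignments. Every pair $\{x_{i},x_{j}\}$ with $i<j$ lies in $C_{n_{j}}$. Therefore $\bigcup_{n}C_{n}\supseteq[\{x_{k}:k\in\omega\}]^{2}$, which makes the union $\mathcal{G}_{c}$-positive.

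\textbf{Main obstacle.} The delicate step is making the removal of $f(n)$ harmless while old vertices are reused in new edges. This is handled by the monotone coding together with choosing each new vertex above $f(n_{k+1})$, so every new edge avoids the removed initial segment. The infinitude clause of Lemma \ref{SHSP} is what lets us pick a fresh index at every stage, so no index is used twice.
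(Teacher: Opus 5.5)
Your proof is correct and is essentially the paper's argument: pass to infinite complete subgraphs, apply Lemma \ref{SHSP} to their vertex sets, and build a single infinite clique one vertex at a time, assigning the edges from each new vertex to a fresh index and choosing that vertex large relative to $f$. You carry this out more carefully than the written proof, which takes an arbitrary countable subfamily instead of the one supplied by the lemma and does not explicitly keep the indices distinct. One cosmetic slip: no bijection $[\omega]^{2}\to\omega$ can give every pair a code $\geq\max\{a,b\}$, since nothing could receive code $0$; but any bijection works if you take $x_{k+1}$ larger than every vertex of the finitely many pairs with code $<f(n_{k+1})$.
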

\begin{proof}
    Assume that we are given uncountable, almost disjoint family $\mathcal{B}\subseteq\mathcal{G}_{c}^{+}$. Without loss of generality we may assume that for each $B'\in\mathcal{B}$ we have $B'=\{(a,b)\in B^{2}:a<b\}$ for some $B\in[\omega]^{\omega}$. Take arbitrary $\{B'_{n}:n\in\omega\}\subseteq\mathcal{B}$ and $f\in\omega^{\omega}$. We will construct $\{c_{k}:k\in\omega\}\in[\omega]^{\omega}$ and $\{i_{k}:k\in\omega\}\in\Baire$ such that for each $k>0$ we have $c_{k}<c_{k+1}$ and $\{(c_{j},c_{k}):j<k\}\subseteq B_{i_{k-1}}$. Assume that $c_{k}$ and $i_{k-1}$ have been built. By Lemma \ref{SHSP} there are infinitely many $i\in\omega$ such that $\{(c_{j},c_{k}):j<k\}\subseteq B_{i}$. Let $i_{k}$ be one of these. Pick then $c_{k+1}\in\omega$, $c_{k+1}>c_{k}$ such that $\{(c_{j},c_{k}):j<k+1\}\subseteq B_{i_{k}}\setminus f(i_{k})$. When the sequence $\{c_{k}:k\in\omega\}$ is constructed We define sets $C_{n}\subseteq B_{n}\setminus f(n)$ as $C_{n}=\emptyset$ if $n\notin\{i_{k}:k\in\omega\}$ and $C_{n}=\{(c_{j},c_{n+1}):j<n+1\}$ when $k=i_{n}$ for some $n\in\omega$. Clearly, $C=\bigcup_{n}C_{n}\in\mathcal{G}_{c}^{+}$ as $C=\{(c_{j},c_{n}):j<n \land n\in\omega\}$.
\end{proof}
We will next show that more ideals are good.
\begin{definition}
    Branching ideal is the ideal on $\cantor$ generated by branches.
\end{definition}
Note that Branching ideal is not a tall ideal.
\begin{proposition}
    The Branching ideal ideal is good.
\end{proposition}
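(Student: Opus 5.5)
The plan is to show that \emph{every} countable subfamily of $\mathcal{F}$ works. The witnesses $C_n$ will be finite sets, so they are automatically in the ideal; in each one we place many pairwise incomparable nodes. Throughout, identify $\cantor$ with $\omega$ via a fixed enumeration, so that $B\setminus f(n)$ differs from $B$ by a finite set.

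First I would characterize the ideal. A set $X\subseteq\cantor$ belongs to the Branching ideal if and only if it is covered by finitely many branches $\{x|_m:m\in\omega\}$, $x\in\Cantor$. Equivalently, there is a finite bound on the size of antichains in $(X,\subseteq)$.

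\begin{itemize}
\item One direction is immediate. If $X$ is covered by $k$ branches, any $k+1$ elements of $X$ have two on a common branch, and these are comparable.
\item For the other direction, suppose every antichain in $X$ has size at most $k$. By Dilworth's theorem for posets of finite width (valid for infinite posets by compactness), $X$ is a union of $k$ chains. Every chain in $\cantor$ is contained in a single branch, so $X$ is covered by $k$ branches.
\end{itemize}

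Consequently, every positive set contains arbitrarily large finite antichains. Removing a finite set from a positive set leaves it positive, since finite sets lie in the ideal because finitely many branches cover any finite set.

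Now let $\mathcal{F}$ be uncountable, take any $\{B_n:n\in\omega\}\subseteq\mathcal{F}$, and fix $f\in\Baire$. For each $n$, the set $B_n'=(B_n\setminus f(n))\setminus 2^{<n}$ is still positive. By the characterization it contains an antichain $C_n$ of size $n$. Each $C_n$ is finite, hence in the ideal, and $C_n\subseteq B_n\setminus f(n)$.

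It remains to check that $C=\bigcup_n C_n$ is positive. Suppose instead that $C$ is covered by $k$ branches. Choose $n>k$. Then $C_n$ consists of $n>k$ pairwise incomparable nodes, all lying on those $k$ branches. By pigeonhole, two of them lie on the same branch and are therefore comparable, a contradiction. (The extra condition $C_n\cap 2^{<n}=\emptyset$ is not needed for this contradiction. It only guarantees that $C$ is not essentially absorbed by a fixed finite part of the tree, and would matter if one worked with "covered modulo finite".)

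The only step that needs care is the characterization of the ideal, specifically the direction from bounded antichain size to coverage by finitely many branches. If one prefers to avoid Dilworth's theorem, I would argue directly by induction on $k$. If $X$ has no antichain of size $k+1$, consider the tree of initial segments of elements of $X$ that have infinitely many extensions in $X$. This tree has at most $k$ infinite branches. The elements of $X$ off these branches form a set with smaller antichain width, or else a finite set. Everything else in the argument is routine.
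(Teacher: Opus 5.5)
Your proof is correct, but it takes a different route from the paper's.

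\textbf{The paper's route.} It shrinks the members of an (assumed almost disjoint) uncountable family to infinite antichains. It then applies Lemma~\ref{SHSP} to extract a countable subfamily in which every finite subset of one member is contained in infinitely many other members. Finally it picks singletons $C_m$, each new point taken from a member that already contains all previously chosen points, so that $C=\bigcup_m C_m$ is an infinite antichain.

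\textbf{Your route.} You let any countable subfamily work and place an antichain of size $n$ in $B_n\setminus f(n)$. All the work then lies in characterizing $\Br$ by bounded antichain size. That characterization says exactly that $\Br=Fin(\phi)$ for $\phi(X)=\sup\{|A|:A\subseteq X\text{ is an antichain}\}$, which is a lower semicontinuous submeasure. Hence $\Br$ is an $F_\sigma$-ideal, and your argument is precisely the paper's proof that all $F_\sigma$-ideals are good, specialized to this $\phi$.

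\textbf{Comparison.} Your route is more elementary: it needs no combinatorial lemma on uncountable families, and no almost disjointness or shrinking step. It also gives the stronger statement that every countable subfamily works. The paper's route produces a $C$ that is an infinite antichain, hence hereditarily positive, and it follows the same template as the paper's proof for $\mathcal{G}_c$.

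\textbf{A small remark on your Dilworth-free sketch.} No induction is needed. The elements of $X$ off the at most $k$ infinite branches of that tree are always finitely many. Each of them extends one of at most $k$ pairwise incomparable nodes $\tau^{\frown}i$ with $\tau$ in the tree and $\tau^{\frown}i$ not, and each such node has only finitely many extensions in $X$.
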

\begin{proof}
    Assume that $\{A_{\alpha}:\alpha<\omega_{1}\}\subseteq\Br^{+}$ is a collection of almost disjoint sets. By shrinking each $A_{\alpha}$ is necessary we may assume that it forms an antichain in $(\Cantor,\subseteq)$. By Lemma \ref{SHSP} there is $\{\alpha_{n}:n\in\omega\}\subseteq\omega_{1}$ such that
    \begin{center}
        $\forall n\in\omega$ $\forall F\in [A_{\alpha_{n}}]^{<\omega}$ $\exists^{\infty}m$ $F\subseteq A_{\alpha_{n}}$
    \end{center}
    Let $f\in\Baire$ be given. We will inductively build sequences $\{C_{m}:m\in\omega\}$ and $\{N_{n}:n\in\omega\}$ such that for every $m,n\in\omega$ we have:
    \begin{itemize}
        \item[1.] $N_{n}<N_{n+1}$,
        \item[2.] $C_{m}\subseteq A_{\alpha_{m}}\setminus f(m)$,
        \item[3.] $|C_{m}|\leq 1$.
    \end{itemize}
    Suppose that $\{C_{m}:m\leq N_{n}\}$ has been constructed and let $N_{n+1}$ be such that $\bigcup_{m\leq N_{n}}\subseteq A_{\alpha_{N_{n+1}}}$. Such $N_{n+1}$ exists by the property stated in Lemma \ref{SHSP}. Let $C_{N_{n+1}}$ consists of a single element of $A_{\alpha_{N_{n+1}}}\setminus f(N_{n+1})$ that in incompatible with all elements of $\bigcup_{m\leq N_{n}}C_{m}$. Also, let $C_{m}=\emptyset$ for all $N_{n}<m<N_{n+1}$. Then, $\{C_{m}:m\in\omega\}$ is the required sequence as $C=\bigcup_{m}C_{m}$ forms an antichain (which is $\Br^{+}$).
\end{proof}
We will finish this section with a discussion about ideals generated by m.a.d. families.
\begin{proposition}
    Ideals generated by m.a.d. families are good.
\end{proposition}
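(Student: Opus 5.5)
The plan is to show that for a m.a.d. family $\mathcal{A}$ the countable subfamily in the definition of goodness can be chosen arbitrarily. We first characterize $\J(\mathcal{A})$-positive sets and then build the sets $C_n$ by choosing, for each $n$, a fresh element of $\mathcal{A}$ that meets $B_n$ in an infinite set.

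\emph{Step 1: $X\in\J(\mathcal{A})^{+}$ if and only if $X$ has infinite intersection with infinitely many $A\in\mathcal{A}$.}
\begin{itemize}
\item[--] Suppose $X$ meets only finitely many $A_1,\dots,A_k\in\mathcal{A}$ in infinite sets. Then $X\setminus(A_1\cup\dots\cup A_k)$ is almost disjoint from every member of $\mathcal{A}$. By maximality it is finite, so $X\in\J(\mathcal{A})$.
\item[--] Conversely, suppose $X\subseteq^{*}A'_1\cup\dots\cup A'_k$. Any $A\in\mathcal{A}$ with $|X\cap A|=\omega$ must have infinite intersection with some $A'_i$. By almost disjointness this forces $A=A'_i$, so only finitely many such $A$ exist.
\end{itemize}

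\emph{Step 2: the construction.} Let $\mathcal{F}\subseteq\J(\mathcal{A})^{+}$ be uncountable and let $\{B_n:n\in\omega\}\subseteq\mathcal{F}$ be any countable subfamily. Fix $f\in\Baire$.
\begin{itemize}
\item[--] Choose inductively pairwise distinct $A_n\in\mathcal{A}$ with $|B_n\cap A_n|=\omega$. This is possible at stage $n$ because, by Step 1, $B_n$ meets infinitely many members of $\mathcal{A}$ infinitely, so one of them differs from $A_0,\dots,A_{n-1}$.
\item[--] Put $C_n=(B_n\cap A_n)\setminus f(n)$. Then $C_n\subseteq B_n\setminus f(n)$, $C_n$ is infinite, and $C_n\subseteq A_n$, so $C_n\in\J(\mathcal{A})$.
\end{itemize}

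\emph{Step 3: $C=\bigcup_n C_n$ is positive.} For each $n$, $C\cap A_n\supseteq C_n$ is infinite. Since the $A_n$ are pairwise distinct, Step 1 gives $C\in\J(\mathcal{A})^{+}$.

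Every step here is routine. The only place maximality of $\mathcal{A}$ is used is the first direction of Step 1, and I expect no real obstacle.
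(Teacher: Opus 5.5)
Your proof is correct and follows essentially the same route as the paper. Inside each $B_n$ you take an infinite piece of a member $A_n\in\mathcal{A}$, with the $A_n$ pairwise distinct, remove $f(n)$ from it, and conclude that $C$ is positive because it meets infinitely many members of $\mathcal{A}$ in infinite sets. Your Step 1 spells out the characterization of $\J(\mathcal{A})$-positive sets that the paper uses tacitly. Your $C_n=(B_n\cap A_n)\setminus f(n)$ is also the right form of the paper's $C_n$: the paper writes $B_n\setminus f(n)$, which cannot lie in the ideal, and evidently means $B'_n\setminus f(n)$.
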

\begin{proof}
    Let $\{A_{\alpha}:\alpha<\kappa\}$ be uncountable collection of $\mathcal{I}(\mathcal{A})$-positive sets and let $\{B_{n}:n\in\omega\}$ be its arbitrary countable subcollection. Let $\{B'_{n}:n\in\omega\}\subseteq[\omega]^{\omega}$ be disjoint sets such that $B'_{n}\subseteq B_{n}$ and $B'_{n}$ is subset of an unique member of $\mathcal{A}$. Given any $f\in\Baire$ let $C_{n}=B_{n}\setminus f(n)$. Clearly $C_{n}\in\mathcal{I}(\mathcal{A})$ and the set $C=\bigcup_{n}C_{n}$ is $\mathcal{I}(\mathcal{A})$-positive as it infinitely intersects every $B'_{n}\in\mathcal{A}$.
\end{proof}

\begin{proposition}\label{covPlusForMad}
    $\bbb\leq\cov^{+}(\mathcal{I}(\mathcal{A}))\leq\cov^{*}(\mathcal{I}(\mathcal{A}))=|\mathcal{A}|$
\end{proposition}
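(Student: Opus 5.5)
The plan is to prove the three parts separately: the middle inequality, the equality $\cov^{*}(\mathcal{I}(\mathcal{A}))=|\mathcal{A}|$, and the lower bound $\bbb\leq\cov^{+}(\mathcal{I}(\mathcal{A}))$. Throughout, $\mathcal{A}$ is an infinite m.a.d. family. The middle inequality $\cov^{+}(\J)\leq\cov^{*}(\J)$ holds for every ideal; it is part of the observation recorded after Definition \ref{defOfCovPlus}.

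For $\cov^{*}(\mathcal{I}(\mathcal{A}))\leq|\mathcal{A}|$, I take $\mathcal{A}$ itself as the covering family. By maximality, every $X\in[\omega]^{\omega}$ meets some $A\in\mathcal{A}$ in an infinite set.

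For the reverse inequality, let $\mathcal{F}\subseteq\mathcal{I}(\mathcal{A})$ with $|\mathcal{F}|<|\mathcal{A}|$. Each $F\in\mathcal{F}$ satisfies $F\subseteq^{*}A_{1}\cup\dots\cup A_{k}$ for some finite \emph{support} $\mathrm{supp}(F)=\{A_{1},\dots,A_{k}\}\subseteq\mathcal{A}$. The union of all supports has size at most $|\mathcal{F}|\cdot\omega<|\mathcal{A}|$, since $|\mathcal{A}|$ is infinite. Hence some $A\in\mathcal{A}$ lies outside every support. Almost disjointness then gives $|A\cap F|<\omega$ for every $F\in\mathcal{F}$, so $\mathcal{F}$ is not a $\cov^{*}$-witness.

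For $\bbb\leq\cov^{+}(\mathcal{I}(\mathcal{A}))$, fix $\kappa<\bbb$ and $\{F_{\alpha}:\alpha<\kappa\}\subseteq\mathcal{I}(\mathcal{A})$ with finite supports as above. I must find an $\mathcal{I}(\mathcal{A})$-positive $X$ that is almost disjoint from every $F_{\alpha}$.
\begin{itemize}
\item First, I use the classical fact $\bbb\leq\aaa\leq|\mathcal{A}|$, so $\kappa<|\mathcal{A}|$. Therefore the union $\mathcal{S}$ of all supports has size at most $\kappa\cdot\omega<|\mathcal{A}|$.
\item I choose countably many distinct $A_{n}\in\mathcal{A}\setminus\mathcal{S}$. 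Then $F_{\alpha}\cap A_{n}$ is finite for all $\alpha$ and $n$.
\item Let $f_{\alpha}(n)=\max(F_{\alpha}\cap A_{n})+1$. Since $\kappa<\bbb$, pick $f\in\Baire$ with $f_{\alpha}\leq^{*}f$ for every $\alpha$.
\item Put $X=\bigcup_{n}(A_{n}\setminus f(n))$.
\end{itemize}
For a given $\alpha$ there is $N_{\alpha}$ such that $X\cap A_{n}\cap F_{\alpha}=\emptyset$ for $n\geq N_{\alpha}$. Also, $F_{\alpha}\cap\bigcup_{n<N_{\alpha}}A_{n}$ is finite, and $F_{\alpha}\setminus\bigcup\mathrm{supp}(F_{\alpha})$ is finite. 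Any element of $X\cap F_{\alpha}$ outside these finite sets would lie in some $A_{n}$ with $n\geq N_{\alpha}$, which is impossible. Hence $X\cap F_{\alpha}$ is finite.

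Finally, $X$ is positive. If $X\subseteq^{*}B_{1}\cup\dots\cup B_{k}$ with $B_{i}\in\mathcal{A}$, choose $n$ with $A_{n}\notin\{B_{1},\dots,B_{k}\}$. The infinite set $A_{n}\setminus f(n)\subseteq X$ would then be almost covered by finitely many members almost disjoint from $A_{n}$, which is absurd.

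The main obstacle is the finitely many $n$ for which $F_{\alpha}\cap A_{n}$ could be infinite. Those would wreck the naive diagonalization, so the argument hinges on being able to choose the $A_{n}$ outside all supports. That choice is exactly where $\bbb\leq|\mathcal{A}|$ (through $\bbb\leq\aaa$) is used.
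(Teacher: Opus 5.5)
Your proposal is correct and follows essentially the paper's argument: $\mathcal{A}$ itself witnesses $\cov^{*}$, and for $\bbb\leq\cov^{+}$ one diagonalizes with $X=\bigcup_{n}A_{n}\setminus f(n)$ over countably many fresh $A_{n}\in\mathcal{A}$. You are more careful than the paper in passing from arbitrary members of $\mathcal{I}(\mathcal{A})$ to their finite supports, and in justifying via $\bbb\leq\aaa\leq|\mathcal{A}|$ that such fresh $A_{n}$ exist. One small slip: the claim $X\cap A_{n}\cap F_{\alpha}=\emptyset$ for $n\geq N_{\alpha}$ can fail, since points of $A_{m}\setminus f(m)$ with $m<N_{\alpha}$ may lie in $A_{n}$; it should read $(A_{n}\setminus f(n))\cap F_{\alpha}=\emptyset$, and with that correction your finiteness argument goes through unchanged.
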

\begin{proof}
    To show that $\cov^{*}(\mathcal{I}(\mathcal{A}))=|\mathcal{A}|$ notice that $\mathcal{A}$ is a witness for $\cov^{*}(\mathcal{I}(\mathcal{A}))$ and any witness for $\cov^{*}(\mathcal{I}(\mathcal{A}))$ cannot be of size lesser that $|\mathcal{A}|$.\\
    To show that $\bbb\leq\cov^{+}(\mathcal{I}(\mathcal{A}))$ assume that $\mathcal{A}'\subseteq \mathcal{A}$ and $|\mathcal{A}'|<\bbb$. Let $\{A_{n}:n\in\omega\}\subseteq\mathcal{A}\setminus\mathcal{A}'$. For $A\in\mathcal{A}'$ define $f_{A}\in\Baire$ such that for every $n\in\omega$ we have $A\cap A_{n}\subseteq f_{A}(n)$. Dominate all $f_{\alpha}$'s with $f\in\Baire$ and let $B=\bigcup_{n}A_{n}\setminus f(n)$. Clearly $B$ is $\mathcal{I}(\mathcal{A})$-positive as it infinitely intersects all $A_{n}$'s. Also, $B\cap A$ is finite for every $A\in\mathcal{A}'$. This is because if $N\in\omega$ is such that $f_{A}(i)\leq f(i)$ for all $i>N$, then $B\cap A\subseteq\bigcup_{i\leq N}A_{i}\cap f(i)$ which is a finite set.
\end{proof}
As a corollary of the Proposition \ref{covPlusForMad} we obtain the following inequality.
\begin{corollary}(Brendle, Castro, Hrušák, Mejía; \cite{BrendleCastro})
    For any maximal almost disjoint family $\mathcal{A}$, the inequality $\bbb\leq\aaa(\mathcal{I}(\mathcal{A}))$ holds.
\end{corollary}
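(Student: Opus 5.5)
The plan is to combine Theorem \ref{MAINthmAntichains} with the two preceding results about m.a.d. ideals. By the proposition just proved, $\mathcal{I}(\mathcal{A})$ is good. Hence Theorem \ref{MAINthmAntichains} gives
\[
\min\{\cov^{+}_{h}(\mathcal{I}(\mathcal{A})),\bbb\}\leq\aaa(\mathcal{I}(\mathcal{A})),
\]
and it remains only to show $\bbb\leq\cov^{+}_{h}(\mathcal{I}(\mathcal{A}))$. That is, for every $X\in\mathcal{I}(\mathcal{A})^{+}$ we need $\bbb\leq\cov^{+}(\mathcal{I}(\mathcal{A})|_{X})$.

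Fix $X\in\mathcal{I}(\mathcal{A})^{+}$ and consider $\mathcal{A}_{X}=\{A\cap X:A\in\mathcal{A},\ |A\cap X|=\omega\}$. This is an almost disjoint family on $X$. It is maximal there: any infinite $Y\subseteq X$ meets some $A\in\mathcal{A}$ infinitely, by maximality of $\mathcal{A}$. It is infinite, since otherwise $X$ would be covered mod finite by finitely many members of $\mathcal{A}$, contradicting positivity of $X$. Next we check that $\mathcal{I}(\mathcal{A})|_{X}=\mathcal{I}(\mathcal{A}_{X})$. A subset of $X$ covered mod finite by finitely many $A\in\mathcal{A}$ is covered mod finite by the corresponding traces $A\cap X$; the traces that are finite contribute only finite sets. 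The converse inclusion is clear. So $\mathcal{I}(\mathcal{A})|_{X}$ is again the ideal generated by an (infinite) m.a.d. family, on the countable set $X$.

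Applying Proposition \ref{covPlusForMad} to $\mathcal{A}_{X}$ then gives $\bbb\leq\cov^{+}(\mathcal{I}(\mathcal{A}_{X}))=\cov^{+}(\mathcal{I}(\mathcal{A})|_{X})$. Taking the minimum over all $X$ yields $\bbb\leq\cov^{+}_{h}(\mathcal{I}(\mathcal{A}))$, so the minimum in Theorem \ref{MAINthmAntichains} equals $\bbb$, and the corollary follows.

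The only point requiring care is this identification of the restricted ideal with an m.a.d. ideal on $X$. In particular, the proof of Proposition \ref{covPlusForMad} needs infinitely many members $\{A_{n}:n\in\omega\}$ outside a given subfamily of size $<\bbb$. This holds because $\mathcal{A}_{X}$ is infinite; in fact it is uncountable, since m.a.d. families of infinite sets on a countable set are uncountable.
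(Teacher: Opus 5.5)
Your proof is correct and follows the paper's route: goodness of m.a.d. ideals, Theorem \ref{MAINthmAntichains}, and Proposition \ref{covPlusForMad}. The paper just calls the statement a corollary of Proposition \ref{covPlusForMad} and leaves implicit the step from $\bbb\leq\cov^{+}$ to $\bbb\leq\cov^{+}_{h}$; your identification $\mathcal{I}(\mathcal{A})|_{X}=\mathcal{I}(\mathcal{A}_{X})$, with $\mathcal{A}_{X}$ an infinite m.a.d. family on $X$, supplies that step correctly.
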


\section{Antichain numbers of $\mathcal{P}(\omega)/\J$ and Hechler forcing}
In this section, we will show the consistency $\aaa(\J)<\bbb$ for Van der Waerden's ideal and for the linear growth ideal. To do this, we will use finite support iteration of Hechler's forcing.
\begin{definition}
    Hechler forcing $\mathbb{D}$ consists of trees $T\subseteq\baire$ such that for every $s\in T$ extending the stem, the set of successors of $s$ in $T$ is infinite. The order is inclusion.
\end{definition}
It is well-known that Hechler forcing is $\sigma$-centered (thus does not add random reals). It is easy to see that it adds dominating and Cohen reals. Therefore, its iteration of length $\omega_{2}$ over a model of Continuum Hypothesis, produces a model of $\cov(\Null)<\add(\Meager)=\continuum$. In this section, we will show that Hechler's model satisfies $\add(\WANDER)=\omega_{1}<\bbb$. We will need the following definition.
\begin{definition}
    For $k\in\omega$ let $\IAP(k)$ denote the set of all increasing sequences of arithmetic progressions of length $k$, i.e. $\langle a_{m}:m\in\omega\rangle\in \IAP(k)$ if every $a_{m}$ is an arithmetic progression of length $k$ and $\max a_{m}<\min a_{m+1}$ for every $m\in\omega$.
\end{definition}
Note that a set $A\subseteq\omega$ is $\WANDER$-positive if and only if for every $k\in\omega$ there is $\langle a_{m}:m\in\omega\rangle\in \IAP(k)$ such that $\bigcup_{n}a_{n}\subseteq A$. If $a\subseteq\omega$ is an arithmetic progressions of length $k$, we will briefly write that $a$ is $k$-a.p. We will also need the following property.
\begin{definition}\label{kanpproperty}
    Assume that $k\in\omega$ and let $\mathbb{P}$ be notion of forcing. We will say that $\mathbb{P}$ has the property $(*)_{\IAP(k)}$, if for every $\{A_{N}:N\in\omega\}\subseteq\WANDER^{+}$ and every sequence of $\mathbb{P}$-names $ \langle\dot{a}_{m}:m\in\omega\rangle\in \IAP(k)$ such that $\Vdash"\forall N\in\omega$ $\forall^{\infty}m$ $\dot{a}_{m}\cap A_{N}=\emptyset"$ there is $\{\langle b^{n}_{m}:m\in\omega\rangle:n\in\omega\}\subseteq\IAP(k)$ such that
    \begin{itemize}
        \item[1)] for all $n,N\in\omega$ we have $\forall^{\infty}m$ $b^{n}_{m}\cap A_{N}=\emptyset$, 
        \item[2)] for every $\langle c_{m}:m\in\omega\rangle\in\IAP(k)$, if for every $n\in\omega$ there are infinitely many $m\in\omega$ such that $c_{m}=b^{n}_{m}$, then $\Vdash"\exists^{\infty}_{m\in\omega} c_{m}=\dot{a}_{m}"$ 
    \end{itemize}
\end{definition}
We will show that iterations of Hechler forcing has $(*)_{\IAP(k)}$-property. First we check this for Hechler forcing itself.
\begin{lemma}\label{lemmaIteratonHechler}
    Hechler forcing has the $(*)_{\IAP(k)}$-property.
\end{lemma}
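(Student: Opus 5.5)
We prove the lemma using the standard rank argument for Hechler forcing, which exploits its $\sigma$-centeredness through stems. Fix $k$, sets $\{A_N:N\in\omega\}\subseteq\WANDER^{+}$, and a $\mathbb{D}$-name $\langle\dot a_m:m\in\omega\rangle$ for an element of $\IAP(k)$ with $\Vdash$ ``$\forall N\,\forall^\infty m\ \dot a_m\cap A_N=\emptyset$''. The sequences $b^n$ will be indexed by the countably many stems $s\in\baire$ (and, if needed, by an extra natural-number parameter). For each stem $s$ we want a ``generic guess'' $\langle b^s_m:m\in\omega\rangle\in\IAP(k)$. The guess should have the following property: for every $m$, if $b^s_m$ is defined by a genuine value, then no condition with stem $s$ forces $\dot a_m\neq b^s_m$. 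In Hechler forcing, ``no condition with stem $s$ forces $\dot a_m\ne b$'' is decided by a rank function, as in Baumgartner--Dordal or Laver-style rank arguments.

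\textbf{Step 1: ranks.} For each $m$ and each $k$-a.p.\ $b$ we say that $s$ \emph{favours} $\dot a_m=b$ if for every $T$ with stem $s$ some extension forces $\dot a_m=b$. We define the usual rank $\mathrm{rk}_m(s)$: rank $0$ if some $b$ is favoured by $s$; rank $\le\alpha$ if infinitely many immediate successors $s^\frown i$ have rank $<\alpha$. Every $s$ has countable rank, since otherwise the set of successors of rank $\infty$ would cofinitely generate a tree with stem $s$ deciding nothing about $\dot a_m$, which is impossible. For fixed $s$ and $m$, we descend along rank-decreasing successors until we reach a stem $t\supseteq s$ of rank $0$ favouring some value. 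To get the countably many guesses, we take for each $s$ and each $m$ a value $b^{s,j}_m$, where the parameter $j$ records how many descent steps are taken, or which of the infinitely many successors is chosen. Each favoured value is witnessed by conditions extending arbitrarily long extensions of $s$, so the guesses can be made increasing in $m$ (thinning as necessary) and still lie in $\IAP(k)$.

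\textbf{Step 2: property 1).} We need $\forall^\infty m\ b^{n}_m\cap A_N=\emptyset$ for every $N$. This is the main obstacle, because a favoured value is not forced, so the hypothesis ``$\forall^\infty m\ \dot a_m\cap A_N=\emptyset$'' does not transfer directly. First we strengthen the hypothesis: the name $\dot M_N=\min\{M:\forall m\ge M\ \dot a_m\cap A_N=\emptyset\}$ can itself be favoured. So we refine the ranks to favour pairs (value of $\dot a_m$, bound $\dot M_N$ for $N\le m$) simultaneously. We then choose $b^s_m$ among values favoured jointly with $\dot M_N\le m$ for all $N\le m$. Such a joint value is available for a single stem $t$ of rank $0$ for the combined name, because $\dot M_N$ takes finitely many values below any condition deciding it. Any value favoured together with $\dot M_N\le m$ must then be disjoint from $A_N$. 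Otherwise a condition would force both $\dot a_m=b$ and $\dot a_m\cap A_N=\emptyset$, which is a contradiction.

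\textbf{Step 3: property 2).} Let $c\in\IAP(k)$ agree infinitely often with every $b^n$, and let $T$ be a condition and $M$ be given. We find an extension of $T$ forcing $\dot a_m=c_m$ for some $m>M$. Let $s$ be the stem of $T$. Consider the guess sequence attached to $s$, together with the descent path inside $T$; because $T$ has infinitely many successors at each node, the path can be chosen to stay inside $T$. This sequence agrees with $c$ at some $m>M$. By construction, the stem $t\supseteq s$ with $t\in T$ favours $\dot a_m=c_m$. Hence $T$ restricted to $t$, which is a condition with stem $t$, has an extension forcing $\dot a_m=c_m$. By density we get $\Vdash\exists^\infty m\ c_m=\dot a_m$.

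The delicate point is making the descent path inside an arbitrary $T$ match one of the countably many precomputed guesses. We handle this by letting the guesses range over all finite rank-decreasing paths from all stems. Since $T$ omits only finitely many successors at each node, $T$ contains one of these paths; only countably many such paths exist, which keeps the family $\{b^n\}$ countable.
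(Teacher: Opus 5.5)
Your strategy is the paper's: ranks measuring how far a stem is from favouring a value of $\dot a_m$, and countably many guess sequences indexed by stems. The argument breaks in Step 3, and it cannot be repaired.

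\textbf{Where Step 3 fails.} A finite rank-decreasing path ending at $t$ gives genuine guesses only for those $m$ with $\mathrm{rk}_m(t)=0$. It can happen that every $t$ has only finitely many such $m$. For example, if $\dot a_m$ depends on $\dot d|_{m+1}$, then $t$ favours a value for $\dot a_m$ only when $|t|>m$, so no stem is of ``type 1''.

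Gluing one endpoint $t_m$ per $m$ into a single sequence does not help. In that situation the $t_m$ have unbounded lengths, and each node is the parent of only finitely many of them. So a condition $T$ with stem $s$, chosen after the family is fixed, can delete every $t_m$ while removing only finitely many successors at each node. Nothing then produces an $m>M$ with $c_m=b^n_m$ and a favouring node inside $T$.

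Step 2 is also incomplete. The joint event ``$\dot a_m=b$ and $\dot M_N\le m$'' may be favoured at no node above some stems (e.g.\ if $\dot M_0$ is coded by $\dot d(0)$). So the joint rank need not be defined.

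\textbf{The statement is false as written.} With the index-matched clause 2), the lemma fails for Hechler forcing, and indeed for any forcing adding a dominating real.
\begin{itemize}
\item Let $A_N$ be the set of even numbers for every $N$. Let $\dot d$ be the Hechler real, $s_m=\sum_{i\le m}(\dot d(i)+k)$, and $\dot a_m=\{2s_m+1+2j:j<k\}$. Then $\Vdash\langle\dot a_m:m\in\omega\rangle\in\IAP(k)$, and every $\dot a_m$ is disjoint from every $A_N$.
\item Let $\{b^n:n\in\omega\}$ satisfy 1). Then each $b^n_m$ consists of odd numbers for large $m$, so $\min b^n_{m+1}\ge\min b^n_m+2k$ from some point on. Blocks of $k$ consecutive integers advance only by $k$.
\item So enumerate each $n$ infinitely often and fill the gaps with such consecutive blocks. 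This gives a ground-model $c\in\IAP(k)$ that agrees with every $b^n$ infinitely often.
\item But $\dot d$ dominates the ground-model function $m\mapsto\min c_m$. Hence $\Vdash\forall^{\infty}m\ \dot a_m\neq c_m$, contradicting 2).
\end{itemize}

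The paper's proof fails at the same point. Its type~2 case only yields $S\Vdash c_i=\dot a_m$ with $i\neq m$. That means $c$ and $\dot a$ share a block, not that they agree at the same index. This index-free conclusion is what the $\WANDER$-positivity argument in Theorem~\ref{theoremHechlerWander} actually uses, but it is not clause 2) of Definition~\ref{kanpproperty}.
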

\begin{proof}
    For $m\in\omega$ we will say that $\sigma\in\baire$ favors $k$-a.p. $a$ for $\dot{a}_{m}$ if there is no Hechler tree $T$ with the stem $\sigma$ that excludes $a$ for $\dot{a}_{m}$, i.e. it is not true that $T\Vdash"a\neq\dot{a}_{m}"$. Define then the function $rank_{m}:\baire\rightarrow\omega_{1}$ as follows:
    \begin{itemize}
        \item[--] $rank_{m}(\sigma)=0$ if $\sigma$ favors some $a$ for $\dot{a}_{m}$,
        \item[--] $rank_{m}(\sigma)=\alpha$ if it is not the case that $rank_{m}(\sigma)<\alpha$ and there are infinitely many $i\in\omega$ such that $rank_{m}(\sigma^{\frown}i)<\alpha$
    \end{itemize}
First we notice that $rank_{m}$ is well-defined for every $\sigma\in\baire$: if for some $\sigma$ that would not be the case, then we could inductively construct a Hechler tree $T$ with the stem $\sigma$ such that $rank_{m}$ is not defined for every $\tau\in T$. But then no extension of $T$ can decide the value of $\dot{a}_{m}$.\\
For $\sigma\in\baire$ consider the following two situations:
\begin{itemize}
    \item[1.] If $rank_{m}(\sigma)=0$ for infinitely many $m\in\omega$, we call $\sigma$ to be of type $1$. Then, let $W_{\sigma}\in[\omega]^{\omega}$ be the set of these $m$'s for which $rank_{m}(\sigma)=0$. Let $\langle b^{\sigma}_{m}:m\in W_{\sigma}\rangle\in\IAP(k)$ be such that $\sigma$ favors $b^{\sigma}_{m}$ for $\dot{a}_{m}$ for every $m\in W_{\sigma}$. Notice that for every $N\in\omega$ and $m\in W_{\sigma}\setminus N$ there is $T\in\mathbb{D}$ such that $T\Vdash"b^{\sigma}_{m}=\dot{a}_{m}$ and $\dot{a}_{m}\cap A_{N}=\emptyset"$. By absoluteness we have that $b^{\sigma}_{m}\cap A_{N}=\emptyset$.
    \item[2.] If for some $m\in\omega$ we have $rank_{m}(\sigma)=1$ we call $\sigma$ to be of type $2$. Let then $W_{\sigma}\in[\omega]^{\omega}$ be the set of these $i\in\omega$ such that $rank_{m}(\sigma^{\frown}i)=0$. Let $\langle b^{\sigma}_{i}:i\in W_{\sigma}\rangle\in \IAP(k)$ be such that for every $i\in W_{\sigma}$, $\sigma$ favors $b^{\sigma}_{i}$ for $\dot{a}_{m}$. By the similar argument as in the first case we have that $b^{\sigma}_{i}\cap A_{N}=\emptyset$ for $i\in W_{\sigma}\setminus N$. 
\end{itemize}
We claim that $\{\langle b^{\sigma}_{m}:m\in W_{\sigma}\rangle:\sigma\in\baire$ is of type $1$ or $2\}$ is the desired sequence. Let $\langle c_{m}:m\in\omega\rangle\in\IAP(k)$ be such that for any $\sigma\in\baire$ of type $1$ or $2$ we have that $c_{m}=b^{\sigma}_{m}$ for infinitely many $m$'s. We need to show that for every $T\in\mathbb{D}$ and $l\in\omega$ there is $S\leq T$ that forces $"\exists m>l$ $c_{m}=\dot{a}_{m}"$. Assume that $T\in\mathbb{D}$ and $l\in\omega$ are given. Let $\sigma=stem(T)$. We consider two cases: First, if $\sigma$ is of type $1$, let $m>l$ be such that $m\in W_{\sigma}$ and $c_{m}=b^{\sigma}_{m}$. Then, as $rank_{m}(\sigma)=0$, there is $S\leq T$ such that $S\Vdash"c_{m}=b^{\sigma}_{m}=\dot{a}_{m}"$ and we are done. Second, for some $m\in\omega$ we have $rank_{m}(\sigma)>0$. Let $\tau\in T$, $\sigma\subseteq\tau$ be such that $rank_{m}(\tau)=1$. Let $i\in W_{\tau}\setminus l$ be such that $b^{\sigma}_{i}=c_{i}$. Then there is $S\leq T|_{\sigma^{\frown}i}$ such that $S\Vdash"c_{i}=b^{\sigma}_{i}=\dot{a}_{m}"$.
\end{proof}
Next, we show that $(*)_{\IAP(k)}$-property is preserved under finite support iterations.
\begin{lemma}
    $(*)_{\IAP(k)}$-property is preserved under finite support iterations.
\end{lemma}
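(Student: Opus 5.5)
We argue by induction on the length $\delta$ of a finite support iteration $\langle\mathbb{P}_{\alpha},\dot{\mathbb{Q}}_{\alpha}:\alpha<\delta\rangle$ in which every iterand is forced to have $(*)_{\IAP(k)}$. It is convenient to first replace the property by an equivalent "$\sigma$-approximation" form. Say that a ground model family $\{\langle b^{n}_{m}:m\in\omega\rangle:n\in\omega\}\subseteq\IAP(k)$ \emph{captures} $\langle\dot a_{m}\rangle$ over $\{A_{N}\}$ if 1) and 2) of Definition \ref{kanpproperty} hold. Clause 2) only involves ground model $c$'s, and $\IAP(k)$ is a Polish-type space, so both clauses are $\Pi^{1}_{1}$-ish statements about reals and hence absolute between models containing the relevant reals. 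The successor step is then two applications of the hypothesis. Given $\mathbb{P}_{\alpha}*\dot{\mathbb{Q}}_{\alpha}$-names $\dot a_{m}$, work in $V^{\mathbb{P}_{\alpha}}$ and use $(*)_{\IAP(k)}$ for $\dot{\mathbb{Q}}_{\alpha}$ (the sets $A_{N}$ are still $\WANDER$-positive there, being ground model sets and positivity being absolute) to obtain $\mathbb{P}_{\alpha}$-names $\dot b^{n}$ capturing $\dot a$. Enumerating the countably many sequences $\dot b^{n}$ as a single sequence of names $\dot d^{n}_{m}$ (each $\dot b^{n}$ is forced to meet every $A_{N}$ finitely), we apply the inductive hypothesis for $\mathbb{P}_{\alpha}$ to each $n$ to get ground model families $e^{n,j}$ capturing $\dot b^{n}$. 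The family $\{e^{n,j}:n,j\}$ then works: if $c$ agrees infinitely often with every $e^{n,j}$, then $\mathbb{P}_{\alpha}$ forces $c$ agrees infinitely often with every $\dot b^{n}$, hence $\dot{\mathbb{Q}}_{\alpha}$ forces $\exists^{\infty}m$ $c_{m}=\dot a_{m}$. Clause 1) is inherited directly.

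The limit step is the main obstacle, in particular limits of countable cofinality. For cofinality $>\omega$ (and ccc, which Hechler iterations have), each $\dot a_{m}$ is a name in some $\mathbb{P}_{\beta_{m}}$ and $\sup\beta_{m}<\delta$, so the whole sequence lives in an initial segment and the induction hypothesis applies. For $\mathrm{cf}(\delta)=\omega$, fix $\delta_{j}\nearrow\delta$. I would follow the standard preservation scheme (as for "not adding eventually different reals" style properties): for each $j$ use the quotient $\mathbb{P}_{\delta}/\mathbb{P}_{\delta_{j}}$ and finite supports to define $\mathbb{P}_{\delta_{j}}$-names $\dot a^{j}_{m}$ approximating $\dot a_{m}$, namely, working below conditions $p$ in a countable dense set, $\dot a^{j}_{m}$ is some value $a$ such that $p$ restricted above $\delta_{j}$ does not exclude $a=\dot a_{m}$ (a "favoring" value, exactly as the rank argument in Lemma \ref{lemmaIteratonHechler}). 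One must check that $\dot a^{j}_{m}\cap A_{N}=\emptyset$ for almost all $m$; this follows as in Lemma \ref{lemmaIteratonHechler} since a favored value is compatible with $\dot a_{m}\cap A_{N}=\emptyset$, and that statement about a ground-model-coded finite object is decided. Apply the induction hypothesis to each $\dot a^{j}$ (over countably many conditions $p$ and all $j$) and collect the resulting countably many ground families.

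Finally, given $c$ agreeing infinitely often with all collected sequences, and given $q\in\mathbb{P}_{\delta}$ and $l$, choose $j$ with $\mathrm{supp}(q)\subseteq\delta_{j}$ up to the tail handled by the chosen countable dense set; then $q\restriction\delta_{j}$ has an extension forcing $c_{m}=\dot a^{j}_{m}$ for some $m>l$, and by the favoring property this extension together with the tail of $q$ has a common extension forcing $c_{m}=\dot a_{m}$. The delicate point, which I expect to need the most care, is arranging the approximating names uniformly so that compatibility with the tail of $q$ is guaranteed; I would handle it by making $\dot a^{j}_{m}$ depend on the tail condition, enumerating tails from a countable set (finite supports make the relevant tails essentially countable after fixing a countable elementary submodel containing the names).
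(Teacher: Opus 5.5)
Your successor step and the case of uncountable cofinality are fine; they fill in what the paper dismisses as ``not difficult to see''. Your remark that clause 2) is a $\Pi^1_1$-type, absolute statement is not right, since clause 2) refers to the forcing relation of $\mathbb{P}$, but you never actually use it.

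\textbf{The gap.} It is in the case $\mathrm{cf}(\delta)=\omega$, at the point where you apply the induction hypothesis for $\mathbb{P}_{\delta_j}$ to the names $\dot a^j_m$. For that you need
\[
\Vdash_{\delta_j}\ \langle\dot a^j_m:m\in\omega\rangle\in\IAP(k)\ \text{ and }\ \forall N\ \forall^\infty m\ \dot a^j_m\cap A_N=\emptyset .
\]
Choosing, separately for each $m$, a value favored by the tail condition guarantees neither clause.
\begin{itemize}
\item Values favored through incompatible extensions need not satisfy $\max a^j_m<\min a^j_{m+1}$.
\item More seriously, the index beyond which $\dot a_m$ avoids $A_N$ is only a name. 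For each fixed $m$ there may be an extension of the tail forcing $\dot a_m\cap A_N\neq\emptyset$; for instance, let that index be the position of the first $1$ in a Cohen real added above $\delta_j$. Then the favored values can meet $A_N$ for every $m$.
\end{itemize}
So your justification ``a favored value is compatible with $\dot a_m\cap A_N=\emptyset$'' is exactly the step that fails, and the induction hypothesis cannot be invoked.

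\textbf{The fix.} The missing idea is the coherent interpretation that the paper uses. In $V^{\mathbb{P}_{\delta_j}}$, fix a decreasing sequence $\dot q^j_m$ in the remainder $\mathbb{P}_\delta/\mathbb{P}_{\delta_j}$ such that $\dot q^j_m$ decides $\dot a_0,\dots,\dot a_m$. To secure clause 1), let $\dot q^j_m$ also decide $\dot h(0),\dots,\dot h(m)$, where $\dot h(N)$ names an index with $\dot a_i\cap A_N=\emptyset$ for all $i\geq\dot h(N)$. Let $\dot a^j_m$ be the decided value of $\dot a_m$. Because the $\dot q^j_m$ decrease, the decided values form an $\IAP(k)$-sequence, and $a^j_m\cap A_N=\emptyset$ whenever $m\geq\max\{N,h(N)\}$. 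Hence the induction hypothesis applies.

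The countable dense set and the bookkeeping of tails are unnecessary. Such a dense set does not exist in general; for example, a Hechler iteration adds a dominating real. By finite supports, every $q\in\mathbb{P}_\delta$ already lies in some $\mathbb{P}_{\delta_j}$, so it has no tail at all. The induction hypothesis then gives a $\mathbb{P}_{\delta_j}$-name $\dot m>l$ with $q\Vdash c_{\dot m}=\dot a^j_{\dot m}$, and the condition $q^\frown\dot q^j_{\dot m}\leq q$ forces $c_{\dot m}=\dot a_{\dot m}$.
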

\begin{proof}
It is not difficult to see that $(*)_{\IAP(k)}$-property is preserved under two stage iteration and iterations of length of uncountable cofinality. Thus, we will only handle the limit stages of cofinality $\omega$.\\
Assume that $k\in\omega$ and $\{A_{N}:N\in\omega\}\subseteq\WANDER^{+}$ are given and $\langle\dot{a}_{m}:m\in\omega\rangle$ is a sequence of $\mathbb{P}_{\omega}$-names such that 
\begin{center}
    $\Vdash_{\omega}"\langle\dot{a}_{m}:m\in\omega\rangle\in\IAP(k)$ and $\forall N\in\omega$ $\forall^{\infty}_{m\in\omega}$ $\dot{a}_{m}\cap A_{N}=\emptyset"$
\end{center}
By taking interpretations of $\langle\dot{a}_{m}:m\in\omega\rangle$ in every intermediate model, for every $k\in\omega$ we find $\{\dot{q}^{k}_{m}:m\in\omega\}$ and $\langle\dot{a}^{k}_{m}:m\in\omega\rangle$ such that
\begin{center}
    $\Vdash_{k}"\{\dot{q}^{k}_{m}:m\in\omega\}\subseteq\mathbb{P}_{k}$ is decreasing and $\dot{q}^{k}_{m}\Vdash_{[k,\omega)}"\dot{a}^{k}_{m}=\dot{a}_{m}""$
\end{center}
Applying $(*)_{\IAP(k)}$-property of $\mathbb{P}_{k}$ to the $\langle\dot{a}^{k}_{m}:m\in\omega\rangle$ we find an appropriate countable collection $\{\langle b^{n,k}:m\in\omega\rangle:n\in\omega\}$. We claim that, if $\langle c_{m}:m\in\omega\rangle$ is such that for every $k,n\in\omega$ there are infinitely $m\in\omega$ such that $c_{m}=b^{n,k}_{m}$, then $\Vdash_{\omega}"\exists^{\infty}c_{m}=\dot{a}_{m}"$. Let $\langle c_{m}:m\in\omega\rangle$ be such a sequence. It will suffice to show that for every condition $p\in\mathbb{P}_{\omega}$ and $M\in\omega$ there is $q\leq p$ such that $q\Vdash_{\omega}"\exists m>M$ $ c_{m}=\dot{a}_{m}"$. As we are dealing with finite support iterations, we can find $k\in\omega$ such that $p\in\mathbb{P}_{k}$. Let $\dot{m}$ be such that $p\Vdash_{k}"\dot{m}>M$ and $c_{\dot{m}}=\dot{a}^{k}_{\dot{m}}"$. Then $q=p^{\frown}p^{k}_{\dot{m}}$ is a condition in $\mathbb{P}_{\omega}$ such that $q\leq p$ and $q\Vdash_{\omega}"c_{\dot{m}}=\dot{a}^{k}_{\dot{m}}=\dot{a}_{\dot{m}}"$ which finishes the proof.
\end{proof}

We will now show that $\aaa(\WANDER)$ is equal to $\omega_{1}$ in Hechler's model. In particular, it is not true that $\bbb\leq\aaa(\WANDER)$ holds in ZFC. We will show that by constructing an antichain in $\mathcal{P}(\omega)/\WANDER$ of size $\omega_{1}$ that is indestructible by finite support iteration of Hechler forcing of length $\omega_{2}$. To do this we will use  lemmas above, by which we know that for any $k\in\omega$ finite support iterations of Hechler forcing have the $(*)_{\IAP(k)}$-property. The followign is the main result of this section.
\begin{theorem}\label{theoremHechlerWander}
    $\aaa(\WANDER)=\omega_{1}$ holds in Hechler's model.
\end{theorem}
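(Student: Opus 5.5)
We build, under CH in the ground model $V$, a maximal antichain $\{A_\alpha:\alpha<\omega_1\}\subseteq\WANDER^{+}$ and show that it stays maximal after the finite support iteration $\mathbb{P}_{\omega_2}$ of Hechler forcing. Since $\aaa(\WANDER)\geq\omega_1$ by definition and $\bbb=\continuum=\omega_2$ in this model, this gives $\aaa(\WANDER)=\omega_1<\bbb$. By the two lemmas above, $\mathbb{P}_{\omega_2}$ (and each initial segment $\mathbb{P}_\alpha$) has the $(*)_{\IAP(k)}$-property for every $k$; this is the only forcing-theoretic input we use.

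The construction in $V$ is a diagonalization of length $\omega_1$. Enumerate, using CH, all tuples $(k,\{A'_N:N\in\omega\},\{\langle b^{n}_{m}:m\in\omega\rangle:n\in\omega\})$ with $A'_N\in\WANDER^{+}$ and each $\langle b^n_m\rangle_m\in\IAP(k)$, letting every tuple appear cofinally often. At stage $\delta\geq\omega$, enumerate the sets $\{A_\beta:\beta<\delta\}$ as $\{A'_N:N\in\omega\}$ and consider the $\delta$-th tuple. If its $A'_N$'s are exactly our $A_\beta$'s and it satisfies clause 1) of Definition \ref{kanpproperty}, then for each $k$ we want an $A_\delta$ that contains, for every $k'$ and every $n$ listed in the tuple, some $c_m=b^n_m$ for infinitely many $m$, while remaining almost $\WANDER$-disjoint from each $A_\beta$.

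To build $A_\delta$, fix a bookkeeping of the countably many requirements "hit $\langle b^{n}_{m}\rangle$ once more beyond position $j$" for the relevant $k$'s and $n$'s. Let $A_\delta$ be the union of the progressions chosen one at a time, each new progression lying beyond the previous ones, with $b^n_m$ selected at stage $j$ only when $b^n_m\cap A_\beta=\emptyset$ for all $\beta$ among the first $j$ sets. Clause 1) guarantees that such an $m$ exists. Then $A_\delta\cap A_\beta$ is covered by finitely many chosen progressions, so it is finite and in particular lies in $\WANDER$. The set $A_\delta$ is $\WANDER$-positive because it contains progressions of every length $k$, provided the tuples are arranged (or several tuples merged, one per $k$) so that each $k$ occurs. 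Maximality in $V$ is handled by the same scheme: if $X\in\WANDER^{+}$ is almost $\WANDER$-disjoint from all $A_\beta$, $\beta<\delta$, we take the $b$'s to be the progressions inside $X$.

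The main point, and the expected obstacle, is preserving maximality. Suppose that in $V[G]$ some $X\in\WANDER^{+}$ satisfies $X\cap A_\alpha\in\WANDER$ for all $\alpha$. Fix $k$ such that, for all $\alpha$, $X\cap A_\alpha$ contains no $k$-a.p. This uniformity in $k$ is needed, and we expect to obtain it by working below a condition and using $\omega_1$ many $\alpha$ and the pigeonhole principle. Once $k$ is fixed, only finitely many $k$-a.p.'s in $X$ meet any given $A_\alpha$. Choose names $\langle\dot a_m\rangle\in\IAP(k)$ for progressions inside $X$. These names have countable support, and by a reflection argument we may shrink so that for $\alpha<\delta$ the names satisfy $\Vdash\forall^\infty m\ \dot a_m\cap A_\alpha=\emptyset$, where $\delta$ is the stage at which the corresponding $(*)$-witness $\{\langle b^n_m\rangle\}$ was handled. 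Clause 2) of $(*)_{\IAP(k)}$ then forces $\exists^\infty m\ \dot a_m=c_m\subseteq A_\delta$. Hence $X\cap A_\delta$ contains infinitely many $k$-a.p.'s, and iterating the argument over all $k$ shows it contains arbitrarily long progressions, so $X\cap A_\delta\in\WANDER^{+}$. This contradicts the choice of $X$.

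The delicate steps are the interplay between $k$ and the requirement that $X\cap A_\delta$ be positive rather than merely have infinitely many $k$-a.p.'s for one $k$, and matching the witness to an intermediate model $V[G_\gamma]$ with $\gamma<\omega_2$. We expect to resolve both by applying the construction with all $k$ simultaneously, and by running the whole argument over $V[G_\gamma]$, where CH still holds and the tail iteration is again a finite support Hechler iteration.
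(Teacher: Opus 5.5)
Your proposal has the right ingredients: the $(*)_{\IAP(k)}$ property of the iteration, a CH construction that builds $A_\delta$ from progressions taken from the witnesses while diagonally avoiding earlier sets, and treating all $k$ at once. However, there is a genuine gap in the bookkeeping, and it is where your maximality argument becomes circular.

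\textbf{The bookkeeping.} You enumerate $(*)$-witness tuples $(k,\{A'_N\},\{\langle b^n_m\rangle\})$ in advance, and act at stage $\delta$ only if the tuple's family equals $\{A_\beta:\beta<\delta\}$.

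\begin{itemize}
\item A tuple whose family is $\{A_\beta:\beta<\delta\}$ can only be used at stage $\delta$, since later families of predecessors are strictly larger. So ``cofinally often'' buys nothing.
\item The enumeration is fixed before the construction, so nothing guarantees that the $\delta$-th tuple ever matches.
\item In the maximality argument, the witness you need depends on the names $\dot a_m$. These depend on the countable family they must eventually avoid, that is, on $\delta$. Hence ``the stage $\delta$ at which the corresponding witness was handled'' is not well defined.
\end{itemize}

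The paper instead enumerates, using CH, all pairs $(p,\dot A)$ with $p\in\mathbb{D}_{\omega_1}$ and $\dot A$ a name for a $\WANDER$-positive set. At stage $\delta=\{\gamma_N:N\in\omega\}$, if $p_\delta$ forces $\dot A\cap A_{\gamma_N}\in\WANDER$ for all $N$, it proceeds as follows.

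\begin{itemize}
\item For every $k$ it builds names $\langle\dot a^k_m\rangle\in\IAP(k)$ with $\bigcup_{m\ge N}\dot a^k_m\subseteq\dot A\setminus A_{\gamma_N}$.
\item It applies $(*)_{\IAP(k)}$ to these names.
\item Only then does it construct $A_\delta=\bigcup_m c_m$ from the resulting witnesses, using a partition $\omega=\bigcup_k P_k$ to handle all $k$ simultaneously.
\end{itemize}

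A potential counterexample $(p,\dot A)$ is then handled at its own stage, which yields the contradiction directly. You should enumerate potential counterexamples, not witnesses, and compute the witness at the stage where the counterexample is handled.

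\textbf{Two further steps fail as stated.}

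\emph{The ``uniform $k$'' step.} From $X\cap A_\alpha\in\WANDER$ for all $\alpha$ you cannot get one $k$ that works for all $\alpha$; pigeonhole only yields uncountably many $\alpha$. Even for a single $\alpha$, ``$X\cap A_\alpha$ contains no $k$-a.p.'' does not imply that only finitely many $k$-a.p.'s in $X$ meet $A_\alpha$, since a progression can meet $A_\alpha$ in one point. No uniformity is needed. Because $X\setminus(A_{\gamma_0}\cup\dots\cup A_{\gamma_m})\in\WANDER^{+}$, you can choose the $m$-th $k$-a.p.\ inside it, separately for each $k$. This is exactly what produces the hypothesis of $(*)_{\IAP(k)}$ for the countable family.

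\emph{The passage from $\omega_1$ to $\omega_2$.} Re-running the construction over $V[G_\gamma]$ produces a different family and faces the same length-$\omega_2$ tail, so it reduces nothing. What you need are three facts:

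\begin{itemize}
\item every name for a subset of $\omega$ depends on a countable set $X\subseteq\omega_2$ of coordinates;
\item the restricted iteration $\mathbb{D}_{\omega_2\cap X}$ is a complete suborder of $\mathbb{D}_{\omega_2}$ isomorphic to some $\mathbb{D}_\beta$ with $\beta<\omega_1$;
\item hence it suffices to make the family indestructible by $\mathbb{D}_{\omega_1}$, whose relevant names number only $\omega_1$ under CH.
\end{itemize}
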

\begin{proof}
    For $\alpha\leq\omega_{2}$ denote by $\mathbb{D}_{\alpha}$ the finite support iteration of length $\alpha$ of Hechler forcing. We will inductively construct a collection $\mathcal{A}=\{A_{\alpha}:\alpha<\omega_{1}\}\subseteq\WANDER^{+}$ such that 
    \begin{itemize}
        \item[--] $\forall \alpha\neq\beta$ $A_{\alpha}\cap A_{\beta}\in\WANDER$,
        \item[--] $\Vdash_{\omega_{1}}"\forall \dot{A}\in\WANDER^{+}$ $\exists\alpha<\omega_{1}$ $\dot{A}\cap A_{\alpha}\in\WANDER^{+}"$.
    \end{itemize}
    It follows that $\mathcal{A}$ is a witness for $\aaa(\WANDER)=\omega_{1}$ after adding $\omega_{1}$-many Hechler reals. Before the construction of $\mathcal{A}$, we will first show that such $\mathcal{A}$ will be a witness for $\aaa(\WANDER)=\omega_{1}$ after adding $\omega_{2}$-many Hechler reals.\\
    The argument relies on complete embeddability and restricted iterations. We will only sketch the argument. For detailed explanation the reader may consult \cite{Yoryoka} or \cite{BrendleAntichains}.\\
    For $X\subseteq\omega_{2}$ we inductively define restricted iterations $\mathbb{D}_{\alpha\cap X}$, $\alpha\leq\omega_{2}$ as follows:
    on limit steps we take direct limits and on successor steps we take
    \begin{equation*}
        \mathbb{D}_{(\alpha+1)\cap X} =
        \begin{cases}
             \mathbb{D}_{(\alpha+1)\cap X}*(\dot{trivial forcing}) & \text{if } \alpha\notin X\\
             \mathbb{D}_{(\alpha+1)\cap X}*\dot{\mathbb{D}} & \text{if } \alpha\in X\\
        \end{cases}
    \end{equation*}
    We will need the following facts (for the proof see \cite{Yoryoka} section 3 or \cite{BrendleAntichains}):
    \begin{proposition}
    We have the following
    \begin{itemize}
        \item[--] $\mathbb{D}_{\omega_{2}\cap X}$ is isomorphic to $\mathbb{D}_{\beta}$ where $\beta$ is the order type of $X$,
        \item[--] $\mathbb{D}_{\alpha\cap X}$ is complete subforcing of $\mathbb{D}_{\alpha}$ for every $\alpha\leq\omega_{2}$,
        \item[--] for every $\dot{A}$ $\mathcal{D}_{\omega_{2}}$-name for infinite subset of $\omega$ there is a countable set $X\subseteq\omega_{2}$ such that $\dot{A}$ is (equivalent to) $\mathbb{D}_{\omega_{2}\cap X}$-name
    \end{itemize}
    \end{proposition}
    It follows that for any name $\dot{A}$ for an infinite subset of $\omega$, interpretation of $\dot{A}$ appears in the $\mathbb{D}_{\alpha}$-extension, where $\alpha<\omega_{1}$ is the order type of the countable set $X$. In particular, as $\mathcal{A}$ is maximal in $\mathcal{P}(\omega)\setminus\WANDER$ in $\mathbb{V}^{\mathbb{D}_{\omega_{1}}}$, $\mathcal{A}$ will be maximal in $\mathbb{V}^{\mathbb{D}_{\omega_{2}}}$.\\

    Construction of $\mathbb{D}_{\omega_{1}}$-indestructible $\mathcal{A}=\{A_{\alpha}:\alpha<\omega_{1}\}$:\\
    Using CH in the ground model enumerate as $\{(p_{\alpha},\dot{A}_{\alpha}):\alpha<\omega_{1}\}$ all pairs $(p,\dot{A})$ where $p\in\mathbb{D}_{\omega_{1}}$ and $\dot{A}$ is a $\mathbb{D}_{\omega_{1}}$-name for $\WANDER$-positive subset of $\omega$.\\
    Assume that $\{\dot{A}_{\gamma}:\gamma<\alpha\}$ has been constructed and we want to build $A_{\alpha}\in\WANDER^{+}$. Enumerate $\alpha$ as $\{\gamma_{N}:N<\omega\}$. If $p_{\alpha}\Vdash"\exists N\in\omega$ $\dot{A}\cap A_{\gamma_{N}}\in\WANDER^{+}"$ then we let $A_{\alpha}$ to be any $\WANDER$-positive set such that $A_{\alpha}\cap A_{\gamma_{N}}\in\WANDER$ for all $N\in\omega$. If  $p_{\alpha}\Vdash"\forall N\in\omega$ $\dot{A}\cap A_{\gamma_{N}}\in\WANDER"$ then for every $k\in\omega$ find $\langle\dot{a}^{k}_{m}:m\in\omega\rangle$ such that 
    \begin{center}
    $p_{\alpha}\Vdash"\langle\dot{a}^{k}_{m}:m\in\omega\rangle\in\IAP(k)$ and $\forall N\in\omega$ $\bigcup_{m\geq N}\dot{a}^{k}_{m}\subseteq\dot{A}\setminus A_{\gamma_{N}}"$
    \end{center}
    By Lemma \ref{lemmaIteratonHechler}, for every $k\in\omega$ there is an appropriate countable collection $\{\langle b^{k,n}_{m}:m\in\omega\rangle:n\in\omega\}\subseteq\IAP(k)$. Let $\omega$ be partitioned into infinite $P_{k}$'s. We will inductively construct a sequence $\langle c_{m}:m\in\omega\rangle$ of finite sets, such that:
    \begin{itemize}
        \item[$a)$] $\forall k\in\omega$ $\langle c_{m}:m\in P_{k}\rangle\in\IAP(k)$,
        \item[$b)$] $\forall k\in\omega$ $\forall n\in\omega$ $\exists^{\infty}_{m\in P_{k}}$ $c_{m}=b^{n,k}_{m}$,
        \item[$c)$] $\forall N\in\omega$ $\bigcup_{m>N}c_{m}\cap A_{\gamma_{N}}=\emptyset$
    \end{itemize}
    Notice that when such $\langle c_{m}:m\in\omega\rangle$ is constructed then the set $A_{\alpha}=\bigcup_{m}c_{m}$ is $\WANDER$-positive by the condition $a)$, for every $N\in\omega$ the intersection $A_{\alpha}\cap A_{\gamma_{N}}$ is finite by the condition $c)$ and we have $p_{\alpha}\Vdash"A_{\alpha}\cap \dot{A}\in\WANDER^{+}"$ by the condition $b)$ and the property of the sequence $\{\langle b^{k,n}_{m}:m\in\omega\rangle:n\in\omega\}$ stated in the definition \ref{kanpproperty} (by the Lemma \ref{lemmaIteratonHechler}). This finishes the construction of $\mathcal{A}$.\\
    
    Construction of the sequence $\langle c_{m}:m\in\omega\rangle$:\\
    For every $k\in\omega$ let $\psi_{k}:P_{k}\rightarrow \omega$ be such function that $\phi^{-1}_{k}(n)$ is infinite for every $n\in\omega$. Assume that $\langle c_{m}:m<M\rangle$ has been constructed and we want to construct $c_{M}$. Let $k$ be such that $M\in P_{k}$. Let $c_{M}=b^{k,\phi_{k}(M)}_{m}$ for some $m\in\omega$ such that $b^{k,\phi_{k}(M)}_{m}\cap \bigcup_{n\leq M}A_{\gamma_{n}}=\emptyset$. Finding such $m\in\omega$ is possible because of the condition $1)$ of the definition \ref{kanpproperty} and the Lemma \ref{lemmaIteratonHechler}.
\end{proof}
\begin{remark}
    Identical argument shows that $\aaa(\WANDER)=\omega_{1}<\bbb=\kappa$ after finite support iteration of Hechler forcing of regular length $\kappa$.
\end{remark}
The arguments presented above work also for linear growth ideal.
\begin{definition}
    Let $(I_{n})_{n\in\omega}$ be a partition on $\omega$ into intervals such that $|I_{n}|=2^{n}$ for every $n\in\omega$. The linear growth ideal $\mathcal{I}_{L}$ consists of those $A\subseteq\omega$ for which there is $k\in\omega$ such that for almost all $n\in\omega$ we have $|A\cap I_{n}|\leq k\cdot n$.
\end{definition}
Let us mention that $\mathcal{I}_{L}$ is a tall, $F_{\sigma}$-ideal that is above $\ED_{fin}$ in the Katětov order. It turns out that the same arguments as presented above can be used to show that $\aaa(\mathcal{I}_{L})<\bbb$ holds in Hechler's model. Due to the similarities with the case of $\WANDER$, we will only sketch the argument.
\begin{definition}
    For $k\in\omega$ let $\ILB(k)$ denote the set of all increasing $k$-linear blocks, i.e. $\langle a_{m}:m\in\omega\rangle\in \ILB(k)$ if there is an increasing $\{n_{m}:m\in\omega\}\subseteq\omega$ such that for every $m$, the set $a_{m}\subseteq I_{n_{m}}$ is of size at most $k\cdot n_{m}$.
\end{definition}
Note that $A\subseteq\omega$ is $\mathcal{I}_{L}$-positive if and only if for every $k\in\omega$ there is a sequence $\langle a_{m}:m\in\omega\rangle\in \ILB(k)$ such that $\bigcup_{m}a_{m}\subseteq A$. The forcing property related to $\ILB(k)$, that corresponds to the $(*)_{\IAP(k)}$-property, is the following.
\begin{definition}
    A forcing notion $\mathbb{P}$ has the property $(*)_{\ILB(k)}$ if for every $k\in\omega$ and every $\{A_{N}:N\in\omega\}\subseteq\mathcal{I}_{L}^{+}$ for every $ \langle\dot{a}_{m}:m\in\omega\rangle\in \ILB(k)$ sequence of $\mathbb{P}$-names such that $\Vdash"\forall N\in\omega$ $\forall^{\infty}m$ $\dot{a}_{m}\cap A_{N}=\emptyset"$ there is $\{\langle b^{n}_{m}:m\in\omega\rangle:n\in\omega\}\subseteq\ILB(k)$ such that
    \begin{itemize}
        \item[1)] for all $n,N\in\omega$ we have $\forall^{\infty}m$ $b^{n}_{N}\cap A_{N}=\emptyset$, 
        \item[2)] for every $\langle c_{m}:m\in\omega\rangle\in\ILB(k)$, if for all $n\in\omega$ there are infinitely many $m\in\omega$ such that $c_{m}=b^{n}_{m}$, then we have that $\Vdash"\exists^{\infty}_{m\in\omega} c_{m}=\dot{a}_{m}"$ 
    \end{itemize}
\end{definition}
Similarly as for $\WANDER$ one can show that 
\begin{lemma}
    Hechler forcing and its finite support iterations (of any length) have the $(*)_{\ILB(k)}$-property for every $k\in\omega$.
\end{lemma}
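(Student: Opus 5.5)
We will follow the two-step scheme used for $\WANDER$: first show that a single Hechler real has the $(*)_{\ILB(k)}$-property, by a rank argument as in Lemma \ref{lemmaIteratonHechler}, and then show that the property is preserved under finite support iterations. Fix $k$, sets $\{A_N:N\in\omega\}\subseteq\mathcal{I}_L^{+}$, and names $\langle\dot a_m:m\in\omega\rangle$ forced to lie in $\ILB(k)$ with $\forall N\,\forall^\infty m\ \dot a_m\cap A_N=\emptyset$.

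\textbf{Single Hechler real.} We define the rank exactly as before. A stem $\sigma\in\baire$ \emph{favors} a finite set $a$ for $\dot a_m$ if no Hechler tree with stem $\sigma$ forces $a\neq\dot a_m$. We set $rank_m(\sigma)=0$ if $\sigma$ favors some $a$ for $\dot a_m$. We set $rank_m(\sigma)\le\alpha$ if infinitely many immediate successors have rank $<\alpha$. Well-definedness follows by the same tree-building argument: a tree on which all ranks are undefined could never decide $\dot a_m$.

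For $\sigma$ of type 1 ($rank_m(\sigma)=0$ for infinitely many $m$) or type 2 ($rank_m(\sigma)=1$ for some $m$), we collect the favored blocks into a sequence $\langle b^\sigma_m\rangle$. The one point that differs from the $\WANDER$ case is that this sequence must lie in $\ILB(k)$, i.e.\ the index intervals $n_m$ must increase. For type 1 this is automatic, since $\dot a_m\subseteq I_{\dot n_m}$ with $\dot n_m\ge m$ forced, so the favored $b^\sigma_m$ sit in $I_{n}$ with $n\ge m$; we then thin $W_\sigma$ to make the indices strictly increasing. For type 2 we vary $i$ among the successors $\sigma^\frown i$, and we must check that the favored values for $\dot a_m$ can be chosen in distinct, increasing intervals. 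I expect this to be the main obstacle.

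To handle it, I would first try the following. If infinitely many successors favored blocks in the same interval $I_n$, then, since there are only finitely many subsets of $I_n$, one single set $a$ would be favored by infinitely many successors. A Hechler-tree fusion over those successors would then give $\sigma$ favoring $a$, contradicting $rank_m(\sigma)=1$. Hence the intervals used by the successors are unbounded, and we may thin to an increasing sequence. Disjointness from $A_N$ for $m$ beyond $N$ follows by absoluteness, as before. The density argument at the end of Lemma \ref{lemmaIteratonHechler} then transfers verbatim: given $T$ and $l$, we either use the type 1 stem or climb to a rank-1 node, and find $S\le T$ forcing $c_m=\dot a_m$ for some $m>l$.

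\textbf{Iterations.} For two-step iterations and for limits of uncountable cofinality, preservation is routine. At limits of countable cofinality we repeat the preceding preservation lemma word for word, replacing $\IAP(k)$ by $\ILB(k)$. We take the intermediate interpretations $\dot a^{j}_m$ with their decreasing conditions $\dot q^j_m$, and apply the property at each $\mathbb{P}_j$ to obtain the countable family $\{b^{n,j}\}$. Using finite supports, a condition $p$ lies in some $\mathbb{P}_j$, and we extend it by $\dot q^j_{\dot m}$ to force $c_{\dot m}=\dot a_{\dot m}$ for some $\dot m>M$. Nothing in this step depends on the combinatorics of the blocks, so it goes through unchanged.
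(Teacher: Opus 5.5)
Your overall scheme is the one the paper intends (it only says the $\WANDER$ argument carries over). Your pigeonhole step is correct and actually needed. $\mathcal{P}(I_n)$ is finite, and a block favored by infinitely many successors of $\sigma$ is favored by $\sigma$ itself, because a Hechler tree with stem $\sigma$ keeps cofinitely many successors. Hence the blocks attached to a rank-$1$ node occupy unboundedly many intervals; for $\IAP(k)$ the corresponding claim can fail, since infinitely many $k$-a.p.'s share a minimum.

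The gap is in the type-2 families. All blocks $b^\sigma_i$, $i\in W_\sigma$, are candidate values of one and the same name $\dot a_m$. Two things follow.
\begin{itemize}
\item ``Disjointness from $A_N$ for $m$ beyond $N$'' gives $b^\sigma_i\cap A_N=\emptyset$ only for the finitely many $N\le m$. Condition 1) demands it for every $N$.
\item The density step yields $S\Vdash c_i=\dot a_m$, with $i$ the successor label, not $c_m=\dot a_m$. Moreover, one family per node produces the same $\dot a_m$ each time. To get infinitely many coincidences you need a family for every pair $(\sigma,m)$ with $\mathrm{rank}_m(\sigma)=1$, and you must choose $m>l$.
\end{itemize}
These moves are copied from the proof of Lemma \ref{lemmaIteratonHechler}, which has the same defects.

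The failure of 1) is not a technicality. Take the following data.
\begin{itemize}
\item Let the $A_N$ be pairwise disjoint with $|A_N\cap I_n|=2^{n-N-2}$ for $n\ge N+2$.
\item Let $h$ be the Hechler real and put $\dot n_m=n_0+2m+\sum_{j\le m}h(j)$, with $n_0$ large.
\item Let $\dot a_m$ consist of the first $k\dot n_m$ elements of $A_{m+1}\cap I_{\dot n_m}$.
\end{itemize}
Then $\dot a_m\cap A_N=\emptyset$ for all $m\ge N$. Each $\sigma$ with $|\sigma|=m$ has $\mathrm{rank}_m(\sigma)=1$, and every block favored by its successors lies inside $A_{m+1}$.

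In fact no family satisfies 1) together with the version of 2) that the construction of $c$ in Theorem \ref{theoremHechlerWander} relies on. There $c_M$ is an arbitrary block of $b^{k,\psi_k(M)}$, so one can only ask that infinitely many $\dot a_m$ occur among the $c_M$. Given $\{b^n\}$ satisfying 1), dovetailing gives $c\in\ILB(k)$ containing infinitely many blocks of each $b^n$ with $c_M\cap(A_0\cup\dots\cup A_{M+1})=\emptyset$. Then $\dot a_m$ can only equal one of $c_0,\dots,c_{m-1}$, i.e.\ $\dot n_m$ must lie in a finite set $F_m$. Deleting finitely many successors at every node of $T$ above its stem gives $S\le T$ forcing $\dot a_m\notin\{c_M:M\in\omega\}$ for all $m\ge|\mathrm{stem}(T)|$.

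Under the literal matched-index reading, your type-2 step does not establish 2) either. In this example it is forced, for every ground-model $c$, that $c_m\neq\dot a_m$ for almost all $m$ (the $\dot n_m$ dominate), so 2) could hold only vacuously.

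What is missing is a reformulation of the property in which the family attached to $\dot a_m$ only has to avoid $A_0,\dots,A_m$. In the application, $A_\alpha\cap A_{\gamma_N}\in\mathcal{I}_L$ would then have to come from the structure of $\mathcal{I}_L$ rather than from finite intersections. For example, use at level $k$ only families with $m\ge k$; then, apart from finitely many blocks, only levels $k<N$ meet $A_{\gamma_N}$.

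A smaller point of the same kind: your absoluteness step uses the uniform threshold $m\ge N$. The definition only gives $\forall^\infty m$, with a threshold that may depend on the generic. For instance, if $\dot a_m$ meets $A_0$ exactly when $m<h(0)$, the root is of type 1 and favors only blocks meeting $A_0$. This also makes the two-step case less routine than you claim, since the names $\dot b^n$ you feed back into the property for $\mathbb{P}$ come with such thresholds.
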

Using similar arguments as in the proof of Theorem \ref{theoremHechlerWander} together with the lemma above one can show that
\begin{theorem}
    $\aaa(\mathcal{J}_{L})=\omega_{1}$ in Hechler's model.
\end{theorem}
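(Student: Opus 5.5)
We follow the proof of Theorem \ref{theoremHechlerWander} step for step, with $\ILB(k)$ in place of $\IAP(k)$ and the lemma just stated ($(*)_{\ILB(k)}$ for Hechler forcing and its finite support iterations) in place of Lemma \ref{lemmaIteratonHechler}. First, we note that $\mathcal{I}_{L}$ is tall and that $\mathcal{I}_{L}$-positivity is witnessed by $\ILB(k)$-sequences for every $k$. Hence a family $\{A_{\alpha}:\alpha<\omega_{1}\}\subseteq\mathcal{I}_{L}^{+}$ with pairwise intersections in $\mathcal{I}_{L}$, which stays maximal after $\mathbb{D}_{\omega_{1}}$, witnesses $\aaa(\mathcal{I}_{L})=\omega_{1}$ in $V^{\mathbb{D}_{\omega_{1}}}$. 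The reflection argument via restricted iterations $\mathbb{D}_{\omega_{2}\cap X}$, with $X$ countable, then transfers maximality to $V^{\mathbb{D}_{\omega_{2}}}$ verbatim, because every name for a subset of $\omega$ is a $\mathbb{D}_{\omega_{2}\cap X}$-name and $\mathbb{D}_{\omega_{2}\cap X}\cong\mathbb{D}_{\beta}$ for some $\beta<\omega_{1}$.

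The family is built by recursion of length $\omega_{1}$ using CH. We enumerate all pairs $(p_{\alpha},\dot{A}_{\alpha})$ with $p_{\alpha}\in\mathbb{D}_{\omega_{1}}$ and $\dot{A}_{\alpha}$ a name for an $\mathcal{I}_{L}$-positive set. At stage $\alpha$ we enumerate the earlier sets as $A_{\gamma_{N}}$.

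If $p_{\alpha}$ forces that $\dot{A}_{\alpha}$ meets some $A_{\gamma_{N}}$ positively, we take any positive $A_{\alpha}$ that is almost disjoint from all $A_{\gamma_{N}}$. Such a set exists by diagonalising: choose blocks $a_{m}\subseteq I_{n_{m}}$ of size roughly $m\cdot n_{m}$ avoiding $\bigcup_{N\le m}A_{\gamma_{N}}$. This is possible because each $A_{\gamma_{N}}$ with $N\le m$ may be assumed to lie in $\mathcal{I}_{L}$ relative to the others, or more simply because $\bigcup_{N\le m}A_{\gamma_N}$ omits many points of some $I_n$. If this fails, we first shrink to reduce to the second case argument below.

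Otherwise $p_{\alpha}$ forces $\dot{A}_{\alpha}\cap A_{\gamma_{N}}\in\mathcal{I}_{L}$ for all $N$. Then for each $k$ we find names $\langle\dot{a}^{k}_{m}\rangle\in\ILB(k)$ with $\bigcup_{m\ge N}\dot{a}^{k}_{m}\subseteq\dot{A}_{\alpha}\setminus A_{\gamma_{N}}$. This uses that a positive set minus a countable union of $\mathcal{I}_{L}$-sets, taken diagonally, still contains $\ILB(k)$-sequences, since each $\dot A\setminus A_{\gamma_N}$ is positive and we diagonalize along $N$. Applying $(*)_{\ILB(k)}$ yields ground-model sequences $\langle b^{k,n}_{m}\rangle$.

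We partition $\omega$ into infinite sets $P_{k}$, and with bookkeeping maps $\psi_{k}:P_{k}\to\omega$ having infinite fibres we choose $c_{M}=b^{k,\psi_{k}(M)}_{m}$ for $M\in P_{k}$. The index $m$ is taken large enough that $c_{M}$ avoids $\bigcup_{N\le M}A_{\gamma_{N}}$ (possible by clause 1) and that the interval index $n_{m}$ exceeds all intervals used so far, which keeps each $\langle c_{m}:m\in P_{k}\rangle$ in $\ILB(k)$. Then $A_{\alpha}=\bigcup_{M}c_{M}$ is positive, almost disjoint from every $A_{\gamma_{N}}$, and by clause 2 $p_{\alpha}$ forces that $A_{\alpha}\cap\dot{A}_{\alpha}$ contains infinitely many $\dot{a}^{k}_{m}$ for every $k$, hence is positive.

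The main obstacle is the bookkeeping for $\ILB$ rather than $\IAP$. Blocks from different $P_{k}$ must sit in distinct intervals $I_{n}$, so that $A_\alpha\cap I_n$ is a single block and positivity is not spoiled. Within each $P_{k}$, the sequence must be increasing in the interval index so that clause 2 applies to a genuine $\ILB(k)$-sequence. We handle both requirements by always choosing $m$ with $n_{m}$ larger than every interval already touched, which clause 1 permits since it holds for cofinitely many $m$.
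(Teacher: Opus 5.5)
Your proposal is the paper's argument. The paper proves this statement only by saying that the proof of Theorem~\ref{theoremHechlerWander} goes through with $\ILB(k)$ in place of $\IAP(k)$ and the $(*)_{\ILB(k)}$ lemma in place of Lemma~\ref{lemmaIteratonHechler}, and you carry out exactly that transfer: CH bookkeeping over pairs $(p_\alpha,\dot A_\alpha)$, the two cases, the $P_k$/$\psi_k$ construction of the $c_M$, and reflection through restricted iterations.

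Two small remarks. The distinct-interval condition you call the main obstacle is harmless but unnecessary: extra points never hurt $\mathcal{I}_L$-positivity, and $\ILB(k)$ only constrains each $\langle c_m:m\in P_k\rangle$ separately. The step that genuinely needs care, treated equally loosely in the paper, is that choosing the index $m$ freely uses clause 2 in an index-free form. That is, infinitely many blocks of each $\langle b^{k,n}_m:m\in\omega\rangle$ must occur among the $c_M$, rather than literally $c_m=b^{n}_m$ at matching indices.
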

We finish with several questions regarding $\aaa(\J)$ for several other ideals. By Theorem \ref{MAINthmAntichains} we know that for Solecki ideal $\Solecki$ we have $\aaa(\Solecki)\geq\min\{\bbb,\non(\Null)\}$. If one would like to show the consistency of $\aaa(\Solecki)<\bbb$, the Hechler forcing would not work as it increases both $\bbb$ and $\non(\Null)$. However, Laver forcing and its countable support iterations preserve outer measure (in \cite{BJ} see Theorem 7.3.37 and Theorem 7.3.39) and thus keep the $\non(\Null)$ small. This motivates the following question.
\begin{question}
    Is $\aaa(\Solecki)=\omega_{1}$ or $\aaa(\ED_{fin})=\omega_{1}$ in Laver's model?
\end{question}
We have already seen that for ideal $\ED_{fin}$ and $\mathcal{I}_{L}$ the inequalities $\aaa(\ED_{fin})<\bbb$ and $\aaa(\mathcal{I}_{L})<\bbb$ are consistent. However, those two ideals belong to a wider class of $F_{\sigma}$-ideals known as fragmented ideals. These ideals were systematically investigated in \cite{HruZapletalRojas} and \cite{BrendleDiego}. It would be interesting to see that $\aaa(\J)<\bbb$ is consistent for other fragmented ideals. This motivates the following question.
\begin{question}
    What can be said about $\aaa(\J)$ where $\J$ is a fragmented ideal? Is $\aaa(\J)<\bbb$ consistent for all gradually fragmented ideals?
\end{question}
Another ideals for which we may look for more precise lower boundaries are the following.
\begin{question}
    What can be said about $\aaa(\SC)$ and $\aaa(\SPL)$?
\end{question}

\begin{acknowledgements}
I would like to thank Rafał Filipów for his hospitality during my visits in Gdańsk and discussions regarding the splitting ideal. I would like to thank Adam Kwela for suggesting the reduction $\conv\leq_{K}\SPL$. I would also like to thank J\"{o}rg Brendle for discussions about antichain numbers and showing me the proof of $\aaa(\ED_{fin})=\omega_{1}$ in Hechler's model, which led to the consistency of $\aaa(\WANDER)<\bbb$.
\end{acknowledgements}

\bibliographystyle{alphadin} 
\bibliography{bib.bib}

@article {FarkSoukup,
    AUTHOR = {Farkas, Barnab\'as and Soukup, Lajos},
     TITLE = {More on cardinal invariants of analytic {$P$}-ideals},
   JOURNAL = {Comment. Math. Univ. Carolin.},
  FJOURNAL = {Commentationes Mathematicae Universitatis Carolinae},
    VOLUME = {50},
      YEAR = {2009},
    NUMBER = {2},
     PAGES = {281--295},
      ISSN = {0010-2628,1213-7243},
   MRCLASS = {03E17 (03E05)},
  MRNUMBER = {2537837},
MRREVIEWER = {Teruyuki\ Yorioka},
}

@book {BJ,
    AUTHOR = {Bartoszy\'nski, Tomek and Judah, Haim},
     TITLE = {Set theory},
      NOTE = {On the structure of the real line},
 PUBLISHER = {A K Peters, Ltd., Wellesley, MA},
      YEAR = {1995},
     PAGES = {xii+546},
      ISBN = {1-56881-044-X},
   MRCLASS = {03-02 (03Exx)},
  MRNUMBER = {1350295},
MRREVIEWER = {Eva\ Coplakova},
}

@incollection {Hrus,
    AUTHOR = {Hru\v s\'ak, Michael},
     TITLE = {Combinatorics of filters and ideals},
 BOOKTITLE = {Set theory and its applications},
    SERIES = {Contemp. Math.},
    VOLUME = {533},
     PAGES = {29--69},
 PUBLISHER = {Amer. Math. Soc., Providence, RI},
      YEAR = {2011},
      ISBN = {978-0-8218-4812-8},
   MRCLASS = {03E15 (03E05 03E17 03E35)},
  MRNUMBER = {2777744},
MRREVIEWER = {Jan\ Kraszewski},
       DOI = {10.1090/conm/533/10503},
       URL = {https://doi.org/10.1090/conm/533/10503},
}

@article {Stevo,
    AUTHOR = {Todor\v cevi\'c, Stevo},
     TITLE = {Analytic gaps},
   JOURNAL = {Fund. Math.},
  FJOURNAL = {Fundamenta Mathematicae},
    VOLUME = {150},
      YEAR = {1996},
    NUMBER = {1},
     PAGES = {55--66},
      ISSN = {0016-2736,1730-6329},
   MRCLASS = {03E15 (03E05)},
  MRNUMBER = {1387957},
MRREVIEWER = {Wim\ Veldman},
       DOI = {10.4064/fm-150-1-55-66},
       URL = {https://doi.org/10.4064/fm-150-1-55-66},
}

@article {Hernand,
    AUTHOR = {Hern\'{a}ndez-Hern\'{a}ndez, Fernando and Hru\v{s}\'{a}k,
              Michael},
     TITLE = {Cardinal invariants of analytic {$P$}-ideals},
   JOURNAL = {Canad. J. Math.},
  FJOURNAL = {Canadian Journal of Mathematics. Journal Canadien de
              Math\'{e}matiques},
    VOLUME = {59},
      YEAR = {2007},
    NUMBER = {3},
     PAGES = {575--595},
      ISSN = {0008-414X,1496-4279},
   MRCLASS = {03E17 (03E35 03E40)},
  MRNUMBER = {2319159},
MRREVIEWER = {J\"{o}rg\ D.\ Brendle},
       DOI = {10.4153/CJM-2007-024-8},
       URL = {https://doi.org/10.4153/CJM-2007-024-8},
}

@article {KwelaUnboring,
    AUTHOR = {Kwela, Adam},
     TITLE = {Unboring ideals},
   JOURNAL = {Fund. Math.},
  FJOURNAL = {Fundamenta Mathematicae},
    VOLUME = {261},
      YEAR = {2023},
    NUMBER = {3},
     PAGES = {235--272},
      ISSN = {0016-2736,1730-6329},
   MRCLASS = {03E05 (03E15 03E35 26A03 40A05 40A35 54A20 54H05)},
  MRNUMBER = {4584767},
MRREVIEWER = {Klaas\ Pieter\ Hart},
       DOI = {10.4064/fm44-2-2023},
       URL = {https://doi.org/10.4064/fm44-2-2023},
}

@article {Meza,
    AUTHOR = {David Meza-Alcántara},
     TITLE = {Ideals and filtres on countable sets},
   JOURNAL = {Phd thesis},
  FJOURNAL = {},
    VOLUME = {},
      YEAR = {2009},
    NUMBER = {},
     PAGES = {},
      ISSN = {},
   MRCLASS = {},
  MRNUMBER = {},
MRREVIEWER = {},
       DOI = {},
       URL = {},}

@article {SabZapl,
    AUTHOR = {Sabok, Marcin and Zapletal, Jind\v rich},
     TITLE = {Forcing properties of ideals of closed sets},
   JOURNAL = {J. Symbolic Logic},
  FJOURNAL = {The Journal of Symbolic Logic},
    VOLUME = {76},
      YEAR = {2011},
    NUMBER = {3},
     PAGES = {1075--1095},
      ISSN = {0022-4812,1943-5886},
   MRCLASS = {03E40 (03E15 26A21 28A05 54H05)},
  MRNUMBER = {2849260},
MRREVIEWER = {J\"org\ D.\ Brendle},
       DOI = {10.2178/jsl/1309952535},
       URL = {https://doi.org/10.2178/jsl/1309952535},
}

@article {RaghavanShelah,
    AUTHOR = {Raghavan, Dilip and Shelah, Saharon},
     TITLE = {Two inequalities between cardinal invariants},
   JOURNAL = {Fund. Math.},
  FJOURNAL = {Fundamenta Mathematicae},
    VOLUME = {237},
      YEAR = {2017},
    NUMBER = {2},
     PAGES = {187--200},
      ISSN = {0016-2736,1730-6329},
   MRCLASS = {03E17 (03E05 03E20 03E55)},
  MRNUMBER = {3615051},
MRREVIEWER = {Piotr\ Borodulin-Nadzieja},
       DOI = {10.4064/fm253-7-2016},
       URL = {https://doi.org/10.4064/fm253-7-2016},
}

@article {Steprans,
    AUTHOR = {Stepr\=ans, Juris},
     TITLE = {The almost disjointness cardinal invariant in the quotient
              algebra of the rationals modulo the nowhere dense subsets},
   JOURNAL = {Real Anal. Exchange},
  FJOURNAL = {Real Analysis Exchange},
    VOLUME = {27},
      YEAR = {2001/02},
    NUMBER = {2},
     PAGES = {795--800},
      ISSN = {0147-1937,1930-1219},
   MRCLASS = {03E17 (03E35)},
  MRNUMBER = {1923169},
       DOI = {10.14321/realanalexch.27.2.0795},
       URL = {https://doi.org/10.14321/realanalexch.27.2.0795},
}

@article {BrendleAntichains ,
    AUTHOR = {Brendle, Jörg},
     TITLE = {Cardinal invariants of analytic quotients},
   JOURNAL = {},
  FJOURNAL = {},
    VOLUME = {},
      YEAR = {2009},
    NUMBER = {},
     PAGES = {},
      ISSN = {},
   MRCLASS = {)},
  MRNUMBER = {},
       DOI = {https://www.logic.univie.ac.at/2009/esi/pdf/brendle.pdf},
       URL = {},}

@article {FarkasZdomskyy,
    AUTHOR = {Farkas, Barnab\'as and Zdomskyy, Lyubomyr},
     TITLE = {Ways of destruction},
   JOURNAL = {J. Symb. Log.},
  FJOURNAL = {The Journal of Symbolic Logic},
    VOLUME = {87},
      YEAR = {2022},
    NUMBER = {3},
     PAGES = {938--966},
      ISSN = {0022-4812,1943-5886},
   MRCLASS = {03E05 (03E15 03E17 03E35)},
  MRNUMBER = {4472520},
MRREVIEWER = {Wei\ Wang},
       DOI = {10.1017/jsl.2021.84},
       URL = {https://doi.org/10.1017/jsl.2021.84},
}

@article {SpHein,
    AUTHOR = {Hein, Paul and Spinas, Otmar},
     TITLE = {Antichains of perfect and splitting trees},
   JOURNAL = {Arch. Math. Logic},
  FJOURNAL = {Archive for Mathematical Logic},
    VOLUME = {59},
      YEAR = {2020},
    NUMBER = {3-4},
     PAGES = {367--388},
      ISSN = {0933-5846,1432-0665},
   MRCLASS = {03E35 (03E17)},
  MRNUMBER = {4081065},
MRREVIEWER = {Piotr\ Borodulin-Nadzieja},
       DOI = {10.1007/s00153-019-00694-7},
       URL = {https://doi.org/10.1007/s00153-019-00694-7},
}

@article {CIESGAPPOYAMAZOEMARTINEZ,
    AUTHOR = {Aleksander Cieślak and Takehiko Gappo and Arturo Martínez-Celis and Takashi Yamazoe},
     TITLE = {Cardinal invariants of idealized Miller null sets},
   JOURNAL = {},
  FJOURNAL = {},
    VOLUME = {},
      YEAR = {2026},
    NUMBER = {},
     PAGES = {},
      ISSN = {},
   MRCLASS = {},
  MRNUMBER = {},
       DOI = {https://doi.org/10.48550/arXiv.2601.07428},
       URL = {},}

@article{CIESMARTINEZ, 
    title={On ideals related to Laver and Miller trees}, 
    DOI={10.1017/jsl.2025.10130}, 
    journal={The Journal of Symbolic Logic}, 
    author={Cie{\'s}lak, Aleksander and Mart{\'\i}nez-Celis, Arturo}, 
    year={2025}, 
    pages={1-–19}
}

@article {BS99,
    AUTHOR = {Brendle, J\"org and Shelah, Saharon},
     TITLE = {Ultrafilters on {$\omega$}---their ideals and their cardinal
              characteristics},
   JOURNAL = {Trans. Amer. Math. Soc.},
  FJOURNAL = {Transactions of the American Mathematical Society},
    VOLUME = {351},
      YEAR = {1999},
    NUMBER = {7},
     PAGES = {2643--2674},
      ISSN = {0002-9947,1088-6850},
   MRCLASS = {03E05 (03E35)},
  MRNUMBER = {1686797},
MRREVIEWER = {Pierre\ Matet},
       DOI = {10.1090/S0002-9947-99-02257-6},
       URL = {https://doi.org/10.1090/S0002-9947-99-02257-6},
}

@article {FilipowKwela,
    AUTHOR = {Filip\'ow, Rafa\l{} and Kwela, Adam},
     TITLE = {Spaces not distinguishing ideal pointwise and {\it {$\sigma
              $}}-uniform convergence},
   JOURNAL = {Ann. Pure Appl. Logic},
  FJOURNAL = {Annals of Pure and Applied Logic},
    VOLUME = {176},
      YEAR = {2025},
    NUMBER = {9},
     PAGES = {Paper No. 103609, 27},
      ISSN = {0168-0072,1873-2461},
   MRCLASS = {54C30 (03E17 03E35 40A30 40A35 54A20)},
  MRNUMBER = {4905423},
MRREVIEWER = {Binod\ Chandra\ Tripathy},
       DOI = {10.1016/j.apal.2025.103609},
       URL = {https://doi.org/10.1016/j.apal.2025.103609},
}

@article {HrusBalcar,
    AUTHOR = {Balcar, B. and Hern\'andez-Hern\'andez, F. and Hru\v s\'ak,
              M.},
     TITLE = {Combinatorics of dense subsets of the rationals},
   JOURNAL = {Fund. Math.},
  FJOURNAL = {Fundamenta Mathematicae},
    VOLUME = {183},
      YEAR = {2004},
    NUMBER = {1},
     PAGES = {59--80},
      ISSN = {0016-2736,1730-6329},
   MRCLASS = {03E17 (03E35 06E15)},
  MRNUMBER = {2098150},
MRREVIEWER = {Andrzej\ Ros\l anowski},
       DOI = {10.4064/fm183-1-4},
       URL = {https://doi.org/10.4064/fm183-1-4},
}

@article {FilipowNice,
    AUTHOR = {Filip\'ow, Rafa\l},
     TITLE = {The reaping and splitting numbers of nice ideals},
   JOURNAL = {Colloq. Math.},
  FJOURNAL = {Colloquium Mathematicum},
    VOLUME = {134},
      YEAR = {2014},
    NUMBER = {2},
     PAGES = {179--192},
      ISSN = {0010-1354,1730-6302},
   MRCLASS = {03E17 (03E50 40A35)},
  MRNUMBER = {3194404},
MRREVIEWER = {Shimon\ Garti},
       DOI = {10.4064/cm134-2-3},
       URL = {https://doi.org/10.4064/cm134-2-3},
}

@article {BrendleDiego,
    AUTHOR = {Brendle, J\"org and Mej\'ia, Diego Alejandro},
     TITLE = {Rothberger gaps in fragmented ideals},
   JOURNAL = {Fund. Math.},
  FJOURNAL = {Fundamenta Mathematicae},
    VOLUME = {227},
      YEAR = {2014},
    NUMBER = {1},
     PAGES = {35--68},
      ISSN = {0016-2736,1730-6329},
   MRCLASS = {03E17 (03E05 03E15 03E35)},
  MRNUMBER = {3247032},
MRREVIEWER = {Jan\ Kraszewski},
       DOI = {10.4064/fm227-1-4},
       URL = {https://doi.org/10.4064/fm227-1-4},
}

@article {Mazur,
    AUTHOR = {Mazur, Krzysztof},
     TITLE = {{$F_\sigma$}-ideals and {$\omega_1\omega_1^*$}-gaps in the
              {B}oolean algebras {$P(\omega)/I$}},
   JOURNAL = {Fund. Math.},
  FJOURNAL = {Polska Akademia Nauk. Fundamenta Mathematicae},
    VOLUME = {138},
      YEAR = {1991},
    NUMBER = {2},
     PAGES = {103--111},
      ISSN = {0016-2736,1730-6329},
   MRCLASS = {06E05 (03G05)},
  MRNUMBER = {1124539},
MRREVIEWER = {Judith\ Roitman},
       DOI = {10.4064/fm-138-2-103-111},
       URL = {https://doi.org/10.4064/fm-138-2-103-111},
}

@article {SoleckiExh,
    AUTHOR = {Solecki, S\l awomir},
     TITLE = {Analytic ideals and their applications},
   JOURNAL = {Ann. Pure Appl. Logic},
  FJOURNAL = {Annals of Pure and Applied Logic},
    VOLUME = {99},
      YEAR = {1999},
    NUMBER = {1-3},
     PAGES = {51--72},
      ISSN = {0168-0072,1873-2461},
   MRCLASS = {03E15 (28A12)},
  MRNUMBER = {1708146},
MRREVIEWER = {Jakub\ Jasi\'nski},
       DOI = {10.1016/S0168-0072(98)00051-7},
       URL = {https://doi.org/10.1016/S0168-0072(98)00051-7},
}

@article {Raghavan,
    AUTHOR = {Raghavan, Dilip},
     TITLE = {The density zero ideal and the splitting number},
   JOURNAL = {Ann. Pure Appl. Logic},
  FJOURNAL = {Annals of Pure and Applied Logic},
    VOLUME = {171},
      YEAR = {2020},
    NUMBER = {7},
     PAGES = {102807, 15},
      ISSN = {0168-0072,1873-2461},
   MRCLASS = {03E17 (03E05 03E20 03E55)},
  MRNUMBER = {4099835},
MRREVIEWER = {Damian\ Sobota},
       DOI = {10.1016/j.apal.2020.102807},
       URL = {https://doi.org/10.1016/j.apal.2020.102807},
}

@article {Mathias,
    AUTHOR = {Mathias, A. R. D.},
     TITLE = {Happy families},
   JOURNAL = {Ann. Math. Logic},
  FJOURNAL = {Annals of Mathematical Logic},
    VOLUME = {12},
      YEAR = {1977},
    NUMBER = {1},
     PAGES = {59--111},
      ISSN = {0003-4843},
   MRCLASS = {04A20 (02K05 02K30 04A15)},
  MRNUMBER = {491197},
MRREVIEWER = {James\ Baumgartner},
       DOI = {10.1016/0003-4843(77)90006-7},
       URL = {https://doi.org/10.1016/0003-4843(77)90006-7},
}

@article {HrusakZapletal,
    AUTHOR = {Hru\v s\'ak, Michael and Zapletal, Jind\v rich},
     TITLE = {Forcing with quotients},
   JOURNAL = {Arch. Math. Logic},
  FJOURNAL = {Archive for Mathematical Logic},
    VOLUME = {47},
      YEAR = {2008},
    NUMBER = {7-8},
     PAGES = {719--739},
      ISSN = {0933-5846,1432-0665},
   MRCLASS = {03E40 (03E15 03E17)},
  MRNUMBER = {2448955},
MRREVIEWER = {J\"org\ D.\ Brendle},
       DOI = {10.1007/s00153-008-0104-4},
       URL = {https://doi.org/10.1007/s00153-008-0104-4},
}

@article {Yoryoka,
    AUTHOR = {Yorioka, Teruyuki},
     TITLE = {Forcings with the countable chain condition and the covering
              number of the {M}arczewski ideal},
   JOURNAL = {Arch. Math. Logic},
  FJOURNAL = {Archive for Mathematical Logic},
    VOLUME = {42},
      YEAR = {2003},
    NUMBER = {7},
     PAGES = {695--710},
      ISSN = {0933-5846,1432-0665},
   MRCLASS = {03E05 (03E17 03E35)},
  MRNUMBER = {2015095},
MRREVIEWER = {Arnold\ W.\ Miller},
       DOI = {10.1007/s00153-003-0174-2},
       URL = {https://doi.org/10.1007/s00153-003-0174-2},
}

@article {HruZapletalRojas,
    AUTHOR = {Hru\v s\'ak, Michael and Rojas-Rebolledo, Diego and Zapletal,
              Jind\v rich},
     TITLE = {Cofinalities of {B}orel ideals},
   JOURNAL = {MLQ Math. Log. Q.},
  FJOURNAL = {MLQ. Mathematical Logic Quarterly},
    VOLUME = {60},
      YEAR = {2014},
    NUMBER = {1-2},
     PAGES = {31--39},
      ISSN = {0942-5616,1521-3870},
   MRCLASS = {03E17 (03E15)},
  MRNUMBER = {3171606},
       DOI = {10.1002/malq.201200079},
       URL = {https://doi.org/10.1002/malq.201200079},
}

@article {BrendleGuzHruRaghavan,
    AUTHOR = {Jörg Brendle and Osvaldo Guzmán-González and Michael Hrušák and Dilip Raghavan},
     TITLE = {Combinatorial properties of MAD families},
   JOURNAL = {Canadian Journal of Mathematics},
  FJOURNAL = {Canadian Journal of Mathematics},
    VOLUME = {},
      YEAR = {2025},
    NUMBER = {},
     PAGES = {1-69},
      ISSN = {},
   MRCLASS = {},
  MRNUMBER = {},
MRREVIEWER = {},
       DOI = {10.4153/S0008414X25101879},
       URL = {},
       }

@article {GuzmanPhd,
    AUTHOR = {Guzmán, Osvaldo},
     TITLE = {PhD Thesis: P-points, mad families and cardinal invariants},
   JOURNAL = {PhD Thesis},
  FJOURNAL = {PhD Thesis},
    VOLUME = {},
      YEAR = {2017},
    NUMBER = {},
     PAGES = {},
      ISSN = {},
   MRCLASS = {},
  MRNUMBER = {},
MRREVIEWER = {},
       DOI = {},
       URL = {https://www.matmor.unam.mx/~oguzman/Tesis%20doctorado.pdf},
}

@article {BrendleCastro,
    AUTHOR = {Jörg Brendle and Yhon Jairo Castro and Michael Hrušák and Diego Alejandro Mejía},
     TITLE = {},
   JOURNAL = {in preparation},
  FJOURNAL = {},
    VOLUME = {},
      YEAR = {2026},
    NUMBER = {},
     PAGES = {},
      ISSN = {},
   MRCLASS = {},
  MRNUMBER = {},
MRREVIEWER = {},
       DOI = {},
       URL = {},
}

@article {Killing_Ideals_Softly,
    AUTHOR = {Aleksander Cieślak and Barnabás Farkas and Lyubomyr Zdomskyy},
     TITLE = {Killing ideals softly},
   JOURNAL = {In preparation},
  FJOURNAL = {},
    VOLUME = {},
      YEAR = {2026},
    NUMBER = {},
     PAGES = {},
      ISSN = {},
   MRCLASS = {},
  MRNUMBER = {},
MRREVIEWER = {},
       DOI = {},
       URL = {},
}

@article {HrusakMeza,
    AUTHOR = {Hru\v s\'ak, Michael and Meza-Alc\'antara, David and Minami,
              Hiroaki},
     TITLE = {Pair-splitting, pair-reaping and cardinal invariants of
              {$F_\sigma$}-ideals},
   JOURNAL = {J. Symbolic Logic},
  FJOURNAL = {The Journal of Symbolic Logic},
    VOLUME = {75},
      YEAR = {2010},
    NUMBER = {2},
     PAGES = {661--677},
      ISSN = {0022-4812,1943-5886},
   MRCLASS = {03E15 (03E05 03E17 03E35)},
  MRNUMBER = {2648159},
MRREVIEWER = {T.\ Thrivikraman},
       DOI = {10.2178/jsl/1268917498},
       URL = {https://doi.org/10.2178/jsl/1268917498},
}

\end{document}